\numberwithin{equation}{section}
\newcommand{\Rr}{\mathbb{R}}
\newcommand{\RR}{\mathbb{R}}
\newcommand{\Nn}{\mathbb{N}}
\newcommand{\pP}{\mathcal{P}}
\newcommand{\kK}{\mathcal{K}}
\newcommand{\Ll}{\mathcal{L}}
\newcommand{\Ss}{\mathcal{S}}
\newcommand{\Vv}{\mathcal{V}}
\theoremstyle{definition}
\newtheorem{theorem}{Theorem}[section]
\newtheorem{lemma}[theorem]{Lemma}
\newtheorem{proposition}[theorem]{Proposition}
\newtheorem{corollary}[theorem]{Corollary}
\theoremstyle{definition}
\newtheorem{definition}[theorem]{Definition}
\newtheorem{remark}[theorem]{Remark}
\begin{document}
	
	\title[$C^\infty$ free boundary for fractional Laplacian]{$C^{\infty}$ Regularity for the free boundary of one-phase Fractional Laplacian problem}

	\begin{abstract}
		We consider a one-phase free  boundary  problem involving fractional Laplacian $(-\Delta)^s$, $0<s<1$. D. De Silva, O. Savin, and Y. Sire proved that the flat boundaries are $C^{1,\alpha}$. We raise the regularity to $C^{\infty}$, extending the result known for $(-\Delta)^{1/2}$ by D. De Silva and O. Savin.
	\end{abstract}

\author[R. ~Lyu]{Runcao Lyu}
\thanks{lyuruncao@u.nus.edu}
	\maketitle
	\tableofcontents
	\section{Introduction}
	Since the 1970s, free boundary problems have become one of the most important subjects in the field of elliptic PDE. One of the most famous problems is the classical \textit{one-phase} Alt-Caffarelli or Bernoulli problem analyzed by H. Alt and L. Caffarelli in \cite{Caffarelli1981}. It is
	\begin{equation}
		\left\{\begin{aligned}
			&u\geq0,&\quad&\text{ in }\Omega,\\
			&\Delta u=0,&\quad&\text{ in }\Omega\cap\{u>0\},\\
			&|Du|=1,&\quad&\text{ on }\Omega\cap\partial\{u>0\}.
		\end{aligned}\right.
	\end{equation}
	It is a natural model for long-range interactions appearing in large systems of interacting particles, flame propagation, and probability theory. Those models raised the interest of mathematicians to analyze the regularity of the free boundary of $u$ \cite{tangentball,FernandezReal2024,Duvaut1976}. Specifically, mathematicians aim to find conditions under which the free boundary is smooth. A series of papers \cite{Caffarelli1981,DeSilva2019,Li2023,Kinderlehrer1977} proved that locally flat free boundaries are $C^\infty$. On the other hand, \cite{Caffarelli1981,Weiss1998} proved that the set of singular points on the free boundary is small in the sense of Hausdorff dimension. We refer to \cite{Caffarelli2005,FernandezReal2022,Velichkov2023} for further details in this field.  We also refer to some important papers on the regularity of the solutions \cite{RosOton2016,RosOton2017,Figalli2020a,Sire2021a,Sire2021,Fall2022,FernandezReal2024a,FernandezReal2024b} for some Schauder-type regularity results.  Note that the regularity of the free boundary and that of the solution are closely related.
	
	In this paper, we focus on the one-phase \textit{fractional Laplacian}  problem. The fractional Laplacian operator is a standard model for \textit{non-local} elliptic operators. Over the past two decades, non-local free boundary problems, especially those involving the fractional Laplacian operator, have attracted significant attention  \cite{Caffarelli2007a,tangentball,Caffarelli2013,Caffarelli2017,Barrios2018,Barrios2018a,Abatangelo2020,RosOton2025,RosOton2025c}.

	The fractional Laplacian operator is a Fourier multiplier in $\Rr^n$ with symbol is $|\xi|^{2s}$. Alternatively, we can define the operator  by an integral representation:
	\[(-\Delta)^su(x)=c_{n,s}P.V.\int_{\Rr^n}\frac{u(x)-u(x')}{|x-x'|^{n+2s}}dx',\quad0<s<1,\]
	where $c_{n,s}$ is a constant depending on $n$ and $s$, and $P.V.$ denotes the Cauchy principal value. In this paper, we consider the one-phase Alt-Caffarelli problem for the fractional Laplacian  $(-\Delta)^s$, $0<s<1$, in an open domain 
	$\Omega\subset\Rr^n$, following \cite{tangentball,DeSilva2014,Engelstein2021},
	\begin{equation}\label{fractional laplacian}
		\left\{\begin{aligned}
			&u_{0}\geq0,\quad&\text{ in }&\Omega,\\
			&(-\Delta)^s u_{0}=0,\;s\in(0,1),\quad&\text{ in }&\Omega\cap\{u>0\},\\
			&\lim_{t\to 0^+}\frac{u_{0}(x+t\nu(x))}{t^s}=1,\quad&\text{ on }&F(u):=\Omega\cap\partial\{u>0\},
		\end{aligned}\right.
	\end{equation}
	where $u_0$ is defined on the entire space $\Rr^n$ with prescribed values outside $\Omega$. In particular, the case $s=\frac{1}{2}$ was studied in greater detail in \cite{DeSilva2012,DeSilva2012b,DeSilva2012a,Silva2015}.

	To study the problem, we will use the Caffarelli-Silvestre extension. It plays a key role in our work. As noted earlier, the non-local nature of the fractional Laplacian precludes the direct application of classical local PDE techniques to the function $u_0$. However, the work of Caffarelli and Silvestre in \cite{Caffarelli2007,Caffarelli2016} allows us to extend $u_{0}$ from $\Rr^{n}$ to $\Rr^{n+1}$. Consider the following minimizing problem
	\begin{equation}
		\min \left\{\int_0^\infty\int_{\Rr^{n}}y^\beta|\nabla u(x,y)|^2dxdy:\;\beta=1-2s,u\Big|_{\Rr^n\times\{0\}}=u_{0}\right\}.
	\end{equation}
	The minimizer $u$ satisfies the following equation:
	\begin{equation}
		\left\{\begin{aligned}
			&L_\beta u=0,\quad\text{in }\Rr^n\times(0,+\infty),\\
			&u=u_{0},\;\;\quad\text{on }\Rr^n\times\{0\},\\
			&(-\Delta)^s u_{0}(x)=d_{n,s}\lim_{y\to 0^+}\frac{u(x,y)-u(x,0)}{y^{1-\beta}},
		\end{aligned}\right.
	\end{equation}
	where $d_{n,s}$ is a constant depending on $n$ and $s$, and $L_\beta$ is defined by \[L_\beta(u):=\text{div}(|y|^\beta\nabla u).\]
	The operator $L_\beta$ is local, allowing us to use standard elliptic techniques. This extension transforms the non-local problem for $u_0$ into a local one for $u$. Consequently, it suffices to study the following boundary value problem:
	\begin{equation}\label{extension of fracional laplacian}
		\left\{
		\begin{aligned}
			&u\geq 0,\quad &\text{in}&\; B_1,\\
			&L_\beta u = 0, \quad &\text{in}& \; B_1 \cap\{u>0\}, \\
			&\frac{\partial u}{\partial \nu^s} = 1, \quad &\text{on}& \; F(u),
		\end{aligned}
		\right.
	\end{equation}
	where \(-1<\beta = 1 - 2s<1\), $\nu$ is the interior normal unit vector of $F(u)$, and
	\begin{equation}
		\frac{\partial u}{\partial \nu^s}(x_0) := \lim_{t \to 0^+} \frac{u(x_0 + t\nu(x_0))}{t^s}, \quad x_0 \in F(u).
	\end{equation}

	For general $0<s<1$, the optimal regularity of solutions has been established in \cite{tangentball,RosOton2024}. The $C^{1,\alpha}$ regularity for flat boundaries was established in \cite{DeSilva2012,DeSilva2014,RosOton2025b}.	The smallness of the singular set was proven in \cite{Engelstein2021}. Specifically, \cite{Silva2015} improved the regularity of flat free boundary for $s=\frac{1}{2}$ to $C^\infty$.
	
	As noted in \cite[Remark 1.2]{RosOton2025b}, the best-known regularity result for the free boundary when $s\neq \frac{1}{2}$ is $C^{1,\alpha}$. A recent paper \cite{Barrios2025} proved that $C^{2,\alpha}$ free boundaries are $C^\infty$ for general non-local operators. We state the $C^{1,\alpha}$ regularity result by D. De Silva, O. Savin, and Y. Sire. Our work is based on that regularity result.

	\begin{theorem}(\cite[Theorem 1.1]{DeSilva2014})\label{C1alpha}
		There exists a small constant $\bar{\varepsilon} > 0$ depending on $n$ and $\beta$, such that if $u$ is a viscosity solution to $(\ref{extension of fracional laplacian})$ satisfying
		\begin{equation}
			\{x \in B_1 : x_n \leq -\bar{\varepsilon} \} \subset \{x \in B_1 : u(x, 0) = 0\} \subset \{x \in B_1 : x_n \leq \bar{\varepsilon} \},
		\end{equation}
		then $F(u)$ is $C^{1,\alpha}$ in $B_{1/2}$. Here $\alpha$ depends on $n,\beta$.
	\end{theorem}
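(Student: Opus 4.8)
The plan is to prove the theorem by the \emph{improvement of flatness} scheme of De Silva, carried out for the local extension formulation \eqref{extension of fracional laplacian}. The machinery of local elliptic PDE applies because for $-1<\beta<1$ the weight $|y|^\beta$ lies in the Muckenhoupt class $A_2$, so $L_\beta$ satisfies the De Giorgi--Nash--Moser and weighted Schauder estimates of Fabes--Kenig--Serapioni; these, together with the non-degeneracy and optimal $s$-growth of viscosity solutions of \eqref{extension of fracional laplacian}, are used freely. The reference object is the one-dimensional profile $U=U(x_n,y)$, the nonnegative function that is $L_\beta$-harmonic on $\Rr^2\setminus\{x_n\le 0,\ y=0\}$, vanishes on the slit $\{x_n\le0,\ y=0\}$, and has $\partial_{\nu^s}U=1$, with trace $(x_n)_+^s$ on $\{y=0\}$. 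Using barrier comparisons with translates of $U$, the hypothesis of the theorem implies that after a dilation $u$ is trapped in $B_1$ between $U(x_n-\varepsilon,y)$ and $U(x_n+\varepsilon,y)$ for a small $\varepsilon$; one iteration step then consists in producing, independently of $\varepsilon$, a unit vector $\nu'$ with $|\nu'-e_n|\le C\varepsilon$ and a radius $\rho\in(0,1)$ so that in $B_\rho$, measured in the frame with axis $\nu'$, the solution is trapped between two translates of $U$ at distance at most $\tfrac12\varepsilon\rho$. Rescaling and iterating this step yields the $C^{1,\alpha}$ regularity of $F(u)$ in $B_{1/2}$.

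The first ingredient is a \textbf{partial Harnack inequality for flat solutions}: if on a ball $u$ lies between $U(x_n+a,y)$ and $U(x_n+b,y)$ with $b-a$ small, then on a smaller concentric ball it lies between $U(x_n+a',y)$ and $U(x_n+b',y)$ with $b'-a'\le(1-c)(b-a)$, $c=c(n,\beta)>0$. The proof uses the interior Harnack inequality for $L_\beta$ together with barriers obtained by perturbing $U$, touching $u$ from inside or outside at free boundary points in the viscosity sense of $\partial u/\partial\nu^s=1$. Iterating this estimate shows that on any subregion where $U$ is bounded below the normalized deviations $\varepsilon^{-1}(u-U)$ are equi-Hölder continuous up to $\{y=0\}$ --- precisely the compactness needed for the blow-up.

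The heart of the matter is the \textbf{compactness--linearization step} and the regularity of the limiting problem. For a sequence of $\varepsilon_k$-flat solutions $u_k$ with $\varepsilon_k\to0$, the partial Harnack inequality gives a subsequence along which $\varepsilon_k^{-1}(u_k-U)$ converges locally uniformly to a function $w$ defined near the origin on $\{y>0\}$ and on the co-normal part $\{y=0,\ x_n>0\}$ of its boundary. The function $w$ solves the \emph{linearized problem}: a degenerate-elliptic equation of $L_\beta$ type in $\{y>0\}$, the homogeneous co-normal condition $\lim_{y\to0^+}y^\beta w_y=0$ on $\{y=0,\ x_n>0\}$ (linearization of $(-\Delta)^s u=0$), the Dirichlet condition $w=0$ on $\{y=0,\ x_n<0\}$, and a matching condition at the edge $\{x_n=0,\ y=0\}$ coming from the linearization of $\partial u/\partial\nu^s=1$. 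What is needed is the estimate that such $w$ agrees, on $B_r$, modulo a remainder of size $O(r^{1+\alpha})$, with an affine function $\ell\cdot x'$ of the tangential variables $x'=(x_1,\dots,x_{n-1})$. This sharp boundary regularity for a thin mixed Dirichlet--Neumann (Signorini-type) problem for $L_\beta$ is where $\beta\ne0$ genuinely departs from the half-Laplacian case: there is no even reflection turning $w$ into a classical harmonic function, so one must analyze the edge directly via the homogeneous special solutions of the weighted mixed problem and a compactness argument inside the linearized class.

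Finally, \textbf{improvement of flatness and iteration}. Plugging the estimate for $w$ back into the compactness step: for $k$ large, on $B_\rho$ the deviation $u_k-U$ is within $\tfrac12\varepsilon_k\rho$ of $\varepsilon_k\,\ell\cdot x'$, which is exactly the statement that $u_k$ is $\tfrac12\varepsilon_k\rho$-flat in $B_\rho$ in the frame with axis $\nu'=(e_n+\varepsilon_k\ell)/|e_n+\varepsilon_k\ell|$. Rescaling $B_\rho$ to $B_1$ and repeating produces flatnesses decaying like $2^{-j}$ at scales $\rho^j$ and a Cauchy sequence of axes, whose limit is the normal to $F(u)$; the geometric decay then gives that $F(u)$ is a $C^{1,\alpha}$ graph in $B_{1/2}$ with $\alpha=\alpha(n,\beta)$. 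I expect the main obstacle to be the linearized problem itself: proving the partial Harnack inequality with uniform constants and, above all, obtaining the sharp pointwise $C^{1,\alpha}$ expansion of the degenerate linearized solution at the thin free boundary edge, which --- unlike the case $s=\tfrac12$ --- cannot be reduced to the regularity of harmonic functions.
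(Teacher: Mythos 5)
This statement is quoted verbatim from De Silva--Savin--Sire \cite[Theorem~1.1]{DeSilva2014}; the present paper does not prove it but cites it as the starting point for the bootstrap to $C^\infty$. So the comparison is against the cited reference rather than against a proof contained in this manuscript.

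Your high-level plan --- interior Harnack and weighted Schauder estimates for the $A_2$ weight $|y|^\beta$, a partial Harnack inequality, compactness of flat solutions, a linearized problem at the blow-up limit, and improvement of flatness by iteration --- is indeed the skeleton of the argument in \cite{DeSilva2014}. But there is a genuine slip in the heart of your linearization step, and it is the same subtlety that drives both the cited proof and the present paper's Sections~\ref{a family of functions}--\ref{C2alpha regularity}. You take the blow-up of the \emph{direct} normalized difference $\varepsilon_k^{-1}(u_k-U)$ and assert that the limit $w$ solves a thin mixed Dirichlet--Neumann (Signorini-type) problem for $L_\beta$ on $\{y>0\}$: Dirichlet on $\{y=0,\ x_n<0\}$, co-normal on $\{y=0,\ x_n>0\}$, and a matching condition at the edge. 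That is not the right linearization. Because both $u$ and $U$ vanish like $\mathrm{dist}^s$ at the free boundary, the difference $u-U$ degenerates and does not have the compactness you want uniformly up to $\pP=\{x_n\le0,\ y=0\}$. De Silva's device --- used in \cite{DeSilva2012b,DeSilva2014,Silva2015} and adopted here in Definition~\ref{domain variation definition} --- is to pass to the $\varepsilon$-\emph{domain variation} $\tilde u_\varepsilon$, the horizontal displacement needed to carry $U$ onto $u$, which remains bounded and equi-H\"older up to $\pP$ by the partial Harnack inequality (Lemma~\ref{maximum principle}, Theorem~\ref{g V harnack}, Corollary~\ref{continuity of limit}). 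The resulting linearized problem is the degenerate equation \eqref{linearized problem}, namely $L_\beta(U_n w)=0$ in $B_1\setminus\pP$ with the \emph{radial} Neumann condition $|\nabla_r w|=0$ on the codimension-two set $\Ll=\{x_n=y=0\}$, interpreted in the viscosity sense of Definition~\ref{definition of solution to linearized problem} --- not a thin mixed boundary value problem on the hyperplane $\{y=0\}$. The first-order expansion $w(X)=w(0)+\xi_0\cdot x'+O(|x'|^2+r^{1+\alpha})$ that drives the improvement of flatness is then a statement about this codimension-two edge problem (Lemma~\ref{derivative regularity}, Theorem~\ref{quadratic expansion}), and it is obtained via the boundary Harnack inequality in slit domains (Theorem~\ref{slit domain boundary harnack}), which handles $\beta\ne0$ directly without any reflection trick. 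So the missing ideas in your sketch are exactly the domain-variation reformulation and the slit-domain boundary Harnack; without them, the compactness and the boundary regularity of the limit are not available, and the thin mixed problem you wrote down is not the one that arises.
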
 
	
	\vspace{5pt}
	
	In this paper, we improve the regularity of flat free boundary \textbf{from $C^{1,\alpha}$ to $C^\infty$}. It is our main Theorem.
	\begin{theorem}\label{fractional cinfty}
		There exists a small constant $\bar{\varepsilon} > 0$ depending on $n$ and $s$, such that if $u_{0}$ is a viscosity solution to $(\ref{fractional laplacian})$ satisfying
		\begin{equation}
			\{x \in B_1 : x_n \leq -\bar{\varepsilon} \} \subset \{x \in B_1 : u_{0}(x, 0) = 0\} \subset \{x \in B_1 : x_n \leq \bar{\varepsilon} \},
		\end{equation}
		then the free boundary $F(u_0)$ is $C^{\infty}$ in $B_{1/2}$.
	\end{theorem}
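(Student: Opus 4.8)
The plan is to pass to the Caffarelli--Silvestre extension and run an induction on the order of regularity of the free boundary: starting from the $C^{1,\alpha}$ conclusion of Theorem \ref{C1alpha}, I would show that for every integer $k\ge 1$, if $F(u)$ is $C^{k,\alpha}$ near a point then it is $C^{k+1,\alpha}$ near that point, so that iterating yields $F(u)\in C^\infty$ in $B_{1/2}$, hence $F(u_0)\in C^\infty$. Throughout one works with the local extension problem \eqref{extension of fracional laplacian} for $u$, recalling that $F(u)\subset\{y=0\}$ is the boundary, inside the thin space, of $\{u_0>0\}$; since $C^{1,\alpha}$ flatness already makes $u$ a classical solution near the free boundary, the bootstrap proceeds classically, and it suffices to improve the regularity near an arbitrary fixed free boundary point, taken to be the origin.

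\emph{Flattening.} Assume $F(u)\in C^{k,\alpha}$ near $0$; after a rotation in $x$, the thin positivity set is $\{x_n>g(x')\}$ with $g\in C^{k,\alpha}$, $g(0)=0$, $\nabla' g(0)=0$. I would apply the shear $\Phi:(x',x_n,y)\mapsto(x',x_n-g(x'),y)$, which leaves the degenerate direction $y$ and the weight $|y|^\beta$ untouched, has Jacobian determinant $1$, flattens $F(u)$ to $E:=\{x_n=0,\ y=0\}$, and sends $\{u_0>0\}$ to $\{x_n>0\}$. The push-forward $v=u\circ\Phi^{-1}$ then solves a uniformly degenerate-elliptic equation in divergence form
\[
\operatorname{div}\big(|y|^\beta A(X)\,\nabla v\big)=0 ,\qquad A=D\Phi\,(D\Phi)^{\!\top},\quad A(0)=\mathrm{Id},\quad A\in C^{k-1,\alpha},
\]
because the entries of $A$ are polynomials in $\nabla' g$; the free boundary condition $\partial u/\partial\nu^s=1$ becomes a prescription on the leading asymptotics of $v$ at the flat edge $E$.

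\emph{The density and a higher-order boundary Harnack.} Let $U_s(x_n,y)=r^s\phi_s(\theta)$ be the $L_\beta$-harmonic function, homogeneous of degree $s$, positive on $\{x_n>0\}$ and vanishing on the slit $\{x_n\le 0,\ y=0\}$, normalized so that $\frac{\partial U_s}{\partial\nu^s}=1$. One expects $v=a(x')\,U_s(x_n,y)+(\text{higher order in }\operatorname{dist}(\cdot,E))$ with $a>0$. The analytic heart of the argument is a higher-order boundary Harnack principle in slit domains for $\operatorname{div}(|y|^\beta A\,\nabla\cdot)$: if $A\in C^{k-1,\alpha}$, then comparing $v$ with its tangential derivatives $\partial_{x_i}v$ ($1\le i\le n-1$) — which solve the same equation with divergence-form right-hand side $-\operatorname{div}(|y|^\beta(\partial_{x_i}A)\nabla v)$, controlled because $v\sim r^s$ and $\partial_{x_i}A\in C^{k-2,\alpha}$ — the quotients $\partial_{x_i}v/v$ extend to $C^{k,\alpha}$ functions up to $E$, and hence so does $v/U_s$ together with its trace $a(x')=(v/U_s)\big|_E\in C^{k,\alpha}$. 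For $s=\tfrac12$ (so $\beta=0$) this is the De Silva--Savin result \cite{Silva2015}; for general $\beta=1-2s$ one must rerun that scheme in the presence of the weight $|y|^\beta$, which requires weighted Schauder estimates adapted simultaneously to the degeneracy of the weight on $\{y=0\}$ and to the $r^{s}$-vanishing of solutions at the $(n-1)$-dimensional edge $E$ — plausibly obtained by a compactness/blow-up scheme together with a Liouville classification of homogeneous $L_\beta$-solutions in the slit half-space.

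\emph{Closing the loop, and the main obstacle.} Unwinding $\Phi$, the condition $\partial u/\partial\nu^s=1$ at $x'$ reads $a(x')\,\Psi(\nabla' g(x'))=1$ for a fixed real-analytic $\Psi$ with $\Psi(0)\ne 0$, describing how the shear $x_n\mapsto x_n-g(x')$ transforms the order-$s$ normal derivative of $U_s$. Solving this relation for $\nabla' g$ and using $a\in C^{k,\alpha}$ gives $\nabla' g\in C^{k,\alpha}$, i.e. $g\in C^{k+1,\alpha}$, which closes the induction; since $\Psi$ is analytic, the same mechanism should in fact upgrade $F(u_0)$ to real-analytic once the analytic dependence of $a$ is tracked. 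The step I expect to be genuinely hard is the higher-order boundary Harnack for $\operatorname{div}(|y|^\beta A\,\nabla\cdot)$ with merely $C^{k-1,\alpha}$ coefficients on a slit domain: one must bound quotients of two solutions that both degenerate like $r^s$ at the edge while the weight $|y|^\beta$ degenerates on the whole hyperplane $\{y=0\}$, and guarantee that exactly one derivative is gained per step, so the bootstrap terminates in $C^\infty$ rather than stalling at finite regularity.
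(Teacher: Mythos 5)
Your bootstrap is announced to start at $k=1$: ``starting from the $C^{1,\alpha}$ conclusion of Theorem \ref{C1alpha}, I would show that for every integer $k\ge 1$, if $F(u)$ is $C^{k,\alpha}$ near a point then it is $C^{k+1,\alpha}$.'' This is precisely what the paper warns cannot be done. The higher-order boundary Harnack / linearized-problem machinery you invoke (De Silva--Savin \cite{Silva2015} and its $\beta\neq 0$ analogue) requires $C^{2,\alpha}$ to get off the ground: to make sense of the Neumann-type condition $|\nabla_r w|=0$ for the quotient $w$ (your $\partial_{x_i}v/v$, or the paper's $u_i/u_n$), one needs a pointwise expansion of the solution at the free boundary with an order-$r^{1+\alpha}$ remainder, and that expansion fails for boundaries that are merely $C^{1,\alpha}$. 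You can already see the symptom inside your own scheme: at $k=1$ you have $A\in C^{0,\alpha}$, and you ask for control of $\operatorname{div}(|y|^\beta(\partial_{x_i}A)\nabla v)$ with ``$\partial_{x_i}A\in C^{k-2,\alpha}$'', which is meaningless at $k=1$. So the induction never starts.

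The paper treats $C^{1,\alpha}\Rightarrow C^{2,\alpha}$ as a separate, non-bootstrap step (Sections \ref{a family of functions} and \ref{C2alpha regularity}), via an improvement-of-flatness argument with the explicit family $V_{\Ss,a,b}$ whose free boundaries are paraboloids, in the spirit of \cite{DeSilva2012b}: compare domain variations $\tilde{u}$ and $\tilde{V}$, run the Harnack-type iteration (Theorem \ref{g V harnack}), take limits to the linearized problem \eqref{linearized problem}, and use Theorem \ref{quadratic expansion} to update $(M,\xi',a,b)$. Only once $F(u)\in C^{2,\alpha}$ is secured does the paper switch (Section \ref{cinfty}) to the $w=u_i/u_n$ bootstrap you describe. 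Your proposal is missing this entire first half. Conditional on having $C^{2,\alpha}$ as the base case, your flattening-plus-higher-order-boundary-Harnack route is a legitimate alternative to the paper's Section \ref{cinfty} (which keeps the curved boundary and uses the quotient $u_i/u_n$, Whitney extension, and a compactness reduction to the model linearized problem rather than literally differentiating a variable-coefficient degenerate equation), and is closer in spirit to the independent argument in \cite{Barrios2025}. But as written, without a $C^{1,\alpha}\to C^{2,\alpha}$ input, the proof is incomplete.
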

	Using Caffarelli-Silvestre extension, the fractional Laplacian can be extended to $\Rr^{n+1}$. Throughout this paper, we mainly consider the extended problem. We will prove the extended version of Theorem \ref{fractional cinfty}:
	\begin{theorem}\label{C infty of free boundary}
		There exists a small constant $\bar{\varepsilon} > 0$ depending on $n$ and $\beta$, such that if $u$ is a viscosity solution to $(\ref{extension of fracional laplacian})$ satisfying
		\begin{equation}
			\{x \in B_1 : x_n \leq -\bar{\varepsilon} \} \subset \{x \in B_1 : u(x, 0) = 0\} \subset \{x \in B_1 : x_n \leq \bar{\varepsilon} \},
		\end{equation}
		then $F(u)$ is $C^{\infty}$ in $B_{1/2}$.
	\end{theorem}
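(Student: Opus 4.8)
The strategy is a bootstrap argument, following the scheme developed by De~Silva and Savin for $s=\tfrac12$ in \cite{Silva2015}: starting from the $C^{1,\alpha}$ regularity of $F(u)$ provided by Theorem~\ref{C1alpha}, we upgrade the regularity of the free boundary one derivative at a time, the engine being a higher-order boundary Harnack principle for the degenerate operator $L_\beta$ up to the free boundary. \textbf{Step 1 (reduction and flattening).} It is enough to prove smoothness near a fixed free boundary point, which after a rotation and rescaling we take to be the origin with interior normal $e_n$. By Theorem~\ref{C1alpha} we may write, in a small ball, $F(u)=\{(x',g(x'),0)\}$ with $g\in C^{1,\alpha}$, $g(0)=0$, $\nabla g(0)=0$, $\{u(\cdot,0)>0\}=\{x_n>g(x')\}$, and near each free boundary point $x_0$ the solution admits the model expansion $u(x,y)=U_\beta\big((x-x_0)\cdot\nu(x_0),\,y\big)+O\big(|(x-x_0,y)|^{\,s+\gamma}\big)$ for some $\gamma>0$, where $U_\beta$ is the $L_\beta$-harmonic extension of $(t)_+^{\,s}$ and $\nu$ the interior normal to $F(u)$. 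Introduce the $y$-independent shear $\Phi(x',x_n,y)=(x',x_n-g(x'),y)$ (unit Jacobian) and set $\tilde u=u\circ\Phi^{-1}$. Writing the equation in divergence form, $\tilde u$ solves near the origin
\[
\operatorname{div}\!\big(|y|^{\beta}\,\tilde A(x')\,\nabla\tilde u\big)=0,\qquad \tilde A=D\Phi\,(D\Phi)^{\mathrm T},\quad \tilde A(0)=I,
\]
with $\tilde A\in C^{k-1,\alpha}$ whenever $g\in C^{k,\alpha}$; here the straightened free boundary is the edge $E:=\{x_n=0,\ y=0\}$, $\tilde u$ vanishes on the slit $\{x_n\le 0,\ y=0\}$, satisfies the conormal condition $\lim_{y\to 0^{+}}|y|^{\beta}\partial_y\tilde u=0$ on $\{x_n>0,\ y=0\}$, and the free boundary condition $\partial u/\partial\nu^{s}=1$ turns into
\[
\lim_{x_n\to 0^{+}}\frac{\tilde u(x',x_n,0)}{x_n^{\,s}}=\big(1+|\nabla g(x')|^{2}\big)^{-s/2}.
\]

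\textbf{Step 2 (higher-order boundary Harnack for $L_\beta$).} The analytic heart is a slit-domain estimate: if $w_1,w_2>0$ solve $\operatorname{div}(|y|^{\beta}A\nabla w_i)=\operatorname{div}(|y|^{\beta}F_i)$ near the origin in $\RR^{n+1}\setminus\{x_n\le 0,\ y=0\}$, with $A\in C^{m,\alpha}$, $A(0)=I$, $F_i\in C^{m,\alpha}$, vanish on the slit with edge behaviour $\sim U_\beta$, and satisfy the conormal condition on $\{x_n>0,\ y=0\}$, then $w_1/w_2$ extends $C^{m,\alpha}$ up to $E$, with quantitative estimates. The case $m=0$ is the boundary Harnack inequality for $L_\beta$ in slit domains available in this setting and used in \cite{tangentball,DeSilva2014}. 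The inductive step combines the weighted (Muckenhoupt $A_2$) Schauder theory for $L_\beta$ with weight $|y|^{\beta}$, both interior and at the thin boundary $\{y=0\}$, in the spirit of the Schauder-regularity references quoted in the introduction; even reflection across $\{y=0\}$, which converts the conormal condition into an interior equation; and a rescaling--compactness argument on dyadic balls centred on $E$, peeling off order by order the $U_\beta$-modulated polynomial part of $w_1/w_2$. The delicate point — and the main obstacle of the whole argument — is precisely the codimension-two edge $E$, i.e.\ the free boundary itself, where the Dirichlet-type vanishing along the slit meets the conormal condition on the thin space and the solutions carry the mixed homogeneity $U_\beta\sim\rho^{\,s}$ fixed by $s$ (equivalently $\beta$); for $\beta\neq 0$ the complex-analytic and harmonic-conjugate shortcuts available when $s=\tfrac12$ are unavailable, so one must analyse the two-dimensional $(x_n,y)$-model directly and quantify how a $C^{m,\alpha}$ datum on the slit and $C^{m,\alpha}$ coefficients propagate to the quotient across this corner.

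\textbf{Step 3 (bootstrap).} Assume $g\in C^{k,\alpha}$, so $\tilde A\in C^{k-1,\alpha}$. Apply Step~2 with $w_2=\tilde U:=U_\beta(x_n,y)$, an exact solution of the constant-coefficient model operator (so $F_2=0$), and $w_1=\tilde u$: this yields $\tilde u/\tilde U\in C^{k,\alpha}$ up to $E$ — as regular as the free boundary $g$ itself, which is the gain that drives the iteration. Restricting to the thin space and using $\tilde U(x',x_n,0)=(x_n)_+^{\,s}$, the trace identity of Step~1 reads $\big(1+|\nabla g(x')|^{2}\big)^{-s/2}=(\tilde u/\tilde U)(x',0,0)\in C^{k,\alpha}$, hence $\nabla g\in C^{k,\alpha}$, that is $g\in C^{k+1,\alpha}$. (The quotients $\partial_i\tilde u/\tilde U$ for $i<n$, whose numerators solve admissible inhomogeneous equations with $F_i$ assembled from $\partial_i\tilde A$ and $\nabla\tilde u$, are used to differentiate this relation tangentially and to check that the data stay in the admissible class along the iteration.) Starting from $g\in C^{1,\alpha}$ and iterating gives $g\in C^{m,\alpha}$ for every $m$; undoing the flattening and the rotation and covering $B_{1/2}$ by finitely many such neighbourhoods shows that $F(u)$ is $C^{\infty}$ in $B_{1/2}$.
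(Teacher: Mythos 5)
Your proposal captures the general shape of the argument for the later stages of the proof, but it skips the hardest and most substantial part of the paper and contains an unresolved off-by-one gap in the bootstrap.

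\textbf{The missing base case $C^{1,\alpha}\to C^{2,\alpha}$.} Your bootstrap in Steps~2--3 is started directly from the $C^{1,\alpha}$ regularity of Theorem~\ref{C1alpha}. The paper explicitly remarks that this cannot work: ``step 1: $C^{1,\alpha}\to C^{2,\alpha}$ is essential.\ The technique in step~2 cannot be applied to $C^{1,\alpha}$ boundaries. To use the technique, we need to analyze the linearized problem with a Neumann boundary condition. However, \ldots the Neumann condition cannot be verified for $C^{1,\alpha}$ boundary.'' Accordingly, Sections~\ref{a family of functions}--\ref{C2alpha regularity} of the paper (the bulk of the work) are devoted to a completely different argument for the intermediate step $C^{1,\alpha}\to C^{2,\alpha}$: an improvement-of-flatness iteration in the spirit of De~Silva--Savin, comparing $u$ against the explicit family $V_{\Ss,a,b}$ with paraboloidal free boundaries (Proposition~\ref{subsolution verification}, Theorem~\ref{g V harnack}, Proposition~\ref{proposition C2alpha}, Theorem~\ref{freeboundary C2alpha}), and passing to the linearized problem \eqref{linearized problem} via the domain-variation compactness of Corollary~\ref{continuity of limit} and Lemma~\ref{limit of w0}. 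None of this appears in your outline, and the boundary-Harnack machinery you invoke does not furnish an expansion of $u$ beyond first order at the free boundary when $g$ is merely $C^{1,\alpha}$, which is exactly what is needed to even state the Neumann-type condition you rely on in Step~3.

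\textbf{The off-by-one in the bootstrap.} In Step~3 you assume $g\in C^{k,\alpha}$, so that after the shear $\Phi$ the coefficient matrix $\tilde A=D\Phi\,(D\Phi)^{\mathrm T}$ is only $C^{k-1,\alpha}$, and then apply Step~2 (stated with $A\in C^{m,\alpha}\Rightarrow w_1/w_2\in C^{m,\alpha}$) to conclude $\tilde u/\tilde U\in C^{k,\alpha}$. But with $m=k-1$ your own Step~2 only gives $\tilde u/\tilde U\in C^{k-1,\alpha}$, and then the trace identity yields $\nabla g\in C^{k-1,\alpha}$, i.e.\ no gain. The extra order is precisely the nontrivial content; the paper obtains it not by flattening, but by working directly with the quotients $u_i/u_n$ in the original coordinates (where the operator stays constant-coefficient and the slit is curved), and proving the quantitative expansion and regularity-improvement statements in Theorems~\ref{expansion theorem}, \ref{regularity improvement} and Lemma~\ref{equation for u_nEQ}. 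In particular Theorem~\ref{regularity improvement} shows $\|F(u)\|_{C^{k+2,\alpha}}\leq\tfrac12$ implies solutions of the linearized problem are in $C^{k+2,\alpha}_{xr}$, and the free-boundary relation $u_i+u_n g_i=0$ then upgrades $g$ by one order; the associated expansion for the linearized problem on a straight edge (Theorem~\ref{quadratic expansion}) is itself a nontrivial input, proved in Section~\ref{expansion theorem section} via slit-domain boundary Harnack (Theorem~\ref{slit domain boundary harnack}) and differentiation in $x'$. Your sketch assumes the conclusion of this analysis as an unjustified higher-order boundary Harnack ``for $L_\beta$ in a straight slit with $C^{m,\alpha}$ coefficients'' without accounting for the lost derivative from flattening; this is where the argument, as written, would break down.
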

	\begin{remark}
		Our strategy for proving the smoothness of $C^{1,\alpha}$ free boundaries consists of two steps.  In step 1, we prove that $C^{1,\alpha}$ free boundaries are $C^{2,\alpha}$.	In step 2, we prove that $C^{2,\alpha}$ free boundaries are $C^\infty$. Step 2  is also recently proved in the paper \cite{Barrios2025} by B. Barrios, X. Ros-Oton, and Marvin Weidner by considering a general type of global operators. In this paper, we prove by using the Caffarelli-Silvestre extension. This approach is significantly different from the proof in \cite{Barrios2025}. Our approach is inspired by the work of De Silva and Savin \cite{DeSilva2012b,Silva2015}. The techniques in \cite{Jhaveri2017,Sire2021a,Sire2021,Fall2022,FernandezReal2024} by M. Fall, X. Fernández-Real, Y. Jhaveri, R. Neumayer, X. Ros-Oton, Y. Sire, S. Terracini, and S. Vita provided key insights into the regularity theory.
		
		Now we introduce our strategy in detail. In step 1, we prove following following the spirit of \cite{DeSilva2012b}. We use a sequence of paraboloids to approximate the free boundary. For each paraboloid, we use function $V_{\Ss,a,b}$ to approximate the solution $u$. During the approximation, we need to analyze the difference of the approximating function  $V_{\Ss,a,b}$  and the solution $u$. For $\beta=0$,  the limit is still a harmonic function with natural regularity estimate. For general $\beta$, we use Schauder-type estimates to derive analogous results.
		
		In step 2, we prove following following the spirit of \cite{Silva2015}. The function $\frac{u_i}{u_n}$ is an  extension of the derivatives of $g_i$ in \eqref{standard Fu}. We prove that it has the same regularity as $g$. The desired result will follow. For general $\beta\in (-1,1)$, we first analyze the linearized problem \eqref{linearized problem} and find polynomial approximation for the solution. That needs us to analyze the degenerate operator  $L_\beta(U_n\cdot)$. The result together with the compactness allows to appoximate  the function $\frac{u_i}{u_n}$.
		
		We notice that step 1: $C^{1,\alpha}\to C^{2,\alpha}$ is essential. The technique in step 2 cannot be applied to $C^{1,\alpha}$ boundaries. To use the technique, we need to analyze the linearized problem with a Neumann boundary condition. However, as \cite{Silva2015} indicates, the Neumann condition cannot be verified for $C^{1,\alpha}$ boundary.

		We follow the spirit of \cite{DeSilva2012b,Silva2015} to construct our proof. Compared to the proof for $\beta=0$, there are several difficulties. For $\beta=0$, the operator $L_\beta=\Delta$. It has been fully analyzed. For general $\beta\in (-1,1)$, we use the regularity theory about $L_\beta$. Roughly speaking, we will consider the regularity theory for the operator $\Delta+\frac{\beta}{y}\partial_y$ since $L_\beta=|y|^\beta(\Delta+\frac{\beta}{y}\partial_y)$. Secondly, harmonic functions remain harmonic under holomorphic map. Therefore to analyze a solution $w$ to $\Delta (U_nw)=0$ in $\Rr^n\setminus\pP$, we can consider the slice of the space $\{x'=0\}$ and map the 2D slit domain to the upper half plane with a holomorphic map. For general $\beta$, we will use boundary Harnack inequality in slit domain to handle it.
		
	\end{remark}
	
	\vspace{7pt}
	
	The paper is organized as following. In Section \ref{preliminaries}, we recall some preliminary results and introduce some notations. In Sections \ref{a family of functions} to \ref{C2alpha regularity}, we raise the regularity to $C^{2,\alpha}$. In Section \ref{a family of functions}, we introduce a family of functions such that their free boundary are paraboloids. In Section \ref{C2alpha regularity}, we use the functions to approximate the solution $u$ to prove the $C^{2,\alpha}$ regularity.  In Section \ref{cinfty}, we prove $C^\infty$ regularity by analyzing the linearized problem to show that the free boundary $F(u)$ has the same regularity as its derivatives. In Section \ref{expansion theorem section}, we consider the linearized problem with $F(u)$ being a codimension 2 hyperplane. In Section \ref{appendix}, we provide some known results about this problem.

	The regularity of flat free boundaries plays a key role in understanding Bernstein-type problems. We are preparing a follow-up paper based on \cite{Engelstein2023} to analyze graphical solutions. This will help us find an equivalent formulation for Bernstein-type problems.
	
	\noindent\textbf{Acknowledgments}: The author would like to thank Hui Yu and Ovidiu Savin for fruitful discussions regarding this project. We refer to a recent paper \cite{Barrios2025} by B. Barrios, X. Ros-Oton, and Marvin Weidner. It proved that $C^{2,\alpha}$ free boundaries are smooth differently. They also developed the smoothness for obstacle problem.
	\section{Preliminary}\label{preliminaries}
	In this section, we recall some definitions and gather some results about the fractional Laplacian problem. First, we introduce the notation used in this paper.
	\subsection{Notation}
	In this paper, we consider the problem \eqref{extension of fracional laplacian} in $\Rr^{n+1}$. For $X \in \mathbb{R}^{n+1}$, we use $x'$ and $x$ to denote its first $n-1$ and $n$ coordinates. We use $x_n$,$y$ to denote the $n$-th, $(n+1)$-th coordinates and write $X=(x,y)=(x',x_n,y)$.
	
	In $\mathbb{R}^{n+1}$, a ball with radius $r$ and center $X$ is denoted by $B_r(X)$. For simplicity we use $B_r$ to denote $B_r(0)$. We use $B_r^n$ to denote the $n$-dimensional ball $B_r \cap \{y = 0\}$.
	
	The half-plane $\pP$ and codimension-two space $\Ll$ will frequently appear. We define them
	\begin{equation}\label{half plane P}
		\pP=\{(x,y):x_n\leq 0,y=0\},\quad\Ll=\{(x,y):x_n=y=0\}.
	\end{equation}
	
	For a continuous non-negative function $u:\Rr^{n+1}\to\Rr_+$, we define its free boundary $F(u)$ as \[F(u)= \partial_{\mathbb{R}^n} (\Rr^n\times\{0\}\cap \{u>0\})\cap B_1 \subset \mathbb{R}^n\times \{0\},\]Specifically, we focus on  $F(u)$ satisfying:
	\begin{equation}\label{standard Fu}
		F(u)=\{(x',g(x'),0):g(0)=0,g\in C^{1,\alpha},\nabla g(0)=0\},
	\end{equation} where $e_n$ is the unit vector  in the $n$-th direction.
	
	Throughout this paper, $n,\beta$ are called universal constants. We say a variable is universal if it only depends on universal constants. We say a variable has universal bound if it is bounded by a universal constant.
	
		In this paper, we consider a different version of regularity following \cite{Silva2015}. Using the notation\[
	x^{\mu} = x_1^{\mu_1} \cdots x_n^{\mu_n}, \quad |\mu| = \mu_1 + \cdots + \mu_n, \quad \mu_i \geq 0,\quad\|P\| := \max |a_{\mu m}|.
	\] We denote by
	\[
	P(x, r) = a_{\mu m} x^{\mu} r^m, \quad \deg P = k,
	\]
	a polynomial of degree \(k\) in the \((x, r)\) variables. 
	\subsection{Viscosity solution}
	Before considering viscosity solutions, we introduce the function $U:\Rr^{2}\to \Rr$. Set
	\begin{equation}\label{half space solution}
		U(t, y) = (\frac{-t+\sqrt{t^2+y^2}}{2})^{s}.
	\end{equation}
	For $t=x_n$,  $U\in C^2(\Rr^{n+1}\setminus\pP)$, and $U(x_n,y)$ is a classical solution to \eqref{extension of fracional laplacian}. Furthermore, the free boundary $F(U)=\Ll$ and $\{U=0\}=\pP$. Therefore $U$ is denoted by \textit{half-plane solution}. We use $U_t$ to denote $\frac{\partial U}{\partial t}$. The following equalities will be of use in this paper.
	\begin{equation}\label{derivatives of U}
		U_t=s\frac{U}{r},\;U_{tt}=\frac{sr-t}{r^2}U_t=\frac{s(sr-t)}{r^3}U,\;r=\sqrt{t^2+y^2}.
	\end{equation}
		
	Now we construct the definition of viscosity solutions. It allows us to consider a solution to be less regular. To begin with, we state the definitions of touching functions following \cite{DeSilva2014}.
	
	\begin{definition} 
		For continuous functions  $u$ and $v$, we say $v$ \textit{touches $u$ from below} (resp. above) at $X_0 \in B_1$ if $u(X_0) = v(X_0)$ and  
		\[  
		u(X) \geq v(X) \quad (\text{resp.} \quad u(X) \leq v(X)) \quad \text{in a neighborhood } O \text{ of } X_0.  
		\]  
		If this inequality is strict in $O \setminus \{X_0\}$, we say that $v$ touches $u$ strictly by below (resp. above).  
	\end{definition}  
	With the concept of touching functions, we give the definition of comparison sub/sup-solutions.
	\begin{definition}
		A function $v \in C(B_1)$ is a \textit{comparison subsolution} to (\ref{extension of fracional laplacian}) if $v$ is non-negative in $B_1$, even with respect to $\{y = 0\}$,  $C^2$ in the set $\{v>0\}$, and satisfies  
		
		(i)\; $L_\beta v \geq 0 \quad \text{in} \quad B_1 \setminus \{y = 0\},$
		
		(ii) $F(v)$ is $C^2$ and if $x_0 \in F(v)$ we have  
		\[  
		v(x, y) = aU((x - x_0) \cdot \nu(x_0), y) + o(|(x - x_0, y)|^s), \quad \text{as} \quad (x, y) \to (x_0, 0),  
		\]  
		with $a \geq 1$, where $\nu(x_0)$ denotes the unit normal at $x_0$ to $F(v)$ pointing toward $\{v>0\}$;
		
		Moreover, $v$ is a strict comparison subsolution if either (i) holds strictly or $a > 1$.  Similarly, one can define a (strict) comparison supersolution.
	\end{definition} 
	Finally, we are able to give the definition of viscosity solutions to end this subsection  following \cite{Caffarelli1995,DeSilva2014}. The idea of the definition is that solutions cannot be touched from below/above by sub/sup-solutions. It needs less regularity of $F(u)$ and uniform convergence implies that the limit is close to a solution. Throughout this paper, all solutions are viscosity solutions unless otherwise specified.
	\begin{definition}
		A function $u$ is a \textit{viscosity solution} to \eqref{extension of fracional laplacian} if $u$ is continuous, non-negative in $B_1$, and satisfies:  
		
		(i) $u$ is locally $C^{1,1}$ in $B_1\cap \{u>0\}$, even with respect to $\{y = 0\}$ and solves (in the viscosity sense)  
		\[  
		L_\beta u = 0 \quad \text{in} \quad B_1 \setminus \{y = 0\}.
		\]  
		
		(ii) Any (strict) comparison subsolution (resp. supersolution) cannot touch $g$ from below (resp. by above) at a point $X_0 = (x_0, 0) \in F(u)$.
		
	\end{definition}

	For this equation, we need to consider a different type of regularity. Now we give the definition of $f \in C^{k,\alpha}_{xr}$. For usually regularity, we use polynomials of $(x.y)$ to approximate the function. Using $P(x,r)$ to approximate the function induces $C^{k,\alpha}_{xr}$ regularity.
	\begin{definition}
		For $F(u)$ satisfying \eqref{standard Fu}, we say that a function \( f : B_1 \subset \mathbb{R}^{n+1} \to \mathbb{R} \) is pointwise \( C^{k,\alpha} \)
		in the \((x,r)\)-variables at \( 0 \in F(u) \), and write \( f \in C^{k,\alpha}_{xr}(0) \), if there exists a (tangent)
		polynomial \( P_0(x,r) \) of degree \( k \) such that
		\[
		f(X) = P_0(x,r) + O(|X|^{k+\alpha}).
		\]
		We define \( \|f\|_{C^{k,\alpha}_{xr}(0)} \) as the smallest constant \( M \) such that
		\[
		\|P_0\| \leq M \quad \text{and} \quad |f(X) - P_0(x,r)| \leq M |X|^{k+\alpha},
		\]
		for all \( X \) in the domain of definition.
		Furthermore,  we say that \( f \in C^{k,\alpha}_{xr}(K) \) for \( K \subset F(u) \) if there exists a constant
		\( M \) such that \( f \in C^{k,\alpha}_{xr}(Z) \) for all \( Z \in K \), and
		\[
		\|f\|_{C^{k,\alpha}_{xr}(Z)} \leq M \quad \text{for all } Z \in K.
		\]
		The smallest such constant \( M \) is denoted by \( \|f\|_{C^{k,\alpha}_{xr}(K)} \).
	\end{definition}

	\subsection{The Linearized Problem}  
	In this subsection, we introduce here the linearized problem associated to (\ref{extension of fracional laplacian}). To improve the regularity of the free boundary $F(u)$, we will show that $\frac{u_i}{u_n}$, which extends the derivatives of $F(u)$, has the same regularity as $F(u)$. They are solutions to the linearized problem.
	
	For $u$ being a solution to \eqref{extension of fracional laplacian} satisfying \eqref{standard Fu}, the linearized problem associated to (\ref{extension of fracional laplacian}) is 
	\begin{equation}\label{general linearized problem}
		\left\{\begin{aligned}
			&L_\beta(u_n w) = 0, \quad &\text{in }& B_1\cap\{u>0\},\\
			&|\nabla_r w| = 0, \quad &\text{on }& B_1\cap F(u). 
		\end{aligned}\right.
	\end{equation}
	Here $|\nabla_r w|$ denotes the radial derivative, defined as\[  
	|\nabla_r w|(X_0) := \lim_{\substack{(t,y) \to (0,0) \\ r^2 = t^2 + y^2}} \frac{w(X_0+t\nu+ye_{n+1}) - w(X_0)}{r},
	\]
	for $\nu$ being the normal unit of $F(u)$ at $X_0$.
	
	A special case of \eqref{general linearized problem} is when $u$ is half-plane solution and $F(u)=\Ll$. In this case, $u$ is substituted by the half-plane solution $U$. To deal with that, we need to consider the viscoisity type solution. We introduce the following problems following the spirit of \cite{DeSilva2014}.
	\begin{definition}\label{definition of solution to linearized problem}
		
		We say $w$ is a viscosity solution to\begin{equation}\label{linearized problem}
			\left\{\begin{aligned}
				&L_\beta(U_n w) = 0, \quad &\text{in }& B_1\setminus\pP,\\
				&|\nabla_r w| = 0, \quad &\text{on }& B_1\cap\Ll,
			\end{aligned}\right.
		\end{equation} if $w\in C^{1,1}_{loc}(B_1\setminus\pP)$, $w$ is even with respect to $\{y=0\}$, and $w$ satisfies:
		
		(i) $L_\beta(U_n w) = 0$ in $B_1 \setminus \{y = 0\}$;  
		
		(ii) Let $\phi$ be a continuous function around $X_0 = (c,0,0) \in B_1\cap\Ll$ satisfying
		\[  
		\phi(X) = \phi(X_0) + a(X_0) \cdot (x - c) + b(X_0)r + O(|x - c|^2 + r^{1+\alpha}), 
		\] for some $\alpha>0$. If $b(X_0) > 0$, then $\phi$ cannot touch $w$ from below at $X_0$, and if $b(X_0) < 0$ then $\phi$ cannot touch $w$ from above at $X_0$.  
	\end{definition}
	The solution to \eqref{linearized problem} has a polynomial expansion at points in $B_1\cap\Ll$. We have the following theorem allowing use to use polynomials to approximate the solution following the  spirit of \cite[Theorem 6.1]{DeSilva2014}. 
	\begin{theorem}\label{quadratic expansion}
		Given a boundary data $\bar{h}\in C(\partial B_1)$ even with respect to $\{y=0\}$, $|\bar{h}|\leq 1$, there exists a unique classical solution $h$ to \eqref{linearized problem} such that $h\in C(\bar{B}_1)$ is even with respect to $\{y=0\}$ and satisfies $h=\bar{h}$ on $\partial B_1$. For any $ k\in \Nn$, there exist $Q$ being order $k$ polynomial in $x'$  and $P$ being  order $k-1$ polynomial in $x,r$  such that
		\[|h-(Q(x')+rP(x,r))|\leq C_o|X|^{k+1}\text{ in }B_{1/2},\]
		for $C_o$ depending on universal constants and $k$. Furthermore, $\|Q\|,\|P\|$ have universal bounds and $L_\beta(U_n(Q+rP))=0$ in $B_1\setminus\pP$. 
		
		In particular, for $n=2$ we have
		\begin{equation}\label{expansion inequlity}
			\Big|h(X)-\Big(h(0)+\xi_0\cdot x'+\frac{1}{2}{x'}^TM_0x'-\frac{1}{2s}\Big(\frac{a_0}{2}r^2+b_0rx_n\Big)\Big)\Big|\leq C_o|X|^3\text{ in }B_1\setminus\pP,
		\end{equation}
		for some $a_0,,b_0\in \RR,\xi_0\in\RR^{n-1},M_0\in S^{(n-1)\times(n-1)}$ with
		\[|\xi_0|,|a_0|,|b_0|,\|M_0\|,C_o\leq C(n,\beta),\]
		and
		\[\frac{a_0}{2s}+b_0-tr M=0.\]
	\end{theorem}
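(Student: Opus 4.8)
The plan is to reduce Theorem~\ref{quadratic expansion} to an iteration of the \emph{first-order} expansion (the $k=1$ case), following the improvement-of-flatness/blow-up scheme that underlies \cite[Theorem 6.1]{DeSilva2014}. The key observation is that the problem \eqref{linearized problem} is scale-invariant in the right sense: if $w$ solves it in $B_1$, then $w_\lambda(X):=\lambda^{-1}w(\lambda X)$ solves it in $B_{1/\lambda}$, because $U_n$ is homogeneous of degree $s-1$ and $L_\beta(U_n\,\cdot)$ is homogeneous of degree $s-2+\beta$ as an operator, so the equation $L_\beta(U_n w)=0$ is preserved, and the Neumann-type condition $|\nabla_r w|=0$ is scale-free. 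So first I would establish existence and uniqueness of the classical solution $h$ with given even boundary data $|\bar h|\le 1$: existence via the direct method applied to the degenerate-elliptic energy $\int |y|^\beta U_n^2 |\nabla w|^2$ (noting $U_n^2|y|^\beta \sim r^{2s-2}|y|^\beta$ is a Muckenhoupt-type weight away from $\pP$, with the correct integrability since $2s-2>-2$ and $\beta>-1$), uniqueness via the maximum principle for $L_\beta(U_n\cdot)$, which holds because the condition in Definition~\ref{definition of solution to linearized problem}(ii) forbids interior-type touching at $\Ll$ and the strong maximum principle handles $B_1\setminus\pP$.

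Next I would prove the \textbf{base case}: there is a universal $\rho\in(0,1)$ so that, after subtracting the best first-order polynomial $\ell_0(x,r)=h(0)+\xi_0\cdot x'+b_0 r$ (with the linear part in $x'$ only — no $x_n$ term is allowed at first order because of the radial Neumann condition, which forces the $x_n$-directional behavior to be governed by $r$), we have $\|h-\ell_0\|_{L^\infty(B_\rho)}\le \tfrac12\rho^{1+\alpha}\|h\|_{L^\infty(B_1)}$ for some small universal $\alpha>0$, together with $L_\beta(U_n\ell_0)=0$. This is a compactness argument: if it failed there would be a sequence $h_j$ with $\|h_j\|_{L^\infty(B_1)}=1$, $h_j(0)=0$, best linear part zero, but $\|h_j\|_{L^\infty(B_\rho)}>\tfrac12\rho^{1+\alpha}$ for every $\rho$; by degenerate-elliptic interior estimates away from $\pP$ and by the boundary Harnack / regularity in slit domains alluded to in the Remark, $h_j$ is precompact in $C^0_{loc}(B_{3/4})$, and the limit $h_\infty$ solves the linearized problem with zero first-order data, so $h_\infty\equiv$ its (vanishing) linear part, contradicting the non-degeneracy. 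Matching the $n=2$ constants: on the slit plane $\{x'=0\}$ the operator degenerates to a 2D problem and the homogeneous solutions of $L_\beta(U_n v)=0$ that are even in $y$ are spanned, in degree $\le 1$, by $1$ and $r$ (since $U_n\cdot r^{?}$ must satisfy the ODE coming from $\Delta+\tfrac\beta y\partial_y$); the quadratic layer then produces exactly the combination $\tfrac{a_0}{2}r^2+b_0 r x_n$ with the constraint $\tfrac{a_0}{2s}+b_0-\mathrm{tr}\,M=0$ forced by requiring $L_\beta(U_n(Q+rP))=0$ at second order — I would verify this by plugging the ansatz into the equation using the identities \eqref{derivatives of U} for $U_n$, $U_{nn}$.

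Then the \textbf{iteration}: apply the base case to the rescaled functions $(h-\ell_0)/\rho^{1+\alpha}$ at dyadic scales $\rho^m$, producing at each step a new first-order correction; summing the geometric series gives a limiting polynomial which, truncated at degree $k$, yields $Q(x')+rP(x,r)$ with $\|h-(Q+rP)\|_{L^\infty(B_{1/2})}\le C_o|X|^{k+\alpha}$, and one bootstraps $\alpha\to 1$ (indeed to any exponent $<2$, hence the clean $|X|^{k+1}$) by the standard trick of absorbing the improvement into the next-order polynomial since all relevant homogeneous solutions are genuine polynomials in $(x,r)$ of integer degree — there is a gap in the homogeneity spectrum between consecutive integers. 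The uniform bounds on $\|Q\|,\|P\|$ and $C_o$ follow from the universal constant in the base case and the geometric summation. \textbf{The main obstacle} I anticipate is the compactness/regularity input for the degenerate operator $L_\beta(U_n\cdot)$ near the edge $\Ll$: unlike the $\beta=0,s=1/2$ case where one conformally maps the slit plane to a half-plane and uses harmonicity, here I must instead invoke a boundary Harnack inequality in the slit domain $B_1\setminus\pP$ for the weight $|y|^\beta U_n^2$ (equivalently, solutions of $\Delta w+\tfrac\beta y w_y+2\tfrac{\nabla U_n}{U_n}\cdot\nabla w=0$) to get the $C^{0,\gamma}$ compactness up to $\Ll$ and to justify that the limit inherits the viscosity Neumann condition in the sense of Definition~\ref{definition of solution to linearized problem}(ii); making that boundary regularity quantitative and uniform in the sequence is the crux, and is where the ``slit-domain'' techniques flagged in the Remark do the real work.
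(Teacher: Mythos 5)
Your iteration step does not produce the higher-order expansion. The rescaling $X\mapsto \rho^m X$ followed by subtraction of a first-order correction $\ell_m$ only yields, after scaling back, another degree-one polynomial in $X$ (with coefficients smaller by a factor $\rho^{m\alpha}$); summing the geometric series gives a single degree-one polynomial and the error bound $|h-\ell_\infty|\le C|X|^{1+\alpha}$, never an expansion of degree $k\ge 2$. Moreover, since the problem is linear and homogeneous, once $\ell_0$ solves $L_\beta(U_n\ell_0)=0$ with the Neumann condition, $h-\ell_0$ again solves \eqref{linearized problem} with \emph{vanishing} first-order data, so the base case returns the trivial correction $\ell_1\equiv 0$ at the next scale. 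There is no mechanism in your scheme for degree escalation: applying a $C^{1,\alpha}$ improvement infinitely often gives only $C^{1,1^-}$-type control, and the "gap in the homogeneity spectrum" bootstrap from exponent $k+\alpha$ to $k+1$ presupposes a degree-$(k+1)$ expansion that the iteration never supplies.

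The paper instead carries the induction on $k$ by \emph{differentiating in the tangential directions $x'$}, not by rescaling. The crucial facts are (i) $D_{x'}^m h$ again solves \eqref{linearized problem} (Lemma~\ref{derivative regularity}/Remark~\ref{derivative regularity remark}), so the inductive hypothesis applies to $\Delta_{x'}h$; (ii) on the slice $\{x'=c\}$ the equation becomes a 2D problem $\mathrm{div}_{t,y}(|y|^\beta\nabla(U_t h))=|y|^\beta U_t\,\Delta_{x'}h$ with a known forcing term of degree $k-1$, to which the higher-order boundary Harnack in slit domains (Theorem~\ref{boundary harnack for half space} and Remark~\ref{remark boundary harnack for half space}) gives a degree-$(k+1)$ expansion of $U_th$; (iii) one then sweeps over slices and uses difference quotients in $x'$ (again valid because $D_{x'}h$ solves the same problem) to show the resulting coefficients $b^h_{ml}(x')$ are smooth, yielding the full $(Q,P)$ and the bound $L_\beta(U_n(Q+rP))=0$. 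You correctly flagged the slit-domain boundary Harnack as the crux, but it must be invoked inside this differentiation/slicing induction; a pure improvement-of-flatness loop at the level of $h$ itself cannot reach degree $k$.
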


	\section{A Family of Functions}\label{a family of functions}
	
	In this section, we introduce a family of functions $V_{\Ss,a,b}$ following the spirit of \cite{DeSilva2012b} by D. De Silva and O. Savin. They serve as approximate solutions to  \eqref{extension of fracional laplacian} with free boundary $\Ss$. We mainly study the case $\Ss$ being a paraboloid.

	Given a surface $\Ss=\{x_n=g(x')\}\subset\mathbb{R}^n$. For any $X=(x,y)\in\mathbb{R}^{n+1}$, we set
	\begin{equation}\label{family of functions}\begin{aligned}
			&U_0(X)=U(t,y),&\quad &t=\text{sgn}(x_n-g(x'))\text{dist}((x,0),\Ss),\\
			&v_{a,b}(t,y):=(1+\frac{a}{4}r+\frac{b}{2}t)U_0(t,y),&\quad &r^2=t^2+y^2,\\
			&V_{\Ss,a,b}(X):=v_{a,b}(t,y),&\quad &X=(x,y).
		\end{aligned}
	\end{equation}
	
	Next we introduce the concept of domain variation functions. It will be used to measure the distance between $v$ and $U$, especially for $v=V_\Ss$ close to $U$. For a function $v$ close to the half-plane solution $U$, we construct the the domain variation function $\tilde{v}$.  In Section \ref{C2alpha regularity} we will construct a sequence of functions approximating the solution $v$. Domain variations will be an important role in the approximating process.
	
	\begin{definition}\label{domain variation definition}
		Let $ \epsilon > 0 $ and let $ v $ be a continuous non-negative function in $ \overline{B_\rho} $. 
		We follow \cite{DeSilva2012b} to define the multivalued map $\tilde{v}_\epsilon:\Rr^{n+1}\setminus \pP\to \Rr$ via the formula 
		\begin{equation} 
			U(X) = v(X - \epsilon w n), \quad \forall w \in \tilde{v}_\epsilon(X).  
		\end{equation}  
		We call $ \tilde{v}_\epsilon $ the $ \epsilon $-\textit{domain variation} associated to $v$. By abuse of notation, from now on we write $ \tilde{v}_\epsilon(X) $ to denote any of the values in this set.
	\end{definition} If 
	\begin{equation} \label{flatness assumption}
		U(X - \epsilon e_n) \leq v(X) \leq U(X + \epsilon  e_n) \quad \text{in } B_\rho\text{ for }\epsilon>0,
	\end{equation}  
	then we have $\tilde{v}_\epsilon(X)\neq \emptyset$ for $X\in B_{\rho-\epsilon}\setminus \pP$ and $|\tilde{v}_\epsilon|\leq 1$. 
	Moreover, if $ v $ is strictly monotone in the $e_n$-direction in $ B_\rho(v)\cap\{v>0\} $, then $ \tilde{v}_\epsilon $ is single-valued.
	
	Domain variations also have the comparison principle. It indicates that domain variation away from the free boundary will control its behavior near the free boundary in some way.
	\begin{lemma}\label{maximum principle}(\cite[Lemma 3.1]{DeSilva2014})
		Let $ u, v $ be respectively a solution and a subsolution to (\ref{extension of fracional laplacian}) in $ B_2 $, with $ v $ strictly increasing in the $ e_n $-direction in $ B_2(v)\cap\{v>0\} $. Assume that $ u $ satisfies the flatness assumption (\ref{flatness assumption}) in $ B_2 $ for $ \epsilon > 0 $ small and that $ \tilde{v}_\epsilon $ is defined in $ B_{2 - \epsilon} \setminus \pP $ and bounded. If  
		\begin{equation}
			\tilde{v}_\epsilon + c \leq \tilde{u}_\epsilon \quad \text{in} \quad (B_{3/2} \setminus \overline{B_{1/2}}) \setminus \pP,  
		\end{equation}  
		then  
		\begin{equation} 
			\tilde{v}_\epsilon + c \leq \tilde{u}_\epsilon \quad \text{in} \quad B_{3/2}\setminus \pP.  
		\end{equation}  
	\end{lemma}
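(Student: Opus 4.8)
\textbf{Proof proposal for Lemma \ref{maximum principle}.}

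The plan is to reduce the statement to a comparison principle for the operator $L_\beta$ on the "slit" domain $B_{3/2}\setminus\pP$, applied to (a regularized version of) the $\epsilon$-domain variations. First I would record the key structural fact underlying the whole argument: if $v$ is a comparison subsolution and $u$ is a viscosity solution to \eqref{extension of fracional laplacian}, both satisfying the flatness assumption \eqref{flatness assumption}, then away from $\pP$ each of $U(\cdot-\epsilon\tilde v_\epsilon e_n)=v$ and $U(\cdot-\epsilon\tilde u_\epsilon e_n)=u$ inherits an equation. More precisely, differentiating the defining relation $U(X-\epsilon\tilde v_\epsilon(X)e_n)=v(X)$ and using that $U$ itself solves $L_\beta U=0$ off $\pP$ while $v$ is an $L_\beta$-subsolution off $\{y=0\}$, one gets that $\tilde v_\epsilon$ is a subsolution of a uniformly elliptic (in the slit domain, after the change of variables straightening out the graph $X\mapsto X-\epsilon we_n$) linear operator with $L^\infty$ coefficients, and similarly $\tilde u_\epsilon$ is a supersolution of the \emph{same} operator; this is exactly the linearization that produces the operator $L_\beta(U_n\,\cdot\,)$ in \eqref{linearized problem}. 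The monotonicity of $v$ in the $e_n$-direction is what makes $\tilde v_\epsilon$ single-valued and the change of variables a genuine diffeomorphism onto its image, so that "subsolution" has its usual meaning.

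Next I would set $\phi:=\tilde u_\epsilon-\tilde v_\epsilon-c$ and observe that, by the above, $\phi$ is a supersolution of the linearized operator in $(B_{3/2}\setminus\overline{B_{1/2}})\setminus\pP$, and by hypothesis $\phi\ge 0$ there; the goal is to propagate $\phi\ge 0$ into all of $B_{3/2}\setminus\pP$. The natural route is the interior minimum principle for $L_\beta$-type operators: if $\phi$ attained a negative minimum at an interior point of $B_{3/2}\setminus\pP$ that lies off $\{y=0\}$ this would contradict the supersolution property there (standard degenerate-elliptic maximum principle for $A_2$-weighted operators, available from the Fabes–Kenig–Serapioni theory and used throughout \cite{DeSilva2014}); on $\{y=0\}\setminus\pP$ we use evenness in $y$ and the fact that both $u$ and $v$ are $C^{1,1}$ across $\{y=0\}$ in the positivity set, so there is no spurious boundary there. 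What remains is the edge $\Ll=\partial\pP$ and the slit $\pP$ itself: here the free-boundary condition enters. This is where the touching/viscosity formulation of Definition of the solution and of the comparison subsolution is used — a negative interior minimum of $\phi$ approached along $\pP$ would, after translating back, produce a strict comparison subsolution of \eqref{extension of fracional laplacian} touching $u$ from below at a free boundary point (the $a\ge 1$ condition and the $U$-asymptotics translating precisely into the sign of $b(X_0)$ in the linearized touching condition), contradicting that $u$ is a viscosity solution. One has to check that the constant shift by $c$ does not spoil the "strictness" needed to invoke the viscosity condition; this is handled by the usual sliding argument, perturbing $c$ slightly and using that the contact set is closed.

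Finally, I would assemble these pieces: cover $B_{3/2}\setminus\pP$, run the minimum principle up to the bad set $\pP\cup\Ll$, and dispatch the bad set by the viscosity comparison just described, concluding $\tilde v_\epsilon+c\le\tilde u_\epsilon$ in $B_{3/2}\setminus\pP$. The main obstacle, and the step deserving the most care, is the behavior at the edge $\Ll$ and along the slit $\pP$: the operator $L_\beta(U_n\,\cdot\,)$ degenerates both because of the weight $|y|^\beta$ and because $U_n\sim r^{s-1}$ blows up (or vanishes, depending on the sign of $s-1$) near $\Ll$, so the "boundary" $\pP$ is not a classical boundary and one cannot simply quote a textbook Hopf lemma. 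The correct substitute is precisely the viscosity formulation and the comparison-subsolution machinery set up in the preliminaries (and, for the sharp asymptotics near $\Ll$, the boundary Harnack/expansion results in the spirit of \cite[Theorem 6.1]{DeSilva2014} recalled as Theorem \ref{quadratic expansion}); everything else is a routine degenerate-elliptic maximum principle argument. Since this is exactly \cite[Lemma 3.1]{DeSilva2014}, I would in the write-up either reproduce that argument or, more briefly, indicate that the proof there goes through verbatim in our setting because our $u$, $v$, and flatness hypotheses match theirs.
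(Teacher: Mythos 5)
The lemma is stated as a citation to \cite[Lemma 3.1]{DeSilva2014}: the paper does not reproduce a proof, so there is nothing internal to compare against, and the correct write-up here is a bare citation. Your proposal instead sketches a new argument, so let me flag where it diverges from the cited proof and where it has a genuine gap.

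The central step of your proposal is to declare that, at fixed $\epsilon>0$, $\tilde v_\epsilon$ is a subsolution and $\tilde u_\epsilon$ is a supersolution of \emph{the same} linear $L^\infty$-coefficient operator, and then to run a minimum principle for $\phi=\tilde u_\epsilon-\tilde v_\epsilon-c$. This does not hold as stated. The change of variables that ``straightens out'' the defining relation $U(X-\epsilon\tilde v_\epsilon(X)e_n)=v(X)$ has coefficients depending on $\tilde v_\epsilon$ itself, and the one for $u$ depends on $\tilde u_\epsilon$; since $\tilde v_\epsilon\neq\tilde u_\epsilon$, these are two \emph{different} fully nonlinear equations, not two inequalities for a single linear operator. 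Consequently $\phi$ is not a supersolution of anything without further work, and the rest of the argument (minimum principle off $\pP$, evenness across $\{y=0\}$, viscosity contradiction on $\Ll$) has nothing to stand on. Linearization of the domain-variation equation is an asymptotic statement as $\epsilon\to0$, used in Lemma \ref{limit of w0} and Proposition \ref{proposition C2alpha} after passing to the limit; it is not available at a fixed $\epsilon$ in the way your proposal requires.

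The actual argument in \cite[Lemma 3.1]{DeSilva2014} (and in its forebears by De Silva--Savin) avoids linearization entirely. The monotonicity of $v$ in $e_n$ makes $\tilde v_\epsilon$ single-valued, and one checks that the inequality $\tilde v_\epsilon+c\le\tilde u_\epsilon$ is \emph{equivalent} to the pointwise inequality between $u$ and the translate $v(\cdot+\epsilon c\,e_n)$ on the corresponding image set. The conclusion then follows by sliding $v(\cdot+\epsilon t\,e_n)$ in $t$ from a value where it sits below $u$ to the critical value: the hypothesis on the annulus rules out first contact near $\partial B_{3/2}$, interior contact in $\{u>0\}$ is excluded by the strong maximum principle for $L_\beta$, and contact on $F(u)$ is exactly what the viscosity definition (no touching by a strict comparison subsolution at a free boundary point) forbids. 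No degenerate-operator Hopf lemma on the slit $\pP$ is needed, because the comparison happens in the original nonlinear problem where $\pP$ is not a boundary at all. If you want to write a proof rather than cite, this sliding argument is the one to reproduce; as written, your linearized route has a gap at the ``same operator'' step.
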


	In fact, the behavior of free boundary can imply the property of the domain variation. It can be understood as following: the regularity of free boundary we derive the regularity of solution near it. For a weaker condition of free boundary, we are supposed to find a similar result for the solution. We have:
	\begin{lemma}\label{compare freeboundary with plane}(\cite[Lemma 2.10]{DeSilva2014})
		Assume $u$ solves (\ref{extension of fracional laplacian}). Given any $\epsilon > 0$ there exists $\delta > 0$ and $\bar{\epsilon} > 0$, depending on $\epsilon$ such that if  
		\[  
		\{(x,0) \in B_1 : x_n \leq -\bar{\epsilon}_0\} \subset \{(x,0) \in B_1 : u(x,0) = 0\} \subset\{(x,0) \in B_1 : x_n \leq \bar{\epsilon}_0\},  
		\]  
		then the rescaling $u$ satisfies  
		\[  
		U(X - \delta\epsilon e_n) \leq u(X) \leq U(X + \delta\epsilon e_n) \quad \text{in } B_\delta.  
		\]   
	\end{lemma}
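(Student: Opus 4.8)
The plan is to prove Lemma \ref{compare freeboundary with plane} by a compactness-and-contradiction argument in the spirit of De Silva and Savin. Fix $\epsilon>0$; it suffices to show the conclusion holds with $\delta=\tfrac12$ and some $\bar\epsilon_0=\bar\epsilon_0(\epsilon)>0$. If not, then for every $k$ there is a viscosity solution $u_k$ of \eqref{extension of fracional laplacian} in $B_1$ whose zero set on $\{y=0\}$ satisfies $\{x_n\le-1/k\}\cap B_1\subset\{u_k(\cdot,0)=0\}\subset\{x_n\le1/k\}\cap B_1$, but for which $U(X-\tfrac{\epsilon}{2}e_n)\le u_k(X)\le U(X+\tfrac{\epsilon}{2}e_n)$ fails somewhere in $B_{1/2}$.

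First I would pass to a limit. The optimal regularity and nondegeneracy for \eqref{extension of fracional laplacian} (see \cite{tangentball,RosOton2024}) make the $u_k$ uniformly $C^{0,s}$ up to $\{y=0\}$, uniformly $C^{1,1}$ on compact subsets of $\{u_k>0\}$, and nondegenerate, $\sup_{B_r(X_0)}u_k\ge c\,r^s$ for $X_0\in F(u_k)$. Hence, along a subsequence, $u_k\to u_\infty$ locally uniformly in $B_1$, with $u_\infty$ nonnegative, even in $y$, and $L_\beta$-harmonic in $\{u_\infty>0\}$; nondegeneracy makes $u_\infty\not\equiv0$ and upgrades the convergence to Hausdorff convergence of the free boundaries. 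Since $1/k\to0$, the zero set of $u_\infty$ on $\{y=0\}$ is exactly $\pP$, so $u_\infty>0$ precisely over $\{x_n>0\}$ on that slice and $F(u_\infty)=\Ll$.

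Next I would invoke the one-dimensional classification: a nonnegative viscosity solution of \eqref{extension of fracional laplacian} in $B_1$, even in $y$, whose free boundary is the flat plane $\Ll$, must equal $aU(x_n,y)$ for some $a>0$, and the comparison sub/supersolution conditions force $a=1$; thus $u_\infty=U$. This rigidity is the structural core of the flatness scheme (cf. \cite{DeSilva2014}). Finally I would promote $u_k\to U$ locally uniformly to the two-sided sandwich in $B_{1/2}$. On compact subsets of $\{U>0\}\cap B_{1/2}$ away from $\Ll$, $U$ is smooth and strictly increasing in $e_n$, so $U(X+\tfrac{\epsilon}{2}e_n)-U(X)\ge c(\epsilon)>0$ there, and uniform closeness of $u_k$ to $U$ then gives $U(X-\tfrac{\epsilon}{2}e_n)\le u_k(X)\le U(X+\tfrac{\epsilon}{2}e_n)$ for $k$ large. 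Near $\Ll$ both functions vanish at the rate $\operatorname{dist}(\cdot,\pP)^s$; here one uses the nondegeneracy of $u_k$ together with barriers for $L_\beta$ built from $U$, $U_t$, $U_{tt}$ (cf. \eqref{derivatives of U}) to squeeze $u_k$ between the vertically translated half-plane solutions in a neighborhood of the free boundary. Combining the two regions contradicts the failure assumption, so $\delta=\tfrac12$ works and the lemma follows.

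I expect the main obstacle to be this last step near the free boundary: turning uniform closeness into the sharp two-sided comparison by vertical translates of $U$, uniformly up to $F(u_k)$, where $L_\beta=|y|^\beta(\Delta+\tfrac{\beta}{y}\partial_y)$ degenerates along $\{y=0\}$ and both $u_k$ and $U$ vanish at the fractional rate $\operatorname{dist}^s$. This requires the correct weighted barriers and a careful use of nondegeneracy, and it is precisely where the viscosity formulation---through comparison with strict sub/supersolutions---enters essentially. If the one-dimensional classification is not available off the shelf for general $\beta\in(-1,1)$, proving it via a monotonicity/blow-up argument for the degenerate operator would be the other substantial ingredient, though it is standard in this circle of ideas.
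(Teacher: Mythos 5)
The paper does not supply a proof of this lemma; it simply imports it verbatim as \cite[Lemma~2.10]{DeSilva2014}. So there is no ``paper's own proof'' to match your argument against --- the correct comparison is with the cited source. With that caveat, here is an assessment of your blind proposal.

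Your compactness/blow-up strategy is in the right circle of ideas, but two points need to be flagged. First, you \emph{strengthen} the statement by fixing $\delta=\tfrac12$ at the outset. The lemma deliberately lets $\delta$ depend on $\epsilon$, and this is not cosmetic: to extract a locally uniformly convergent subsequence from the $u_k$ you need an $L^\infty$ bound on $u_k$ in, say, $B_{3/4}$ that is uniform in $k$. The hypotheses control only the trace's zero set, not the boundary data of $u_k$ on $\partial B_1$, and the free boundary condition fixes the fractional normal derivative at $F(u_k)$ but does not by itself forbid $u_k$ from growing far from the free boundary. The standard way to secure a uniform bound is precisely to shrink $\delta$ and rescale (the boundary Harnack comparison with $U$ then applies in a ball that is well inside $B_1$); simply asserting optimal regularity from \cite{tangentball,RosOton2024} does not resolve this, because the constants there scale with $\|u\|_{L^\infty}$. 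Second, the two load-bearing steps --- (i) identifying the limit $u_\infty$ as exactly $U$ (which requires both stability of viscosity solutions under uniform convergence \emph{and} the Liouville-type classification of global solutions with flat free boundary for general $\beta\in(-1,1)$, typically done via the Weiss monotonicity formula), and (ii) upgrading locally uniform convergence $u_k\to U$ to the two-sided sandwich $U(X-\tfrac\epsilon2 e_n)\le u_k\le U(X+\tfrac\epsilon2 e_n)$ \emph{up to} $F(u_k)$, where both functions vanish at rate $\operatorname{dist}(\cdot,\pP)^s$ and the operator degenerates on $\{y=0\}$ --- are exactly the substance of the lemma and are only gestured at, not proved. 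To your credit you explicitly flag (ii) and the $\beta\neq 0$ classification as the hard part; but as written the proposal is a plan, not a proof, and it leaves open precisely the steps that the cited Lemma~2.10 has to actually carry out.
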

	
	We use the notation  
	\begin{equation}
		V_{M,\xi',a,b}(X) := V_{\Ss,a,b}(X),  
	\end{equation}for $ \Ss := \{ x_n =\frac{1}{2} (x')^T M x' + \xi'\cdot x' \} $, $M$ being a symmetric $(n-1)\times(n-1)$ matrix, and $\xi'\in \mathbb{R}^{n-1}$. This is the  main case under consideration in this paper.
	Furthermore, we will investigate $V_{M,\xi',a,b}(X)$ with some certain conditions. We give the definition below.
	\begin{definition} \label{definition of Vdelta}
		For $\delta > 0$ small, we define the following classes of functions  
		\begin{align*}  
			\Vv_\delta &:= \{ V_{M,\xi',a,b} : \|\text{M}\|,|\xi'|, |a|, |b| \leq \delta \}, \\  
			\Vv_\delta^0 &:= \{  V_{M,\xi',a,b} \in \Vv_\delta : \frac{a}{2s} +b - \text{tr}M = 0 \}.  
		\end{align*}  
	\end{definition}
	We define the rescaling $V_\lambda$ of $V_{M,\xi',a,b}$ to be  
	\begin{equation*}  
		V_\lambda(X) = \lambda^{-s} V(\lambda X)=V_{\lambda M,\xi',\lambda a, \lambda b}(X), \quad X \in B_1.
	\end{equation*} 
	
	To use $\Vv_\delta$ to approximate solution to \eqref{extension of fracional laplacian}, we need to compare them with solutions. In the following proposition, we provide a condition such that $V\in\Vv_\delta$ is a subsolution/supersolution.
	
	\begin{proposition}\label{subsolution verification}
		Let $V = V_{M,\xi',a,b} \in \Vv_\delta$, with $\delta \leq \delta_0(n,\beta)$. There exists a constant $C_0 > 0$ depending  only on $n,\beta$ such that if  
		\begin{equation}  
			\frac{a}{2s} +  b - \text{tr}M \geq C_0 \delta^2,  
		\end{equation}  
		then $V$ is a comparison subsolution to (\ref{extension of fracional laplacian}) in $B_2$.  
	\end{proposition}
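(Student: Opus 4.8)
The plan is to compute $L_\beta V$ explicitly on $B_2 \setminus \{y = 0\}$ and verify conditions (i) and (ii) in the definition of comparison subsolution. Write $V(X) = v_{a,b}(t,y)$ where $t = t(x) = \operatorname{sgn}(x_n - g(x'))\operatorname{dist}((x,0),\Ss)$ is the signed distance to the paraboloid $\Ss$, and $v_{a,b}(t,y) = (1 + \tfrac{a}{4}r + \tfrac{b}{2}t)U(t,y)$ with $r^2 = t^2 + y^2$. First I would record the geometry of the signed distance function: since $\Ss$ is a graph of a quadratic with $\|M\|,|\xi'| \leq \delta$, one has $|\nabla_x t| = 1$ and $\Delta_x t = -H_\Ss + O(\delta^2 \operatorname{dist})$, where $H_\Ss$ is (up to sign and lower-order terms) $\operatorname{tr} M$ evaluated along the nearest-point projection; more precisely $\Delta_x t = -\operatorname{tr} M + O(\delta^2)$ near $\Ss$ inside $B_2$. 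Then, using the chain rule, $L_\beta V = |y|^\beta\bigl(\Delta V + \tfrac{\beta}{y}\partial_y V\bigr)$ decomposes into the intrinsic two-variable operator $\mathcal{L}_\beta := \partial_{tt} + \partial_{yy} + \tfrac{\beta}{y}\partial_y$ acting on $v_{a,b}(t,y)$, plus a cross term $(\partial_t v_{a,b})\,\Delta_x t$ coming from $|\nabla_x t|^2 = 1$ and the curvature of the level sets. So
\[
L_\beta V = |y|^\beta\Bigl( \mathcal{L}_\beta v_{a,b} + (\partial_t v_{a,b})\,\Delta_x t \Bigr).
\]

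The key algebraic step is to evaluate $\mathcal{L}_\beta v_{a,b}$. Since $U = U(t,y)$ is a solution of $L_\beta U = 0$ away from $\pP$, i.e. $\mathcal{L}_\beta U = 0$, and since $\mathcal{L}_\beta$ applied to the multiplier $1 + \tfrac{a}{4}r + \tfrac{b}{2}t$ times $U$ produces the multiplier's own $\mathcal{L}_\beta$ times $U$ plus twice the inner product of gradients, I would use the identities in \eqref{derivatives of U}, namely $U_t = s U/r$ and $U_{tt} = \tfrac{s(sr - t)}{r^3}U$, together with the analogous $y$-derivatives, to get an exact formula. The linear term $\tfrac{b}{2}t$ is $\mathcal{L}_\beta$-harmonic against $U$ up to the gradient pairing, and $\tfrac{a}{4}r$ contributes the main curvature-type term; a computation of the same flavour as in \cite{DeSilva2012b} should yield
\[
\mathcal{L}_\beta v_{a,b} = \Bigl( c(n,\beta)\bigl(\tfrac{a}{2s} + b\bigr)\tfrac{U}{r} \cdot (\text{positive factor}) \Bigr) + O(\delta^2)\tfrac{U}{r},
\]
while the cross term gives $(\partial_t v_{a,b})\Delta_x t = -(\operatorname{tr} M)\tfrac{sU}{r} + O(\delta^2)\tfrac{U}{r}$. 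Combining, $L_\beta V = |y|^\beta \tfrac{sU}{r}\bigl( \tfrac{a}{2s} + b - \operatorname{tr} M + O(\delta^2) \bigr)$ up to a fixed positive normalization, so the hypothesis $\tfrac{a}{2s} + b - \operatorname{tr} M \geq C_0 \delta^2$ with $C_0$ absorbing the implicit constant forces $L_\beta V \geq 0$, which is (i). (When the inequality is strict one gets $L_\beta V > 0$, giving the strict subsolution case.)

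For condition (ii), I would check that $F(V) = \Ss$ is $C^2$ (immediate, it is a quadratic graph) and compute the asymptotic expansion of $V$ near a free boundary point $x_0 \in \Ss$. Along the inward normal $\nu(x_0)$ one has $t = (x - x_0)\cdot\nu(x_0) + O(|x - x_0|^2)$, hence $r = |((x-x_0)\cdot\nu(x_0), y)| + O(|\cdot|^2)$, and since $U(t,y) = O(r^s)$ with the multiplier $1 + \tfrac{a}{4}r + \tfrac{b}{2}t \to 1$ as $(x,y)\to(x_0,0)$, we get $V(x,y) = U((x-x_0)\cdot\nu(x_0),y) + o(|(x-x_0,y)|^s)$, i.e. $a = 1 \geq 1$ in the notation of the definition. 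Finally, evenness in $\{y=0\}$ and $C^2$ regularity on $\{V > 0\}$ hold because $U$ has these properties away from $\pP$ and $t$ is smooth away from the focal set, which stays outside $B_2$ for $\delta$ small. I expect the main obstacle to be the bookkeeping in the $\mathcal{L}_\beta v_{a,b}$ computation: keeping track of exact coefficients versus $O(\delta^2)$ remainders, and in particular confirming that all terms not proportional to $\tfrac{a}{2s} + b - \operatorname{tr} M$ are genuinely $O(\delta^2)\tfrac{U}{r}$ and not, say, $O(\delta)$ — this is where the specific structure of the multiplier $1 + \tfrac a4 r + \tfrac b2 t$ (as opposed to an arbitrary perturbation) is essential, and where I would need to be careful with the $\beta/y$ term since $\partial_y r = y/r$ and the weight $|y|^\beta$ interact.
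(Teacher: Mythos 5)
Your proposal takes essentially the same route as the paper: decompose $L_\beta V = |y|^\beta\bigl(\mathcal{L}_\beta v_{a,b} + (\partial_t v_{a,b})\Delta_x t\bigr)$ using $|\nabla_x t|=1$, evaluate the intrinsic operator on $v_{a,b}$ (the paper records the exact identity $(\Delta+\tfrac{\beta}{y}\partial_y)_{(t,y)} v_{a,b} = (\tfrac{a}{2s}+b)U_t$, with no $O(\delta^2)$ remainder), and use $|\Delta_x t + \mathrm{tr}\,M|\le C\delta^2$ together with $|\partial_t v_{a,b} - U_t|\le C\delta U_t$ to arrive at $L_\beta V = |y|^\beta\bigl((\tfrac{a}{2s}+b-\mathrm{tr}\,M)U_t + O(\delta^2)U_t\bigr)\ge 0$. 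Your verification of the free-boundary asymptotics (condition (ii)) is a correct spelling-out of what the paper dismisses as immediate; the only loose end in your write-up is the hedging $O(\delta^2)$ term in the $\mathcal{L}_\beta v_{a,b}$ computation, which in fact vanishes identically, but this does not affect the conclusion.
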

	\begin{proof}
		We can easily verify the boundary condition for $V$. Our main task, therefore, is to show that $L_{\beta}V\geq 0$ in $B_2$. To this end, we now calculate the action of the operator on $V$. We begin by recalling that
		\begin{equation}\label{Laplacian t}
			\Delta_x t(x)=-\kappa(x),\;\text{$\kappa(x)$ being the mean curvature of the parallel surface to $\Ss$ at $x$.}
		\end{equation} For the case $ \Ss := \{ x_n =\frac{1}{2} (x')^T M x' + \xi'\cdot x' \} $, we have
		\begin{equation}\label{mean curvature}
			|\kappa(x)-\text{tr}M|\leq C\delta^2\quad\text{for }\|M\|, |\xi'|\leq \delta\text{ and }x\in B_2.
		\end{equation}
		Applying \eqref{Laplacian t} and \eqref{mean curvature}, we have
		\[\begin{aligned}
			L_\beta V(X)&=|y|^\beta((\frac{a}{2s} +  b)U_t-(\partial_t v_{a,b})\kappa(x)),
		\end{aligned}\]
		for $X=(x,y)\in B_2$. By analysis we have $|\kappa(x)-trM|\leq C\delta^2$ for $C$ depending  only on $n,\beta$. For completeness, we show the calculation below:
		
		\[\begin{aligned}
			(\Delta+\frac{\beta}{y}\partial_y)_{(t,y)}v_{a,b}=(\frac{a}{2s}+b)U_t.
		\end{aligned}\]
		
		Since $r\leq 2$ and $|\xi'|,\|M\|\leq \delta$, we have
		\begin{equation}\label{derivative of v_ab}
			\begin{aligned}
				|(\partial_t v_{a,b}-U_n)(t,y)|\leq C\delta U_t.
			\end{aligned}
		\end{equation}
		Therefore, we obtain the following estimate
		\begin{equation}\label{estimate of Lbeta}
			|L_\beta V-|y|^\beta(\frac{a}{2s} +  b-trM)U_t|\leq \frac{1}{2}C_0|y|^\beta \delta^2U_t,
		\end{equation}
		for $C_0$ universal. If $a+b-trM\geq C_0\delta^2$ we have the desired result.
	\end{proof}
	Next we derive the comparison between $V_n$,$L_\beta V$ and $U_n$ for $\delta$ small. It indicates that $V$ is close to $U$ for $\delta$ small. Furthermore, we have a quantitative estimate for $V_n,L_\beta V$. We will use it for the comparison between $V$ and $u$.
	\begin{proposition}\label{compare Vn Un}
		Let $V=V_{M,\xi',a,b}\in \Vv_\delta$ with $\delta\leq\delta_0$, $\delta_0$ universal. Here $\Vv_\delta$ is defined in Definition \ref{definition of Vdelta}. We have
		\begin{equation}\label{domain to estimate V_n}
			c\leq\frac{V_n}{U_n}\leq C\quad \text{in } B_2\setminus(\pP\cup\{|(x_n,y)|\leq 10\delta|x'|\}),
		\end{equation}
		here $\pP$ is defined in \eqref{half plane P}. If $V\in\Vv_\delta^0$, then
		\begin{equation}\label{Lbeta estimate}
			|(\Delta+\frac{\beta}{y}\partial_y)V(X)|\leq C\delta^2 U_n(X)\quad \text{in } B_1\setminus(\pP\cup\{|(x_n,y)|\leq C'\delta|x'|\}).
		\end{equation}
	\end{proposition}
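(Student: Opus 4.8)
The plan is to extract both estimates from a single computation of $L_\beta V$ near the half-plane solution, together with the explicit formula $V_{\Ss,a,b}(X)=v_{a,b}(t,y)=(1+\tfrac{a}{4}r+\tfrac{b}{2}t)U(t,y)$ where $t=\mathrm{sgn}(x_n-g(x'))\,\mathrm{dist}((x,0),\Ss)$. First I would establish \eqref{domain to estimate V_n}. Away from the cone $\{|(x_n,y)|\le 10\delta|x'|\}$, the signed distance $t$ and its gradient $\nabla_x t$ are smooth and controlled: $|\nabla_x t|=1$ and $|\partial_{x_n}t-1|\le C\delta$, so by the chain rule $V_n=\partial_{x_n}V=(\partial_t v_{a,b})\,\partial_{x_n}t$. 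Using \eqref{derivative of v_ab}, namely $|\partial_t v_{a,b}-U_n|\le C\delta U_t$, and the identity $U_t=s U/r=U_n$ (here $U_n$ means $U_t$ evaluated along the profile), we get $V_n=(1+O(\delta))U_n$ on the complement of the cone, since there $U_n$ is comparable to itself without vanishing. This gives the two-sided bound $c\le V_n/U_n\le C$. The one subtlety is that near $\pP$ (but outside the cone) $U_n$ degenerates like $r^{s-1}$; but the ratio $V_n/U_n$ is still bounded because the perturbation is multiplicative in $r$ and $t$, so the degeneracies cancel.

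For \eqref{Lbeta estimate}, I would reuse the computation already carried out in the proof of Proposition \ref{subsolution verification}. There it is shown that
\[
L_\beta V(X)=|y|^\beta\Big((\tfrac{a}{2s}+b)U_t-(\partial_t v_{a,b})\,\kappa(x)\Big),
\]
and that $|\kappa(x)-\mathrm{tr}\,M|\le C\delta^2$ by \eqref{mean curvature}. Writing $\partial_t v_{a,b}=U_n+(\partial_t v_{a,b}-U_n)$ and using \eqref{derivative of v_ab}, one obtains
\[
L_\beta V-|y|^\beta\big(\tfrac{a}{2s}+b-\mathrm{tr}\,M\big)U_t = O(|y|^\beta\delta^2 U_t),
\]
which is exactly \eqref{estimate of Lbeta}. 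Now the hypothesis $V\in\Vv_\delta^0$ forces $\tfrac{a}{2s}+b-\mathrm{tr}\,M=0$, so the main term vanishes identically and $|L_\beta V|\le C\delta^2|y|^\beta U_t$. To convert this into the claimed bound $|(\Delta+\tfrac{\beta}{y}\partial_y)V|\le C\delta^2 U_n$ I divide by $|y|^\beta$, using $L_\beta=|y|^\beta(\Delta+\tfrac{\beta}{y}\partial_y)$, and replace $U_t$ by $U_n$ via \eqref{derivatives of U}. On the region $B_1\setminus(\pP\cup\{|(x_n,y)|\le C'\delta|x'|\})$ the error terms hidden in \eqref{derivative of v_ab} and in $|\kappa-\mathrm{tr}\,M|$ are genuinely controlled because there the distance function $t$ is smooth with bounded derivatives up to second order, with constants depending only on $n$ and the flatness $\delta$; this is why the cone must be excised, and its opening $C'\delta$ (rather than the $10\delta$ of the first estimate) is dictated by needing second-derivative control of $t$, not just first.

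The main obstacle is the bookkeeping near the edge, i.e.\ verifying that on the complement of the cone $\{|(x_n,y)|\le C'\delta|x'|\}$ the geometric quantities attached to the paraboloid $\Ss$ — the signed distance $t(x)$, its gradient, its Hessian, and the mean curvature $\kappa$ of the parallel surfaces — are all smooth with the stated $\delta$-dependent bounds, uniformly down to (but not on) $\pP$. Once that is in hand, both \eqref{domain to estimate V_n} and \eqref{Lbeta estimate} are immediate consequences of the chain rule and the already-established estimates \eqref{derivative of v_ab}, \eqref{mean curvature}, \eqref{estimate of Lbeta}, together with the defining relation $\tfrac{a}{2s}+b-\mathrm{tr}\,M=0$ for $\Vv_\delta^0$. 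I would therefore structure the proof as: (1) coordinates and regularity of $t$ off the cone; (2) chain-rule computation of $V_n$ and the ratio bound; (3) quote \eqref{estimate of Lbeta}, impose the $\Vv_\delta^0$ relation, divide by $|y|^\beta$, and convert $U_t\to U_n$.
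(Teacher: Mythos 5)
Your overall structure is right — the chain rule $V_n=(\partial_t v_{a,b})\,\partial_{x_n}t$, the estimate \eqref{derivative of v_ab}, and, for the second inequality, citing \eqref{estimate of Lbeta} and using $\tfrac{a}{2s}+b-\mathrm{tr}\,M=0$ are exactly the ingredients the paper assembles. However, there is a genuine gap in how you pass from $U_t(t,y)$ to $U_n(X)=U_t(x_n,y)$. Your computation delivers $V_n(X)=\bigl(1+O(\delta)\bigr)\,U_t(t,y)$, where $t$ is the signed distance to $\Ss$ — but the statement of \eqref{domain to estimate V_n} compares $V_n$ with $U_n(X)=U_t(x_n,y)$, i.e.\ $U_t$ evaluated at the \emph{coordinate} $x_n$, not at the \emph{distance} $t$. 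You dismiss this as ``the degeneracies cancel because the perturbation is multiplicative,'' and later say you ``replace $U_t$ by $U_n$ via \eqref{derivatives of U},'' but \eqref{derivatives of U} only gives identities like $U_t=sU/r$ — it says nothing about comparing $U_t$ at two different base points $(t,y)$ and $(x_n,y)$. Near the edge $\Ll$ these two points can be on opposite sides of where $U_t$ degenerates, and the ratio can blow up unless $|t-x_n|$ is small relative to $|(x_n,y)|$.

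The paper fills this gap by a Harnack-type inequality for $U_t$ in the slit domain, recorded as \eqref{comparable U_t}: $U_t(t_1,y)/U_t(t_2,y)\le C$ whenever $|t_1-t_2|\le\tfrac12|(t_2,y)|$. This is precisely where the excised cone $\{|(x_n,y)|\le 10\delta|x'|\}$ enters: off the cone, $|(x_n,y)|\ge 8\delta|x|\ge 2|t-x_n|$ (since $|t-x_n|\lesssim\delta|x'|$ for the paraboloid with $\|M\|,|\xi'|\le\delta$), so $B_{|t-x_n|}(x_n,y)$ sits inside $\{U>0\}$ and the Harnack bound applies with $t_1=t$, $t_2=x_n$. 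Without this comparison step your proof does not establish \eqref{domain to estimate V_n}, and hence also not \eqref{Lbeta estimate} (which, as you note, is deduced from \eqref{estimate of Lbeta} together with \eqref{domain to estimate V_n}). Relatedly, your claim that the opening $C'\delta$ in the second cone ``is dictated by needing second-derivative control of $t$'' is not the mechanism at work; it is the same $|t-x_n|\lesssim\delta|x'|$ vs.\ $|(x_n,y)|$ bookkeeping feeding into the Harnack comparison.
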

	\begin{proof}
		As \eqref{Lbeta estimate} is a direct result of \eqref{estimate of Lbeta} and \eqref{domain to estimate V_n}, it suffices to prove \eqref{domain to estimate V_n}. In step 1 we establish the proportionality between $V_n$
		and $U_t(t,y)$, where $t$ defined in \eqref{family of functions}. In step 2 we substitute $t$ by $x_n$.
		
		\vspace{5pt}
		
		\textit{Step 1: Proportionality between $V_n$ and $U_t(t,y)$.}
		
		\vspace{5pt}
		
		By defintion, we have
		\[\partial_t v_{a,b}(t,y)\geq V_n(X)=\partial_t v_{a,b}(t,y)\frac{\partial t}{\partial x_n}\geq \frac{1}{2}\partial_t v_{a,b}(t,y)\text{ for $\delta$ small}.\]
		We apply \eqref{derivative of v_ab} to compare $\partial_t v_{a,b}$ and $ U_t$ to obtain
		\begin{equation}\label{estimate V_n by U_t}
			2U_t(t,y)\geq V_n(X)\geq \frac{1}{4}U_t(t,y).
		\end{equation} 
		
		\vspace{5pt}
		
		\textit{Step 2: Substitute $t$ by $x_n$.}
		
		\vspace{5pt}

		It suffices to derive the comparison between $U_t(t,y)$ and $U_n(x_n,y)$. From Harnack inequality, we have \begin{equation}\label{comparable U_t}
			\frac{U_t(t_1,y)}{U_t(t_2,y)}\leq C\quad\text{if}\quad|t_1-t_2|\leq \frac{1}{2}|(t_2,y)|.
		\end{equation} For the domain given in (\ref{domain to estimate V_n}), we have
		\[|(x_n,y)|\geq 8\delta |x|\geq 2|t-x_n|.\]
		Therefore $B_{|t-x_n|}(x_n,y)\subset B_{2|t-x_n|}(x_n,y)\subset \{U>0\}$. Applying \eqref{comparable U_t} for $t_1=t$ and $t_2=x_n$, the proof is done. 
	\end{proof}
	 The following proposition gives an estimate of $\tilde{V}$ for $V\in \Vv_\delta$. For $\delta$ small, we know that the domain variation is close to $0$. As the Proposition above, the following proposition gives a quantitative estimate for it. This will be an important tool in our paper.
	\begin{proposition}\label{difference between tildeV and gammaV} Let $V = V_{M,\xi',a,b} \in V_\delta$, with $\delta \leq \delta_0$ universal. Then $V$ is strictly monotone increasing in the $e_n$-direction in $B_2^+(V)$. Moreover, $V$ satisfies  
			\begin{equation}\label{gamma definition}
				|\tilde{V}(x) - \gamma_V(x)| \leq C_1 \delta^2 \text{ in }B_2\setminus \pP, \quad \gamma_V(x) = \frac{1}{2s}(\frac{a}{2}r^2+brx_n)-\frac{1}{2}(x')^TMx'-\xi'\cdot x',
			\end{equation}
		$\text{with } r = \sqrt{x_n^2 + y^2} \text{ and } C_1 \text{ depending  only on $n,\beta$}.  $
	\end{proposition}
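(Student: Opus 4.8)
The plan is to unravel the definitions and reduce everything to a one-variable Taylor expansion of the profile $v_{a,b}(t,y)$ near the half-plane solution $U(t,y)$, then account for the difference between the signed distance $t$ to the paraboloid $\Ss$ and the flat coordinate $x_n$. The starting point is the defining relation $U(X)=V(X-\epsilon\tilde V(X)\,e_n)$ for the domain variation; since $V$ is built from the one-dimensional profile, and since $\tilde V$ is small, I would expand $V(X-\epsilon w e_n)$ to first order in $\epsilon w$ and solve for $\tilde V$, getting $\tilde V(X)\approx \bigl(V(X)-U(X)\bigr)/V_n(X)$ up to an error controlled by $\|V_{nn}\|$ and $\tilde V^2$. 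For this to make sense I first need the strict monotonicity of $V$ in the $e_n$-direction on $B_2^+(V)$: this follows from Proposition \ref{compare Vn Un}, which gives $V_n\sim U_n>0$ on $B_2\setminus(\pP\cup\{|(x_n,y)|\le 10\delta|x'|\})$, together with a direct check near the cone $\{|(x_n,y)|\le 10\delta|x'|\}$ using $\partial_t v_{a,b}\ge \tfrac14 U_t$ from \eqref{estimate V_n by U_t} and $\partial t/\partial x_n\ge \tfrac12$ for $\delta$ small.

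Next I would compute $V(X)-U(X)$ explicitly. By definition $V(X)=v_{a,b}(t,y)=(1+\tfrac a4 r_t+\tfrac b2 t)U(t,y)$ where $r_t=\sqrt{t^2+y^2}$ and $t$ is the signed distance to $\Ss$, while $U(X)=U(x_n,y)$. Writing $t=x_n-\phi(x')+O(\text{higher order})$ with $\phi(x')=\tfrac12(x')^TMx'+\xi'\cdot x'$ (the standard expansion of signed distance to a graph, with the correction of size $\lesssim\delta^2|x'|^2$ controlled as in \eqref{mean curvature}), a Taylor expansion of $U$ in its first slot gives $U(t,y)=U(x_n,y)-\phi(x')U_t(x_n,y)+O(\delta^2 \text{-terms})$, hence
\begin{equation*}
	V(X)-U(X)=\Bigl(\tfrac a4 r+\tfrac b2 x_n-\phi(x')\Bigr)U_n(X)+(\text{error}),
\end{equation*}
with $r=\sqrt{x_n^2+y^2}$ and the error of size $\le C\delta^2 U_n$ after using $|U_t-U_n|\lesssim \delta \, U_n$, the homogeneity $U_t=sU/r$, and $\tfrac a4 r+\tfrac b2 x_n=O(\delta)$. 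Dividing by $V_n\in[\tfrac12 U_n,2U_n]$ and comparing with the stated $\gamma_V$, note $\gamma_V(x)=\tfrac1{2s}\bigl(\tfrac a2 r^2+brx_n\bigr)-\phi(x')$; the factor $\tfrac1{2s}(\tfrac a2 r^2+brx_n)=\tfrac{r}{2s}(\tfrac a2 r+bx_n)$ matches $\bigl(\tfrac a4 r+\tfrac b2 x_n\bigr)\cdot\tfrac{r}{s}$, and since $U_t=sU/r$ one has $\bigl(\tfrac a4 r+\tfrac b2 x_n\bigr)U_n / U_n$ should be divided by $U_n$ but weighted by $r/s$ coming from the ratio $U/U_n=r/s$ — so the bookkeeping is: $(V-U)/V_n = \bigl(\tfrac a4 r+\tfrac b2 x_n\bigr)\cdot\tfrac{U}{U_n}\cdot\tfrac1U\cdot U + \ldots$ and one tracks that $\tfrac{U_n}{V_n}=1+O(\delta)$, $U/U_n = r/s$, to land exactly on $\gamma_V$ modulo $O(\delta^2)$.

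The main obstacle is the region near the cone $\{|(x_n,y)|\le C\delta|x'|\}$ and near $\pP$, where $U_n$ degenerates or blows up and the first-order expansion of $V(X-\epsilon w e_n)$ is most delicate: there the ratio $(V-U)/V_n$ is not a priori bounded by the naive argument, so I would instead argue directly that both $\tilde V$ and $\gamma_V$ are $O(\delta)$-small there and their difference is $O(\delta^2)$ by a scaling argument — rescaling around a point at height $|x'|$ so that the cone becomes a fixed neighborhood of $\pP$, applying the bounded, Lipschitz-type control of $U$ and $v_{a,b}$ in the rescaled picture, and using $|\tilde V|\le 1$ from the flatness assumption \eqref{flatness assumption} (which $V\in\Vv_\delta$ satisfies for $\delta$ small, since $U(X-\epsilon e_n)\le V(X)\le U(X+\epsilon e_n)$ for a suitable $\epsilon\sim\delta$). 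Away from that cone, the clean expansion above combined with Proposition \ref{compare Vn Un} closes the estimate. I would organize the write-up as: (1) flatness and monotonicity, so $\tilde V$ is well-defined, single-valued, and bounded; (2) the first-order identity $\tilde V=(V-U)/V_n + O(\tilde V^2 \|V_{nn}/V_n\|)$ away from the cone; (3) the explicit computation of $V-U$ and $V_n$ yielding $\gamma_V$ up to $C\delta^2$; (4) the separate cone/near-$\pP$ estimate by rescaling.
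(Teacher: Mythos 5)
Your plan hinges on the first-order identity $\tilde V=(V-U)/V_n + O(\tilde V^2\,V_{nn}/V_n)$, and that is where the argument breaks. Near the free boundary $\pP$ one has $V_{nn}\sim U_{nn}\sim r^{s-2}$ and $V_n\sim U_n\sim r^{s-1}$, so $V_{nn}/V_n\sim 1/r$ and the remainder is of size $\tilde V^2/r\sim\delta^2/r$. This is not $O(\delta^2)$ uniformly: at $r\sim\delta$ it is already $O(\delta)$, no better than the crude flatness bound $|\tilde V|\lesssim\delta$. You flag this (for the cone and near $\pP$) and propose to fall back on a rescaling, but rescaling around a point at height $|x'|$ plus $|\tilde V|\le 1$ only re-derives the crude $O(\delta)$ bound; it does not produce the refined cancellation $|\tilde V-\gamma_V|\le C\delta^2$, which is the whole content of the proposition. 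The same degeneracy also infects your explicit computation of $V-U$: the second-order Taylor remainder of $U(t,y)-U(x_n,y)$ is of size $(t-x_n)^2\,|U_{tt}|\sim\delta^2 r^{s-2}$, and dividing by $V_n\sim r^{s-1}$ again leaves a $1/r$ factor.

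The paper's proof is structured precisely to avoid any division by $V_n$ or $U_n$. Step 1 proves a two-sided \emph{sandwich} $U(t+\gamma_{a,b}(t,y)-C\delta^2,y)\le v_{a,b}(t,y)\le U(t+\gamma_{a,b}(t,y)+C\delta^2,y)$: the quadratic Taylor remainder $\sim s\mu^2 r^{-2}U$ is absorbed into a \emph{shift of the argument} of $U$ of size $2C\tilde\mu^2/(sr)$, and since $\tilde\mu=\frac{1}{2s}(\frac a2 r^2+btr)=O(\delta r^2)$, this extra shift is $O(\delta^2 r^3)=O(\delta^2)$ uniformly in $r\le 2$. Step 2 then replaces $t$ by $x_n$ \emph{inside the argument} of $U$, using $|t-(x_n-g(x'))|\le C\delta^2$ (from $1-\partial t/\partial x_n=O(\delta^2)$) and $\|\nabla\gamma_{a,b}\|_\infty=O(\delta)$, yielding the flatness statement $U(X+(\gamma_V(X)-C\delta^2)e_n)\le V(X)\le U(X+(\gamma_V(X)+C\delta^2)e_n)$. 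Plugging $X-\tilde V(X)e_n$ into this sandwich and using only the strict monotonicity of $U$ in $e_n$ gives $|\tilde V(X)-\gamma_V(X-\tilde V e_n)|\le C\delta^2$, and the $O(\delta)$ Lipschitz bound on $\gamma_V$ finishes. To repair your write-up you would need to replace step (2) of your outline (the linearized identity for $\tilde V$) with a sandwich of this type and then deduce the bound on $\tilde V$ from monotonicity rather than from a quotient.
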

	
	\begin{proof}  
		The monotonicity of $V$ follows from (\ref{estimate V_n by U_t}). The proof of the inequality is similar to the proof of Proposition \ref{compare Vn Un}. In step 1 we derive the estimate for $t$. It is quite the case in 2D space such that we do not need to consider the terms about $x'$. In step 2 we establish the estimate \eqref{gamma definition} by considering the difference between $t + \gamma_{a,b}(t, s)$ and $x_n + \gamma_V(X)$.

		\vspace{5pt}
		
		\textit{Step 1: Estimate for $t$.}
		
		\vspace{5pt}

		In this step we prove that $v_{a,b}$ satisfies  
		\begin{equation} \label{gamma estimate}
			U(t + \gamma_{a,b} - C\delta^2, y) \leq v_{a,b}(t, y) \leq U(t + \gamma_{a,b} + C\delta^2, y),\quad \gamma_{a,b}(t,y):=\frac{1}{2s}(\frac{a}{2}r^2+br t),
		\end{equation}  
		for $C$ depending  only on $n,\beta$.
		
		To begin with, we apply the Taylor expansion to $U$ and \eqref{derivatives of U} to derive 
		\[|U(t + \mu, y) - (U(t,y) + \mu U_t(t, y))|
		\leq \frac{1}{2}\mu^2 |U_{tt}(t',y)|.\]
		By \eqref{derivatives of U}, we can estimate of $U_t$ and $U_{tt}$ to obtain:
		\[|U(t + \mu, y) - (1+\frac{s\mu}{r})U(t,y) |\leq Cs \mu^2 r^{-2} U(t,y),\quad r=\sqrt{t^2+y^2},\]
		for  some $t'$ satisfying $t\leq t'\leq t+\mu$ and $|\mu| \leq 2/r$.
		
		Choosing $\mu = \tilde{\mu}\pm2\frac{C\tilde{\mu}^2}{sr}$, we obtain 
		\begin{equation}\label{difference tildeV and gammaV step}
			U(t + \tilde{\mu} + \frac{C\tilde{\mu}^2}{sr}, y) \geq (1 + \frac{s\tilde{\mu}}{r}) U(t,y) \geq U(t + \tilde{\mu} - \frac{C\tilde{\mu}^2}{sr}, y),  
		\end{equation}
		provided that $|\tilde{\mu}/r| < c(n,\beta)$ sufficiently small.  Set $\tilde{\mu} = \frac{1}{2s}(\frac{a}{2} r^2 + bt r)$ in \eqref{difference tildeV and gammaV step}, then \eqref{gamma estimate} is obtained.

		\vspace{5pt}
		
		\textit{Step 2: Proof of \eqref{gamma definition}.}
		
		\vspace{5pt}

		In this step, we will prove
		\begin{equation}\label{flatness of V}
			U(X + (\gamma_V(X) - C \delta^2) e_n) \leq V(X) \leq U(X + (\gamma_V(X) + C \delta^2) e_n).
		\end{equation} We notice
		\[\gamma_V(X) = \gamma_{a,b}(x_n, s) - \frac{1}{2} x'^T M x' - \xi' \cdot x'.\]
		Therefore it suffices to prove
		\begin{equation}\label{compare t xn}
			|(t + \gamma_{a,b}(t, s)) - (x_n + \gamma_V(X))| \leq C \delta^2,
		\end{equation}
		then \eqref{flatness of V} follows by applying the  inequality to \eqref{gamma estimate}. We will prove the \eqref{compare t xn} by comparing $t$ and $x_n$. We have
		\[t = 0 \text{ on } \Ss := \{x_n = g(x') := \frac{1}{2} x'^T M x' + \xi' \cdot x'\},\quad 1 \geq \frac{\partial t}{\partial x_n} \geq 1 - C \delta^2 \text{ in } B_1.\]
		Integrating this inequality on the segment $(x', h(x')), (x', x_n)$, we obtain
		\[|t - (x_n - g(x'))| \leq C \delta^2.\]
		
		In $B_1$, the surface $\Ss$ and $x_n = 0$ are within distance $\delta$ from each other. We
		have $|t - x_n| \leq C \delta$. Hence
		\[|\gamma_{a,b}(t, y) - \gamma_{a,b}(x_n, y)| \leq || \nabla \gamma_{a,b} ||_\infty |t - x_n| \leq C \delta^2.\]
		
		The last two inequalities prove \eqref{flatness of V}. The monotonicity of $U$ in the $e_n$ direction gives the estimate for $\tilde{V}$.
	\end{proof} 
	\begin{remark}\label{delta flatness remark}
		Proposition  \ref{difference between tildeV and gammaV} can give an estimate of $V$ by half-plane solution $U$. If $V\in\Vv_{\delta/4}$, then $V$ satisfies\begin{equation}\label{delta flatness}
			U(X-\delta e_n)\leq  V(X)\leq U(X+\delta e_n)\quad\text{in }B_1.
		\end{equation}
		We call \eqref{delta flatness} the $\delta-$flatness assumption in $B_1$.
	\end{remark}
	Next we vaompare the functions $V$. In the  following lemma, we show that the comparison of two surfaces will imply the comparison of two functions.
	\begin{lemma}\label{paraloid separate}
		Let $\Ss_1$ and $\Ss_2$ be surfaces with boundaries of curvature not exceeding 2. Let
		\[V_i=V_{\Ss_i,a_i,b_i},\quad |a_i|,|b_i|\leq 2, \quad i=1,2.\] 
		Assume that
		\[\Ss_i\cap B_{2\sigma}=\{x_n=g_i(x')\},\quad \sigma\leq c,\]
		with $g_i$ Lipschitz graphs,  $g_i(0)=0$, $|\nabla g_i|\leq 1$ and $c$ depending only on $n,\beta$. If
		\[|a_1-a_2|,\;|b_1-b_2|\leq\epsilon,\;\|g_1-g_2\|_{L^\infty(B_\sigma)}\leq \epsilon\sigma^2,\]
		for some small $\epsilon\leq c$, then for some universally big $C$ we have
		\[V_1(X)\leq V_2(X+C\epsilon\sigma^2 e_n)\quad \text{in}\; B_\sigma.\]
	\end{lemma}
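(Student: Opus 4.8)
The plan is to reduce the comparison $V_1(X)\le V_2(X+C\epsilon\sigma^2 e_n)$ to a statement about the signed distance functions $t_1,t_2$ to $\Ss_1,\Ss_2$, and then to transport the hypotheses on $(a_i,b_i,g_i)$ through the one-dimensional profile $v_{a,b}(t,y)=(1+\tfrac a4 r+\tfrac b2 t)U(t,y)$. First I would record the elementary estimate, already proved inside Proposition \ref{difference between tildeV and gammaV}, that $V_i(X)=v_{a_i,b_i}(t_i,y)$ with $v_{a,b}(t,y)=U(t+\gamma_{a,b}(t,y)+O(\epsilon^2),y)$ uniformly on $B_\sigma$ (here using $|a_i|,|b_i|\le 2$ and $\sigma\le c$ to keep all the Taylor remainders universally bounded); since $U$ is increasing in its first slot, it therefore suffices to show
\[
 t_1(X)+\gamma_{a_1,b_1}(t_1(X),y)\ \le\ t_2(X+C\epsilon\sigma^2 e_n)+\gamma_{a_2,b_2}\big(t_2(X+C\epsilon\sigma^2 e_n),y\big)+ C'\epsilon\sigma^2
\]
on $B_\sigma$, absorbing the $O(\epsilon^2)$ errors into the right-hand side.

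Next I would compare the two signed distance functions. On $B_{2\sigma}$ both surfaces are Lipschitz graphs $x_n=g_i(x')$ with $|\nabla g_i|\le 1$, so $|t_i(x,y)-(x_n-g_i(x'))|\le C|(x',x_n,y)|^2\le C\sigma^2$ from the curvature bound (the same integration-along-a-segment argument as in Step 2 of Proposition \ref{difference between tildeV and gammaV}), and more importantly the difference is controlled: $|t_1(X)-t_2(X)|\le \|g_1-g_2\|_{L^\infty(B_\sigma)}+C\sigma\,(\text{curvature})\cdot(\dots)$. The clean way is to note $\partial_{x_n}t_i\in[1-C\sigma^2,1]$, so moving $X$ by $C\epsilon\sigma^2 e_n$ increases $t_2$ by at least $(1-C\sigma^2)C\epsilon\sigma^2\ge \tfrac12 C\epsilon\sigma^2$; meanwhile $t_1(X)-t_2(X)$ at a common point is bounded, by the graph comparison, by $\|g_1-g_2\|_{L^\infty(B_\sigma)}\le\epsilon\sigma^2$ plus a curvature-type error $\le C\sigma^2\cdot(\text{something}\lesssim \sigma)$ which I must check is actually $\le C\epsilon\sigma^2$ — this is the one genuinely delicate bookkeeping point. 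Choosing $C$ large enough that $\tfrac12 C\epsilon\sigma^2$ beats $\epsilon\sigma^2$ plus all error terms gives $t_1(X)\le t_2(X+C\epsilon\sigma^2 e_n)$.

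It remains to handle the $\gamma$-terms. Since $|t_i|,r\le C\sigma$ on $B_\sigma$ and $|a_i|,|b_i|\le 2$, we have $|\gamma_{a_i,b_i}(t,y)|\le C\sigma^2$ and, crucially, $\gamma$ is Lipschitz in its parameters and in $t$ with universally bounded constants on this scale: $|\gamma_{a_1,b_1}(t_1,y)-\gamma_{a_2,b_2}(t_2,y)|\le C\sigma^2(|a_1-a_2|+|b_1-b_2|)+C\sigma\,|t_1-t_2|\le C\epsilon\sigma^2$ using the two hypotheses and the $t$-comparison from the previous step. Feeding $t_1(X)\le t_2(X+C\epsilon\sigma^2 e_n)$ together with this $\gamma$-estimate into the displayed monotonicity inequality, and then back through $V_i(X)=U(t_i+\gamma_i+O(\epsilon^2),y)$, yields $V_1(X)\le V_2(X+C''\epsilon\sigma^2 e_n)$ after enlarging the constant once more to swallow the $O(\epsilon^2)\le O(\epsilon\sigma^2/\sigma^2)$... (here I should be slightly careful and instead simply note $\epsilon^2\le c\epsilon\le \epsilon\sigma^2$ fails in general, so the $O(\epsilon^2)$ terms are better rewritten as $O(\delta^2)$-type errors that in the present normalization are $\le C\epsilon\sigma^2$ because the relevant small parameter here is $\epsilon$, not an independent $\delta$; this is exactly the subtlety flagged above).

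\textbf{Main obstacle.} The real work is not the monotonicity packaging but verifying that \emph{every} error term — the curvature-induced discrepancy between $t_i$ and the linear graph, the second-order remainder in the Taylor expansion of $U$, and the $\gamma$-remainder — is genuinely of size $C\epsilon\sigma^2$ rather than merely $C\sigma^2$ or $C\epsilon$, so that a single universal multiple of $\epsilon\sigma^2$ in the $e_n$-shift absorbs all of them simultaneously; this forces one to track the $\sigma$-powers carefully and to use that $V_i$ agree to first order at the origin ($g_i(0)=0$, $|\nabla g_i|\le 1$) so the leading $\sigma^2$ discrepancies cancel and only the $\epsilon\sigma^2$ part survives.
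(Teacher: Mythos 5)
Your route diverges from the paper's in an important way, and the detour introduces a gap that you flag but do not close. The paper's Step~1 never passes through the $\gamma$-parametrization. Instead it uses the \emph{exact} identity
\[
v_{a_2,b_2}(t,y)-v_{a_1,b_1}(t,y)=\Bigl(\tfrac{a_2-a_1}{4}\,r+\tfrac{b_2-b_1}{2}\,t\Bigr)U(t,y),
\]
which is linear in $(a_2-a_1,b_2-b_1)$ and hence of size $C\epsilon\, r\,U = (C\epsilon/s)\, r^2\,U_t$ by $rU_t=sU$. Working on the dyadic shell $\lambda\le r\le 2\lambda$ and using that $\partial_t v_{a,b}\sim U_t$ (equation \eqref{estimate V_n by U_t}) with comparable values along the short segment (equation \eqref{comparable U_t}), this difference is offset by a $t$-shift of exactly size $C\epsilon\lambda^2$. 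No quadratic Taylor remainder in $(a,b)$ or in $\sigma$ ever appears. Your proposal instead writes each $v_{a_i,b_i}=U(t+\gamma_{a_i,b_i}+E_i,y)$ and tries to compare the $\gamma$'s. The remainder $E_i$ coming from Proposition~\ref{difference between tildeV and gammaV} is of size $O(\delta^2)$ where $\delta$ bounds $|a_i|,|b_i|$; in this lemma those are only bounded by $2$, and even after rescaling by $1/\sigma$ the effective $\delta$ is $\sim\sigma$, so $E_i=O(\sigma^2)$ --- \emph{not} $O(\epsilon^2)$, and not absorbable into $C\epsilon\sigma^2$ since $\epsilon$ and $\sigma$ are independent small parameters. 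You notice the danger in your parenthetical aside, but the proposed fix (``the relevant small parameter is $\epsilon$'') is not true: $\sigma$ is independent. To salvage this route you would need a quantitative cancellation $|E_{a_1,b_1}-E_{a_2,b_2}|\lesssim\epsilon\sigma^2$, i.e.\ a Lipschitz-in-parameters refinement of Proposition~\ref{difference between tildeV and gammaV}, which is extra work the paper avoids entirely by never introducing $E_i$.

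Two smaller remarks. First, your concern about a ``curvature-type error $\le C\sigma^3$'' in the $t_1$-vs-$t_2$ comparison is overly pessimistic: for Lipschitz graphs the Hausdorff distance of $\Ss_1,\Ss_2$ is $\le\|g_1-g_2\|_{L^\infty}\le\epsilon\sigma^2$, so $|t_1(X)-t_2(X)|\le\epsilon\sigma^2$ directly, with no additional $O(\sigma^2)$ loss --- the trouble comes if you insist on passing through the linear approximation $x_n-g_i(x')$, which you should not. The paper's Step~2 packages this cleanly by introducing $\bar t_2=\mathrm{dist}(\cdot,\Ss_2-C'\epsilon e_n)$ and showing $t_1+C\epsilon\le\bar t_2$; then monotonicity of $v_{a_2,b_2}$ in its first slot finishes. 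Second, your claim that ``$V_i$ agree to first order at the origin'' from $g_i(0)=0$ and $|\nabla g_i|\le1$ is not justified: interpolation between $\|g_1-g_2\|_\infty\le\epsilon\sigma^2$ and the $C^2$ bound only gives $|\nabla(g_1-g_2)|\lesssim\sqrt{\epsilon}\,\sigma$, so the gradients need not match to order $\epsilon\sigma$. Fortunately this is not needed once the comparison is done the paper's way.
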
  
	
	\begin{proof}
		After a rescaling of factor $1/\sigma$, it suffices to prove our lemma for $\sigma = 1$ and with the curvature of $S_i$, $a_i$, $b_i$ and $\epsilon$ smaller than $c$ universal. This proof is similar to the proofs above. In step 1 we prove the desired result for $t$. It is quite the 2D space case. In step 2 we establish the desired result by considering the difference between dist$(X,\Ss_i)$, $i=1,2$.

		\vspace{5pt}
		
		\textit{Step 1: Estimate for $t$.}
		
		\vspace{5pt}

		In this step we prove that for $0 < \lambda \leq 1$, 
		\[
		v_{a_1, b_1}(t, y) \leq v_{a_2, b_2}(t + C\epsilon\lambda^2, y), \quad \lambda \leq r = |(t, y)| \leq 2\lambda.
		\]
		
		By (\ref{estimate V_n by U_t}), $\partial_t v_{a,b}$ is proportional to $\partial_t U$ in the disk of radius 2. Since on the segment with endpoints $(t, y)$ and $(t + C\epsilon\lambda^2, y)$ all the values of $\partial_t U$ are comparable (see (\ref{comparable U_t})) we obtain (using $r U_t =s U$)
		\[\begin{aligned}
			v_{a_2, b_2}(t + C\epsilon\lambda^2, y) \geq v_{a_2, b_2}(t, s) + C \epsilon \lambda^2 U_t(t, y)\geq v_{a_1, b_1}(t, y),
		\end{aligned}\]
		and our claim is proved.
		
		\vspace{5pt}
		
		\textit{Step 2: Estimate for dist$(X,\Ss_i)$, $i=1,2$}
		
		\vspace{5pt}
		
		Since $v_{a_2, b_2}$ is increasing in the first coordinate, we obtain that
		\[
		v_{a_1, b_1}(t, y) \leq v_{a_2, b_2}(t + C\epsilon, y), \quad |(t, y)| \leq 1.
		\]
		On the other hand, from the hypotheses on $h_i$ we see that 
		\[
		t_1 + C\epsilon \leq \bar{t}_2\text{ in } B_1,
		\]
		where $\bar{t}_2$ is the distance to $\mathcal{S}_2 - C' e_{n}$, for some $C'$ large depending on the $C$ above. Hence in $B_1$ we have
		\[
		V_1(X) = v_{a_1, b_1}(t_1, y) \leq v_{a_2, b_2}(t_1 + C\epsilon, y) \leq v_{a_2, b_2}(\bar{t}_2, y) = V_2(X + C' \epsilon e_{n}).
		\]
	\end{proof}
	
	By the preparation above, we now seek for Harnack-type Theorem \ref{g V harnack} following the spirit of \cite{DeSilva2012b}. It will be an important tool in Section \ref{C2alpha regularity}. To begin with, we prove the following lemma. It shows that the comparison of $u$ and $V$ at a given point indicates the comparison near the free boundary.
	
	\begin{lemma}\label{lemma harnack}
		If $u$ solves (\ref{extension of fracional laplacian}) and
		\begin{equation}\label{g bigger than V}
			u(X)\geq V(X-\epsilon e_n)\quad \text{in} \; B_1,
		\end{equation}
		\[u(\bar{X})\geq V(\bar{X})\quad \text{at some}\;\bar{X}\in B_{1/8}(\frac{1}{4}e_n),\]
		with $V=V_{M,\xi',a,b}\in\Vv_\delta^0$, $\bar{C}\delta^2\leq \epsilon$ for $\bar{C}>0$ universal, then
		\[u(X)\geq V(X-(1-\tau)\epsilon e_n)\quad \text{in}\;\;\mathcal{C},\]
		\[\mathcal{C}:=\{(x',x_n,y):\sqrt{s}d\leq |(x_n,y)|\leq\frac{1}{2}\}, \quad d=\frac{1}{8\sqrt{n-1}},\]
		for $\tau\in(0,1)$ universal.
	\end{lemma}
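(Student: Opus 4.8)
The plan is to follow the classical Harnack iteration scheme of De Silva (and its refinement in \cite{DeSilva2012b}). The core idea is to use the interior Harnack inequality for the degenerate operator $L_\beta$ to propagate a pointwise gain at $\bar X$ into a uniform gain in the whole cone $\mathcal{C}$, and then convert that gain on $u$ into a shift of the competitor $V$ via the domain-variation estimate of Proposition \ref{difference between tildeV and gammaV}. First I would set $w := u - V(\cdot - \epsilon e_n)$. By hypothesis \eqref{g bigger than V}, $w \geq 0$ in $B_1$; moreover, using Proposition \ref{subsolution verification} (the condition $\frac{a}{2s}+b-\mathrm{tr}M = 0$ for $V \in \Vv_\delta^0$ together with $\bar C \delta^2 \le \epsilon$ lets one treat $V(\cdot-\epsilon e_n)$ as a genuine subsolution after a harmless perturbation, cf. the estimate \eqref{estimate of Lbeta}), $w$ is a nonnegative supersolution of $L_\beta$ in the region $B_{3/4}\cap\{u>0\}$ away from $\pP$. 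The point $\bar X \in B_{1/8}(\tfrac14 e_n)$ sits at a definite distance from $\pP$ and from $F(u)$, so on a fixed ball $B_{c_0}(\bar X)\subset\{u>0\}\setminus\pP$ the function $u$ is comparable to $U_n \sim 1$ there; hence the hypothesis $u(\bar X)\ge V(\bar X)$ gives $w(\bar X) \geq c\,\epsilon$ for a universal $c>0$ (quantifying the gap between $V(\bar X)$ and $V(\bar X - \epsilon e_n)$ via the monotonicity and the lower bound $V_n \gtrsim U_n$ from Proposition \ref{compare Vn Un}).

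Next I would run a Harnack chain. The set $\mathcal{C}\cap\{u>0\}$, together with a ball around $\bar X$, can be covered by a universally bounded number of balls of comparable radius, each lying in $\{u>0\}$ and at controlled distance from $\pP$, on which the weight $|y|^\beta$ is an $A_2$ weight with uniformly controlled constant; the interior Harnack inequality of Fabes–Kenig–Serapioni then yields $w \geq c'\epsilon$ on $\mathcal{C}' := \mathcal{C}\cap\{u>0\}\cap\{|y|\ge c'' d\}$ for universal $c',c''$. Near the part of $\mathcal{C}$ that approaches $F(u)$ (where $U_n$ degenerates like $r^{s-1}$), one instead invokes the boundary behavior: $u$ and $V$ both vanish at rate $r^s$ at their free boundaries with the same normalized coefficient $1$, so the gain $w \geq c'\epsilon$ in the interior, combined with the expansion $v(x,y) = aU(\cdots) + o(|\cdot|^s)$ and the comparison Lemma \ref{paraloid separate}, forces the inequality $w(X) \geq c'\epsilon\,U_n(X)$ to persist down to the free boundary; this is exactly the mechanism by which a uniform additive gain becomes a gain proportional to $U_n$.

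Finally I would translate this back into a shift. Having $u(X) \geq V(X - \epsilon e_n) + c'\epsilon\, U_n(X)$ on $\mathcal{C}$, I use that $V(X - \epsilon e_n + \tau\epsilon e_n) - V(X-\epsilon e_n) \leq \tau\epsilon\, \sup V_n \leq C\tau\epsilon\, U_n(X)$ on $\mathcal{C}$ (again by Proposition \ref{compare Vn Un}, since $\mathcal{C}$ stays away from $\pP$ and from the cone $\{|(x_n,y)|\le 10\delta|x'|\}$ once $\delta$ is small relative to $d$). Choosing $\tau$ universal with $C\tau \leq c'$ gives $u(X)\geq V(X-(1-\tau)\epsilon e_n)$ on $\mathcal{C}$, as claimed. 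The main obstacle I anticipate is the passage near $F(u)$: one must show the interior gain does not get washed out as $U_n \to 0$, which requires carefully pairing the Harnack estimate with the precise $r^s$-asymptotics of both $u$ and $V$ at their respective free boundaries and controlling the $o(|\cdot|^s)$ errors uniformly — this is where the hypotheses $V\in\Vv_\delta^0$ and $\bar C\delta^2\le\epsilon$ are essential, since they guarantee $V(\cdot-\epsilon e_n)$ is a strict enough subsolution that its free boundary lies strictly inside $\{u>0\}$ by a margin comparable to $\epsilon$.
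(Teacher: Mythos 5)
The opening reduction (set $h := u - V(\cdot-\epsilon e_n)$, establish $h\geq 0$, $h(\bar X)\gtrsim\epsilon$, and convert the target into showing $h\gtrsim\epsilon\,U_n$ on $\mathcal C$ before undoing the Taylor expansion via $V_n\sim U_n$) and the interior Harnack chain away from $\{y=0\}$ are both on track and coincide with the paper's strategy. The difficulty, as you anticipated, is near the slit — but your account of where the degeneracy lives and how to treat it is not correct, and the key mechanism is missing.

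First, a factual point: the cone $\mathcal C$ is bounded away from $\Ll$ by $\sqrt{s}\,d$, and $F(u)$ sits within distance $O(\delta)\ll\sqrt{s}\,d$ of $\Ll$. So $\mathcal C$ never approaches $F(u)$, and the asymptotics $u\sim U_0$ and $v=aU+o(|\cdot|^s)$ at the free boundaries play no role here. What does create trouble is the strip of $\pP$ inside $\mathcal C$, namely $\{y=0,\ -1/2\leq x_n\leq-\sqrt{s}\,d\}$: there $h$ vanishes and the weight $|y|^\beta$ degenerates, so the interior Harnack inequality of Fabes--Kenig--Serapioni cannot be pushed all the way down, and the chain argument alone gives no lower bound proportional to $U_n$ near that set. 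Your proposed remedy — invoking Lemma \ref{paraloid separate} — does not apply: that lemma compares two functions of the form $V_{\Ss_i,a_i,b_i}$, not a solution $u$ against such a $V$, and it cannot produce a lower bound for $h$ in terms of $U_n$.

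Second, the source term is never handled. Because $V\in\Vv_\delta^0$ is not an exact solution, one only has $|(\Delta+\tfrac{\beta}{y}\partial_y)h|\lesssim\delta^2$ (see \eqref{Lbeta estimate}); this is not a sign condition, so $h$ is not a supersolution, and treating $V(\cdot-\epsilon e_n)$ as a subsolution does not eliminate the error. The hypothesis $\bar C\delta^2\leq\epsilon$ is there precisely so that this $\delta^2$-error can be absorbed into the $\epsilon$-gain, not to make $V$ a subsolution. The paper does this by splitting $h$ into a homogeneous part and a particular part: in the interior ball it writes $h=h_1+h_2$ with $L_\beta h_1=0$ and $h_2$ controlled by an explicit barrier of size $\delta^2$, and near $\pP$ it constructs two auxiliary functions $q_1$ (homogeneous, capturing the inherited gain on $\partial B_{1/8}(\tfrac14 e_n)$, vanishing on $\pP$) and $q_2$ (particular, with unit source and zero boundary data), applies the maximum principle to get $h\geq c_3\epsilon q_1 - C_2\delta^2 q_2$, and then uses the boundary Harnack inequality (Theorem \ref{boundary harnack}) twice — once to dominate $q_2$ by $q_1$, once to compare $q_1$ with $U_n$. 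These two boundary Harnack comparisons are the essential missing ingredient in your sketch; without them there is no route from the interior gain to the bound $h\gtrsim\epsilon\,U_n$ near $\pP$.
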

	\begin{proof}
		We consider Taylor expansion of $V$ at $X-\epsilon e_n$. It gives that for any $\tau\in (0,1)$, there exists $\lambda\in (0,1)$ such that \[V(X-(1-\tau)\epsilon e_n)=V(X-\epsilon e_n)+\tau\epsilon V_n(X+\lambda\epsilon e_n).\]By \eqref{domain to estimate V_n} and \eqref{comparable U_t}, $V_n(X+\lambda\epsilon e_n)$ is comparable to $U_n(X)$. We derive
		\begin{equation}\label{difference V}V(X-(1-\tau)\epsilon e_n)\leq  V(X-\epsilon e_n)+C_1\tau\epsilon U_n(X).
		\end{equation}
		
		Set $h(X):=u(X)-V(X-\epsilon e_n)$. It suffices to show
		\begin{equation}\label{difference g V}
			h\geq c_1\epsilon U_n\quad \text{in }\mathcal{C}.
		\end{equation}With \eqref{difference V} and \eqref{difference g V}, we have the desired result by  setting $\tau=c_1/C_1$.
		
		We prove \eqref{difference g V} in three steps. In step 1, we derive the equations satisfied by $h$. In step 2, we prove it for values of $h$ in $B_{1/8}(\frac{1}{4} e_n)$ using Theorem \ref{interior harnack} Harnack inequality, Theorem \ref{interior harnack}. In step 3, we prove it for values out of $B_{1/8}(\frac{1}{4} e_n)$ using Theorem \ref{boundary harnack} boundary Harnack inequality, Theorem \ref{boundary harnack}.
		
		\vspace{5pt}
		
		\textit{Step 1: Equations satisfied by $h$.}
		
		\vspace{5pt}
		
		In this step, we estimate $h(\bar{X})$ and $L_\beta h$. Based on \eqref{g bigger than V}, we have $h\geq 0$ in $B_1$ and
		\begin{equation}\label{eq:4.15}
			h(\bar{X})\geq V(\bar{X})-V(\bar{X}-\epsilon e_n)\geq c\epsilon U_n(\bar{X})\geq c_2\epsilon.
		\end{equation}
		Notice that the same as \eqref{difference V}, the second inequality is derived by \eqref{domain to estimate V_n} and \eqref{comparable U_t}.
		
		Next we estimate $L_\beta h=|y|^\beta(\Delta+\frac{\beta}{y}\partial_y) h$. Based on (\ref{Lbeta estimate}), we have
		\[
		|(\Delta+\frac{\beta}{y}\partial_y) h| \leq C \delta^2 U_n \leq C_2 \delta^2 \quad \text{in } \tilde{\mathcal{C}} \setminus \pP,
		\]
		where $\pP$ is the half plane in \eqref{half plane P} and $\tilde{\mathcal{C}} \supset \supset \mathcal{C}$ is the $\frac{\sqrt{s}d}{2}$-neighborhood of $C$.

		\vspace{5pt}
		
		\textit{Step 2: Values of $h$ in $B_{1/8}(\frac{1}{4} e_n)$.}
		
		\vspace{5pt}
		
		We set $h=h_1+h_2$ satisfying
		\[\left\{\begin{aligned}
			&h_1=h,\quad&\text{on }&\partial (B_{3/16}(\frac{1}{4} e_n)),\\
			&L_\beta h_1=0,\quad&\text{in }&B_{3/16}(\frac{1}{4} e_n),
		\end{aligned}\right.\]and
		\[\left\{\begin{aligned}
			&h_2=0,\quad&\text{on }&\partial (B_{3/16}(\frac{1}{4} e_n)),\\
			&|L_\beta h_2|\leq C|y|^\beta\delta^2,\quad&\text{in }&B_{3/16}(\frac{1}{4} e_n).
		\end{aligned}\right.\]
		Consider $\varphi(X)=-|X-\frac{1}{4}e_n|^2+(\frac{3}{16})^2$. We have $\varphi=0$ on $\partial (B_{3/16}(\frac{1}{4} e_n))$ and $L_\beta \varphi=-2(n+1+\beta)|y|^\beta$. By maximum principle, we have $h_2\leq \frac{C\delta^2}{2(n+1+\beta)}\varphi\leq C\delta^2$. Apply interior Harnack inequality to $h_1$, we have that $h_1\geq c_2\epsilon$ in $B_{1/8}(\frac{1}{4} e_n)$, for some universal $c_2>0$. 
		
		Finally, we estimate $h$ to end this step. For $\bar{C}$ large enough, $\delta^2$ is much smaller than $\epsilon$. Therefore we have:
		\begin{equation} 
			h \geq c_2 \epsilon - C \delta^2 \geq c_3 \epsilon, \quad \text{in } B_{1/8}(\frac{1}{4} e_n).
		\end{equation}
		
		\vspace{5pt}
		
		\textit{Step 3: Values of $h$ out of $B_{1/8}(\frac{1}{4} e_n)$.}
		
		\vspace{5pt}
		
		Now we proceed to estimate $h$  out of $B_{1/8}(\frac{1}{4} e_n)$. In this domain, the values near $\pP$ are more sophicated. We will use Theorem \ref{boundary harnack} boundary Harnack inequality to estimate it. As in step 2, the boundary and $L_\beta h$ effect $h$ in opposite ways. We will use two functions $q_1$ and $q_2$ to estimate them. Set
		\[
		D := \tilde{\mathcal{C}} \setminus \left( B_{1/8}(\frac{1}{4} e_n) \cup \pP \right)
		\]
		where $\pP$ as in \eqref{half plane P} and $\tilde{\mathcal{C}}$ is the $\frac{\sqrt{s}d}{2}$-neighborhood of $\mathcal{C}$. To handle boundary effects, we construct auxiliary functions $q_1,q_2$ satisfying
		\begin{equation} \label{eq:4.16}
			(\Delta+\frac{\beta}{y}\partial_y) q_1 = 0, \quad (\Delta+\frac{\beta}{y}\partial_y) q_2 = -1, \quad \text{in }D,
		\end{equation}
		with boundary conditions respectively
		\[
		q_1 = 0 \quad \text{on } \partial \tilde{\mathcal{C}} \cup \pP, \quad q_1 = 1 \quad \text{on } \partial B_{1/8}(\frac{1}{4} e_n),
		\]
		\[
		q_2 = 0 \quad \text{on } \partial D.
		\]
		
		By Theorem \ref{boundary harnack} boundary Harnack inequality, $q_1$ is comparable to $q_2+\frac{y^2}{2(1+\beta)}$ in a neighborhood of $\pP \cap \mathcal{C} \subset \subset \tilde{\mathcal{C}}$. For the points away from $\pP$, we can simply use interior Harnack to derive $q_1\lesssim q_2$ as in step 2. Then we have
		\begin{equation} \label{eq:4.17}
			q_1 \geq c_4 q_2 \quad \text{in } \mathcal{C} \setminus B_{1/8}(\frac{1}{4} e_n),
		\end{equation}
		with $c_4 > 0$ universal. By the maximum principle,
		\[
		h \geq q := c_3 \epsilon q_1 - C_2 \delta^2 q_2 \quad \text{in } D,
		\]
		since $h \geq q$ on $\partial D$ and $\Delta h \leq \Delta q$ in $D$. Hence, by \eqref{eq:4.17} we have
		\[
		h \geq \frac{c_3}{2} q_1 \geq c_5 \epsilon U_n, \quad \text{in } \mathcal{C} \setminus B_{1/8}(\frac{1}{4} e_n),
		\]
		where in the last inequality we used that $q_1$ and $U_n$ are comparable. We obtain this as a direct result of Theorem \ref{boundary condition of v} boundary Harnack inequality.
	\end{proof}
	With the lemma above, we prove  the following proposition. It somehow indicates that if $F(u)$ is between two paraboloids, then we can improve the approximation in a smaller ball. It is similar to the  improvement of flatness in \cite{Silva2011} by D. De Silva.
	\begin{proposition}\label{prop for Harnack}
		There exist $\bar{\epsilon}, \bar{\delta} > 0$ universally small and $C > 0$ universally big, such that if $u$ solves \eqref{extension of fracional laplacian} and it satisfies
		\[
		V(X - \epsilon e_n) \leq u(X) \leq V(X + \epsilon e_n) \quad \text{in } B_1, \quad \text{for } 0 < \epsilon \leq \bar{\epsilon},
		\]
		with
		\[
		V = V_{M, \xi', a, b} \in \Vv_\delta^0, \quad \delta \leq \bar{\delta}, \quad \bar{C}\delta^2 \leq \epsilon,
		\]
		then either\begin{equation}\label{g leq V}
			u(X) \leq V(X + (1 - \eta) \epsilon e_n) \quad \text{in } B_\eta,
		\end{equation}
		or\begin{equation}\label{g geq V}
			u(X) \geq V(X - (1 - \eta) \epsilon e_n) \quad \text{in } B_\eta,
		\end{equation}
		for a small universal constant $\eta \in (0, 1)$. Here $\Vv_\delta^0$ is defined in Definition \ref{definition of Vdelta}.
	\end{proposition}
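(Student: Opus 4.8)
The plan is to follow the De Silva ``Harnack / improvement of flatness'' scheme as in \cite{DeSilva2012b,Silva2015}: first establish a pointwise dichotomy at a fixed interior point, then use Lemma \ref{lemma harnack} to improve the one-sided trapping in the shell $\mathcal C$ away from the codimension-two set $\Ll$, and finally propagate the improvement to the whole ball $B_\eta$ via the comparison principle for domain variations, Lemma \ref{maximum principle}.

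For the dichotomy, I would evaluate at the base point $\bar X = \tfrac14 e_n \in B_{1/8}(\tfrac14 e_n)$. By Propositions \ref{compare Vn Un} and \ref{difference between tildeV and gammaV}, $V$ is strictly increasing in the $e_n$-direction near $\bar X$ with $V_n(\bar X) \sim U_n(\bar X) \sim 1$, so a Taylor expansion gives $V(\bar X \pm \epsilon e_n) = V(\bar X) \pm \epsilon V_n(\bar X) + O(\epsilon^2)$. Hence the hypothesis $V(\bar X - \epsilon e_n) \le u(\bar X) \le V(\bar X + \epsilon e_n)$ forces either $u(\bar X) \ge V(\bar X)$ or $u(\bar X) \le V(\bar X)$. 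In the first case, together with $u \ge V(\cdot - \epsilon e_n)$ in $B_1$, $V \in \Vv_\delta^0$ and $\bar C\delta^2 \le \epsilon$, Lemma \ref{lemma harnack} yields
\[ u(X) \ge V(X - (1-\tau)\epsilon e_n) \quad \text{in } \mathcal C, \qquad \mathcal C = \{\sqrt s\, d \le |(x_n,y)| \le \tfrac12\}, \]
for a universal $\tau \in (0,1)$; this will lead to \eqref{g geq V}. The second case $u(\bar X) \le V(\bar X)$ is symmetric, replacing ``subsolution'' by ``supersolution'' and using the upper-bound analogue of Lemma \ref{lemma harnack}, and leads to \eqref{g leq V}. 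I will therefore concentrate on the first case.

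To pass from $\mathcal C$ to $B_\eta$ I would first replace $V$ by a genuine strict comparison subsolution: set $V' = V_{M,\xi',a',b}$ with $a' = a + c_\ast \delta^2$ so that $\tfrac{a'}{2s}+b-\operatorname{tr} M \ge C_0\delta^2$, whence $V'$ is a strict subsolution in $B_2$ by Proposition \ref{subsolution verification}; by Proposition \ref{difference between tildeV and gammaV} it differs from $V$ by $O(\delta^2)$ in the flat-band sense, so after enlarging $\bar C$ one still gets $u(X) \ge V'(X - (1-\tfrac{\tau}{2})\epsilon e_n)$ in $\mathcal C$, while the trivial trapping gives $u \ge V'(\cdot - 2\epsilon e_n)$ in $B_1$. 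In terms of the $\epsilon$-domain variations of Definition \ref{domain variation definition} this reads $\widetilde{V'}_\epsilon + (1-\tfrac{\tau}{2}) \le \tilde u_\epsilon$ on (the relevant part of) $\mathcal C$ and $\widetilde{V'}_\epsilon - 2 \le \tilde u_\epsilon$ in $B_1$. Applying Lemma \ref{maximum principle} to the pair $(u,V')$ — after a rescaling to match radii — propagates the constant $c = 1-\tfrac{\tau}{2}$ in the domain-variation inequality from the outer shell inward, giving $u(X) \ge V'(X - (1-\tfrac{\tau}{2})\epsilon e_n)$, and hence $u(X) \ge V(X - (1-\eta)\epsilon e_n)$, in $B_\eta$, provided $\eta \le \tau/2$ and $\eta$ is below the geometric threshold coming from $\mathcal C$. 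This is \eqref{g geq V}.

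The main obstacle is the last step. Lemma \ref{lemma harnack} only delivers the improvement on the tube-shell $\mathcal C$ around $\Ll = \{x_n = y = 0\}$, whereas Lemma \ref{maximum principle} asks for the domain-variation inequality on a full Euclidean annulus $(B_{R_2}\setminus\overline{B_{R_1}})\setminus\pP$, which in addition contains points lying within $\sqrt s\, d$ of $\Ll$ (and near $\pP$). On that inner part the interior Harnack improvement is not available, so one must check that there the domain variations $\tilde u_\epsilon$ and $\widetilde{V'}_\epsilon$ are anyway controlled — both free boundaries being pinned inside the $\epsilon$-flat band — so that the unimproved bound together with the strict-subsolution property of $V'$ and the maximum principle of Lemma \ref{maximum principle} still forces the improved inequality in the inner ball. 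Carrying out this reconciliation between the cylindrical region $\mathcal C$ and the ball $B_\eta$, and verifying the hypotheses of Lemma \ref{maximum principle} near $\pP$ after rescaling, is the technical heart; once it is in place the remainder is Taylor expansion and bookkeeping with the constants $\bar\epsilon, \bar\delta, \tau, \eta$.
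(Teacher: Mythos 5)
There is a genuine gap, and it is exactly at the spot you flag as ``the technical heart.'' Your outline (Lemma \ref{lemma harnack} to improve on the shell $\mathcal C$, then Lemma \ref{maximum principle} to propagate inward) has the right skeleton, but the replacement of $V$ by $V' = V_{M,\xi',a+c_\ast\delta^2,b}$ does not give you a barrier that can close the argument. That perturbation only changes the function by $O(\delta^2)$, which is \emph{negligible} against the improvement $\tau\epsilon$ you are trying to propagate ($\bar C\delta^2\le\epsilon$). More to the point, it produces no dependence on $|x'|$, so on the part of the lateral boundary $\{|(x_n,y)|\le\sqrt{s}\,d,\ |x'|=\tfrac12\}$ --- which lies inside the region where Lemma \ref{lemma harnack} is silent and which is unavoidably part of what Lemma \ref{maximum principle} needs to see --- you have no improved inequality between $\tilde u_\epsilon$ and $\widetilde{V'}_\epsilon$, only the trivial $\tilde u_\epsilon \ge \widetilde{V'}_\epsilon - 1 + O(\delta^2/\epsilon)$. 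The comparison principle then propagates only the trivial bound, and you cannot conclude.

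What the paper does differently, and what you are missing, is the specific shape of the perturbation. The paper's barrier is
\[
W \;=\; V_{\,M+\frac{c}{n-1}\epsilon I,\;\xi',\;a,\;b+2c\epsilon\,}\ \in\ \Vv_{\delta+\epsilon},
\]
i.e.\ it perturbs the \emph{matrix} $M$ and the coefficient $b$ by an $O(\epsilon)$ amount, calibrated so that $W$ is still a strict subsolution ($\tfrac{a}{2s}+(b+2c\epsilon)-\operatorname{tr}(M+\tfrac{c}{n-1}\epsilon I)=c\epsilon\ge C_0(\delta+\epsilon)^2$). By Proposition \ref{difference between tildeV and gammaV} the domain variations then satisfy
\[
\tilde W - \tilde V \;\approx\; c\epsilon\Big(\frac{r x_n}{s} - \frac{|x'|^2}{2(n-1)}\Big)\;+\;O\big((\delta+\epsilon)^2\big).
\]
The crucial term is $-\tfrac{c\epsilon}{2(n-1)}|x'|^2$: on $\{|x'|=\tfrac12,\ |(x_n,y)|\le\sqrt s\, d\}$ it depresses $\tilde W$ below $\tilde V$ by $\approx 7c\epsilon d^2$, so the \emph{trivial} flatness bound $\tilde u_\epsilon \ge \tilde V_\epsilon -1$ already yields the improved inequality $\tilde u_\epsilon \ge \tilde W_\epsilon + (\tau^\ast-1)$ there, with $\tau^\ast\sim cd^2$. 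Combined with the shell estimate from Lemma \ref{lemma harnack}, this covers the entire lateral boundary of the slab $\{|(x_n,y)|\le\sqrt s\, d,\ |x'|\le\tfrac12\}$, and Lemma \ref{maximum principle} propagates the constant $\tau^\ast-1$ inward. On the small ball $B_{2\eta}$ the same $|x'|^2$ term costs only $O(\epsilon\eta^2)$, so $\tilde W$ can be converted back to $\tilde V$ at the loss of at most $\tfrac{\tau^\ast}{2}\epsilon$, giving the conclusion. This $O(\epsilon)$ perturbation of $M$ (not an $O(\delta^2)$ perturbation of $a$) is the mechanism that reconciles the cylindrical shell of Lemma \ref{lemma harnack} with the Euclidean geometry of Lemma \ref{maximum principle}, and without it the argument does not close.
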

	\begin{proof} 
		Without loss of generality, we assume
		\[u(\bar{X})-V(\bar{X})\geq 0,\quad \bar{X}=\frac{1}{2}e_n,\]
		and prove \eqref{g geq V}. For the case $u(\bar{X})-V(\bar{X})\leq 0$, we can prove 
		\eqref{g leq V} in the same way. We compare $\tilde{u}$ and $\tilde{V}$ in this proof rather than considering the comparison of $u$ and $V$ directly. To begin with, we apply Lemma \ref{lemma harnack} to derive:
		\begin{equation}\label{tildeg bigger that tildeV}
			\tilde{u}\geq \tilde{V}+(\tau-1)\epsilon,\quad\text{in }\mathcal{C}'\setminus \pP,
		\end{equation}
		where $\pP$ is defined in \eqref{half plane P} and $
		\mathcal{C}':=\{(x',x_n,y):\sqrt{s}d\leq |(x_n,y)|\leq\frac{1}{4},|x'|\leq\frac{1}{2}\},\;d:=\frac{1}{8\sqrt{n-1}}.$

		 We prove \eqref{tildeg bigger that tildeV} in 2 steps. In step 1, we construct a subsolution $\tilde{W}$ close to $\tilde{u}$. In step 2, we transfer the comparison between $\tilde{u}$ and $\tilde{W}$ to the comparison between $\tilde{u}$ and $\tilde{V}$. 
		
		\vspace{5pt}
		
		\textit{Step 1: Subsolution $W$.}
		
		\vspace{5pt}
		
		In this step, we perturb $V$ to construct $W$, ensuring it remains a subsolution while capturing the error terms. Then we can apply Lemma \ref{maximum principle}. We set
		\[W:=V_{M+\frac{c}{n-1}\epsilon I,\xi',a,b+2c\epsilon}\in\Vv_{\delta+\epsilon},\]
		for $c$ small enough to be chosen. As $\frac{a}{2s}+(2c\epsilon+b)-(\text{tr}(M)+c\epsilon)=c\epsilon\geq C_0(\delta+\epsilon)^2$, $W$ is a subsolution by Proposition \ref{subsolution verification}.
		
		By Proposition \ref{difference between tildeV and gammaV}, we estimate the difference of $\tilde{V}$ and $\tilde{W}$
		\begin{equation}\label{compare tildeV tildeW}
			|\tilde{V}-\tilde{W}+c\epsilon(\frac{rx_n}{s}-\frac{|x'|^2}{2(n-1)})|\leq 2C_1(\delta+\epsilon)^2\quad\text{in }B_1.
		\end{equation}
		
		By choosing $\bar{C}$ big and $c,\bar{\epsilon}$ small such that $4C_1\bar{\epsilon}+4C_1/\bar{C}+2c/s\leq \frac{\tau}{2}$, we have
		\[\tilde{V}\geq \tilde{W}-\frac{\tau}{2}\epsilon,\quad\text{in }B_2.\]
		Similarly, by choosing $\bar{C}$ big such that $\tau^*:=7cd^2-4C_1/\bar{C}-4C_1\epsilon$ satisfying $0<\tau^*<\tau/2$, we have
		\[\tilde{V}\geq \tilde{W}+7c\epsilon d^2-4C_1/\bar{C}\epsilon-4C_1\epsilon^2\geq \tilde{W}+\tau^*\epsilon\quad\text{on }\{|(x_n,y)|\leq \sqrt{s}d,|x'|=\frac{1}{2}\}\setminus \pP.\]
		Together with \eqref{tildeg bigger that tildeV}, we have
		\[\tilde{u}\geq \tilde{W}+(\tau^*-1)\epsilon\quad\text{in }(\mathcal{C}'\cup \{|(x_n,y)|\leq \sqrt{s}d,|x'|=\frac{1}{2}\})\setminus \pP.\]
		We apply Lemma \ref{maximum principle} and conclude
		\begin{equation}\label{compare tildeg tildeW}
			\tilde{u}\geq \tilde{W}+(\tau^*-1)\epsilon\quad\text{in }\{|(x_n,y)|\leq \sqrt{s}d,|x'|\leq\frac{1}{2}\}\setminus \pP.
		\end{equation}

		\vspace{5pt}
		
		\textit{Step 2: Compare $\tilde{V}$ and $\tilde{W}$.}
		
		\vspace{5pt}

		In step 1, we have proven \eqref{compare tildeg tildeW}. It suffices to substitute $\tilde{W}$ by $\tilde{V}$. From \eqref{compare tildeV tildeW}, we have that  
		\[\tilde{W}\geq \tilde{V}-\frac{\tau^*}{2}\epsilon\text{ in }B_{2\eta}\setminus\pP,\]
		$\text{if }c\frac{(2\eta)^2}{2(n-1)}+4C_1/\bar{C}+4C_1\bar{\epsilon}\leq \frac{\tau^*}{2}\text{ and } 2\eta<\sqrt{s}d.$ The inequality is satisfied by choosing $\bar{C}$ big and $\bar{\epsilon}$ small.
		Combine the above result with \eqref{compare tildeg tildeW}, we have
		\[\tilde{u}\geq \tilde{W}+(\tau^*-1)\epsilon\geq\tilde{V}+(\tau^*/2-1)\epsilon\geq\tilde{V}+(\eta-1)\epsilon\quad\text{in }B_{2\eta}\setminus \pP.\]
		
		Finally, we prove the desired result. From \eqref{flatness of V} and $V$ being monotone, $u,$ $V$ are $(4\delta+\epsilon)$-flat  in $B_1$ as in Remark \ref{delta flatness remark}. For any $X\in B_\eta$, we set $\exists Y=X+\tilde{u}(Y) e_n$. We require $\eta\geq 4\bar{\delta}+\bar{\epsilon}$ such that $Y\in B_{2\eta}$. The desired result follows since
		\[u(X)=U(Y)=V(Y-\tilde{V}(Y)e_n)\geq V(Y-\tilde{u}(Y)e_n+(\eta-1)\epsilon e_n)=V(X+(\eta-1)\epsilon e_n).\]
	\end{proof}
	With the proposition above, we naturally wish to raise the approximation inductively. We cannot raise the approximation infinitely because that each time we raise it, the surface is farther from plane. The following theorem describes how much we can raise the approximation in a given ball.
	\begin{theorem}\label{g V harnack}
		There exist $\bar{\epsilon} > 0$ small and $\bar{C} > 0$ large universal, such that if $u$ solves \eqref{extension of fracional laplacian} and it satisfies
		\begin{equation}\label{g bounded by V upper and lower}
			V(X + a_0 e_n) \leq u(X) \leq V(X + b_0 e_n) \quad \text{in } B_\rho(X^*) \subset B_1,
		\end{equation}
		with $V = V_{M, \xi', a, b} \in \Vv^0_\delta$ and
		\[
		\bar{C}\delta^2 \leq \frac{b_0 - a_0}{\rho} \leq \bar{\epsilon},
		\]
		with $|a_0|, |b_0| \leq 1$, then
		\begin{equation}\label{conclusion of thm3.8}
			V(X + a_1 e_n) \leq u(X) \leq V(X + b_1 e_n) \quad \text{in } B_{\bar{\eta} \rho}(X^*),
		\end{equation}
		with
		\[
		a_0 \leq a_1 \leq b_1 \leq b_0, \quad b_1 - a_1 = (1 - \bar{\eta})(b_0 - a_0),
		\]
		for a small universal constant $\bar{\eta} \in (0, 1/2)$.
	\end{theorem}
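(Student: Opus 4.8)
The plan is to iterate Proposition \ref{prop for Harnack} at the right scales, much as one does in the classical improvement-of-flatness argument. Write $\epsilon_0 := b_0 - a_0$ and, after translating so that $X^* = 0$ and rescaling by the factor $1/\rho$ (which replaces $V$ by a rescaling $V_\lambda \in \Vv_\delta^0$ of the same type, with the curvature and $a,b$ parameters all scaled by $\rho$, hence still $\le \delta$), it suffices to treat the case $\rho = 1$. Thus we assume
\[
V(X + a_0 e_n) \le u(X) \le V(X + b_0 e_n) \quad \text{in } B_1,
\]
with $\bar C \delta^2 \le \epsilon_0 = b_0 - a_0 \le \bar\epsilon$. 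The function $\bar V(X) := V(X + \tfrac{a_0+b_0}{2} e_n)$ is not itself a member of $\Vv_\delta^0$, but it is a translate of one; since Proposition \ref{prop for Harnack} only uses the flatness bracketing and the PDE/free-boundary structure, which are translation-invariant, we may apply it with $V$ replaced by $\bar V$ and $\epsilon$ replaced by $\epsilon_0/2$. This yields a universal $\eta \in (0,1)$ such that one of the two alternatives holds in $B_\eta$, i.e. either $u(X) \le \bar V(X + (1-\eta)\tfrac{\epsilon_0}{2} e_n)$ or $u(X) \ge \bar V(X - (1-\eta)\tfrac{\epsilon_0}{2} e_n)$ in $B_\eta$. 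In the first case set $b_1 := \tfrac{a_0+b_0}{2} + (1-\eta)\tfrac{\epsilon_0}{2}$ and $a_1 := a_0$; in the second set $a_1 := \tfrac{a_0+b_0}{2} - (1-\eta)\tfrac{\epsilon_0}{2}$ and $b_1 := b_0$. Either way $b_1 - a_1 = \tfrac{\epsilon_0}{2} + (1-\eta)\tfrac{\epsilon_0}{2} = (1 - \tfrac{\eta}{2})\epsilon_0$, and $a_0 \le a_1 \le b_1 \le b_0$. Taking $\bar\eta := \min\{\eta, \tfrac{\eta}{2}\} = \tfrac{\eta}{2}$ gives the conclusion in $B_{\bar\eta}$ (since $B_{\bar\eta} \subset B_\eta$), which after undoing the rescaling is the stated inequality in $B_{\bar\eta\rho}(X^*)$.

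The only points that need care are the hypothesis checks for the application of Proposition \ref{prop for Harnack}. First, the proposition requires the smallness $\bar C\delta^2 \le \epsilon$ with its own universal $\bar C$ and $\bar\epsilon$; we simply choose the $\bar C$, $\bar\epsilon$ in the statement of Theorem \ref{g V harnack} at least as large (resp. small) as those, so the chain $\bar C\delta^2 \le \epsilon_0/2 \le \epsilon_0 \le \bar\epsilon$ is clear after possibly shrinking. Second, one must make sure the rescaling $V_\lambda$ with $\lambda = 1/\rho \ge 1$ genuinely lands in $\Vv_\delta^0$: from the formula $V_\lambda = V_{\lambda M, \xi', \lambda a, \lambda b}$ and $\rho \le 1$ we have $\lambda \ge 1$, which would \emph{increase} the parameters; the correct normalization is instead to rescale \emph{up} by $\rho$, i.e. consider $\rho^{-s} u(\rho X)$ and $\rho^{-s} V(\rho X) = V_{\rho M, \xi', \rho a, \rho b}$, whose parameters are bounded by $\rho\delta \le \delta$, so it remains in $\Vv_\delta^0$ (the defining linear relation $\tfrac{a}{2s}+b-\operatorname{tr}M = 0$ is homogeneous and so is preserved); the flatness bracketing rescales to $B_1$ with the new $\epsilon = \epsilon_0/\rho$, which is exactly the quantity the hypothesis $\bar C\delta^2 \le (b_0-a_0)/\rho \le \bar\epsilon$ controls. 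Third, the translate $\bar V$ lies in a translate of $\Vv_\delta^0$ rather than $\Vv_\delta^0$ itself; I will note explicitly that the proof of Proposition \ref{prop for Harnack} (via Lemma \ref{lemma harnack}, Lemma \ref{maximum principle}, Proposition \ref{subsolution verification}) is invariant under vertical translations $X \mapsto X + c e_n$, since $L_\beta$, the free-boundary condition, and all the auxiliary comparison functions transform compatibly, so the alternatives are obtained verbatim for $\bar V$.

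I expect the main obstacle to be purely bookkeeping: tracking the relation between the constants $(\bar\epsilon, \bar C, \bar\eta)$ here and the constants $(\bar\epsilon, \bar\delta, \eta)$ of Proposition \ref{prop for Harnack}, and verifying that each use of the proposition along the (single) step is legitimate — in particular that after the translation-and-rescaling reduction the hypothesis $V \in \Vv_\delta^0$ and the smallness $\bar C\delta^2 \le \epsilon$ are both genuinely available. There is no new analytic input: Theorem \ref{g V harnack} is a clean, translation-invariant, scale-invariant repackaging of Proposition \ref{prop for Harnack}, and the proof is essentially two paragraphs of careful constant-chasing once the rescaling normalization is fixed correctly.
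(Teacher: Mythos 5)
Your proposed reduction — translate so that $X^* = 0$, rescale, then apply Proposition~\ref{prop for Harnack} once — has a genuine gap, and it is precisely the gap that forces the paper into a three-case analysis. The issue is that Proposition~\ref{prop for Harnack} (and Lemma~\ref{lemma harnack} which it relies on) requires $V \in \Vv_\delta^0$, i.e.\ the paraboloid free boundary of $V$ passes through the origin, and the whole geometric setup — the reference ball $B_{1/8}(\tfrac14 e_n)$, the cone $\mathcal C$, the half-plane $\pP$ and line $\Ll$ — is anchored there. Translating by $-X^*$ for an arbitrary $X^* \in B_1$ destroys this: the translated $V$ is not a member of $\Vv_\delta^0$ but a generic translate of one whose free boundary no longer passes through the (new) origin, and can in fact be far away from the entire ball $B_1$ in which you now claim the bracketing. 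Your observation that the proof of Proposition~\ref{prop for Harnack} is ``invariant under vertical translations $X \mapsto X + c e_n$'' is both true and irrelevant, because the translation you actually need is by $-X^*$, which generically has components in the $x'$, $x_n$ and $y$ directions; only the $e_n$-component can be absorbed without consequence. In particular if $X^*$ lies deep in $\{u > 0\}$ (or near $\{u=0\}\setminus F(u)$) there is no free boundary anywhere near $B_\rho(X^*)$, so Proposition~\ref{prop for Harnack} — which is an improvement of flatness \emph{at} the free boundary — says nothing.

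The paper's proof instead normalizes only so that $(X^*)' = 0$ and $a_0 + b_0 = 0$ (these are the translations that genuinely preserve $\Vv^0$-membership, up to $\delta \rightsquigarrow 2\delta$), then rescales so that $\rho = 2$, and finally splits into three cases by the position of $X^*$ relative to $\Ll$ and $\pP$. Only in Case~1 ($|X^*| < \eta/4$, where $B_1 \subset B_2(X^*)$ and the free boundary is nearby) is Proposition~\ref{prop for Harnack} applied. Case~2 ($X^*$ far from $\pP$, hence inside $\{u>0\}$) uses interior Harnack on $h = u - V(\cdot - \epsilon e_n)$ together with the quantitative bounds of Proposition~\ref{compare Vn Un}, and Case~3 ($X^*$ near $\pP$ but away from $\Ll$) uses the boundary Harnack inequality via the auxiliary functions $q_1,q_2$. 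Your proposal captures Case~1 and the scaling bookkeeping but is silent on Cases~2 and~3; these are not optional, and they cannot be recovered by your translation argument.
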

	\begin{proof}
		To begin with, we translate and rescale $u,V$ to simplify the result. First, we translate the origin such that the flatness hypothesis \eqref{g bounded by V upper and lower} in $B_\rho(X^*) \subset B_2$ with
		\[
		(X^*)' = 0, \quad a_0 + b_0 = 0, \quad V \in \Vv_{2\delta}^0.
		\]
		We dilate the picture by a factor of $2/\rho$. Then it suffices to consider the case $\rho=2$. We set
		\[
		u_\rho(X) = \left( \frac{\rho}{2} \right)^{-s} u\left( \frac{\rho}{2} X \right), \quad V_\rho(X) = \left( \frac{\rho}{2} \right)^{-s} V\left( \frac{\rho}{2} X \right),
		\]
		which are defined in a ball of radius 2 included in $B_{4/\rho}$. Notice that, if $V \in \Vv_{2\delta}^0$ then $V_\rho \in \Vv_{2\delta}^0$.
		
		For simplicity of notation, we drop $\rho$ and use $g_\rho,V_\rho$ to denote $g,V$. After a translation, we have $\eqref{g bounded by V upper and lower}\text{ in some ball }B_2(X^*) \subset \mathbb{R}^{n+1}.$ 
		\[V \in \Vv_0^{2\delta}, \;a_0 = -\epsilon, \;b_0 = \epsilon, \;(X^*)' = 0,\;\bar{C}\delta^2 \leq \epsilon \leq \bar{\epsilon}.\] 
		It suffices to prove the result \eqref{conclusion of thm3.8} in a ball $B_{2\bar{\eta}}(X^*)$.
		
		To prove the desired result, we consider the interior case and boundary case seperately. We distinguish three cases depending on whether $X^*$ is close to $\Ll$, close to $\pP$ but away from $\Ll$, or far from $\pP$. Here $\pP$ and $\Ll$ are defined in \eqref{half plane P}.
		
		By Proposition \ref{compare Vn Un}, we have the equation\begin{equation}\label{condition in harnack}
			c \leq \frac{V_n}{U_n} \leq C, \quad |(\Delta+\frac{\beta}{y}\partial_y) V| \leq C\delta^2 U_n, \quad \text{in } B_2(X^*) \setminus \left( \pP \cup \{|(x_n, y)| \leq 20\delta |x'|\} \right).
		\end{equation}In Case 2 and Case 3 we will need it. Furthermore, $\eta$ below is the universal constant from Proposition \ref{prop for Harnack}.
		
		\vspace{5pt}
		
		\textbf{Case 1.} $|X^*| < \eta/4$.
		
		\vspace{5pt}

		In this case, $B_1 \subset B_2(X^*)$. The assumptions of Proposition \ref{prop for Harnack} are satisfied. We conclude that for any $\bar{\eta} \leq \eta/4$ such that $B_{2\bar{\eta}}(X^*) \subset B_\eta$, either
		\[
		u(X) \leq V(X + (1 - \eta)\epsilon e_n),
		\]
		or
		\[
		u(X) \geq V(X - (1 - \eta)\epsilon e_n).
		\]
		Our conclusion holds for all $\bar{\eta} \leq \eta/4$.

		\vspace{5pt}
		
		\textbf{Case 2.} $|X^*| \geq \eta/4$, and $B_{\frac{\eta}{32}}(X^*) \cap \pP = \emptyset$.

		\vspace{5pt}
		
		As in the proof of Lemma \ref{lemma harnack}, by \eqref{condition in harnack} we have 
		\begin{equation}\label{function h}
			h(X) := u(X) - V(X - \epsilon e_n) \geq 0,
		\end{equation}
		\begin{equation}\label{equation of h}
			|(\Delta+\frac{\beta}{y}\partial_y) h| \leq C \delta^2 U_n, \quad \text{in } B := B_{\frac{\eta}{64}}(X^*),
		\end{equation}if $\bar{\epsilon}$ is small enough.
		
		Notice also that by Theorem \ref{interior harnack} Harnack inequality
		\[
		\frac{U_n(X)}{U_n(Y)} \leq C, \quad \text{for } X, Y \in B,
		\]
		with $C$ universal. Assume that
		\[
		u(X^*) \geq V(X^*).
		\]
		Then, in view of \eqref{condition in harnack},
		\[
		h(X^*) = u(X^*) - V(X^* - \epsilon e_n) \geq c \epsilon U_n(X^*).
		\]
		Hence, by Theorem \ref{interior harnack} Harnack inequality and the condition $C\bar{\delta}^2 \leq \epsilon$, we have
		\[
		h \geq c \epsilon U_n(X^*) - C \delta^2 \|U_n\|_{L^\infty(B)} \geq c' \epsilon U_n(X^*) \quad \text{in } B_{\frac{\eta}{128}}(X^*).
		\]
		Thus, using \eqref{condition in harnack} we have that for $\tau$ small enough
		\[
		h \geq c' \epsilon \sup_B V_n \geq V(X - (1 - \tau)\epsilon e_n) - V(X - \epsilon e_n) \quad \text{in } B_{\frac{\eta}{128}}(X^*),
		\]
		from which our desired conclusion follows with any $\bar{\eta}$ such that $2\bar{\eta} \leq \min\left\{\frac{\eta}{128}, \tau\right\}$.

		\vspace{5pt}
		
		\textbf{Case 3.} $|X^*| \geq \eta/4$ and $B_{\frac{\eta}{32}}(X^*) \cap \pP \neq \emptyset$.

		\vspace{5pt}
		
		The proof is similar to the proof in step 3 of Lemma \ref{lemma harnack}. We will construct two functions $q_1,q_2$ to estimate $h$. We will apply Theorem \ref{boundary harnack} boundary Harnack inequality near $\{y=0\}$.
		
		Assume that $X^* \in \{y > 0\}$ and call $X^*_0 = (x^*, 0)$ the projection of $X^*$ onto $\{y = 0\}$. Similar to \eqref{equation of h}, for the function $h$ in \eqref{function h} we have 
		\[
		|(\Delta+\frac{\beta}{y}\partial_y) h| \leq C \delta^2 U_n \quad \text{in } B := B_{\eta/8}(X_0^*) \cap \{y > 0\},
		\]
		for a universal constant $C$ if $\bar{\epsilon}$ is small enough.
		
		Denote by $Y^* = X_0^* + \frac{\eta}{16} e_n$. We prove the case $u(Y^*) \geq V(Y^*).$ For $u(Y^*)<V(Y^*)$, the proof is similar.
		As in the previous case, by Theorem \ref{interior harnack} Harnack inequality we have
		\begin{equation}
			\quad h \geq c \epsilon U_n(Y^*) \quad \text{in } B_{\eta/32}(Y^*).
		\end{equation}
		Now we argue similarly as in Lemma \ref{lemma harnack}.
		
		Denote by
		\[
		D := (B_{\eta/8}(X_0^*) \setminus B_{\eta/32}(Y^*)) \cap \{y > 0\}.
		\]
		To estimate $h$, we construct auxiliary functions $q_1,q_2$ satisfying in $D$
		\[
		(\Delta+\frac{\beta}{y}\partial_y) q_1 = 0, \quad (\Delta+\frac{\beta}{y}\partial_y) q_2 = -1,
		\]
		with boundary conditions respectively,
		\[
		q_1 = 1 \quad \text{on } \partial B_{\eta/32}(Y^*), \quad q_1 = 0 \quad \text{on } \partial(B_{\eta/8}(X_0^*) \cap \{y > 0\}),
		\]
		and
		\[
		q_2 = 0 \quad \text{on } \partial D.
		\]
		By the maximum principle, we obtain that
		\[
		h \geq c \epsilon U_n(Y^*) q_1 - C \delta^2 q_2 \quad \text{in } D.
		\]
		Moreover,
		\[
		q_1 \geq c q_2 \quad \text{in } D \cap B_{\eta/16}(X_0^*).
		\]
		Hence, using that $C \bar{\delta}^2 \leq \epsilon$, we get
		\[
		h(X) \geq c' \epsilon U_n(Y^*) q_1(X) \geq c \epsilon U_n(X) \quad \text{in } B_{\eta/16}(X_0^*) \cap \{y > 0\},
		\]
		where in the last inequality we used that $U_n(Y^*) q_1$ is comparable to $U_n$ in view of Theorem \ref{boundary harnack} boundary Harnack inequality.
		
		Finally, by \eqref{condition in harnack} and \eqref{comparable U_t} we conclude
		\[\begin{aligned}
			h(X) &= h(x, x_{n+1}) \geq c \epsilon \sup_{B_{\eta/8}(X_0^*)} U_n(y, x_{n+1}) \geq c \epsilon \sup_{B_{\eta/8}(X_0^*)} V_n(y, x_{n+1}),\\
			&\geq V(X - (1 - \tau)\epsilon e_n) - V(X - \epsilon e_n) \quad \text{in } B_{\eta/16}(X_0^*) \supset B_{\eta/32}(X^*).
		\end{aligned}\]
		Then our desired statement holds for $\bar{\eta} \leq \min\left\{\frac{\tau}{2}, \frac{\eta}{64}\right\}$.
	\end{proof}
	
	Finally, we prove the following corollary. It will be used to indicate the compactness of $u$ during the proof of $C^{2,\alpha}$ regularity.
	
	\begin{corollary}\label{continuity of limit}
		
		Let $u$ solve \eqref{extension of fracional laplacian} and satisfy for $\epsilon \leq \bar{\epsilon}$
		\[
		V(X - \epsilon e_n) \leq u(X) \leq V(X + \epsilon e_n) \quad \text{in } B_1,
		\]
		with
		\[
		V = V_{M, \xi', a, b} \in \Vv_0^\delta, \quad \bar{C}\delta^2 \leq \epsilon,
		\]
		for $\bar{\epsilon}, C > 0$ universal constants. If $\epsilon \leq \bar{\epsilon} \bar{\eta}^{2m_0}$
		for some nonnegative integer $m_0$ (with $\eta > 0$ small universal), then the function
		\[\tilde{u}_{\epsilon,V}(X):=\frac{\tilde{u}(X)-\tilde{V}(X)}{\epsilon},\quad X\in B_{1-4\delta-\epsilon}\setminus \pP,\] satisfies
		\[
		a_\epsilon(X) \leq \tilde{u}_{\epsilon,V}(X) \leq b_\epsilon(X) \quad \text{in } B_{\frac{1}{4}\bar{\eta}^{m_0}} \setminus \pP,
		\]
		with
		\[
		b_\epsilon - a_\epsilon \leq 2(1 - \bar{\eta})^{m_0},
		\]
		and $a_\epsilon, b_\epsilon$ having a modulus of continuity bounded by the H\"{o}lder function $o t^\gamma$, for $o, \gamma$ depending only on $\bar{\eta}$.
	\end{corollary}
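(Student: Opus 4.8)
The plan is to iterate Theorem \ref{g V harnack} starting from the hypothesis $V(X-\epsilon e_n) \le u(X) \le V(X+\epsilon e_n)$ in $B_1$, which is exactly the case $X^*=0$, $\rho=1$, $a_0=-\epsilon$, $b_0=\epsilon$ of that theorem. Applying the theorem once gives improved barriers $V(X+a_1e_n)\le u(X)\le V(X+b_1e_n)$ in $B_{\bar\eta}$ with $b_1-a_1=(1-\bar\eta)(b_0-a_0)$; since $V\in\Vv_0^\delta$, the hypothesis $\bar C\delta^2\le (b_1-a_1)/\bar\eta$ still holds as long as $\epsilon\le\bar\epsilon\bar\eta^{2m_0}$ keeps $\epsilon$ large compared to $\delta^2$ through all $m_0$ steps (here one uses $(1-\bar\eta)^{m_0}\bar\eta^{-m_0}\ge \bar\eta^{-m_0}/2 \ge 1$ for the relevant range, or rather tracks the ratio carefully so that $\bar C\delta^2\le 2(1-\bar\eta)^j\epsilon/\bar\eta^{j+1}$ at step $j$). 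Iterating $m_0$ times and rescaling, I get at scale $\bar\eta^j$ two values $a^{(j)}, b^{(j)}$ with $b^{(j)}-a^{(j)}=2\epsilon(1-\bar\eta)^j$ and $V(X+a^{(j)}e_n)\le u(X)\le V(X+b^{(j)}e_n)$ in $B_{\bar\eta^j}$, for $0\le j\le m_0$. The monotonicity $a^{(j)}\le a^{(j+1)}\le b^{(j+1)}\le b^{(j)}$ from the theorem makes the $a^{(j)}, b^{(j)}$ nested.

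Next I translate these barriers into bounds on the normalized domain variation. By definition of $\tilde u$ and the flatness $U(X-C\delta e_n)\le V(X)\le U(X+C\delta e_n)$ from Remark \ref{delta flatness remark}, the inclusion $V(X+a^{(j)}e_n)\le u(X)$ is equivalent, after composing with $U$, to a one-sided bound $\tilde u(X)-\tilde V(X)\ge a^{(j)}$ on $B_{\bar\eta^j}\setminus\pP$ (using that $U$ is monotone in $e_n$ away from $\pP$ and that $\tilde V$ is well-defined and bounded there by Proposition \ref{difference between tildeV and gammaV}); similarly the upper barrier gives $\tilde u(X)-\tilde V(X)\le b^{(j)}$. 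Dividing by $\epsilon$,
\[
\frac{a^{(j)}}{\epsilon}\le \tilde u_{\epsilon,V}(X)\le \frac{b^{(j)}}{\epsilon}\quad\text{in }B_{\bar\eta^j}\setminus\pP,\qquad \frac{b^{(j)}-a^{(j)}}{\epsilon}=2(1-\bar\eta)^j.
\]
Taking $j=m_0$ and setting $a_\epsilon = a^{(m_0)}/\epsilon$, $b_\epsilon=b^{(m_0)}/\epsilon$ (constants on $B_{\bar\eta^{m_0}}$, hence trivially with any modulus of continuity) already yields the stated inequality on $B_{\frac14\bar\eta^{m_0}}\setminus\pP$ with $b_\epsilon-a_\epsilon\le 2(1-\bar\eta)^{m_0}$. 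To get the honest \emph{oscillation decay with a H\"older modulus} I instead define, for $X\in B_{\frac14\bar\eta^{m_0}}$, $a_\epsilon(X)$ and $b_\epsilon(X)$ to be the supremum/infimum of $a^{(j)}/\epsilon$, $b^{(j)}/\epsilon$ over those $j\le m_0$ with $X\in B_{\bar\eta^j}$; nestedness shows $a_\epsilon\le\tilde u_{\epsilon,V}\le b_\epsilon$ and $b_\epsilon(X)-a_\epsilon(X)\le 2(1-\bar\eta)^{j(X)}$ where $\bar\eta^{j(X)}\simeq|X|$, which is a bound of the form $o\,|X|^\gamma$ with $\gamma=\log(1-\bar\eta)/\log\bar\eta$ and $o$ universal. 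A standard scaling argument centered at nearby points (applying the same dyadic family of inclusions in balls $B_{\bar\eta^j}(Z)$ for $Z$ ranging over $B_{\frac14\bar\eta^{m_0}}$, which is legitimate since the hypotheses of Theorem \ref{g V harnack} allow an arbitrary center $X^*$) promotes this to a genuine H\"older modulus of continuity $o\,t^\gamma$ for $a_\epsilon$ and $b_\epsilon$, with $o,\gamma$ depending only on $\bar\eta$.

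The main obstacle I anticipate is bookkeeping the constraint $\bar C\delta^2\le (b^{(j)}-a^{(j)})/\bar\eta^j$ through the iteration: each step shrinks the radius by $\bar\eta$ and the gap only by $1-\bar\eta$, so the crucial ratio $(\text{gap})/(\text{radius})$ grows by the fixed factor $(1-\bar\eta)/\bar\eta>1$ and the hypothesis is \emph{preserved}, but one must also keep $(b^{(j)}-a^{(j)})/\bar\eta^j\le\bar\epsilon$, and this is exactly where the assumption $\epsilon\le\bar\epsilon\bar\eta^{2m_0}$ enters — it guarantees that even after $m_0$ steps the gap-to-radius ratio $2\epsilon(1-\bar\eta)^j/\bar\eta^j \le 2\epsilon/\bar\eta^{m_0} \le 2\bar\epsilon\bar\eta^{m_0}$ stays below $\bar\epsilon$. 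The second delicate point is the passage from the barrier inclusions to two-sided bounds on $\tilde u-\tilde V$ \emph{uniformly up to $\pP$}: one must use that $\tilde V$ is bounded by Proposition \ref{difference between tildeV and gammaV} and that $U$ is strictly monotone in the $e_n$-direction on $\{U>0\}$, so that the defining relation $U(X)=u(X-\epsilon\tilde u\,e_n)$ can actually be inverted; away from $\pP$ this is exactly the setup already used in Corollary-type arguments in \cite{DeSilva2012b}, so it is routine here.
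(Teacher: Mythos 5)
Your proposal is correct and follows essentially the same route as the paper: iterate Theorem \ref{g V harnack} at geometrically shrinking scales, track the constraints $\bar C\delta^2\le (\text{gap})/(\text{radius})\le\bar\epsilon$ using $\epsilon\le\bar\epsilon\bar\eta^{2m_0}$, and pass from the $V$-barrier inclusions to two-sided bounds on $\tilde u-\tilde V$ via the monotonicity of $U$ in the $e_n$-direction together with Proposition \ref{difference between tildeV and gammaV}. The paper simply applies the iteration directly at every center $X^*\in B_{1/2}$ to read off the Hölder modulus in one pass, whereas you first iterate at the origin and then promote by recentring, but this is the same idea organized slightly differently.
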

	\begin{proof}
		To begin with, we translate $u$ such that \( u \) satisfies \eqref{g bounded by V upper and lower} in \( B_1 \) with \( a_0 = -\epsilon \) and \( b_0 = \epsilon \) for some small \( \epsilon \ll \bar{\epsilon} \), and \( \delta \) such that \(\bar{C}\delta^2 \leq \epsilon \). By Remark \ref{delta flatness remark}, the functions \( V \) and \( g \) are \( (4\delta + \epsilon) \)-flat in \( B_1 \).
		
		Then, at any point \( X^* \in B_{1/2} \), we can apply Theorem \ref{g V harnack} repeatedly for a sequence of radii \( \rho_m = \frac{1}{2} \bar{\eta}^m \) and obtain
		\[
		V(X + a_m e_n) \leq u(X) \leq V(X + b_m e_n) \quad \text{in } B_{\frac{1}{2} \bar{\eta}^m}(X^*),
		\]
		with
		\begin{equation}\label{distance between am and bm}
			b_m - a_m = (b_0 - a_0)(1 - \bar{\eta})^m = 2\epsilon(1 - \bar{\eta})^m,
		\end{equation}
		for all \( m \geq 1 \) such that
		\[
		4\epsilon\frac{(1 - \bar{\eta})^{m-1}}{\bar{\eta}^{m-1}}   \leq \bar{\epsilon}.
		\]
		This implies that for all such \( m \), the function \( \tilde{g} \) satisfies
		\[
		\tilde{V} + a_m \leq \tilde{u} \leq \tilde{V} + b_m \quad \text{in } B_{\frac{1}{2} \bar{\eta}^m - 4\delta - \epsilon}(X^*) \setminus \pP,
		\]
		with \( a_m \) and \( b_m \) as in \eqref{distance between am and bm}.
		
		We then get that in \( B_{\frac{1}{4} \bar{\eta}^m}(X^*) \setminus \pP \),
		\[
		\text{osc } \tilde{u}_{\epsilon, V} \leq 2(1 - \bar{\eta})^m,
		\]
		provided that
		\[
		4\delta + \epsilon \leq \epsilon^{1/2} \leq \bar{\eta}^{m/4}.
		\]
		If \( \epsilon \leq \bar{\epsilon} \frac{\bar{\eta}}{2m_0} \) for some nonnegative integer \( m_0 \), then our inequalities above  hold for all \( m \leq m_0 \). We thus obtain the following corollary.
		
	\end{proof}
	
	\section{$C^{2,\alpha}$ Regularity}\label{C2alpha regularity}
	In this section, we will prove the $C^{2,\alpha}$ regularity by using a sequence of paraboloids to approximate $F(u)$ following the spirit of \cite{DeSilva2012b} by D. De Silva and O. Savin.	To begin with, we consider a sequence of paraboloids approximating the free boundary and analyze the limit. For $\beta=0$, it is easy to analyze since the operator $L_\beta=\Delta$. For general $\beta\in (-1,1)$, we will need a more precise analysis.
	
	\begin{lemma}\label{limit of w0}
		Suppose there exists a sequence $\lim_{k\to 0}\lambda_k=0$ such that \[
		V_{\lambda_k} = V_{\lambda M,0,\lambda a, \lambda b} \in \Vv^0_{\lambda_k},
		\]\
		and\
		\begin{equation}\label{scaled condition}
			V_{\lambda_k}(X - \lambda_k^{1 + \alpha} e_n) \leq u_{\lambda_k}(X) \leq V_{\lambda_k}(X + \lambda_k^{1 + \alpha} e_n) \quad \text{in} \ B_1.
		\end{equation}
		For $w_{\lambda_k} := \frac{\tilde{u}_{\lambda_k} - \gamma_{V_{\lambda_k}}}{\lambda_k^{1+\alpha}},\;\tilde{u}_{\lambda_k}$ defined in Definition \ref{domain variation definition} and $\gamma_{V_{\lambda_k}}$ is defined in \eqref{gamma definition}, we have that $w_0=\lim_{k\to \infty} w_{\lambda_k}$ exists and satisfies \eqref{linearized problem}.
	\end{lemma}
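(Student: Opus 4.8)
\emph{Strategy.} This is the standard ``limit of domain variations solves the linearized problem'' step. I would: (a) extract a locally uniform limit $w_0$ using the compactness packaged in Corollary~\ref{continuity of limit}; (b) linearize $L_\beta u_{\lambda_k}=0$ around $U$ to obtain the interior equation for $w_0$; (c) verify the viscosity boundary condition at $\Ll$ by pushing a touching test function back down to the solutions $u_{\lambda_k}$ and using the comparison principle for \eqref{extension of fracional laplacian}.

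\emph{Step 1 (compactness).} The flatness exponent $\alpha$ of Theorem~\ref{C1alpha} lies in $(0,1)$, so $\bar C\lambda_k^{2}\le\lambda_k^{1+\alpha}$ for $k$ large and Corollary~\ref{continuity of limit} applies to $u_{\lambda_k},V_{\lambda_k}$ with $\epsilon=\lambda_k^{1+\alpha}$, $\delta=\lambda_k$, $m_0=m_0(\lambda_k)\to\infty$. It gives that $\tilde u_{\lambda_k,V_{\lambda_k}}:=\lambda_k^{-(1+\alpha)}(\tilde u_{\lambda_k}-\tilde V_{\lambda_k})$ is uniformly bounded and has a uniform H\"older modulus of continuity on every compact subset of $B_1\setminus\pP$ (for $k$ large relative to the compact). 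By Proposition~\ref{difference between tildeV and gammaV}, $|\tilde V_{\lambda_k}-\gamma_{V_{\lambda_k}}|\le C_1\lambda_k^{2}$, so $|w_{\lambda_k}-\tilde u_{\lambda_k,V_{\lambda_k}}|\le C_1\lambda_k^{1-\alpha}\to0$ and $w_{\lambda_k}$ inherits these bounds. Arzel\`a--Ascoli then yields a subsequence with $w_{\lambda_k}\to w_0$ locally uniformly in $B_1\setminus\pP$, with $w_0$ locally H\"older and even in $\{y=0\}$. (Any subsequential limit will be seen to solve \eqref{linearized problem}; the boundary normalization then pins down the full limit.)

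\emph{Step 2 (interior equation).} Fix a compact $K\subset B_1\setminus\{y=0\}$, where $U,u_{\lambda_k},V_{\lambda_k}$ are smooth with uniform $C^2$-bounds, $U_n$ is bounded below, and $\tilde u_{\lambda_k},\tilde V_{\lambda_k}$ are uniformly Lipschitz. Since $|\tilde u_{\lambda_k}|,|\gamma_{V_{\lambda_k}}|\lesssim\lambda_k$, Taylor expansion of $U(X)=u_{\lambda_k}(X-\tilde u_{\lambda_k}(X)e_n)$ and $U(X)=V_{\lambda_k}(X-\tilde V_{\lambda_k}(X)e_n)$ (the latter combined with $|\tilde V_{\lambda_k}-\gamma_{V_{\lambda_k}}|\le C_1\lambda_k^2$) gives on $K$
\[
u_{\lambda_k}=U+\tilde u_{\lambda_k}U_n+O(\lambda_k^{2}),\qquad V_{\lambda_k}=U+\gamma_{V_{\lambda_k}}U_n+O(\lambda_k^{2}),
\]
hence $\lambda_k^{-(1+\alpha)}(u_{\lambda_k}-V_{\lambda_k})=w_{\lambda_k}U_n+O(\lambda_k^{1-\alpha})\to w_0U_n$ locally uniformly in $B_1\setminus\{y=0\}$. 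As $L_\beta u_{\lambda_k}=0$ and $|L_\beta V_{\lambda_k}|\le C\lambda_k^{2}|y|^\beta U_n$ by \eqref{Lbeta estimate} (valid since $V_{\lambda_k}\in\Vv^0_{\lambda_k}$), we get $|L_\beta(\lambda_k^{-(1+\alpha)}(u_{\lambda_k}-V_{\lambda_k}))|\le C\lambda_k^{1-\alpha}|y|^\beta U_n\to0$ on $K$. Stability of $L_\beta(\cdot)=0$ under locally uniform convergence with vanishing right-hand side (the operator is uniformly elliptic and smooth off $\{y=0\}$) gives $L_\beta(U_nw_0)=0$ in $B_1\setminus\{y=0\}$; Schauder estimates for $L_\beta$ and the positivity of $U_n$ off $\pP$ (see \eqref{derivatives of U}) upgrade $w_0$ to $C^{1,1}_{loc}(B_1\setminus\pP)$, and evenness passes to the limit. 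This is part (i) of Definition~\ref{definition of solution to linearized problem}.

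\emph{Step 3 (boundary condition at $\Ll$; the main obstacle).} Let $\phi$, with $\phi(X)=\phi(X_0)+a(X_0)\cdot(x-c)+b(X_0)r+O(|x-c|^{2}+r^{1+\alpha})$ and $b(X_0)>0$, touch $w_0$ from below at $X_0=(c,0,0)\in B_1\cap\Ll$; by a standard perturbation we may take the touching strict off $X_0$. Since $w_{\lambda_k}$ carries no uniform estimate near $\pP\supset\Ll$, this cannot be contradicted at the level of $w_0$; instead one transfers it to $u_{\lambda_k}$. Away from $\pP$ one has $w_{\lambda_k}\to w_0\ge\phi$ (strict off $X_0$), so via $u_{\lambda_k}=U+\tilde u_{\lambda_k}U_n+O(\lambda_k^2)$ the inequality $\tilde u_{\lambda_k}\gtrsim\gamma_{V_{\lambda_k}}+\lambda_k^{1+\alpha}\phi$ there converts into $u_{\lambda_k}$ lying above some $P_{\lambda_k}=V_{M',\xi'',a',b'}$ whose defining paraboloid is a small perturbation of that of $V_{\lambda_k}$ and whose coefficient is increased by a positive multiple of $\lambda_k^{1+\alpha}b(X_0)$; by Proposition~\ref{subsolution verification}, with the geometry controlled via Lemma~\ref{paraloid separate}, $P_{\lambda_k}$ is then a \emph{strict} comparison subsolution of \eqref{extension of fracional laplacian} near $X_0$, the clause $a'>1$ being forced precisely by $b(X_0)>0$. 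Shrinking the perturbation parameter to first contact yields a point of $F(u_{\lambda_k})$ near $X_0$ at which $P_{\lambda_k}$ touches $u_{\lambda_k}$ from below, contradicting that $u_{\lambda_k}$ is a viscosity solution; the case $b(X_0)<0$ (touching from above) is symmetric. Hence $w_0$ satisfies part (ii) of Definition~\ref{definition of solution to linearized problem}, so $w_0$ solves \eqref{linearized problem}. I expect Step~3 to be the crux: Steps~1--2 are essentially bookkeeping on top of Corollary~\ref{continuity of limit} and \eqref{Lbeta estimate}, whereas Step~3 requires constructing the right strict subsolution with the correct ``quadratic in $x'$, linear in $r$'' geometry near the degeneracy locus $\Ll$ and matching the sign of $b(X_0)$ to the $a'>1$ condition at the free boundary.
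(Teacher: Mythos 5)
Your Steps 1 and 2 match the paper's argument: compactness is extracted via Corollary~\ref{continuity of limit} and Proposition~\ref{difference between tildeV and gammaV}, and the interior equation $L_\beta(U_n w_0)=0$ together with $C^{1,1}_{loc}$ regularity of $w_0$ is obtained by showing $\lambda_k^{-(1+\alpha)}(u_{\lambda_k}-V_{\lambda_k})\to w_0 U_n$ and applying Schauder estimates, exactly as in the paper. The genuine deviation is Step~3, where there are two issues.

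First, the claim that the strictness of the comparison subsolution ``$a'>1$'' is forced by $b(X_0)>0$ is incorrect for this family: every $V_{\Ss,a,b}=(1+\frac{a}{4}r+\frac{b}{2}t)U_0$ has leading coefficient exactly $1$ in its expansion $aU((x-x_0)\cdot\nu,y)+o(|\cdot|^s)$ at a free boundary point, so you never get $a'>1$. The strictness in the paper comes instead from the \emph{first} clause of the strict subsolution definition, namely that $L_\beta W_\lambda>0$ holds strictly, which is exactly what Proposition~\ref{subsolution verification} provides once $\epsilon\gg\delta^2$ (for $W_\lambda=V_{\lambda M+A\epsilon I,-\epsilon\xi',\lambda a,\lambda b+2\epsilon A(n-1)}$ one computes $\frac{a'}{2s}+b'-\mathrm{tr}M'=A\epsilon(n-1)$, which dominates $C_0(\delta+\epsilon)^2$). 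This is where the sign $b(X_0)>0$ enters, by dictating the sign of the coefficient $2A(n-1)$ added to $b$.

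Second, and more substantively, the ``slide to first contact'' mechanism skips the step that the paper handles with Lemma~\ref{maximum principle}. You have no pointwise control of $w_{\lambda_k}$ near $\pP$ (the H\"older bounds degenerate there), so sliding $P_{\lambda_k}$ upward, you cannot rule out that first contact occurs on $\pP$ or on the lateral boundary of the neighborhood, away from $F(u_{\lambda_k})$, and no viscosity contradiction is available there. The paper avoids this by working entirely with domain variations: from the strict separation $w_0-q\ge\sigma$ on the annulus $N_{\tilde r}\setminus N_{\tilde r/2}$, it deduces $\tilde u_\lambda\ge\tilde W_\lambda+\frac{\sigma}{2}\epsilon$ there, then invokes Lemma~\ref{maximum principle} (comparison of domain variations for $u$ against a subsolution) to propagate this lower bound into the full tubular neighborhood, and contradicts it at an interior point $X^*$ where $w_0(X^*)-q(X^*)\le\sigma/8$. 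Your Step~3 needs to be replaced (or completed) with this propagation argument; as written, the ``first contact at a point of $F(u_{\lambda_k})$ near $X_0$'' is not justified.
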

	
	\begin{proof}
		We drop $k$ to simplify the notations. From Proposition \ref{difference between tildeV and gammaV}, we have
		\begin{equation}\label{uniformly convergence}
			w_{\lambda} = \frac{\tilde{u}_{\lambda} - \tilde{V}_{\lambda}}{\epsilon} + \frac{\tilde{V}_{\lambda} - \gamma_{V_{\lambda}}}{\epsilon} = (\tilde{u}_{\lambda})_{\epsilon, {V}_{\lambda}} + O\left(\epsilon^{1-\alpha}\right).
		\end{equation}
		The second term tends to $0$ as $\lambda$ tends to $0$. By Corollary \ref{continuity of limit}, the first term is uniformly H\"{o}lder continuous. Furthermore. $w_0(0) = 0$ and $|w_0| \leq 1$.
		
		We show that $w_0$ solves \eqref{linearized problem} according to Definition \ref{definition of solution to linearized problem}. We prove in 2 steps. In step 1,  we prove that $L_\beta(U_nw_0)=0$ in $B_{1/2}\setminus \pP$ and show $w_0\in C^{1,1}_{loc}(B_{1/2}\setminus\pP)$. In step 2, we prove the boundary condition $|\nabla_r w_0|=0$ on $\Ll$. Here $\pP$ and $\Ll$ are defined in \eqref{half plane P} and $r:=\text{dist}(X,\Ll)$.
		
		\vspace{5pt}
		
		\textit{Step 1:  $L_\beta(U_nw_0)=0$ in $B_{1/2}\setminus \pP$ and $w_0\in C^{1,1}_{loc}(B_{1/2}\setminus\pP)$.}
		
		\vspace{5pt}
		
		Suppose the convergence is $C^{1,t}$ for $t$ close to $1$, it is easy to verify that $L_\beta(U_nw_0)=0$ in $B_{1/2}\setminus \pP$. In this step, we verify the regularity condition $w\in C^{1,1}_{loc}(B_{1/2}\setminus \pP)$. We use $u_\lambda-V_\lambda$ to approximate $U_t(\tilde{u}_\lambda-\tilde{V}_\lambda)$ for the desired result. We first prove it is $C^{1,t}$ for a given $t\in(0,1)$, then raise the regularity to $C^{1,1}$ by the property of the equation. 
		
		By \eqref{uniformly convergence}, we have that $(\tilde{u}_{\lambda})_{\epsilon, {V}_{\lambda}}$ converges to $w_0$ uniformly. As $L_\beta u_\lambda =0$, by \eqref{estimate of Lbeta} we have
		\begin{equation}\label{equation of difference}
			|L_\beta(u_\lambda-V_\lambda)|\leq|y|^\beta C_0\lambda^2U_t.
		\end{equation}  For $d=$dist$(X,\pP)$, Theorem \ref{schauder estimate} implies \[\|u_\lambda-V_\lambda\|_{C^{1,t}(B_{d/4}(X))}\leq C(\|u_\lambda-V_\lambda\|_{L^\infty(B_{d/2}(X))}+\lambda^2\|U_t\|_{L^\infty(B_{d/2}(X))})\quad\text{for given }t\in (0,1).\]
		We will estimate the right hand side to derive the desired regularity. For the first term, we have \[|u_\lambda-V_\lambda|(X)\leq |V_\lambda(X+\lambda^{1+\alpha}e_n)-V_\lambda(X)|\leq C\lambda^{1+\alpha},\] for $\lambda$ small. For $\lambda$ tending to $0$, The second inequality is based on that $V_n$ converges to $U_t(x_n,y)$ uniformly in $B_{d/2}(X)$.
		
		To estimate the second term, we have $U_t(t,y)$ converges to $U_n(x_n,y)$ uniformly in $B_{d/2}(X)$ for $\lambda$ tending to $0$. Therefore $U_t$ is uniformly bounded in $B_{d/2}(X)$ for $\lambda$ small. Now we have\[\|u_\lambda-V_\lambda\|_{C^{1,t}(B_{d/4}(X))}\leq C\lambda^{1+\alpha}.\]By Arzela-Ascoli theorem, $\frac{u_\lambda-V_\lambda}{\epsilon}$ converges to a $C^{1,t}_{loc}(B_1\setminus \pP)$ function for $\lambda\to 0$. 
		
		Next we raise the regularity to $C^{1,1}$.  We use \eqref{equation of difference} to derive that $|L_\beta(\frac{u_\lambda-V_\lambda}{\epsilon})|\leq|y|^\beta C_0\lambda^{1-\alpha}U_t$. Take $\lambda\to 0$, we have that $\frac{u_\lambda-V_\lambda}{\epsilon}$ converges to a solution to $L_\beta u=0$. Invoking Theorem \ref{regularity of harmonic}, the limit is in $C^{1,1}_{loc}(B_1\setminus \pP)$.
		
		Finally, we show that $\frac{u_\lambda-V_\lambda}{\epsilon }=\frac{{(u_\lambda(X)-U)}-{(V_\lambda(X)-U)}}{\epsilon U_n}U_n$ converges to $w_0U_n$ uniformly in $B_{d/2}(X)$. Since $U_n$ is smooth away from the line $\Ll$, we have $w_0\in C^{1,1}_{loc}(B_{d/2}(X))$.
		
		By definition and Taylor expansion, we have 
		\[u_\lambda(X)-U(X)=u_\lambda(X)-u(X-\tilde{u}_\lambda(X))=u_n(X)\tilde{u}_\lambda(X)-\frac{1}{2}u_{nn}(X-t\tilde{u}_\lambda(X))\tilde{u}_\lambda^2(X).\]
		By Proposition \ref{difference between tildeV and gammaV} and Theorem \ref{regularity of harmonic}, we know that\[|\tilde{u}|\leq C\lambda,\;|(u_\lambda)_n-U_n|\leq C\lambda,\; |u_{nn}|\leq C,\quad\text{in }B_1\]in $B_1$ for $\lambda$ small. It gives $u_\lambda(X)-U=U_n(X)\tilde{u}_\lambda(X)+O(\lambda^2)$. Similarly, we have $V_\lambda(X)-U(X)=U_n(X)\tilde{V}_\lambda(X)+O(\lambda^2)$. Combine these two inequalities together, we have $\frac{u_\lambda-V_\lambda}{\epsilon }=\frac{{(u_\lambda(X)-U)}-{(V_\lambda(X)-U)}}{\epsilon}U_n=(\tilde{u}_{\lambda})_{\epsilon, {V}_{\lambda}}U_n+C\lambda^{1-\alpha}$. Letting $\lambda$ go to $0$ and we have the desired result.
		
		\vspace{5pt}
		
		\textit{Step 2: $|\nabla_r w_0|= 0$ on $B_{1/2} \cap \Ll$.}
		
		\vspace{5pt}
		
		In this substep we prove that
		\[
		|\nabla_r w_0|(X_0) = 0, \quad X_0 = (x'_0,0,0) \in B_{1/2} \cap \Ll,
		\]
		in the viscosity sense of Definition \ref{definition of solution to linearized problem}.
		
		We argue by contradiction. Assume for simplicity (after a translation) that there exists a function $\varphi$ which touches $w_0$ from below at $0$ with $\varphi(0) = 0$ and such that
		\[
		\varphi(X) = \xi' \cdot x' + a r + O(|x'|^2 + r^{1+\alpha}),\quad a > 0.
		\]
		
		Then we can find constants $\sigma$, $\tilde{r}$ small and $A$ large, such that the polynomial
		\[
		q(X) = \xi' \cdot x' - \frac{A}{2} |x'|^2 + 2A(n-1)x_n r
		\]
		touches $\varphi$ from below at $0$ in a tubular neighborhood $N_{\tilde{r}} = \{|x'| \leq \tilde{r}, r \leq \tilde{r}\}$ of $0$, with
		\[
		\varphi - q \geq \sigma > 0, \quad \text{on } N_{\tilde{r}} \setminus N_{\tilde{r}/2}.
		\]
		Therefore we have
		\begin{equation}\label{difference away from origin}
			w_0 - q \geq \sigma > 0, \quad \text{on } N_{\tilde{r}} \setminus N_{\tilde{r}/2},
		\end{equation}
		and
		\[
		w_0(0) - q(0) = 0.
		\]
		
		In particular, by continuity near the origin, we can find a point $X^*$ such that
		\begin{equation}\label{difference near origin}
			w_0(X^*) - q(X^*) \leq \frac{\sigma}{8}, \quad X^* \in N_{\tilde{r}} \setminus \pP \text{ close to } 0.
		\end{equation}
		
		Define
		\[
		W_{\lambda} := V_{\lambda M + A\epsilon I, -\epsilon \xi', \lambda a, \lambda b + 2\epsilon A(n-1)} \in \Vv_{2\delta}.
		\]
		Then, in view of Proposition \ref{difference between tildeV and gammaV}, we have
		\[
		\tilde{W}_{\lambda} = \epsilon q + \gamma_{V_{\lambda}} + O(\delta^2).
		\]
		Moreover, $W_{\lambda}$ is a subsolution to our problem since $\epsilon \gg \delta^2$.
		
		Thus, from the uniform convergence of $w_{\lambda}$ to $w_0$ and \eqref{difference away from origin}, we get that (for all $\lambda$ small)
		\begin{equation}\label{difference near origin: contradiction}
			\frac{\tilde{g}_{\lambda} - \tilde{W}_{\lambda}}{\epsilon} = w_{\lambda} - q + O\left( \frac{\delta^2}{\epsilon} \right) \geq \frac{\sigma}{2} \quad \text{in } (N_{\tilde{r}} \setminus N_{\tilde{r}/2}) \setminus \pP.
		\end{equation}
		
		Similarly, from the uniform convergence of $w_{\lambda}$ to $w_0$ and \eqref{difference near origin}, we get that for $k$ large
		\[
		\frac{\tilde{u}_{\lambda} - W_{\lambda}}{\epsilon}\leq \frac{\sigma}{4}\text{ in }\tilde{N}_r \setminus \pP.
		\]
		On the other hand, it follows from Lemma \ref{maximum principle} and \eqref{difference near origin} that
		\[
		\frac{\tilde{u}_{\lambda} - W_{\lambda}}{\epsilon}\geq \frac{\sigma}{2}\text{ in }\tilde{N}_r \setminus \pP,
		\]
		which contradicts \eqref{difference near origin: contradiction}.
		
	\end{proof}

	Now we prove the following quadratic improvement of flatness proposition. If a solution \( g \) stays in a \( \lambda^{2+\alpha} \) neighborhood of a function \( V \in \Vv_1^0 \) in a ball \( B_\lambda \), then in \( B_{\eta\lambda} \), \( g \) is in a \( (\lambda\eta)^{2+\alpha} \) neighborhood of another function \( V \) in the same class.
	
	\begin{proposition}\label{proposition C2alpha}
		Given \( \alpha \in (0, 1) \), there exist \( \lambda_0, \eta_0 \in (0, 1) \) and \( C > 0 \) large, depending on \( \alpha \), $\beta$, and \( n \). If \( u \) solves \eqref{extension of fracional laplacian}, \( 0 \in F(u) \), and \( u \) satisfies
		\begin{equation}\label{condition for C2 alpha prop interation}
			V(X - \lambda^{2+\alpha} e_n) \leq u(X) \leq V(X + \lambda^{2+\alpha} e_n), \quad \text{in } B_\lambda \text{ with } 0 < \lambda \leq \lambda_0,
		\end{equation}
		for \( V = V_{M,0,a,b} \in \Vv_1^0 \). Then in a possibly different system of coordinates denoted by
		\[
		\bar{E} = \{\overline{e}_1, \ldots, \overline{e}_n, \overline{e}_{n+1} \},
		\]
		we have
		\begin{equation}\label{result for C2 alpha prop interation}
			\bar{V}(X - (\eta_0\lambda)^{2+\alpha} \overline{e}_n) \leq u(X) \leq \bar{V}(X + (\eta_0\lambda)^{2+\alpha} \overline{e}_n), \quad \text{in } B_{\eta_0 \lambda},
		\end{equation}
		for some \( \bar{V} = \bar{V}_{\bar{M},0, \bar{a}, \bar{b}} \)  with \( \bar{\Ss} \) given in the \( \bar{E} \).
		
		Moreover, for $\bar{V}$ we have\[
		\|\bar{M} - M\|, |\bar{a} - a|, |\bar{b} - b| \leq C \lambda^\alpha, \quad \frac{1}{2s}\bar{a} + \bar{b} - \text{tr}(\bar{M}) = 0.
		\]And the surfaces $\Ss,\bar{\Ss}$ given by
		\[\Ss = \left\{ x_n = \frac{1}{2} (x')^T M x \right\},\; \bar{\Ss} = \left\{ \overline{x}_n = \frac{1}{2} (\overline{x}')^T \bar{M} \overline{x}' \right\},\]
		 separate in \( B_\sigma \) at most \( C(\lambda^\alpha \sigma^{2} + \lambda^{1+\alpha} \sigma) \) for any \( \sigma \in (0, 1] \).
	\end{proposition}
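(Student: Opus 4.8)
The plan is to run the compactness-and-improvement scheme of De~Silva--Savin \cite{DeSilva2012b}, with the limiting object governed by the linearized problem. Suppose the statement is false. Fixing in advance the small constant $\eta_0=\eta_0(\alpha,n,\beta)$ that will emerge from the compactness argument below, we obtain, for each $k$, a radius $\lambda_k\to 0$, a solution $u_k$ of \eqref{extension of fracional laplacian} with $0\in F(u_k)$, and $V_k=V_{M_k,0,a_k,b_k}\in\Vv_1^0$ satisfying \eqref{condition for C2 alpha prop interation} at scale $\lambda_k$, for which no $\bar V$ of the stipulated form yields \eqref{result for C2 alpha prop interation} in $B_{\eta_0\lambda_k}$. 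Rescaling, $u_{\lambda_k}(X):=\lambda_k^{-s}u_k(\lambda_k X)$ satisfies $V_{\lambda_k}(X-\lambda_k^{1+\alpha}e_n)\le u_{\lambda_k}(X)\le V_{\lambda_k}(X+\lambda_k^{1+\alpha}e_n)$ in $B_1$ with $V_{\lambda_k}:=V_{\lambda_k M_k,0,\lambda_k a_k,\lambda_k b_k}\in\Vv^0_{\lambda_k}$ (the compatibility relation defining $\Vv^0$ scales linearly in $\lambda_k$). Since $\lambda_k^{1+\alpha}\gg\lambda_k^2$ for $k$ large, Lemma \ref{limit of w0} applies, and along a subsequence the normalized domain variations
\[
w_{\lambda_k}:=\frac{\tilde u_{\lambda_k}-\gamma_{V_{\lambda_k}}}{\lambda_k^{1+\alpha}}
\]
converge locally uniformly to a function $w_0$ solving the linearized problem \eqref{linearized problem}, with $w_0(0)=0$ and $|w_0|\le 1$.

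Next I would apply the quadratic expansion of Theorem \ref{quadratic expansion} with $k=2$ to $w_0$: there are $\xi_0\in\RR^{n-1}$, $a_0,b_0\in\RR$ and a symmetric $(n-1)\times(n-1)$ matrix $M_0$, all bounded by $C(n,\beta)$, with $\frac{a_0}{2s}+b_0-\text{tr}M_0=0$, such that the tangent polynomial
\[
\Pi(X)=\xi_0\cdot x'+\frac12 (x')^TM_0x'-\frac{1}{2s}\Big(\frac{a_0}{2}r^2+b_0rx_n\Big)
\]
(the constant term vanishes since $w_0(0)=0$; a bare $r$-term and cross terms $rx_i$, $i<n$, are excluded by the boundary condition $|\nabla_r\cdot|=0$ on $\Ll$) satisfies $|w_0-\Pi|\le C_o|X|^3$ in $B_{1/2}$ and $L_\beta(U_n\Pi)=0$ in $B_1\setminus\pP$. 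The structure of $\Pi$ matches that of $\gamma_V$ in \eqref{gamma definition}, so I would define the improved approximating function $\bar V_{\lambda_k}$ by passing from $V_{\lambda_k}$ to the function with matrix $\lambda_k M_k+\lambda_k^{1+\alpha}M_0$ and coefficients $\lambda_k a_k+\lambda_k^{1+\alpha}a_0$, $\lambda_k b_k+\lambda_k^{1+\alpha}b_0$, and by rotating the coordinate frame by an angle comparable to $\lambda_k^{1+\alpha}|\xi_0|$, which absorbs the linear term $\xi_0\cdot x'$; the result is a function $\bar V_{\lambda_k}\in\Vv_{2\lambda_k}$ in the rotated frame $\bar E$, with $0\in F(\bar V_{\lambda_k})$ and $\gamma_{\bar V_{\lambda_k}}=\gamma_{V_{\lambda_k}}+\lambda_k^{1+\alpha}\Pi+O(\lambda_k^{2+2\alpha})$ (all computed in $\bar E$).

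The heart of the argument is then a short estimate in the frame $\bar E$. Using $\tilde u_{\lambda_k}=\gamma_{V_{\lambda_k}}+\lambda_k^{1+\alpha}w_{\lambda_k}$ (the definition of $w_{\lambda_k}$), Proposition \ref{difference between tildeV and gammaV} for $\bar V_{\lambda_k}\in\Vv_{2\lambda_k}$ (which gives $\tilde{\bar V}_{\lambda_k}=\gamma_{\bar V_{\lambda_k}}+O(\lambda_k^2)$), and the identity for $\gamma_{\bar V_{\lambda_k}}$ above,
\[
|\tilde u_{\lambda_k}-\tilde{\bar V}_{\lambda_k}|\le \lambda_k^{1+\alpha}\bigl(|w_{\lambda_k}-w_0|+C_o|X|^3\bigr)+O(\lambda_k^2)\le \lambda_k^{1+\alpha}\bigl(o(1)+C_o\eta_0^3\bigr)+O(\lambda_k^2)\quad\text{in }B_{\eta_0},
\]
with $o(1)\to0$ as $k\to\infty$ by the uniform convergence $w_{\lambda_k}\to w_0$ on $B_{\eta_0}$. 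Here is where $\eta_0$ is chosen: pick it so small that $C_o\eta_0^{1-\alpha}\le\frac12$, hence $C_o\eta_0^3\le\frac12\eta_0^{2+\alpha}$ (this uses $\alpha<1$ and $C_o=C_o(n,\beta)$); then pick $\lambda_0=\lambda_0(\eta_0)$ so small that for $k$ large the remaining terms are at most $\frac12\eta_0^{2+\alpha}\lambda_k^{1+\alpha}$, giving $|\tilde u_{\lambda_k}-\tilde{\bar V}_{\lambda_k}|\le\eta_0^{2+\alpha}\lambda_k^{1+\alpha}$ in $B_{\eta_0}$. Converting this to a $U$-flatness bound exactly as in the last paragraph of the proof of Proposition \ref{difference between tildeV and gammaV} (via the strict $\bar e_n$-monotonicity of $\bar V_{\lambda_k}$, with a negligible loss of radius absorbed by shrinking $\eta_0$) and undoing the $\lambda_k$-rescaling produces a $\bar V$ of the stipulated form satisfying \eqref{result for C2 alpha prop interation} in $B_{\eta_0\lambda_k}$ for $k$ large --- contradicting the choice of the sequence. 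For the ``moreover'' part: $\|\bar M-M\|,|\bar a-a|,|\bar b-b|\le C\lambda^\alpha$ follow by dividing the corrections $\lambda^{1+\alpha}M_0,\lambda^{1+\alpha}a_0,\lambda^{1+\alpha}b_0$ by $\lambda$ and using their $C(n,\beta)$-bounds; the identity $\frac{1}{2s}\bar a+\bar b-\text{tr}\bar M=0$ follows from adding $\frac{a}{2s}+b-\text{tr}M=0$ to $\lambda^\alpha$ times $\frac{a_0}{2s}+b_0-\text{tr}M_0=0$, the rotation contributing only $O(\lambda^{2+2\alpha})$ to $\text{tr}\bar M$, absorbed by a harmless $O(\lambda^{2+2\alpha})$ correction of $\bar b$; and the separation of $\Ss$ from $\bar\Ss$ in $B_\sigma$ by at most $C(\lambda^\alpha\sigma^2+\lambda^{1+\alpha}\sigma)$ comes from the quadratic discrepancy $\|\bar M-M\|\,\sigma^2\le C\lambda^\alpha\sigma^2$ together with the linear term of size $\le C\lambda^{1+\alpha}\sigma$ produced by the frame rotation.

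The step I expect to be the main obstacle is the error bookkeeping in the displayed estimate: one must check that every auxiliary error is genuinely of lower order than the gain $\frac12\eta_0^{2+\alpha}\lambda_k^{1+\alpha}$ --- the discrepancy of size $O(\lambda_k^{1-\alpha})$ between $w_{\lambda_k}$ and $\lambda_k^{-(1+\alpha)}(\tilde u_{\lambda_k}-\tilde V_{\lambda_k})$ (from Proposition \ref{difference between tildeV and gammaV}), the $O(\lambda_k^{1-\alpha})$ coming from the frame rotation, and the loss of radius in the domain-variation-to-$U$-flatness conversion --- which is what forces the constants to be chosen strictly in the order $\eta_0$, then $\lambda_0$, with $C$ depending on $\alpha,n,\beta$. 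A secondary technical point is the careful implementation of the frame rotation: it is exactly what converts the gradient term $\xi_0\cdot x'$ of $w_0$ into the change of coordinates $\bar E$ while keeping $0\in F(\bar V)$ and preserving membership in the class $\Vv$ in the new coordinates.
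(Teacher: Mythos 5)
Your argument is essentially the paper's proof: contradiction plus rescaling, extraction of the linearized limit $w_0$ via Lemma \ref{limit of w0}, the expansion of Theorem \ref{quadratic expansion} applied to $w_0$, construction of the improved approximant by perturbing $(M,a,b)$ at order $\lambda^{1+\alpha}$, and a coordinate rotation to absorb the linear term $\xi_0\cdot x'$, followed by the same iteration of the flatness improvement after undoing the rescaling. One bookkeeping detail worth re-checking: with the paper's sign convention $\gamma_V=\frac{1}{2s}\big(\frac{a}{2}r^2+brx_n\big)-\frac12(x')^TMx'-\xi'\cdot x'$, making $\gamma_{\bar V_{\lambda_k}}-\gamma_{V_{\lambda_k}}$ match $\lambda_k^{1+\alpha}\Pi$ requires \emph{subtracting} the corrections $\lambda_k^{1+\alpha}M_0,\lambda_k^{1+\alpha}a_0,\lambda_k^{1+\alpha}b_0$ (as the paper's $T_\lambda$ does), not adding them as written in your construction.
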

	
	\begin{proof}
		We will prove the desired result in by contradiction. Assume that no such $\lambda_0$ exists, then, we can find
		a sequence $\lim_{k\to 0}\lambda_k=0$ such that $g_k$ and $V_k$ satisfying \eqref{condition for C2 alpha prop interation} and \eqref{result for C2 alpha prop interation} fails. 
		
		We show the contradiction in two steps. In step 1, we rescale the functions $u$ and $V$ and analyze the limit of function $w_{\lambda_k} := \frac{\tilde{u}_{\lambda_k} - \gamma_{V_{\lambda_k}}}{\lambda_k^{1+\alpha}},\;\tilde{u}_{\lambda_k}$ defined in Definition \ref{domain variation definition} and $\gamma_{V_{\lambda_k}}$ is defined in \eqref{gamma definition}. In step 2, we apply Theorem \ref{quadratic expansion} to construct $\bar{V}$ to derive the contradiction.
		
		\vspace{5pt}
		
		\textit{Step 1: Rescaling and $w_\lambda$.}
		
		\vspace{5pt}
		
		To begin with, we rescale $g_k$ and $V_k$. For simplicity of notation, we drop the dependence on $k$ and
		denote
		\[
		u_{\lambda}(X) = \lambda^{s} u(\lambda X), \quad V_{\lambda}(X) = \lambda^{s} V(\lambda X), \quad X \in B_1.
		\]
		Now we have
		
		\begin{equation}\label{scaled condition}
			V_{\lambda_k}(X - \lambda_k^{1 + \alpha} e_n) \leq u_{\lambda_k}(X) \leq V_{\lambda_k}(X + \lambda_k^{1 + \alpha} e_n) \quad \text{in} \ B_1.
		\end{equation}
		From Lemma \ref{limit of w0}, we have that $w_0=\lim_{k\to \infty} w_{\lambda_k}$ is a solution to \eqref{linearized problem}

		\vspace{5pt}
		
		\textit{Step 2: Construction of $V$.}
		
		\vspace{5pt}
		
		In this step, we construct a new function $\bar{V}$ such that $\tilde{g}_\lambda$ can be approximated by $\bar{V}$ for $\lambda$ small. We will use the uniform convergence of $w_\lambda$ to $w_0$ to construct a polynomial approximating it. By Proposition \ref{difference between tildeV and gammaV} we can construct a solution related to the polynomial. After a modification of the coordinate system, we have constructed $\bar{V}$.
		
		In step 2, we have proven that $w_0$ is a solution to \eqref{linearized problem}. By Theorem \ref{quadratic expansion}, we have
		\[|w_0(X)-(\xi_0\cdot x'+\frac{1}{2}(x')^TM_0x'-\frac{1}{2s}(\frac{a_0}{2}r^2+b_0x_nr))|\leq C(|x-x'|^3+r^3)\quad\text{in }B_{1/2}.\]
		Then for $\eta_0$ small enough depending on $\alpha$, we have
		\[|w_0(X)-(\xi_0\cdot x'+\frac{1}{2}(x')^TM_0x'-\frac{1}{2s}(\frac{a_0}{2}r^2+b_0x_nr))|\leq \frac{1}{4}\eta_0^{2+\alpha}\quad\text{in }B_{4\eta_0}.\]
		
		Set $T_\lambda:=V_{\lambda M-\epsilon M_0,\epsilon\xi_0,\lambda a-\epsilon a_0,\lambda b-\epsilon b_0}$. Here we dropped $k$ to simplify the notation. By Proposition \ref{difference between tildeV and gammaV}, we have
		\[\begin{aligned}
			\tilde{T}_\lambda-\gamma_{V_\lambda}&=\tilde{T}_\lambda-\gamma_{T_\lambda}+\gamma_{T_\lambda}-\gamma_{V_\lambda}=O(\lambda^2)+\gamma_{T_\lambda}-\gamma_{V_\lambda}\\
			&=O(\lambda^2)-\epsilon\frac{1}{2s}(\frac{a_0}{2}r^2+b_0rx_n)+\epsilon\xi_0\cdot x'+\frac{1}{2}\epsilon(x')^TM_0x'.
		\end{aligned}\]
		
		From the uniform convergence of $w_\lambda$ to $w_0$, t follows that for fixed $r$ and sufficiently large $k$, $\lambda_k$ satisfies
		\[|w_\lambda-\frac{\tilde{T}_\lambda-\gamma_{V_\lambda}}{\epsilon}|=|\frac{\tilde{u}_\lambda-\tilde{V}_\lambda}{\epsilon}|\leq \frac{1}{2}\eta_0^{2+\alpha}\quad\text{in }B_{4\eta_0}.\]
		
		By the definition of $w_\lambda$, we have
		\[\tilde{T}_\lambda-\frac{1}{2}\epsilon\eta_0^{2+\alpha}\leq \tilde{u}_\lambda\leq \tilde{T}_\lambda+\frac{1}{2}\epsilon\eta_0^{2+\alpha}\quad\text{ in }B_{2\eta_0}.\]
		
		Rescale $u_\lambda$  back from $B_1$ to $B_\lambda$ and obtain
		
		\begin{equation}\label{g between T}
			T(X-\frac{1}{2}\epsilon\lambda\eta_0^{2+\alpha}e_n)\leq u(X)\leq T(X+\frac{1}{2}\epsilon\lambda\eta_0^{2+\alpha}e_n)\quad\text{ in }B_{2\lambda\eta_0}.
		\end{equation}
		
		Finally, we show that $\Ss$ and $\bar{\Ss}$ separate. Then \eqref{result for C2 alpha prop interation} follows by definition. We will consider the function $T=V_{M_T,0,a_T,b_T}$ and the surface  \(\Ss_T:=\{x_n:=\frac{1}{2}(x')^TM_Tx'+\xi_T\cdot x'\},\) with\[ M_T=M-\frac{\epsilon}{\lambda}M_0,\;a_T=a-\frac{\epsilon}{\lambda}a_0,\;b_T=b-\frac{\epsilon}{\lambda}b_0.\]The distance between $\Ss$ and $\Ss_T$ can be derived by simple calcuius. It suffices to show that $\Ss_T$ approximates  $\bar{\Ss}$.

		To begin with, we construct a different system of coordinates $\bar{E}$. Without loss of generalization, we assume $\xi_0$ is in $e_1$ direction. We choose $\bar{E}={\bar{e}_1,\cdots,\bar{e}_{n+1}}$ with $\bar{e}_i=e_i$ for $i\neq 1,n$, and $\bar{e}_n$ normal  to $\epsilon\xi_0\cdot x'$ at $0$. $\bar{E}$ is obtained by an orthogonal tranformation $O$ such that $\bar{x}=Ox$. Furthermore, we have that $\|O-I\|\leq C\epsilon$.
		
		By definition, we have that \[\bar{\Ss}=\{\bar{x}_n=\frac{1}{2}(\bar{x}')^TM_T\bar{x}'\}.\]In the original system $E$, there exists a function $g$ such that\[\bar{\Ss}=\{x_n=g(x')\}.\]
		By the property of $O$, we have that\[\|D_{x'}^2g-M_T\|_{L^\infty(B_1)}\leq C\epsilon,\quad\nabla_{x'}g(0)=\xi_T.\]
		Then in $B_{2\eta_0\lambda}$ we have that the surfaces $\Ss_T\pm \frac{\epsilon}{2}\lambda\eta_0^{2+\alpha}e_n$ lie between  $\bar{\Ss}\pm \epsilon\lambda\eta_0^{2+\alpha}\bar{e}_n$. We are done.
		
		Now we prove \eqref{result for C2 alpha prop interation}. We set $\bar{V}=V_{M_T,0,a_T,b_T}$ in $\bar{E}$. By the analysis above, we have the distance between $\bar{\Ss}$ and $\Ss_T$. Together with \eqref{g between T}, we have the desired result.
	\end{proof}

	Finally, we prove that flat free boundaries are $C^{2,\alpha}$ to end this section.
	\begin{theorem}\label{freeboundary C2alpha}
		For $\alpha\in (0,1)$, there exists $\bar{\epsilon}$ small depending on universal constants and $\alpha$, such that if $u$ solves \eqref{extension of fracional laplacian} in $B_1$ with\begin{equation}
			\{x \in B_1 : x_n \leq -\bar{\varepsilon} \} \subset \{x \in B_1 : u(x, 0) = 0\} \subset \{x \in B_1 : x_n \leq \bar{\varepsilon} \},
		\end{equation}
	then $F(u)\cap B_{1/2}^n$ is $C^{2,\alpha}$.
	\end{theorem}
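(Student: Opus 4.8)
The plan is to combine the flatness hypothesis with an iteration of the quadratic improvement of flatness (Proposition~\ref{proposition C2alpha}). First I would invoke Theorem~\ref{C1alpha} to conclude that $F(u)\cap B_{1/2}^n$ is $C^{1,\alpha}$; in particular, after restricting to a smaller ball and rescaling, at every point $X_0\in F(u)\cap B_{1/4}^n$ the free boundary is, in suitable coordinates, the graph of a $C^{1,\alpha}$ function with vanishing gradient at $X_0$. The goal is to upgrade this to a $C^{2,\alpha}$ estimate, uniform over $X_0$, by producing at each such point a second-order (paraboloid) approximation with universally controlled coefficients.

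The heart of the argument is a dyadic iteration. Fix $\alpha\in(0,1)$ and let $\lambda_0,\eta_0,C$ be as in Proposition~\ref{proposition C2alpha}. Using Lemma~\ref{compare freeboundary with plane} together with the $C^{1,\alpha}$ regularity from Theorem~\ref{C1alpha}, I would show that, up to translating to $X_0\in F(u)$, rotating so the tangent plane is $\{x_n=0\}$, and rescaling, the solution satisfies the base hypothesis \eqref{condition for C2 alpha prop interation} with the trivial choice $V=V_{0,0,0,0}\in\Vv_1^0$ and some initial $\lambda=\lambda_1\le\lambda_0$: indeed flatness of order $\bar\varepsilon$ gives, via Lemma~\ref{compare freeboundary with plane}, a bound $U(X-\lambda_1^{2+\alpha}e_n)\le u\le U(X+\lambda_1^{2+\alpha}e_n)$ in $B_{\lambda_1}$ once $\bar\varepsilon$ is chosen small enough. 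Then I apply Proposition~\ref{proposition C2alpha} repeatedly: setting $\lambda_{j+1}=\eta_0\lambda_j$ and producing at each step a coordinate frame $\bar E_j$ and a function $V_j=V_{M_j,0,a_j,b_j}\in\Vv_1^0$ with
\[
V_j(X-\lambda_j^{2+\alpha}e_n^{(j)})\le u(X)\le V_j(X+\lambda_j^{2+\alpha}e_n^{(j)})\quad\text{in }B_{\lambda_j},
\]
and with the coefficient increments controlled by $\|M_{j+1}-M_j\|,|a_{j+1}-a_j|,|b_{j+1}-b_j|\le C\lambda_j^\alpha=C\eta_0^{j\alpha}\lambda_1^\alpha$. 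Since $\sum_j\eta_0^{j\alpha}<\infty$, the sequences $M_j,a_j,b_j$ and the frames $\bar E_j$ converge geometrically to limits $M_\infty,a_\infty,b_\infty$ and a limiting frame $\bar E_\infty$, with $\tfrac{1}{2s}a_\infty+b_\infty-\operatorname{tr}M_\infty=0$ and with the rate of convergence giving $\|M_j-M_\infty\|+\dots\le C\lambda_j^\alpha$. The two-sided trapping of $u$ by $V_j$ in $B_{\lambda_j}$ then forces $F(u)$ to lie within $C(\lambda_j^\alpha\sigma^2+\lambda_j^{1+\alpha}\sigma)$ of the paraboloid $\Ss_\infty=\{x_n=\tfrac12(x')^TM_\infty x'\}$ at scale $\sigma=\lambda_j$, which is exactly the pointwise $C^{2,\alpha}$ estimate $|g(x')-\tfrac12(x')^TM_\infty x'|\le C|x'|^{2+\alpha}$ at $X_0$ in the frame $\bar E_\infty$. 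A standard covering/uniformity argument over $X_0\in F(u)\cap B_{1/4}^n$, using that all constants above are universal, promotes these pointwise estimates to $F(u)\cap B_{1/2}^n\in C^{2,\alpha}$ (after the harmless shrinking of balls, which one absorbs by a preliminary rescaling so the statement reads $B_{1/2}$).

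I expect the main obstacle to be the bookkeeping of the rotating coordinate frames: Proposition~\ref{proposition C2alpha} delivers the improved approximation in a \emph{new} system $\bar E$ each time, with $\|O_j-I\|\le C\epsilon_j=C\lambda_j^{1+\alpha}$, so one must check that the composition of all these rotations converges and that transporting the coefficient matrices $M_j$ between frames does not spoil the geometric decay of the increments (it does not, because the rotation angles decay even faster, like $\lambda_j^{1+\alpha}$). A secondary technical point is verifying the base step: one needs Lemma~\ref{compare freeboundary with plane} and the $C^{1,\alpha}$ conclusion of Theorem~\ref{C1alpha} to be compatible, i.e.\ that after one preliminary rescaling the hypothesis of Proposition~\ref{proposition C2alpha} holds with $V$ in the flat class $\Vv_1^0$; this is where the smallness of $\bar\varepsilon$ (depending on $\alpha,n,\beta$) is fixed.
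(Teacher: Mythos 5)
Your proposal is correct and essentially the same as the paper's proof: both establish the base flatness hypothesis for $u_\mu=\mu^{-s}u(\mu\cdot)$ via Lemma~\ref{compare freeboundary with plane}, iterate Proposition~\ref{proposition C2alpha} to obtain $V_k=V_{\Ss_k,a_k,b_k}$ trapping $u_\mu$ at scale $\tau_k=\tau\eta_0^k$ with geometrically summable increments $C\tau_k^\alpha$, and pass to limits $M_*,a_*,b_*$ and a limiting frame to conclude a pointwise $C^{2,\alpha}$ estimate at each free-boundary point. The only notable cosmetic differences are that you invoke Theorem~\ref{C1alpha} at the start (which is not strictly needed, since the base hypothesis follows directly from Lemma~\ref{compare freeboundary with plane} applied at each $X_0\in F(u)\cap B_{1/2}^n$), and that you stop at the statement ``$F(u)$ is within $C(\lambda_j^\alpha\sigma^2+\lambda_j^{1+\alpha}\sigma)$ of $\Ss_\infty$'' rather than invoking Lemma~\ref{paraloid separate} to transfer the two-sided trapping of $u$ from $V_{\Ss_k,a_k,b_k}$ to the limiting $V_{\Ss_*,a_*,b_*}$, as the paper does; for the conclusion about $F(u)$ alone, your version suffices, while the paper's version records the slightly stronger fact about $u$ itself.
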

	\begin{proof}
		Our goal is to construct a paraboloid that a paraboloid approximates  $F(u)$ with $C^{2,\alpha}$ precision at the origin. The argument can then be applied to any point in $F(u)\cap B_{1/2}$. The proof proceeds as follows. First we apply Proposition \ref{proposition C2alpha} iteratively to generate a sequence of approximating functions $V_k$ and coordinate systems  $E_k$. Second, we show that the corresponding surfaces $\Ss_k$
		converge uniformly to a limit paraboloid $\Ss$, which provides the desired approximation for $F(u)$. 
		
		To begin with, we consider the rescaling $u_\mu(X)=\mu^{-s}u(\mu X)$. By Lemma \ref{compare freeboundary with plane}, we have\[U(X-\tau^{2+\alpha}e_n)\leq u_\mu(X)\leq U(X+\tau^{2+\alpha}e_n),\quad\text{in }B_1,\]
		provided that $\bar{\epsilon},\mu$ small depending on $\tau\leq \lambda_0$. Here $\lambda_0$ is from Proposition \ref{proposition C2alpha} and $\tau$ is to be determined later.
		
		Applying Proposition \ref{proposition C2alpha} repeatedly to $u_\mu$, we have a sequence $\tau_k=\tau\eta_0^k$, $V_{\Ss_k,a_k,b_k}$, and vector $e_n^k$ such that\[V_{\Ss_k,a_k,b_k}(X-\tau_k^{2+\alpha}e_n^k)\leq u_\mu(X)\leq V_{\Ss_k,a_k,b_k}(X+\tau_k^{2+\alpha}e_n^k),\quad\text{in }B_{\tau_k}.\]
		We choose $\tau$ small such that $\sum C\tau_k^\alpha\leq 1$, therefore $M_k\to M_*$, $a_k\to a_*$, $b_k\to b_*$ always have norm no more than $1$. Furthermore, we have
		\[|a_k-a_*|,|b_k-b_*|\leq C\tau_k^\alpha,\] Then $F(u_\mu)$ is $C^{2,\alpha}$ at $0$. 
		
		From Proposition \ref{proposition C2alpha}, $\Ss_k$ and $\Ss_{k+1}$ separate in $B_\sigma$ at most $C(\tau_k^\alpha\sigma^2+\tau_k^{1+\alpha}\sigma)$, $\Ss_k$ converge uniformly to a paraboloid $\Ss_*$ in $B_1$. Since $\Ss_k$ and $\Ss_*$ separate in $B_{2\tau_k}$ at most $C\tau_k^{2+\alpha}$, we apply Lemma \ref{paraloid separate} to derive that
		\[V_{\Ss_*,a_*,b_*}(X-\tau_k^{2+\alpha}e_n^k)\leq u_\mu(X)\leq V_{\Ss_*,a_*,b_*}(X+\tau_k^{2+\alpha}e_n^k),\quad\text{in }B_{\tau_k}.\] Then $F(u_\mu)$ is $C^{2,\alpha}$ at $0$.
	\end{proof}
	\section{$C^\infty$ Regularity}\label{cinfty}
	In this section, we provide the proof of Theorem \ref{C infty of free boundary}. From Theorem \ref{C2alpha regularity}, we have that the free boundary is $C^{2,\alpha}$. A recent work \cite{Barrios2025} by B. Barrios, X. Ros-Oton, and M. Weidner provided a different proof for this step by considering a general type of global elliptic operator. We will prove using a different method. 
	
	In this section, we will show that the derivatives $g_i$ has the same regularity as $g$ as in \cite{Silva2015} by D. De Silva and O. Savin. In the following theorem, we consider the behavior of $\nabla_x^2u$ near the free boundary. Without loss of generality, we consider the following case throughout this section.
	\begin{equation}\label{Fu smooth condistion}
		F(u):=\{(x',g(x'),0)\},\quad\|g\|_{C^{k+2,\alpha}(B_{1/2})}\leq \frac{1}{2},\quad\nabla g(0)=0.
	\end{equation}
	\begin{theorem}\label{expansion theorem}
		If $u$ is a solution to \eqref{extension of fracional laplacian} with $F(u)$ satisfying \eqref{Fu smooth condistion} for $k\geq 0$, then we have:
		\begin{equation}\label{order0 expansion}
			\|\frac{u}{U_0}\|_{C^{k+1,\alpha}_{xr}(F(u)\cap B_{1/2})}\leq C, 
		\end{equation}
		\begin{equation}\label{order1 expansion}
			\|\frac{\nabla_xu}{U_0/r}\|_{C^{k+1,\alpha}_{xr}(F(u)\cap B_{1/2})}\leq C, 
		\end{equation}
		\begin{equation}\label{order2 expansion}
			\|\frac{\nabla_x^2u}{U_0/r^2}\|_{C^{k+1,\alpha}_{xr}(F(u)\cap B_{1/2})}\leq C,
		\end{equation}
		where $U_0$ is defined in \eqref{family of functions} and $C$ is a universal constant.
	\end{theorem}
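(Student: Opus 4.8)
The plan is to prove the three estimates by induction on $k$, with the engine at each step being the polynomial expansion Theorem~\ref{quadratic expansion} for the model linearized problem~\eqref{linearized problem} fed into a Campanato-type iteration. First I would localize and straighten: after translating a free boundary point to the origin and passing to the $C^{k+2,\alpha}$ coordinates adapted to $F(u)$ (with $t=\mathrm{sgn}(x_n-g(x'))\,\mathrm{dist}((x,0),F(u))$ as in~\eqref{family of functions}), the half-plane solution $U_0$ becomes, up to a perturbation controlled by $\|g\|_{C^{k+2,\alpha}}$, the model solution $U(t,y)$, and $L_\beta$ becomes a perturbation of $L_\beta$ with $C^{k+1,\alpha}$ coefficients. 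It then suffices to prove that $u/U_0$, $\nabla_xu/(U_0/r)$ and $\nabla_x^2u/(U_0/r^2)$ lie in $C^{k+1,\alpha}_{xr}$ uniformly on an interior subset of $F(u)$, with constants depending only on $\|g\|_{C^{k+2,\alpha}}$ and universal quantities.

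For~\eqref{order0 expansion} set $w:=u/U_0$. Using $L_\beta u=0$, the identity $L_\beta(fg)=fL_\beta g+gL_\beta f+2|y|^\beta\nabla f\cdot\nabla g$, and the computation in the proof of Proposition~\ref{subsolution verification} giving $L_\beta U_0=-|y|^\beta\kappa(x)\,\partial_tU_0$ (with $\kappa$ the mean curvature of the parallels to $F(u)$), one finds
\[
\mathrm{div}\!\left(U_0^2|y|^\beta\nabla w\right)=U_0\,|y|^\beta\kappa(x)\,\partial_tU_0\,w
\]
in $\{u>0\}$; near $F(u)$, since $\partial_tU_0=sU_0/r$ by~\eqref{derivatives of U}, this is a degenerate equation with $A_2$ weight $U_0^2|y|^\beta\asymp r^{2s}|y|^\beta$ and right-hand side of the same weight -- equivalently, after the usual substitution, a linearized equation of the form~\eqref{general linearized problem} perturbed by curvature terms of regularity $C^{k+1,\alpha}$. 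The bounds $c\le w\le C$ and $w\to1$ at $F(u)$ come from comparison with half-plane solutions (Lemma~\ref{maximum principle}) and from the flux condition, which also forces $|\nabla_r w|=0$ on $F(u)$. Now iterate at the origin: by Theorem~\ref{quadratic expansion} the model linearized problem admits a degree-$(k+1)$ approximant $Q(x')+rP(x,r)$ with $\|Q\|,\|P\|$ universally bounded; comparing $w$ with rescalings of such approximants and absorbing the $C^{k+1,\alpha}$ perturbation terms, the weighted Schauder and slit-domain boundary Harnack estimates for $L_\beta(U_n\,\cdot)$ developed in Section~\ref{expansion theorem section} upgrade the decay of the remainder to $|X|^{k+1+\alpha}$. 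Performing this at every $Z\in F(u)\cap B_{1/2}$ with uniform constants gives $w\in C^{k+1,\alpha}_{xr}(F(u)\cap B_{1/2})$.

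For~\eqref{order1 expansion} and~\eqref{order2 expansion} I would differentiate the equation. Each $\partial_iu$ and $\partial^2_{ij}u$ still solves $L_\beta(\cdot)=0$ in $\{u>0\}$, and from~\eqref{derivatives of U} ($U_t=sU_0/r$, $U_{tt}=\tfrac{s(sr-t)}{r^3}U_0$) the normalizations $U_0/r$ and $U_0/r^2$ are exactly the ones that keep the ratios bounded at $F(u)$. The tangential ratios $u_i/u_n$ extend the derivatives $-\partial_ig\in C^{k+1,\alpha}$ and solve the homogeneous linearized problem~\eqref{general linearized problem} exactly, so the same Campanato iteration with Theorem~\ref{quadratic expansion} gives $u_i/u_n\in C^{k+1,\alpha}_{xr}$; combining this with~\eqref{order0 expansion} and the explicit $U_0$-structure of $\nabla_xU_0$ in the adapted coordinates yields~\eqref{order1 expansion}. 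The second-order estimate~\eqref{order2 expansion} is obtained the same way, applying the scheme to the second ratios $u_{ij}/u_n$ (which solve linearized problems with lower-order right-hand sides built from first derivatives) and using~\eqref{order0 expansion}, \eqref{order1 expansion}, and the identity for $U_{tt}$. The induction on $k$ is what guarantees that at each stage the coefficients of the flattened operator and the curvature terms carry exactly the regularity needed to close the iteration.

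The main obstacle is twofold. First, the operators are doubly degenerate: $L_\beta=|y|^\beta(\Delta+\tfrac{\beta}{y}\partial_y)$ already carries the Muckenhoupt weight $|y|^\beta$ with $\beta\in(-1,1)$, and after conjugation one works with $\mathrm{div}(U_0^2|y|^\beta\nabla\cdot)$, i.e. $L_\beta(U_n\,\cdot)$, which degenerates further along $\Ll$; one therefore needs weighted Schauder estimates together with a boundary Harnack inequality in the slit domain $B_1\setminus\pP$, holding uniformly as $F(u)$ is straightened -- this is precisely the role of Section~\ref{expansion theorem section} and the imported results of Section~\ref{appendix}, and it replaces the conformal-mapping shortcut available only for $\beta=0$. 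Second, and I expect this to be the most delicate part, one must track the error terms arising from the adapted change of coordinates and from $L_\beta U_0\ne0$ carefully enough to verify that the Neumann-type condition $|\nabla_r w|=0$ is preserved to the order required by the iteration; this is exactly where the $C^{k+2,\alpha}$ hypothesis on $g$ (one derivative beyond the $C^{k+1,\alpha}_{xr}$ target) is consumed.
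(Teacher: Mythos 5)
Your proposal diverges substantially from the paper and, in my reading, contains genuine gaps.

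First, the paper does \emph{not} reprove \eqref{order0 expansion} and \eqref{order1 expansion}: it simply cites \cite[Prop.~3.2 and Lemma~7.2]{Jhaveri2017}, and then derives \eqref{order2 expansion} by a direct argument. Starting from the expansion $|u-U_0P|\leq CU_0 r^{k+1+\alpha}$ of \eqref{order0 expansion}, it sets $\tilde U_\lambda(X)=\lambda^{-(k+1+\alpha+s)}(u-U_0P)(\lambda X)$, computes (following the calculation in \cite{Jhaveri2017}) that $L_\beta\tilde U_\lambda=|y|^\beta F$ with $F$ suitably bounded, and then obtains $|\nabla_x^2\tilde U_\lambda|\lesssim U_{0,\lambda}$ in an annulus by applying the weighted Schauder estimate (Theorem~\ref{schauder estimate}) and the conjugation trick $L_{-\beta}(|y|^\beta\partial_y\tilde U)=\partial_y F$ iterated through Proposition~\ref{interior regularity for nabla_xu and uy}, with a separate argument near $\{y=0,\,x_n<0\}$ using that $\partial_{ij}\tilde U$ vanishes there. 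A Campanato-type iteration via Theorem~\ref{quadratic expansion} plays no role in this proof; that iteration is the mechanism of Theorem~\ref{regularity improvement}, a different statement.

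Second, and more importantly, your reduction of \eqref{order0 expansion} to Theorem~\ref{quadratic expansion} does not go through, for two independent reasons. (i) The equation you derive for $w=u/U_0$ is in divergence form with weight $U_0^2|y|^\beta$, whereas Theorem~\ref{quadratic expansion} and the entire machinery of Section~\ref{expansion theorem section} concern $L_\beta(U_n\,\cdot)$, i.e.\ the weight $U_n^2|y|^\beta\asymp r^{2s-2}|y|^\beta$. These are genuinely different degenerate weights, and the polynomial-expansion estimate for one does not transfer to the other by a ``usual substitution'' without a new argument. (ii) You assert that the flux condition forces $|\nabla_r w|=0$ on $F(u)$. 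It does not. The condition $\lim_{t\to 0^+}u(x_0+t\nu)/t^s=1$ gives the \emph{Dirichlet} boundary value $w|_{F(u)}=1$, and the expansion of $u/U_0$ at a free-boundary point is of the general form $1+a_nx_n+br+\dots$ with $b$ not forced to vanish. So $w=u/U_0$ is not a solution (not even a perturbed one) of the Neumann-type linearized problem~\eqref{linearized problem}, and Theorem~\ref{quadratic expansion} simply does not apply to it. The appropriate tool is the higher-order slit-domain boundary Harnack of \cite{Jhaveri2017} (Theorem~\ref{boundary harnack for half space}), which the paper invokes precisely for this reason.

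Third, your scheme for \eqref{order2 expansion} via the ratios $u_{ij}/u_n$ also breaks down: when $i=j=n$, one has $u_{nn}\sim U_0/r^2$ and $u_n\sim U_0/r$, so $u_{nn}/u_n\sim 1/r$ is unbounded on approach to $F(u)$, and Theorem~\ref{quadratic expansion} (which assumes $L^\infty$ control on the boundary data) cannot be applied to it. This is exactly why the paper instead works directly with the error $u-U_0P$ and controls its \emph{second derivatives} pointwise via scaled Schauder estimates, rather than trying to run the linearized-problem iteration on a second-order quotient.
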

	\begin{proof}
		Note that \eqref{order0 expansion} and \eqref{order1 expansion} follow from \cite[Proposition 3.2 and Lemma 7.2]{Jhaveri2017}. It suffices prove that \eqref{order2 expansion}. Without loss of generalization, we assume that the normal unit of $F(u)$ at $0$ is $e_n$.
		
		By \eqref{order0 expansion}, there exists a degree $k+1$ polynomial $P(x,r)$ such that $|u-U_0P|\leq CU_0r^{k+1+\alpha}$. We claim that for any $1\leq i,j\leq n$ we have
		\begin{equation}\label{point expansion}
			|\partial_{ij}(u-U_0P)|\leq C\frac{U_0}{r^2}|X|^{k+1+\alpha}\quad\text{in }\kK:=\{x_n^2+y^2\geq |x'|^2\}.
		\end{equation}
		Then for each $x\in F(u)$, we apply \eqref{point expansion} to \[\kK_x:=\{X\in\Rr^{n+1}:|(X-x)\cdot \nu|^2+y^2\geq |X-x|^2-|(X-x)\cdot \nu|^2\}.\] As $\bigcup_{x\in F(u)} \kK_x$ is the whole space, we have the desired result.

		We prove \eqref{point expansion} by scaling. To begin with, we define the following scalings for $\lambda\in(0,1)$:
		\[\begin{aligned}
			\tilde{U}_\lambda(X)&:=\frac{(u-U_0P)(\lambda X)}{\lambda^{k+1+\alpha+s}},\quad &\pP_\lambda:=\{x:u(\lambda x)=0\},\\
			r_\lambda(X)&:=\frac{\text{dist}(\lambda X,F(u))}{\lambda},\quad &U_{0,\lambda}:=\frac{U(\lambda X)}{\lambda^s}.
		\end{aligned}\]
		By choosing different $\lambda$, $\kK\cap(B_{3\lambda/4}\setminus \bar{B}_{\lambda/4})$ covers $\kK$. Therefore it suffices to show that 
		\begin{equation}\label{point expansion lambda}
			|\nabla_{x}^2\tilde{U}_\lambda|\leq CU_{0,\lambda}\quad\text{in }\kK\cap(B_{3/4}\setminus \bar{B}_{1/4}),\quad\forall\lambda\in(0,1).
		\end{equation}
		
		We will prove \eqref{point expansion lambda} in 3 steps. In step 1, we determine the equation satisfied by $\tilde{U}_\lambda$. In step 2 and 3, we prove the desired result for $x_n\geq 0$ and $x_n<0$ using Theorem \ref{interior harnack} Harnack inequality and Theorem \ref{boundary harnack} boundary Harnack inequality.
		
		\vspace{5pt}
		
		\textit{Step 1: equations for $\tilde{U}$.}
		
		By \eqref{order0 expansion}, $\|\tilde{U}_\lambda\|_{L^\infty(B_1)}\leq C$. First we consider $L_\beta\tilde{U}_\lambda$. Following \cite[Theorem 4.1]{Jhaveri2017}, we claim that
		\[L_\beta\tilde{U}_\lambda=|y|^\beta F\text{ in }B_1\setminus \pP_\lambda,\quad |\frac{F(X)}{U_{0,\lambda}/r_\lambda}|\leq C|X|^{k+\alpha}.\] Here $F(X)=-\frac{U_{0,\lambda}}{r_\lambda}\frac{h(\lambda X)}{\lambda^{k+s}}$, $h(x,y)=\sum_{l=0}^{k}r^lh_l(x)$, $\|h_l\|_{C^{k,\alpha}(B_1^n)}\leq C(n,\beta)$ such that $h_l$ has vanishing derivatives up to order $k-l$ at zero. We rewrite the equation of $u$ to be
		\begin{equation}\label{modified equation}
			\Delta_x \tilde{U}=F-(\frac{\beta}{y}\partial_y+\partial_{yy})\tilde{U}.
		\end{equation}
		
		We prove the claim to end this step. It suffices to prove for the case $\lambda=1$. The result for general $\lambda$ follows by scaling. As $L_\beta u=0$, we  only need to consider $L_\beta (U_0P)$. By calculation following \cite{Jhaveri2017}, we have
		\[\begin{aligned}
			L_\beta(U_0x^\mu r^m)&=|y|^\beta\frac{U_0}{r}(-(dm+sr)\kappa x^\mu r^{m-1}+m(m+1)x^\mu r^{m-1}\\
			&+2r^{m-1}(dm+sr)\nu\cdot\nabla_xx^\mu+\mu_i(\mu_i-1)x^{\mu-2\bar{i}}r^{m+1})\\
			&=|y|^\beta\frac{U_0}{r}(m(m+1+2\mu_n)x^\mu r^{m-1}+2s\mu_n x^{\mu-\bar{n}}r^m\\
			&+\mu_i(\mu_i-1)x^{\mu-2\bar{i}}r^{m+1}+c_{\sigma l}^{\mu m}x^\sigma r^l+h^{\mu m}(x,r)),
		\end{aligned}\]
		where $\kappa$ is defined in \eqref{Laplacian t}. Here $c_{\sigma l}^{\mu m}\neq 0$ only if $|\mu|+m\leq|\sigma|+l\leq k$, and
		\[h^{\mu m}(x,r)=r^mh^\mu_m(x)+mr^{m-1}h^\mu_{m-1}(x),\]
		$ h^\mu_m,h^\mu_{m-1}\in  C^{k,\alpha}(B_1^n)\text{ has vanishing derivatives of order $k-m$ and $k-m+1$ at zero.}$

		\vspace{5pt}
		
		\textit{Step 2: \eqref{point expansion lambda} in $\{x_n\geq 0\}$.}
		
		\vspace{5pt}
		
		Since $\|F(u)\|_{C^{k+2}}\leq 1 $, $U_{0,\lambda}$ has a positive lower bound in $\kK\cap(B_{3/4}\setminus \bar{B}_{1/4})\cap\{x_n\geq 0\}$. We only need to show that $\nabla_{x}^2\tilde{U}$ is uniformly bounded.
		
		We claim that the right hand side of \eqref{modified equation} is $C^\alpha$ bounded and apply Schauder estimate to \eqref{modified equation}. The desired result will follow.
		
		The regularity of $F$ is natural. We focus on the regularity of $\frac{\beta}{y}\partial_y\tilde{U}+\partial_{yy}\tilde{U}$. By calculus, we have \[L_{-\beta}(|y|^\beta\partial_y \tilde{U})=\partial_y F.\] The transformation was pointed out in \cite{Caffarelli2007}. We apply Prop. \ref{interior regularity for nabla_xu and uy} to the equation, it gives 
		\[\||y|^\beta\partial_y \tilde{U}\|_{L^\infty(\kK\cap(B_{7/8}\setminus \bar{B}_{1/8})\cap\{x_n\geq 0\})}\leq C.\]Similarly, we have that\[L_{\beta}(|y|^{-\beta}\partial_y(|y|^\beta\partial_y \tilde{U}))=\partial_y(|y|^\beta\partial_y F).\] We apply Proposition \ref{interior regularity for nabla_xu and uy} again to derive that 
		\[\||y|^{-\beta}\partial_y(|y|^\beta\partial_y \tilde{U})\|_{L^\infty(\kK\cap(B_{7/8}\setminus \bar{B}_{1/8})\cap\{x_n\geq 0\})}\leq C.\] We apply the same trick for the third time to derive
		\[\||y|^\beta\partial_y(|y|^{-\beta}\partial_y(|y|^\beta\partial_y \tilde{U}))\|_{L^\infty(\kK\cap(B_{7/8}\setminus \bar{B}_{1/8})\cap\{x_n\geq 0\})}\leq C.\]
		
		As $|y|^\beta\partial_y(|y|^{-\beta}\partial_y(|y|^\beta\partial_y \tilde{U}))=|y|^\beta\partial_y(\frac{\beta}{y}\partial_y\tilde{U}+\partial_{yy}\tilde{U})$, the inequality gives the  H\"{o}lder continuity of $\frac{\beta}{y}\partial_y\tilde{U}+\partial_{yy}\tilde{U}$ in the $y$ direction. For the H\"{o}lder continuity in $x$-direction, we consider the equation $$L_\beta\partial_x(|y|^{-\beta}\partial_y(|y|^\beta\partial_y u))=\partial_x\partial_y(|y|^{-\beta}\partial_y F).$$ We apply Prop. \ref{interior regularity for nabla_xu and uy} to the equation, it gives that\[\|\partial_x(\frac{\beta}{y}\partial_y\tilde{U}+\partial_{yy}\tilde{U})\|_{L^\infty(\kK\cap(B_{7/8}\setminus \bar{B}_{1/8})\cap\{x_n\geq 0\})}\leq C.\]
		
		\vspace{5pt}
		
		\textit{Step 3: \eqref{point expansion lambda} in $\{x_n< 0\}$.}
		
		\vspace{5pt}
		
		We notice that, if we can prove the inequlity \eqref{point expansion lambda} near $\{y=0\}$, then we only need to check boundness for points away from $\{y=0\}$. That was proven in Step 2. 
		
		As $\partial_{ij} \tilde{U}=0$ on $\{y=0\}$, it suffices to show  that $|y|^\beta\partial_y(\partial_{ij}\tilde{U})$ is bounded. We consider
		\[L_{-\beta}\partial_{ij}(|y|^\beta\partial_y\tilde{U})=\partial_{ij}\partial_y F.\]
		Apply  Proposition \ref{interior regularity for nabla_xu and uy} to the equation, we have
		\[\|\partial_{ij}(|y|^\beta\partial_y\tilde{U})\|_{L^\infty(\kK\cap(B_{7/8}\setminus \bar{B}_{1/8})\cap\{x_n\geq 0\})}\leq C.\]
	\end{proof}
	
	Now we try to improve the regularity of $F(u)$. We want to show that the derivatives of the free boundary has the same regularity as itself. To prove that, we need to analyze the PDE of it. We notice that $-\frac{u_i}{u_n}$, $i\leq n-1$ is a natural extension of the derivative. It satisfies $L_\beta(u_n\frac{u_i}{u_n})=0$. To analyze the regularity of $\frac{u_i}{u_n}$, we state the following theorem.
	\begin{theorem}\label{regularity improvement}
		Let $u$ be a solution to \eqref{extension of fracional laplacian} and $\|F(u)\|_{C^{k+2,\alpha}(B_1)}\leq \frac{1}{2}$. Assume $w\in C(B_1)$, even in $y$, satisfies the equation \begin{equation}\label{equation for gi}
			\left\{
			\begin{aligned}
				&L_\beta(u_n w) = 0 \quad &\text{in }& B_1 \setminus \{u>0\},\\
				&|\nabla_r w| = 0 \quad &\text{on }& F(u). 
			\end{aligned}\right.
		\end{equation}
		Then $\|w\|_{C^{k+2,\alpha}_{xr}(F(u)\cap B_{1/2})}\leq C$ with $C$  depending only on $n,k,\alpha.$
	\end{theorem}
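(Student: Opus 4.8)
The plan is to prove Theorem \ref{regularity improvement} by induction on $k$, exactly in the spirit of \cite{Silva2015}, using the polynomial expansion theorem (Theorem \ref{quadratic expansion}) for the linearized operator $L_\beta(U_n\,\cdot)$ at the codimension-two edge $\Ll$ as the engine of a Schauder-type iteration. The base case is that $C^{2,\alpha}$ free boundaries are $C^{2,\alpha}$-regular for $w$ near $F(u)$, which will follow from Theorem \ref{expansion theorem} together with \eqref{order0 expansion}–\eqref{order2 expansion}: indeed $\frac{u_i}{u_n} = \frac{u_i}{U_0/r}\cdot\frac{U_0/r}{u_n}$ and both factors lie in $C^{1,\alpha}_{xr}$ near $F(u)$, so $w \in C^{1,\alpha}_{xr}$. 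For the inductive step, assume the result through $k-1$ and suppose $\|F(u)\|_{C^{k+2,\alpha}}\le \tfrac12$; we must upgrade $w$ from $C^{k+1,\alpha}_{xr}$ to $C^{k+2,\alpha}_{xr}$ near each point $Z\in F(u)\cap B_{1/2}$.

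First I would reduce, after a translation, rotation and rescaling, to the case $Z=0$ with unit inward normal $e_n$, so that near $0$ the free boundary is a $C^{k+2,\alpha}$ graph $x_n=g(x')$ with $g(0)=0,\ \nabla g(0)=0$. Then I would flatten the free boundary — or, following the approach used in Theorem \ref{expansion theorem}, work directly with the distance function $t$ and the comparison functions $U_0,r$ — so that the problem becomes a perturbation of the model problem \eqref{linearized problem} with edge $\Ll$ and half-plane solution $U$. The heart of the argument is a compactness/blow-up scheme: define rescalings $w_\lambda(X) = \lambda^{-(k+2+\alpha)}\big(w(\lambda X) - P_\lambda(x,r)\big)$ where $P_\lambda$ is the optimal degree-$(k+1)$ tangent polynomial from the inductive hypothesis; show (using the perturbative error terms generated exactly as in Step 1 of the proof of Theorem \ref{expansion theorem}, where the $C^{k+2,\alpha}$ regularity of $g$ feeds a right-hand side of size $O(|X|^{k+1+\alpha})$ relative to $U_0/r$) that $w_\lambda$ satisfies $L_\beta(U_n w_\lambda)=$ lower-order error and a perturbed radial-derivative boundary condition; and pass to a limit $w_0$ solving the homogeneous model problem \eqref{linearized problem}. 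By Theorem \ref{quadratic expansion}, $w_0$ has a degree-$(k+2)$ polynomial expansion $Q(x')+rP(x,r)$ at $0$ with universally bounded coefficients, and $L_\beta(U_n(Q+rP))=0$. Subtracting this polynomial and iterating gives geometric decay of the oscillation of $w$ minus its tangent polynomial at scale $2^{-m}$, i.e. $w\in C^{k+2,\alpha}_{xr}(0)$ with a universal bound; the uniformity in $Z$ follows since all constants are universal.

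Two technical inputs carry the weight. One is the boundary Harnack / regularity theory for the degenerate operator $L_\beta$ across the slit $\pP$ and at the edge $\Ll$ — the same tools invoked in Lemma \ref{lemma harnack}, Theorem \ref{g V harnack} and the appendix — which are needed to (i) establish the compactness of the rescaled sequence $w_\lambda$ (uniform $C^{\gamma}$ or $C^{1,1}_{loc}$ estimates away from $\pP$, controlled behavior up to $\pP$), and (ii) justify that the viscosity boundary condition $|\nabla_r w|=0$ is stable under the blow-up. The second is the careful bookkeeping, carried out in Step 1 of Theorem \ref{expansion theorem} for $w=u/U_0$, that the inhomogeneities produced by writing the equation for $w$ in terms of the model operator are polynomials in $(x,r)$ of the right degree plus a $C^{k,\alpha}$-small remainder with the prescribed vanishing; this is what makes the polynomial $Q+rP$ in Theorem \ref{quadratic expansion} an admissible correction at each step of the iteration.

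The main obstacle I anticipate is the compactness step at the edge $\Ll$: unlike the $s=1/2$ case in \cite{Silva2015}, where one can slice and use a holomorphic change of variables to map the slit plane to a half-plane and reduce to classical harmonic estimates, here one must control $w_\lambda$ uniformly near $\Ll$ directly through the degenerate-weight boundary Harnack inequality in the slit domain, and verify that the limit retains the correct normalization ($w_0(0)=0$, $|w_0|\le 1$, correct boundary condition). Making the error estimates quantitative enough — so that the perturbation terms are genuinely lower order than $\lambda^{k+2+\alpha}$ at every stage, uniformly as $\lambda\to 0$ — is where the bulk of the work lies; the rest is the now-standard Campanato-type iteration.
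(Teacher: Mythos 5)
Your high-level skeleton — compactness to the model problem \eqref{linearized problem}, applying the expansion Theorem~\ref{quadratic expansion}, then a Campanato-type iteration — does match the paper's strategy, and you have correctly identified the degenerate-weight boundary Harnack theory as the engine of the compactness step. But there are structural gaps that would prevent the argument from closing as written.

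The central missing idea is the form of the approximating objects. You propose to subtract a polynomial $P_\lambda(x,r)$ from $w$ and argue that the rescaled remainder is nearly a solution to the model problem. This does not work: for a curved free boundary, a raw polynomial in $(x,r)$ does not satisfy the radial boundary condition $|\nabla_r(\cdot)|=0$ on $F(u)$ to the required order, and its defect is $O(\|\nabla g\|)$, which is bounded but not $O(|X|^{k+1+\alpha})$ — so the error in the boundary condition for $w_\lambda$ is not lower order and does not go away in the blow-up. The paper's proof replaces the polynomial by $W_{Q,P}=E(Q)+\frac{U_0}{u_n}P$, where $E(Q)$ is the Whitney extension (Theorem~\ref{whitney}) of a degree-$(k+2)$ polynomial of the tangential variables restricted to $F(u)$ (so $E(Q)$ is constant along normals to $F(u)$, hence has vanishing radial derivative automatically), and $\frac{U_0}{u_n}P$ replaces the model factor $r$ by the geometrically correct quantity that vanishes at $F(u)$. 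Without this anisotropic structure, neither the interior error $L_\beta(u_n(w-P_\lambda))$ nor the boundary-condition defect is genuinely of order $|X|^{k+\alpha}$, so the claimed normalization of $w_\lambda$ fails.

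Two further omissions: first, the coefficients of $Q$ and $P$ are not free. They must satisfy the compatibility conditions \eqref{Neumann condition} and the algebraic recursions \eqref{amu1 determine}, \eqref{amul induction} linking $A_{\sigma l}$ and $B_{\sigma l}$; this is what guarantees that the correction $(Q',P')$ obtained at each scale is unique and that the polynomials converge. Your proposal treats the "lower-order error" claim as bookkeeping, but verifying it requires the explicit computation in Lemma~\ref{equation for u_nEQ}. Second, the uniform Hölder estimate needed for compactness (Lemma~\ref{uniform holder}) is not an off-the-shelf consequence of boundary Harnack: the paper constructs custom barriers of the form $-1+\delta_1\bigl(\tfrac14+E(Q)+\tfrac{U_0}{u_n}(1+Mr)\bigr)$, again built from the $E(Q)+\frac{U_0}{u_n}(\cdot)$ combination, to run a two-region maximum principle. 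A final, smaller issue: you frame the proof as an induction on $k$, with a base case tied to the specific choice $w=u_i/u_n$ from Theorem~\ref{expansion theorem}. But the statement concerns arbitrary $w$ satisfying \eqref{equation for gi}, and the paper proves it directly at fixed $k$ via scale iteration; the induction on $k$ lives one level up, in the proof of Theorem~\ref{C infty of free boundary}.
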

	
	The case $F(u)=\Ll$ in \eqref{half plane P} is Theorem \ref{quadratic expansion}. For the general case, we will use iteration to construct a sequence of $C^{k+2}$ functions approximating $w$. To begin with, we follow \cite[Section 7.1]{Silva2015} to define $E(Q)$. For $z=(z',z_n)\in \Rr^{n-1}\times\Rr$, we set
	\[T_0(z,0)=(z',g(z'),0)+z_n\frac{(-\nabla g,1,0)}{\sqrt{1+|\nabla g|^2}}.\]
	Since $\nu$ has lower regularity, $T_0$ only has $C^{k+1,\alpha}$ regularity. However, the restriction of $T$ on the hyperline $\Ll$ in \ref{half plane P} is pointwise $C^{k+2,\alpha}$. Furthermore, $T$ is one-to-one near the origin. Then we define
	\begin{equation}\label{definition of Q}
		Q(x)=Q(z')=q_\mu z^\mu,\quad|\mu|\leq k+2,\quad q_\mu=0\;\text{if }\mu_n\neq 0.
	\end{equation}
	By the analysis above, $Q$ is pointwise $C^{k+2,\alpha}$ on $F(u)$. By Whitney extension, we have
	\begin{theorem}\label{whitney}(\cite[Thm. 7.1]{Silva2015} Whitney Extension Theorem)
		There exists $E(Q)$ such that
		\[D_x^\mu E(Q)=D_x^\mu Q\quad\text{on }F(u)\text{ for }\mu\leq k+2.\]
		and
		\[\|E(Q)\|_{C^{k+2,\alpha}_x(B_{1/2})}\leq C\|Q\|_{C^{k+2,\alpha}_x(F(u))}.\]
		Moreover, $E(Q)$ is linear in $Q$. For  $Q$ in \eqref{definition of Q}, we have
		\[E(Q)=\tilde{q}_\mu x^\mu+O(|x|^{k+2+\alpha}),\;\tilde{q}_\mu=q_\mu+\tilde{c}_\mu^\sigma q_\sigma,\;\tilde{c}_\mu^\sigma\neq 0\text{ if and only if }|\sigma|<|\mu|.\]
	\end{theorem}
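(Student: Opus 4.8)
The plan is to reduce Theorem~\ref{whitney} to the classical Whitney extension theorem in its Hölder form, checking only the two features special to the present situation: that the data carried by $Q$ along $F(u)$ forms a legitimate $C^{k+2,\alpha}$ Whitney jet on the closed set $F(u)$, and that the Taylor coefficients of the resulting extension at the origin have the claimed triangular form. Linearity is then automatic from the construction of the extension operator.

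\emph{The jet, the extension, and linearity.} Here $F(u)=\{(x',g(x'),0)\}$ is a closed $C^{k+2,\alpha}$ graph, and by hypothesis $Q$ is pointwise $C^{k+2,\alpha}$ on it: at each $Z\in F(u)\cap B_{1/2}$ there is a degree-$(k+2)$ tangent polynomial $P_Z(x)$ with $|Q(X)-P_Z(x)|\le\|Q\|_{C^{k+2,\alpha}_x(F(u))}|X-Z|^{k+2+\alpha}$ and $\|P_Z\|\le\|Q\|_{C^{k+2,\alpha}_x(F(u))}$. Setting $D_x^\mu Q(Z):=\mu!\,(\text{coefficient of }x^\mu\text{ in }P_Z)$ for $|\mu|\le k+2$, one subtracts $P_Z$ and $P_{Z'}$ at points of $F(u)$ and uses that along a $C^{k+2,\alpha}$ graph intrinsic and Euclidean distances are comparable to obtain the Whitney compatibility estimates
\[
\Big|D_x^\mu Q(Z')-\sum_{|\nu|\le k+2-|\mu|}\tfrac{1}{\nu!}\,D_x^{\mu+\nu}Q(Z)\,(Z'-Z)^\nu\Big|\le C\,\|Q\|_{C^{k+2,\alpha}_x(F(u))}\,|Z-Z'|^{\,k+2-|\mu|+\alpha}
\]
uniformly on $F(u)\cap B_{1/2}$. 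Thus $(D_x^\mu Q)_{|\mu|\le k+2}$ is a jet of class $C^{k+2,\alpha}$, and the Hölder Whitney extension theorem (e.g.\ Stein, \emph{Singular Integrals}, Ch.\ VI) yields $E(Q)\in C^{k+2,\alpha}$ with $D_x^\mu E(Q)=D_x^\mu Q$ on $F(u)$ for $|\mu|\le k+2$ and $\|E(Q)\|_{C^{k+2,\alpha}_x(B_{1/2})}\le C\|Q\|_{C^{k+2,\alpha}_x(F(u))}$; extending independently of $y$ gives the function on $B_{1/2}\subset\Rr^{n+1}$, automatically even in $y$. Linearity in $Q$ is built in: the extension is a sum over a Whitney cube decomposition of $B_{1/2}\setminus F(u)$ of partition-of-unity multiples of the polynomials $P_Z$, with $Z$ chosen depending only on the cube, and each $P_Z$ depends linearly on the jet, hence on $Q$.

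\emph{Expansion at the origin.} As $E(Q)\in C^{k+2,\alpha}$, it has a Taylor polynomial $E(Q)(x)=\tilde q_\mu x^\mu+O(|x|^{k+2+\alpha})$ at $0$, and $0\in F(u)$ together with the first property gives $\tilde q_\mu=\tfrac{1}{\mu!}D_x^\mu E(Q)(0)=\tfrac{1}{\mu!}D_x^\mu Q(0)$, i.e.\ the coefficients of the $(k+2)$-jet of $Q$ at $0$. Near $F(u)$ one has $Q(x)=\tilde Q(z'(x))$, where $z'(x)$ is the tangential part of $T_0^{-1}$ and $\tilde Q(z')=q_\mu z^\mu$ involves only powers of $z'$ since $q_\mu=0$ when $\mu_n\ne0$. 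Because $g(0)=0$ and $\nabla g(0)=0$ we have $DT_0(0)=\mathrm{Id}$, hence $z'(x)=x'+R(x)$ with $R(0)=0$, $DR(0)=0$, so $R(x)=O(|x|^2)$. Substituting $z'=x'+R(x)$ into a monomial $(z')^{\sigma'}$ of degree $|\sigma|$ and truncating at order $k+2$ reproduces $(x')^{\sigma'}$ plus only monomials of degree strictly greater than $|\sigma|$; collecting the coefficient of a given $x^\mu$ therefore yields $q_\mu$ (which is present only if $\mu_n=0$) together with a linear combination of $q_\sigma$ with $|\sigma|<|\mu|$. This is precisely $\tilde q_\mu=q_\mu+\tilde c_\mu^\sigma q_\sigma$ with $\tilde c_\mu^\sigma\ne0$ only when $|\sigma|<|\mu|$.

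\emph{Main obstacle.} The first two steps are the classical theorem applied to a jet whose admissibility is essentially the definition of $C^{k+2,\alpha}_x(F(u))$, so the genuine work is the last step: one must invert the Fermi parametrization $T_0$ near $0$ and verify that the induced map on Taylor coefficients is triangular with unit diagonal and strictly degree-decreasing corrections — the a priori concern being that the chain rule might couple coefficients of equal degree, which is excluded exactly because $g(0)=0$ and $\nabla g(0)=0$ force $R=O(|x|^2)$. A subsidiary point, already asserted in the text preceding the theorem, is that $\tilde Q\circ z'$ is honestly pointwise $C^{k+2,\alpha}$ on $F(u)$ even though $z'$ is only $C^{k+1,\alpha}$ on a full neighborhood, because Fermi coordinates are adapted to $F(u)$.
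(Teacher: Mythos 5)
The paper does not prove this theorem: it is stated as a citation of \cite[Thm.~7.1]{Silva2015} with no in-paper argument, so there is no internal proof to compare against. Your reconstruction is essentially correct and follows the route one would expect the cited reference to take: assemble the tangent polynomials of $Q$ into a Whitney jet on $F(u)$, invoke the H\"older form of the classical Whitney extension theorem for existence, bound, and linearity, and then verify the triangular form of the Taylor coefficients at the origin by inverting the Fermi parametrization $T_0$. You correctly isolate the mechanism behind the triangularity, namely that $g(0)=0$ and $\nabla g(0)=0$ give $DT_0(0)=\mathrm{Id}$ and hence $z'(x)=x'+R(x)$ with $R=O(|x|^2)$, so that substituting into a degree-$|\sigma|$ monomial produces $(x')^\sigma$ plus only strictly higher-degree corrections.

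One imprecision worth flagging: in establishing Whitney compatibility you write that one subtracts $P_Z$ and $P_{Z'}$ \emph{at points of} $F(u)$. Evaluating the degree-$(k+2)$ polynomial $P_Z-P_{Z'}$ only on the codimension-two set $F(u)$ does not pin down all of its coefficients (in particular nothing in the directions normal to $F(u)$ is seen). What makes the argument work is that the pointwise estimate $|Q(X)-P_Z(X)|\leq \|Q\|_{C^{k+2,\alpha}_x(F(u))}\,|X-Z|^{k+2+\alpha}$ from the definition holds for all $X$ in the tubular domain where $Q$ is defined, not merely for $X\in F(u)$; comparing at such $X$ ranging over a full-dimensional ball of radius comparable to $|Z-Z'|$ controls every coefficient of $P_Z-P_{Z'}$ and yields the compatibility estimate in all multi-indices. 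With that adjustment the rest of your outline stands, including the subsidiary observation that $z'(x)$, although only $C^{k+1,\alpha}$ as a function on a full neighborhood, admits a pointwise $(k+2)$-jet at points of $F(u)$ because $d(x)\nabla d(x)$ gains an order from $d$ vanishing on $F(u)$.
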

	We can consider $E(Q)$ as $Q$ in Theorem \ref{quadratic expansion}. We want to have a  polynomial type function to approximate $u$. Furthermore, the property of $Q$ in Theorem \ref{quadratic expansion} implies that we need a function only depending on the projection onto $\Gamma$. However , that is impossible due to the regularity. Therefore we use Whitney extension to construct $E(Q)$. It extends $Q$ near $\Gamma$ in $C^{k+2.\alpha}$ regularity. Now we consider the equation for $E(Q)$.
	\begin{lemma}\label{equation for u_nEQ}
		
		Let \( e \) be a unit vector and \( u_e \) has the following expansion at \( 0 \),
		\[
		u_e = \frac{U_0}{r} \left(P^e_0 + O(|X|^{k+1+\alpha})\right),\quad\deg P^e_0\leq k+1.
		\]
		Then for polynomial $Q$ such that
		\[Q(y')=\sum_{\mu\in\Nn^{n-1}\times\{0\}^2}^{|\mu|\leq k+2}q_\mu y^\mu,\] we have
		\[
		L_\beta (u_e E(Q)) = |y|^\beta \frac{U_0}{r} (R + O(|X|^{k+\alpha}) \quad \text{in } B_1 \setminus \pP,
		\]where \( R \) is a polynomial of degree \( k \) in \( (x,r) \) given by
		\[
		R = A_{\sigma l} x^\sigma r^l,\quad |\sigma|+l\leq  k.
		\]
		Here $A_{\sigma l}$ satisfies
		\[A_{\sigma l}=\left\{\begin{aligned}
			&c^\mu_{\sigma l} q_\mu,\quad &(\sigma_n,l)&\neq (0,0),\\
			&c^\mu_{\sigma l}q_\mu+P^e_0(0) (\sigma_i + 1)(\sigma_i + 2) q_{\sigma + 2\bar{i}}   , \quad &(\sigma_n, l)&=(0,0).
		\end{aligned}\right.\]
		Additionally,
		\[
		c^\mu_{\sigma l} \neq 0 \quad \text{only if } |\mu| \leq |\sigma| + l + 1.
		\]
	\end{lemma}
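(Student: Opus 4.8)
The plan is to compute $L_\beta(u_eE(Q))$ by the product rule, using $L_\beta u_e=0$, and then to feed in the two expansions available: the one for $u_e$ from the hypothesis and the one for $E(Q)$ from the Whitney Theorem \ref{whitney}. I will use that $L_\beta u_e=0$ away from $F(u)\cup\{y=0\}$ (differentiate the equation in the direction $e$, noting $\Delta+\tfrac{\beta}{y}\partial_y$ has coefficients depending only on $y$), that $E(Q)$ may be taken independent of $y$, and — the structural point that keeps everything bounded — that $\partial_\nu E(Q)$ vanishes on $F(u)$ (indeed the whole transversal jet does), because $Q$ carries no $z_n$ in \eqref{definition of Q}; hence $\partial_\nu E(Q)=O(r)$ near $F(u)$. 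From $L_\beta(fg)=fL_\beta g+gL_\beta f+2|y|^\beta\nabla f\cdot\nabla g$ and $L_\beta u_e=0$ one gets $L_\beta(u_eE(Q))=u_e|y|^\beta\Delta_xE(Q)+2|y|^\beta\nabla u_e\cdot\nabla E(Q)$, so, dividing by $|y|^\beta\tfrac{U_0}{r}$, it suffices to analyze $\tfrac{u_e}{U_0/r}\Delta_xE(Q)$ and $\tfrac{\nabla u_e}{U_0/r}\cdot\nabla E(Q)$. Writing $u_e=\tfrac{U_0}{r}(P^e_0+\varphi_1)$, $E(Q)=\tilde q_\mu x^\mu+\varphi_2$ with $\varphi_1\in C^{k+1,\alpha}_{xr}$, $\varphi_2\in C^{k+2,\alpha}_x$ vanishing at $0$ to orders $k{+}1{+}\alpha$, $k{+}2{+}\alpha$, one has $\tfrac{u_e}{U_0/r}=P^e_0+\varphi_1$ and $\tfrac{\nabla u_e}{U_0/r}=\tfrac{\nabla(U_0/r)}{U_0/r}(P^e_0+\varphi_1)+\nabla(P^e_0+\varphi_1)$, where $\tfrac{\nabla(U_0/r)}{U_0/r}=\tfrac{sr-t}{r^2}\nu+(\mathrm{lower})e_{n+1}$ by \eqref{derivatives of U}; the apparent $r^{-1}$ in the $\nu$-direction is killed by $\partial_\nu E(Q)=O(r)$, so $\tfrac{\nabla u_e}{U_0/r}\cdot\nabla E(Q)$ stays bounded near $F(u)$.

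I then extract $R$ as the part of $(P^e_0+\varphi_1)\Delta_xE(Q)+2\tfrac{\nabla u_e}{U_0/r}\cdot\nabla E(Q)$ which is a polynomial in $(x,r)$ of degree $\le k$; everything of degree $\ge k+1$ is $O(|X|^{k+1})\subset O(|X|^{k+\alpha})$. Matching against the computation in the proof of Theorem \ref{expansion theorem}, the contributing families are $\mu_i(\mu_i-1)x^{\mu-2\bar i}r^m$ (from $\Delta_x$ of $\tilde q_\mu x^\mu$, the $r^m$ coming from the $r$-dependence of $P^e_0$), $2s\mu_n x^{\mu-\bar n}r^m$ (from the $\nu$-part of the cross term), and curvature corrections of $(x,r)$-degree $\le k$. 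A degree count shows $A_{\sigma l}$ can involve $q_\mu$ with $|\mu|=|\sigma|+l+2$ only through $\mu_i(\mu_i-1)x^{\mu-2\bar i}r^m$ with $m=0$ — forcing $\mu=\sigma+2\bar i$ and $l=0$ — whose coefficient is $P^e_0(0)(\sigma_i+1)(\sigma_i+2)\tilde q_{\sigma+2\bar i}$; since $\tilde q_{\sigma+2\bar i}=q_{\sigma+2\bar i}+(\text{lower-order }q)$, since $q_{\sigma+2\bar i}=0$ unless $(\sigma+2\bar i)_n=0$, and since $|\sigma+2\bar i|\le|\sigma|+l+1$ once $l\ge1$, the term $P^e_0(0)(\sigma_i+1)(\sigma_i+2)q_{\sigma+2\bar i}$ survives precisely when $(\sigma_n,l)=(0,0)$, while all other contributions are of the form $c^\mu_{\sigma l}q_\mu$ with $c^\mu_{\sigma l}\ne0$ only if $|\mu|\le|\sigma|+l+1$. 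This is the claimed $A_{\sigma l}$, and it forces $\deg R\le k$.

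It remains to check that the curvature corrections, the degree-$\ge k+1$ polynomial pieces, and every cross term involving $\varphi_1$ or $\varphi_2$ sum to $|y|^\beta\tfrac{U_0}{r}\,O(|X|^{k+\alpha})$. For $X$ at distance $r$ from $F(u)$ I would use $|\partial^j(U_0/r)|\le C(U_0/r)r^{-j}$ from \eqref{derivatives of U}, the interior Schauder estimate (Theorem \ref{schauder estimate}) on $B_{r/2}(X)$ to convert the vanishing orders of $\varphi_1,\varphi_2$ into bounds on $\nabla\varphi_i$ and $\Delta\varphi_i$ carrying the right powers of $r$, and again $\partial_\nu E(Q)=O(r)$ to pay for the $r^{-1}$ from $\nabla(U_0/r)$; running the same estimates with the origin replaced by an arbitrary $Z\in F(u)$ gives the statement in the uniform form needed in Theorem \ref{regularity improvement}.

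I expect this last step — together with the cancellation already lurking in the set-up — to be the main obstacle: the weight $U_0/r$ and, more severely, its derivatives blow up like $r^{s-1}$ and $r^{s-2}$ near $F(u)$, so the crude product estimates for the error terms do not close; they are controlled only because $E(Q)$ has vanishing transversal jet along $F(u)$ and because the $C^{k,\alpha}_{xr}$ framework forces every estimate onto balls shrinking to the free boundary.
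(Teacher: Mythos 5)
Your proof follows essentially the same route as the paper: the product-rule decomposition (using $L_\beta u_e=0$ and $y$-independence of $E(Q)$), the crucial observation that $\partial_\nu E(Q)$ vanishes along $F(u)$ because $Q$ has no $z_n$-dependence (which kills the most singular $r^{-1}$ factor in $\nabla u_e\cdot\nabla E(Q)$), and the degree count isolating the only possible $|\mu|=|\sigma|+l+2$ contribution as $P^e_0(0)(\sigma_i+1)(\sigma_i+2)q_{\sigma+2\bar i}$ with $(\sigma_n,l)=(0,0)$ all reproduce the paper's computation and its final form of $A_{\sigma l}$. The one small methodological difference — you invoke Theorem~\ref{schauder estimate} on shrinking balls $B_{r/2}(X)$ to make the differentiation of the $O(|X|^{k+1+\alpha})$ remainder rigorous, whereas the paper cites Theorem~\ref{expansion theorem} (hence \cite{Jhaveri2017}) for the controlled expansions of $u_{ei}$ directly — is immaterial, and the cancellation mechanism you flag as the main obstacle is exactly the one the paper exploits.
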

	\begin{proof}
		Since $E(Q)$ is independent of $y$ direction and $L_\beta u_e=0$, $L_\beta(u_eE(Q))$ only have two main terms:
		\[L_\beta(u_eE(Q))=u_e|y|^\beta\Delta E(Q)+2|y|^\beta\nabla u_e\cdot\nabla E(Q).\]
		We will now find the expansion for each of these terms at the origin.
		
		First, we analyze the term involving $\Delta E(Q)$. By Theorem \ref{whitney}, we know it is pointwise $C^{k,\alpha}_x$ at the origin. We formally differentiate it at the origin to find the expansion. We have
		\[\Delta E(Q)(x)=\sum_{m=0}^{k}\frac{1}{m!}x_n^m\Delta\partial_n^mQ(0)+O(|x_n|^{k+\alpha})=\Delta Q(x)+O(|x_n|^{k+\alpha}).\]
		
		For the second term, we claim that
		\[
		\nabla E(Q) = \nabla Q + |d|^{k+s} \xi + |d|^{k+s+1} \eta
		\]
		for two bounded vectors \(\xi, \eta \in \mathbb{R}^n\) with \(\xi \cdot \nu = 0\).
		
		Assume for simplicity that \(x\) is a point on the \(e_n\)-axis. Then, since \(E(Q) \in C^{k+2,\alpha}_x\), we find
		\[
		\nabla E(Q)(x) = \sum_{m=0}^{k+1} \frac{1}{m!} x_n^m \nabla \partial_n^m Q(0) + O(|x_n|^{k+1+\alpha}).
		\]
		
		From Taylor expansion, we see that
		\[
		\sum_{m=0}^{k+1} \frac{1}{m!} x_n^m \nabla \partial_n^m Q(0) = \nabla Q(x) + \xi |d|^{k+s},
		\]
		for some bounded vector \(\xi\). Moreover, since \(Q\) is constant on perpendicular lines to \(\alpha\), we have \(\nabla Q(x) \cdot e_n = 0\) and \(\partial_n^l Q(0) = 0\) for all \(l \leq k+2\). Thus, the formula above gives \(\xi \cdot e_n = 0\), and our claim is proved.
		
		Next we give the estimate of  $u_{ei}$. By Theorem \ref{expansion theorem}, we have that\[\begin{aligned}
			u_{ei}&=\partial_i(\frac{U_0}{r}(P^e_0+O(|X|^{k+1+\alpha})))\\
			&=\nu^ns\frac{U_0}{r^2}(P^e_0+O(|X|^{k+1+\alpha}))-\nu^i\frac{U_0}{r^3}(P^e_0+O(|X|^{k+1+\alpha}))\\
			&+\frac{U_0}{r}(\partial_iP^e_0+O(|X|^{k+1+\alpha}))+\frac{U_0}{r^2}(d\nu^i\partial_r P^e_0+O(|X|^{k+1+\alpha}))\\
			&=\frac{U_0}{r^2}((s-\frac{d}{r})\nu^i P^e_0+r\partial_iP^e_0+d\nu^i\partial_rP^e_0+O(|X|^{k+1+s})).
		\end{aligned}\]
		
		Therefore we have that
		\[\nabla u_e\cdot\nabla E(Q)=\frac{U_0}{r}(\nabla_x P^e_0\cdot\nabla Q+O(|X|^{k+\alpha})),\]
		
		Finally, we derive the estimate of $L_\beta(u_eE(Q))$.
		\[\begin{aligned}
			L_\beta(u_eE(Q))=\frac{U_0}{r}(P^e_0\cdot(\mu_i+2)(\mu_i+1) q_{\mu+2\bar{i}}x^{\mu}+\nabla_x P^e_0\cdot\nabla Q+O(|X|^{k+\alpha})).
		\end{aligned}\]
		To obtain the desired result, we only need to apply the expansion of $P_0^e$ and $\nabla P^e_0$ at $0$ to the equality above.
	\end{proof}
	
	Next we prove the compactness of solutions to \eqref{linearized problem}. Suppose that \( u \) is a solution to \eqref{extension of fracional laplacian}, by Theorem \ref{expansion theorem}, we have
	\[u=U_0(1+O(|X|)),\quad u_n=\frac{U_0}{r}(s+O(|X|)).\]
	We consider the scaling
	\[u_\delta(x)=\delta^{-s}u(\delta x),\]
	it satisfies the expansion
	\begin{equation}\label{scaled expansion}
		u=U_0(1+\delta O(|X|)),\quad u_n=\frac{U_0}{r}(s+\delta O(|X|)),\quad \|g\|_{C^{2,\alpha}}\leq \delta,\; g\text{ is defined in \eqref{Fu smooth condistion}}.
	\end{equation}
	We notice that we only care about the regularity of $w$ near the free boundary $F(u)$. Therefore we can always scale $u$ and $w$. In the following Theorem, we only consider the scaled case.
	\begin{lemma}\label{uniform holder}
		Suppose $u$ is a solution to \eqref{extension of fracional laplacian} and satisfies the scaled expansion \eqref{scaled expansion} for some $\delta$ small enough. For \( w \) satisfying
		\[
		\left\{\begin{aligned}
			&|L_\beta (u_n w)| \leq |y|^\beta \delta\frac{U_0}{r} &\text{ in }& B_1 \cap\{u>0\}, \\
			&|w_{\nu}| \leq \delta &\text{ on }& F(u), \\
			&\|w\|_{L^\infty(B_1)} \leq 1.
		\end{aligned}\right.
		\]
		We have \( w \in C^{\eta} \) and \(\|w\|_{C^{\eta}(B_{1/2})} \leq C,\) for some small, universal \( \eta \).
	\end{lemma}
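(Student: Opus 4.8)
By evenness in $y$ we work in $\{y\ge 0\}$. Set $v:=u_nw$ and let $\pP_g:=\{x_n\le g(x'),\,y=0\}\cap B_1$ be the codimension‑one zero set of $u$, whose edge is $F(u)$, so that $B_1\cap\{u>0\}=B_1\setminus\pP_g$. Differentiating $L_\beta u=0$ gives $L_\beta u_n=0$ in $B_1\setminus\pP_g$, so $u_n>0$ solves the homogeneous equation there, while $|L_\beta v|\le|y|^\beta\delta\,U_0/r$; both $u_n$ and $v$ vanish continuously on $\pP_g$ (for $v$ because $u_n\to 0$ there and $|w|\le 1$), and by \eqref{order1 expansion} together with \eqref{scaled expansion} one has $u_n\asymp U_0/r$ — in coordinates adapted to $F(u)$ at the nearest point — up to a factor $1+C\delta$; note $U_0/r\sim r^{s-1}$ blows up at $F(u)$. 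Dividing $L_\beta(u_nw)$ by $u_n$, the bounded $w$ solves a uniformly elliptic $L_\beta$‑type equation with drift $2\nabla u_n/u_n$ and right‑hand side $O(\delta|y|^\beta)$; the drift behaves like $\beta/y$ near the interior of $\pP_g$ — so the natural condition on $\pP_g$ is Neumann, as in Theorem \ref{boundary harnack} — and like $r^{-1}$ near $F(u)$. Write $d(X):=\operatorname{dist}(X,F(u))$. Following \cite{Silva2015}, the plan is: (i) oscillation decay for $w$ away from $F(u)$; (ii) oscillation decay at points of $F(u)$ via blow‑up to the model problem \eqref{linearized problem}; (iii) a chaining argument.

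\emph{Step 1 (away from the edge).} Fix $X_0\in B_{3/4}$ with $d_0:=d(X_0)>0$ and $r\le d_0$. Split $v=v_1+v_2$ on $B_r(X_0)$ with $L_\beta v_1=0$, $v_1=v$ on $\partial B_r(X_0)$, and $v_2=0$ on $\partial B_r(X_0)\cup\pP_g$, $|L_\beta v_2|\le|y|^\beta\delta U_0/r$; a barrier gives $|v_2|\le C\delta u_n$, contributing $\le C\delta r^{\gamma}$ to $\operatorname{osc}_{B_{r/2}(X_0)}w$ for some universal $\gamma>0$. Applying the interior Harnack inequality for $L_\beta$ (Theorem \ref{interior harnack}) on the sub‑balls of $B_r(X_0)$ disjoint from $\pP_g$ and the boundary Harnack inequality in the slit domain (Theorem \ref{boundary harnack}) on those meeting $\pP_g$, to the nonnegative functions $(\sup_{B_r(X_0)}(v_1/u_n))u_n-v_1$ and $v_1-(\inf_{B_r(X_0)}(v_1/u_n))u_n$ compared with $u_n$, yields universal $\eta,\theta\in(0,1)$ with
\[
\operatorname{osc}_{B_{\theta r}(X_0)}w\ \le\ \theta^{\eta}\operatorname{osc}_{B_r(X_0)}w\ +\ C\delta\,r^{\gamma}.
\]
Iterating from $r=d_0$ gives $\operatorname{osc}_{B_r(X_0)}w\le C(r/d_0)^{\eta}\operatorname{osc}_{B_{d_0}(X_0)}w+C\delta\,r^{\gamma}$ for all $r\le d_0$; in particular $w$ is locally $C^\eta$ on $B_{3/4}\setminus F(u)$, uniformly on compact subsets of $B_{3/4}\setminus\pP$.

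\emph{Step 2 (at the edge).} We claim there are universal $\theta,\delta_0\in(0,1)$ such that if $\delta\le\delta_0$, $Z\in F(u)\cap B_{1/2}$ and $r\le\frac14$, then either $\operatorname{osc}_{B_r(Z)}w<\delta^{1/2}$ or $\operatorname{osc}_{B_{\theta r}(Z)}w\le\frac12\operatorname{osc}_{B_r(Z)}w$. If not, pick $\delta_j\to 0$, solutions $(u_j,w_j)$, points $Z_j\in F(u_j)\cap B_{1/2}$ and radii $r_j$ with $\omega_j:=\operatorname{osc}_{B_{r_j}(Z_j)}w_j\ge\delta_j^{1/2}$ but $\operatorname{osc}_{B_{\theta r_j}(Z_j)}w_j>\frac12\omega_j$, and rescale $\bar u_j(X):=r_j^{-s}u_j(Z_j+r_jX)$, $\bar w_j(X):=\omega_j^{-1}(w_j(Z_j+r_jX)-w_j(Z_j))$. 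Then $\bar w_j(0)=0$, $\operatorname{osc}_{B_1}\bar w_j=1$, $\operatorname{osc}_{B_\theta}\bar w_j>\frac12$; by \eqref{scaled expansion}, $\bar u_j\to U$ locally uniformly with $F(\bar u_j)\to\Ll$; and the equation for $\bar w_j$ has right‑hand side bounded by $|y|^\beta\delta_j^{1/2}U_0/r\to 0$ and radial‑derivative defect on $F(\bar u_j)$ at most $\delta_j^{1/2}\to 0$. By Step 1 (rescaled), $\{\bar w_j\}$ is precompact on compact subsets of $B_1\setminus\pP$, so along a subsequence $\bar w_j\to w_\infty$, which — arguing as in Lemma \ref{limit of w0} — is a classical solution of \eqref{linearized problem} with $w_\infty(0)=0$, $\operatorname{osc}_{B_1}w_\infty\le 1$, $\operatorname{osc}_{B_\theta}w_\infty\ge\frac12$. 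By Theorem \ref{quadratic expansion} with $k=1$, $w_\infty(X)=\xi_0\cdot x'+O(|X|^2)$ near $0$ — the $r$‑linear term vanishing precisely because $|\nabla_rw_\infty|=0$ on $\Ll$ — so $\operatorname{osc}_{B_\theta}w_\infty\le C_o\theta<\frac12$ once $\theta$ is chosen small universal, a contradiction. Iterating the dichotomy (using that $r\mapsto\operatorname{osc}_{B_r(Z)}w$ is nondecreasing) gives, after possibly shrinking $\eta$, $\operatorname{osc}_{B_r(Z)}w\le C(r^{\eta}+\delta_0^{1/2})\le C'$ for all $Z\in F(u)\cap B_{1/2}$, $r\le\frac14$.

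\emph{Step 3 and the main obstacle.} For $X,X'\in B_{1/2}$ put $d:=d(X)$ and choose $Z\in F(u)$ with $|X-Z|=d$; from Step 2, $\operatorname{osc}_{B_{2d}(Z)}w\le Cd^{\eta}$. If $|X-X'|<d/2$, Step 1 gives $|w(X)-w(X')|\le\operatorname{osc}_{B_{|X-X'|}(X)}w\le C(|X-X'|/d)^{\eta}d^{\eta}+C\delta_0|X-X'|^{\gamma}\le C|X-X'|^{\eta}$ (after shrinking $\eta$); if $|X-X'|\ge d/2$, then $X,X'\in B_{3|X-X'|}(Z)$ and Step 2 gives $|w(X)-w(X')|\le C|X-X'|^{\eta}$. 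Combined with $|w|\le 1$, this yields $\|w\|_{C^{\eta}(B_{1/2})}\le C$. The main obstacle is Step 2: because $u_n\sim r^{s-1}$ is singular at $F(u)$, neither interior nor slit boundary Harnack applies directly at the edge; one must pass to the blow‑up at a free‑boundary point and exploit the rigidity of the model linearized problem furnished by Theorem \ref{quadratic expansion}, and this is the only step where the hypothesis $|w_\nu|\le\delta$ is used.
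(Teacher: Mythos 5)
Your route is genuinely different from the paper's. At the free boundary $F(u)$ the paper builds an \emph{explicit barrier} $v=-1+\delta_1\bigl(\tfrac14+E(Q)+\tfrac{U_0}{u_n}(1+Mr)\bigr)$, verifies via Lemma \ref{equation for u_nEQ} that $L_\beta(u_nv)\geq C|y|^\beta U_0/r$ and $\partial_\nu v\geq c>0$, and then runs a maximum-principle comparison and a rescaling iteration; you instead run a \emph{blow-up compactness} argument, reducing to the model problem \eqref{linearized problem} at a free-boundary point and invoking Theorem \ref{quadratic expansion} for rigidity. Near $\{u=0\}\setminus F(u)$ the paper factorizes $w=\tfrac{H}{|y|^{1-\beta}}\cdot\tfrac{|y|^{1-\beta}}{u_n}$, handling the first factor by the conjugation $L_\beta\mapsto L_{2-\beta}$ plus the Schauder estimate (Theorem \ref{schauder estimate}) and the second by boundary Harnack; you instead run an oscillation-decay iteration via interior and slit-domain Harnack. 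The barrier approach is more self-contained and gives a cleaner iteration; your compactness approach is conceptually lighter but transfers the difficulty to a careful rescaling bookkeeping.

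There is a genuine gap in the bookkeeping at exactly that point. Your Step~2 dichotomy -- either $\operatorname{osc}_{B_r(Z)}w<\delta^{1/2}$ or $\operatorname{osc}_{B_{\theta r}(Z)}w\leq\tfrac12\operatorname{osc}_{B_r(Z)}w$ -- iterated as written only yields $\operatorname{osc}_{B_r(Z)}w\leq C\max(r^\eta,\delta^{1/2})$, which is a H\"older bound \emph{only} down to the scale $r_*\sim\delta^{1/(2\eta)}$; for $r<r_*$ you have no decay, so Step~3's chaining, which uses $\operatorname{osc}_{B_{2d}(Z)}w\leq Cd^\eta$, does not follow from what you proved. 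The missing ingredient is that the admissible error \emph{improves} under rescaling: if $\tilde w(X)=w(Z+\rho X)$, the right-hand side $|L_\beta(u_n w)|\leq |y|^\beta\delta U_0/r$ rescales with a factor $\rho^2$ and the Neumann defect with a factor $\rho$, so after renormalizing the oscillation to order one the effective $\delta$ at scale $\rho$ is $\lesssim\rho\delta$. With that, the dichotomy at scale $\theta^k$ has threshold $(\theta^{k}\delta)^{1/2}$ rather than $\delta^{1/2}$, and the iteration closes. The paper's Step~1 handles this implicitly: the claim $\operatorname{osc}_{B_{\delta_0}}w\leq 2-\delta_0$ is scale-stable precisely because the hypotheses are preserved (with no worse constants) after rescaling and translating $w$. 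You should make the rescaled dichotomy explicit.

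Two smaller points. In Step~1 the bound $|v_2|\leq C\delta u_n$ does not come from comparing with $u_n$ (which solves the \emph{homogeneous} equation); you need a genuine supersolution to $|L_\beta\,\cdot\,|\leq|y|^\beta\delta U_0/r$ vanishing on $\pP_g$, e.g.\ a multiple of $rU_0$ or of $U_0(1+Mr)-U_0$ as in the paper's Substep~1, and even then $|v_2|\leq C\delta u_n$ only yields an additive error $C\delta$ in the oscillation of $w$, not $C\delta r^\gamma$. Finally, the slit-domain boundary Harnack you need is Theorem \ref{slit domain boundary harnack}, not Theorem \ref{boundary harnack} (the latter concerns a flat Dirichlet surface).
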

	\begin{proof}
		First, we note that for $\delta$ small enough, the equation is non-degenerate away from $F(u)$. Therefore the standard elliptic estimates guarantee the interior regularity of $w$.Our main task is to prove Hölder continuity near the set $\{u=0\}$. We handle this in two distinct regions. In step 1, we prove that $w$ is H\"{o}lder near $F(u)$. In step 2, we prove that $w$ is H\"{o}lder near $\{u=0\}\setminus F(u)$.
		
		\vspace{5pt}
		
		\textit{Step 1: Regularity on $F(u)$.}
		
		\vspace{5pt}
		
		In this step, we use the iteration argument to prove the H\"{o}lder regularity of $w$ near $F(u)$.  Under the setting \eqref{Fu smooth condistion}, we claim that
		\[\text{osc}_{B_{\delta_0}}w\leq 2-\delta_0,\]
		for some $\delta_0$ universal. We notice that after a translation $w_t(X)=w(X)+t$, we can always consider the case $\text{osc}_{B_1}w=2\|w\|_{L^\infty}$. Suppose the claim holds, we have that \[\text{osc}_{B_{\delta_0}}w\leq 2-\delta_0\leq (1-\frac{\delta_0}{2})2\|w\|_{L^\infty}=(1-\frac{\delta_0}{2})\text{osc}_{B_1}w .\] By iteration, we have that \(\|w\|_{C^{\eta_1}(\alpha)}\leq C\) for some $\eta_1\in(0,1)$.
		
		We prove the claim in 2 substeps. In substep 1, we construct barrier function $v$. In substep 2, we compare the barrier function $v$ with $u$. We will construct lower barrier function $v$ under the setting $w(\frac{1}{2}e_n)\geq 0$. For the case  $w(\frac{1}{2}e_n)\leq 0$, we can construct the upper barrier function similarly.
		
		\vspace{5pt}
		
		\textit{Substep 1: Barrier function $v$.}
		
		\vspace{5pt}
		
		We set \[v=-1+\delta_1(\frac{1}{4}+E(Q)+\frac{U_0}{u_n}(1+Mr)),\] where $Q(y')=-|y'|^2$, $M$ large and $\delta_1$ small to be determined later. By Lemma \ref{equation for u_nEQ}, we have
		\[L_\beta(u_nE(Q))\geq -C|y|^\beta\frac{U_0}{r}.\]
		
		By calculation we have
		\[\begin{aligned}
			\frac{1}{|y|^\beta}L_{\beta}(U_0(1+Mr))&=(1+Mr)L_\beta U_0+MU_0L_\beta r+2M(U_0)_r\\
			&=(1+Mr)s\kappa\frac{U_0}{r}+(1+\beta+\kappa)M\frac{U_0}{r}+2sM\frac{U_0}{r}\\
			&=((1+Mr)sO(\delta)+M(1+\beta+O(\delta))+2sM)\frac{U_0}{r}.
		\end{aligned}\]
		Here $\kappa$ is defined in \eqref{Laplacian t}. We choose $M$ big enough and $\delta$ small enough such that $L_{\beta}(U_0(1+Mr))\geq 2C|y|^\beta\frac{U_0}{r}$. By the two inequalities above, we have
		\begin{equation}\label{interior equation for v}
			L_\beta(u_nv)\geq C|y|^\beta \frac{U_0}{r}
		\end{equation}
		On $F(u)$ we have
		\begin{equation}\label{boundary condition of v}
			\partial_\nu(E(Q)+\frac{U_0}{u_n}(1+Mr))=\partial_\nu(\frac{r}{s}(1+O(X))(1+Mr))\geq c>0.
		\end{equation}
		
		\vspace{5pt}
		
		\textit{Substep 2: Comparison of $w$ and $v$ in $B_{3/4}\cap\{r<c\}$.}
		
		\vspace{5pt}
		
		In this substep, we compare $w$ and $v$ in $B_{3/4}\cap\{r<c\}$. We start from compare them on $\partial B_{3/4}\cap \{r<c\}$. For $c,\delta$ small enough, we have
		\[E(Q)\text{ is close enough to }(\frac{3}{4})^2,\quad \frac{U_0}{u_n}(1+Mr)=\frac{r(1+Mr)}{s+\delta O(|X|)}<\frac{2c}{s} \text{ on }\partial B_{3/4}\cap \{r<c\}.\]
		Therefore we have $v<-1\leq w\text{ on }\partial B_{3/4}\cap \{r<c\}$.
		
		Next we compare $w$ and $v$ on $B_{3/4}\cap\{r=c\}$.  We claim that that $1+w\geq c_0>0$ for some universal $c_0>0$ on $B_{3/4}\cap\{r=c\}$. By choosing $c,\delta_1$ small enough, we have $w\geq -1+c_0\geq v$ on $B_{3/4}\cap\{r=c\}$.
		
		We use Theorem \ref{interior harnack} Harnack and Theorem \ref{boundary harnack} boundary Harnack inequalities to prove the claim. Since $1+w(\frac{1}{2}e_n)\geq 1$ and $\delta$ is small, we apply Harnack inequality to compare $u_n(1+w)$ with $u_n$. It gives that 
		\[1+w=\frac{u_n(1+w)}{u_n}\geq c_1>0\text{ in }B_{7/8}\cap\{X:\text{dist}(X,\{u=0\})\geq \frac{c}{2}\}.\]
		Similarly, we apply Theorem \ref{boundary harnack} boundary Harnack inequality to compare $u_n(1+w)$ with $u_n$ under the setting $\delta$ small enough. The comparison is extended to the boundary $\{u=0\}$ and the claim follows.

		Finally, we show $w\geq v$ in $B_{3/4}\cap\{r<c\}$. For $\delta<<\delta_1$, \eqref{interior equation for v} indicates that the minimum of $u_n(w-v)$ cannot occur inside by maximum principle. The free boundary condition indicates that the minimum  of $u_n(w-v)$ cannot occur on $F(u)$. As analyzed above, the minimum  of $u_n(w-v)$ cannot occur on $\partial(B_{3/4}\cap \{r<c\})$. That shows $v$ is a lower barrier function.

		\vspace{5pt}
		
		\textit{Step 2: Regularity on $\{u=0\}\setminus F(u)$.}
		
		\vspace{5pt}
		
		For $X\in\{u=0\}\setminus F(u)$ and $2R=dist(X,F(u))\leq 1$, we prove the H\"{o}lder continuity of $w$ in $B_{R}(X)$. We prove it in 2 substeps. In substep 1, we set $H=u_n w$ and show the H\"{o}lder continuity of $\frac{H}{|y|^{1-\beta}}$. In substep 2, we show the H\"{o}lder continuity of $\frac{|y|^{1-\beta}}{u_n}$. The H\"{o}lder continuity of $w$ will be derived by the fact that $w=\frac{H}{|y|^{1-\beta}}\cdot\frac{|y|^{1-\beta}}{u_n}$.

		\vspace{5pt}
		
		\textit{Substep 1: H\"{o}lder continuity of $\frac{H}{|y|^{1-\beta}}$.}
		
		\vspace{5pt}
		
		In this substep, we show the H\"{o}lder continuity of $\frac{H}{|y|^{1-\beta}}$, $H=u_n w$. Set $\tilde{H}(X)=H(RX)$, we have \[|L_\beta \tilde{H}|\leq |y|^\beta R^2\frac{\tilde{U}_0}{r}.\] By calculus  we have \[|L_{2-\beta}(\frac{\tilde{H}}{|y|^{1-\beta}})|\leq |y|^{2-\beta}R^2\frac{\tilde{U}_0}{|y|^{1-\beta}r}\leq |y|^{2-\beta}CR\text{ in }B_{3/2}(\frac{X}{R}).\] Apply Theorem \ref{schauder estimate}, we have
		\[\|\frac{\tilde{H}}{|y|^{1-\beta}}\|_{C^1(B_{1}(\frac{X}{R}))}\leq C.\]
		Scaling it back, we have
		\[\|\frac{H}{|y|^{1-\beta}}\|_{C^1(B_{R}(X))}\leq\frac{C}{R}.\]

		\vspace{5pt}
		
		\textit{Substep 2: H\"{o}lder continuity of $\frac{|y|^{1-\beta}}{u_n}$.}
		
		\vspace{5pt}
		
		We use Theorem \ref{boundary harnack} boundary Harnack inequality to show the H\"{o}lder continuity of $\frac{|y|^{1-\beta}}{u_n}$. As we have\[L_\beta u_n=L_\beta (|y|^{1-\beta})=0,\;|u_n|\leq \frac{C}{R^{1-s}},\text{ in }B_{R}(X).\]Apply the boundary Harnack theorem, we have \[\|\frac{|y|^{1-\beta}}{u_n}\|_{C^{\eta_2}(B_{R}(X))}\leq \frac{C}{R^{1-s}}.\]  The desired result follows.
	\end{proof}
	
	With the uniform H\"{o}lder estimate, we can derive a compactness result by Arzela-Ascoli theorem. Here we give the result as below.
	
	\begin{lemma}\label{compactness}
		Assume a sequence $u_k$ of solutions  to \eqref{extension of fracional laplacian} satisfying \eqref{scaled expansion} for a sequence $\delta_k\to 0$. If a sequence $w_k$ satisfies
		\[\left\{\begin{aligned}
			&L_\beta(\partial_n(u_k)w_k)\leq \delta_k\frac{U_0}{r}\quad \text{in }B_1\setminus\pP_k,\\
			&|\partial_\nu w_k|\leq \delta_k\quad\text{on }\alpha,\\
			&\|w_k\|_{L^\infty(B_2)}\leq 1,
		\end{aligned}\right.\]
		then there is a subsequence of $w_{n_k}$ of $w_k$ such that $w_{n_k}$ converges uniformly on compact sets to a H\"{o}lder continuous function $w$. Furthermore, $w$ satisfies \eqref{linearized problem}.
	\end{lemma}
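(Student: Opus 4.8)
The plan is to obtain $w$ as a locally uniform limit of the $w_k$ and then to check that it satisfies \eqref{linearized problem} in the sense of Definition~\ref{definition of solution to linearized problem}: the interior equation passes to the limit by standard elliptic compactness, while the boundary condition on $\Ll$ is extracted by a contradiction/barrier argument modeled on Step~2 of the proof of Lemma~\ref{limit of w0}. First I would apply Lemma~\ref{uniform holder} at every free-boundary point of $u_k$ --- together with interior estimates for $L_\beta(\partial_n u_k\,\cdot\,)$, which is uniformly elliptic away from $\{y=0\}$ and degenerate only of the admissible type across it --- to see that $\{w_k\}$ is bounded in $C^\eta$ on compact subsets of $B_1$, with a constant independent of $k$ (here one uses \eqref{scaled expansion} and $\delta_k\to 0$). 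By Arzel\`a--Ascoli, a subsequence of the $w_k$ converges locally uniformly in $B_1$ to some $w\in C^\eta_{\mathrm{loc}}(B_1)$, even in $y$, with $\|w\|_{L^\infty}\le 1$.

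Since $\|g_k\|_{C^{2,\alpha}}\le\delta_k\to 0$ in \eqref{scaled expansion}, the zero sets $\pP_k=\{u_k=0\}$ converge to the half-plane $\pP$ of \eqref{half plane P}, the positivity sets to $B_1\setminus\pP$, and $\partial_n u_k\to U_n$, the latter bounded above and below on compact subsets of $B_1\setminus\pP$. On any $K\subset\subset B_1\setminus\pP$ the coefficients of $L_\beta(\partial_n u_k\,\cdot\,)$ converge in $C^\alpha$ to those of $L_\beta(U_n\,\cdot\,)$ (using smoothness of $U_n$ away from $\Ll$), so applying the Schauder estimates of Theorem~\ref{schauder estimate} to $\partial_n u_k\,w_k$ upgrades the convergence $w_k\to w$ to $C^{1,\alpha}_{\mathrm{loc}}(B_1\setminus\pP)$ and lets me pass to the limit in
\[
|L_\beta(\partial_n u_k\,w_k)|\le\delta_k\,|y|^\beta\,\frac{U_0}{r}
\]
to get $L_\beta(U_n w)=0$ in $B_1\setminus\{y=0\}$. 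Combining this with Theorem~\ref{regularity of harmonic} exactly as in Step~1 of the proof of Lemma~\ref{limit of w0} yields $w\in C^{1,1}_{\mathrm{loc}}(B_1\setminus\pP)$; this is condition~(i) of Definition~\ref{definition of solution to linearized problem}.

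The main obstacle is condition~(ii), $|\nabla_r w|=0$ on $B_1\cap\Ll$, which I would obtain by the contradiction argument of Step~2 of the proof of Lemma~\ref{limit of w0}, adapted to the present (non domain-variation) setting. Suppose some $\phi$ with $\phi(X)=\phi(X_0)+a(X_0)\cdot(x-c)+b(X_0)r+O(|x-c|^2+r^{1+\alpha})$ and $b(X_0)>0$ touches $w$ from below at $X_0=(c,0,0)\in B_1\cap\Ll$; the case $b(X_0)<0$, touching from above, is symmetric, and after a harmless perturbation I may assume the touching is strict off $X_0$. Exactly as in Lemma~\ref{limit of w0}, the strict positivity of $b(X_0)$ lets me pick a comparison function $q$ --- built from $\Vv_\delta$-functions, with a genuine positive coefficient $b_*\in(0,b(X_0))$ in front of $r$ and a quadratic correction making $U_n q$ a \emph{strict comparison subsolution} of $L_\beta(U_n\,\cdot\,)=0$ away from the thin cone $\{|(x_n,y)|\le C\delta|x'|\}$ about $\Ll$ (here one uses the identity $U_n r=sU$ together with Propositions~\ref{subsolution verification} and \ref{compare Vn Un}) --- such that $q$ touches $\phi$, hence $w$, from below on a tubular neighbourhood $N_{\tilde r}$ of $X_0$ with $w-q\ge\sigma>0$ on $N_{\tilde r}\setminus N_{\tilde r/2}$. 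Pushing $q$ up by a constant $c_k\to 0$ and using the locally uniform convergence $w_k\to w$ and $\partial_n u_k\to U_n$, for $k$ large $q+c_k$ must touch $w_k$ from below at some point $X_k\in N_{\tilde r/2}$ with $X_k\to X_0$. If $X_k\in\{u_k>0\}$ lies away from the cone, the minimum principle applied to $\partial_n u_k(w_k-q-c_k)$ --- which has $L_\beta$ strictly negative near $X_k$, since $L_\beta(\partial_n u_k q)>\delta_k|y|^\beta U_0/r\ge L_\beta(\partial_n u_k w_k)$ there, while it attains an interior minimum $0$ at $X_k$ --- gives a contradiction. If $X_k\in F(u_k)$, the inward radial derivative of $q$ at $X_k$ equals $b_*+o(1)>0$, whereas touching from below forces the radial derivative of $w_k$ at $X_k$ to be at least as large, contradicting $|\partial_\nu w_k|\le\delta_k\to 0$. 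The one remaining case, $X_k$ inside the thin cone about $\Ll$ (equivalently $X_k\in\pP_k\setminus F(u_k)$ extremely close to the edge), I would dispatch by tilting $q$ slightly in the $e_n$-direction to force the contact point onto $\overline{\{u_k>0\}}$ and by transferring the comparison across $\{y=0\}$ via the boundary Harnack inequality Theorem~\ref{boundary harnack}, exactly as in Case~3 of the proof of Theorem~\ref{g V harnack}. This contradiction establishes condition~(ii), and with it the lemma; I expect the careful construction of $q$ and this edge analysis to be where essentially all the work lies.
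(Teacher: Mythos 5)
Your first two steps match the paper exactly: uniform local H\"older bounds from Lemma~\ref{uniform holder}, Arzel\`a--Ascoli to extract $w$, and interior Schauder estimates (Theorem~\ref{schauder estimate}) together with $\partial_n u_k\to U_n$ and $\pP_k\to\pP$ to pass to the limit in the interior equation. The gap is in your verification of condition~(ii), which you model on Step~2 of Lemma~\ref{limit of w0} and carry out with $\Vv_\delta$-functions. That is the wrong tool. The comparison functions $V_{M,\xi',a,b}\in\Vv_\delta$ are (sub)solutions of the \emph{nonlinear} free boundary problem~\eqref{extension of fracional laplacian}, and Propositions~\ref{subsolution verification} and~\ref{compare Vn Un} give sign and growth information on $L_\beta V$ and $V_n/U_n$ --- they say nothing about $L_\beta(U_n q)$ or $L_\beta(\partial_n u_k\,q)$ for a polynomial-type $q$, which is what you actually need to declare $\partial_n u_k\,q$ a strict subsolution against $\partial_n u_k\,w_k$. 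The domain-variation machinery of Lemma~\ref{limit of w0} does not transfer here, because the $w_k$ are not domain variations of anything.

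The paper instead uses the barrier machinery it just built in Lemma~\ref{uniform holder}: given a test function $\varphi(X)=b_0-2a_1|y'-y_1'|^2+\tfrac{a_2}{2}r+Mr^2$ touching $w$ from below at $X_0$, it replaces it by
\[
\varphi_k\;=\;b_k-2a_1\,E\bigl(|y'-y_1'|^2\bigr)+s\,\frac{U_0}{u_n}\Bigl(\tfrac{a_2}{2}+Mr\Bigr),
\]
where $E$ is the Whitney extension of Theorem~\ref{whitney}. The two ingredients $E(\cdot)$ and $U_0/u_n$ are exactly what encode the geometry of $F(u_k)$, and the computation of $L_\beta(u_n\varphi_k)$ runs through Lemma~\ref{equation for u_nEQ} and the identity $L_\beta\bigl(U_0(1+Mr)\bigr)\geq c|y|^\beta U_0/r$ already established in Substep~1 of Lemma~\ref{uniform holder}. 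This makes $\varphi_k$ a strict subsolution of the whole linearized problem (interior inequality plus Neumann condition) in a full neighborhood of $F(u_k)$, so touching $w_k$ from below is impossible with no case splitting --- in particular there is no separate ``thin cone'' case. Your proposed fix for that case (tilting $q$ and transferring across $\{y=0\}$ by Theorem~\ref{boundary harnack}) is not a complete argument: it does not explain how to exclude a contact point strictly inside $\pP_k\setminus F(u_k)$, where neither the interior inequality nor the Neumann condition for $w_k$ is available. To repair the proposal you would need to replace the $\Vv_\delta$-based barrier and the invocation of Propositions~\ref{subsolution verification}--\ref{compare Vn Un} with a barrier of the form $E(Q)+\tfrac{U_0}{u_n}P$ and a direct subsolution verification via Lemma~\ref{equation for u_nEQ}, which is precisely what the paper does.
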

	\begin{proof}
		By Lemma \ref{uniform holder}, the existence and continuity of the limit $w$ is a direct result of Arzela-Ascoli Theorem. It suffices to prove that $w$ satisfies \eqref{linearized problem}.
		
		For the interior part, Theorem \ref{schauder estimate} indicates that $w$ is $C^{1,t}$ for any $t<1$. Condition \eqref{scaled expansion} indicates that $F(u)\to \Ll$, $(u_k)_n\to (U_0)_n$. Therefore $w$ satisfies $L_\beta(U_nw)=0$ in $B_1\setminus\pP$.
		
		For the boundary condition, we prove by contradiction. Suppose $w$ can be touched from below strictly at $0$ by a function $\varphi$ such that\[\varphi(X)=b_0-2a_1|y'-y'_1|^2+\frac{a_2}{2}r+Mr^2\quad\text{with }M>>2a_1,\]
		for some $b_0$ and $y_1'$. Since $w_k\to w$ uniformly, then we can touch $w_k$ from below by $\varphi_k=b_k-2a_1E(|y'-y'_1|^2)+s\frac{U_0}{u_n}(\frac{a_2}{2}+Mr)$.
		
		Now, as in Step 1 in the proof of Lemma \ref{uniform holder}, we can show that $\varphi_k$ is a subsolution to \eqref{linearized problem} for $k$ big enough. Therefore it cannout touch $w_k$ from below. This gives a contradiction.
	\end{proof}
	
	With all the preparation above, we give the proof of Theorem \ref{regularity improvement} below.
	\begin{proof}
		In the following proof, we will show that $w$ is $C^{k+2,\alpha}$ at $0$. The proof applies to all points on $F(u)$. We notice that $C^{k+2,\alpha}$ is a local regularity, therefore it suffices to consider a scaling of $w(R X)$ for $X\in B_{1/R}$. After a scaling, we only need to consider the case \eqref{scaled expansion} for some $\delta$ small.
		
		By definition, it suffices to find a $C^{k+2}$ function $W_{Q,P}$ to approximate $w$ at $0$. We determine $W_{Q,P}$ by approximating $w$ in sequence of balls $B_{\rho^m}$ inductively on $m$. Set\[W_{Q,P}=E(Q)+\frac{U_0}{u_n}P,\] with \begin{equation}\label{form of QP}
			\begin{aligned}
				&Q=Q(x')=q_\mu y^\mu,\;\text{$q_\mu=0$ for $\mu_n\neq0$ is a degree $k+2$ polynomial of $x'$},\\
				&P=P(x,r)=a_{\mu m}x^\mu r^m\;\text{is a degree $k+1$ polynomial of $(x,r)$.}
			\end{aligned}
		\end{equation}
		
		We claim that: if for a given $W_{Q,P}$ such that $\|w-W_{Q,P}\|_{L^\infty(B_{1})}\leq 1$ and $Q,P$ satisfy\begin{equation}\label{Neumann condition}
			\begin{aligned}
				&(i)&\;&\partial_\nu(\frac{U_0}{u_n}P) \text{ vanishes on \(F(u)\) at origin of order $k+1+\alpha$,}\\
				&(ii)&\;&\frac{L_\beta(u_nW_{Q,P})}{U_0/r} \text{ vanishes at origin of order $k+\alpha$,}
			\end{aligned}
		\end{equation}then there exists some $Q',P'$ satisfying \eqref{Neumann condition} such that $\|(Q'+rP')-(Q+rP)\|_{L^\infty(B_{1})}\leq C$, and $\|w-W_{Q',P'}\|_{L^\infty(B_{\rho})}\leq \rho^{k+2+s}$ for $\rho$ universal.
		
		We notice that the above claim implies that there exists $Q_m,P_m$ such that $W_{Q_m,P_m}$ approximating $w$ in $B_{\rho^m}$. We set $\bar{Q}=\lim_{m\to \infty}Q_m,\bar{P}=\lim_{m\to \infty}P_m$, then $|w-W_{\bar{Q},\bar{P}}|\leq C|X|^{k+2+s}$ by standard argument. That is the desired result.
		
		Now it suffices to prove the claim. To begin with, we start by investigating \eqref{Neumann condition} $(i)$. It indicates that $\partial_\nu W_{Q,P}=O(|X|^{k+1+s})$. By Lemma \ref{expansion theorem}, we have $\frac{U_0}{u_n}=r(s+P^*)$. Therefore it suffices to require $P^*P$ to vanish of order $k+1$ at origin. Consider the coefficients of $x^{\mu'}$, we have
		\begin{equation}\label{amu1 determine}
			a_{(\sigma',0),0}=\hat{c}^\mu_{\sigma'}a_{\mu,0},\;|\hat{c}^\mu_{\sigma'}|<\delta,\;\hat{c}^\mu_{\sigma'}\text{ only if }|\mu|<|\sigma'|.
		\end{equation}
		
		Next we analyze \eqref{Neumann condition} $(ii)$. We calculate as below \[\begin{aligned}
			L_\beta(u_nW_{Q,P})	=|y|^\beta\frac{U_0}{r}(A_{\sigma l}x^\sigma r^l+O(|X|^{k+s}))+L_\beta(U_0P),
		\end{aligned}\]
		where $A_{\sigma l}\text{ is given in Lemma \ref{equation for u_nEQ}}.$ For the second term above, we calculate following \cite{Jhaveri2017} (4.1) to derive that \[\begin{aligned}
			L_\beta(U_0x^\mu r^m)&=|y|^\beta\frac{U_0}{r}(-(dm+sr)\kappa x^\mu r^{m-1}+m(m+1)x^\mu r^{m-1}\\
			&+2r^{m-1}(dm+sr)\nu\cdot \nabla_x x^\mu+\mu_i(\mu_i-1)x^{\mu-2\bar{i}}r^{m+1})\\
			&=|y|^\beta\frac{U_0}{r}(m(m+1_2\mu_n)x^\mu r^{m-1}+2s\mu_nx^{\mu-\bar{n}}r^m\\
			&\mu_i(\mu_i-1)x^{\mu-2\bar{i}}r^{m+1}+c_{\sigma l}^{\mu m}x^\sigma r^l+h^{\mu m}(x,r)).\\
		\end{aligned}\] Here $c_{\sigma l}^{\mu m}\neq0\text{ only if }|\mu|+m\leq |\sigma|+l\leq k,$ $|c_{\sigma l}^{\mu m}|<\delta$, and $h^{\mu m}(x,r)=r^mh_m^\mu(x)+mr^{m-1}h_{m-1}^\mu(x)$ and $h^\mu_m,h^\mu_{m-1}\in C^{k,\alpha}(B_1^n)$ has vanishing derives of order $k-m$ and $k-m+1$ at the origin. That indicates
		\[L_\beta(U_0P)=|y|^\beta\frac{U_0}{r}(B_{\sigma l}x^\sigma r^l+\sum_{m=0}^kr^mh_m(x)),\]
		where $B_{\sigma l}$,$h_m$ satisfy\[\begin{aligned}
			&B_{\sigma l}=(l+1)(l+2+2\sigma_n)a_{\sigma,l+1}+2s(\sigma_n+1)a_{\sigma+\bar{n},l}+(\sigma_i+1)(\sigma_i+2)a_{\sigma+2\bar{i},l-1}+c_{\sigma l}^{\mu m}a_{\mu m},\\
			&\|h_m\|_{C^{k,\alpha}(B_1^n)}\leq C,\; h_m\text{ vanish of order }k-m\text{ at origin.}
		\end{aligned}\]
		
		In conclusion, \eqref{Neumann condition} $(ii)$ is equal to $A_{\sigma l}+B_{\sigma l}=0$. Now we show that $q_\mu$ and $a_{\mu 0}$ in \eqref{form of QP} will uniquely determine $Q$, $P$ satisfying \eqref{Neumann condition}. We prove by induction. Suppose for $l\in \Nn$, $a_{\mu l}$ are determined. Then we have \begin{equation}\label{amul induction}
			-A_{\sigma l}-2s(\sigma_n+1)a_{\sigma+\bar{n},l}+(\sigma_i+1)(\sigma_i+2)a_{\sigma+2\bar{i},l-1}+c_{\sigma l}^{\mu m}a_{\mu m}=(l+1)(l+2+2\sigma_n)a_{\sigma,l+1}.
		\end{equation} As $A_{\sigma l}$ is uniquely determined by $q_\mu$ and $c_{\sigma l}^{\mu m}\neq0\text{ only if }|\mu|+m\leq |\sigma|+l\leq k,$, we are done. Furthermore, by \eqref{amu1 determine} we only need $q_\mu$ and $a_{\mu,0}$, $\mu\neq 0$ to determine $Q,P$ uniquely.
		
		We  prove the claim to end this proof. Set
		\[w=W_{Q,P}+\tilde{w},\]
		we have
		\[\left\{\begin{aligned}
			&|L_\beta(u_n\tilde{w})|\leq C\delta|y|^\beta\frac{U_0}{r},\\
			&|\tilde{w}_\nu|\leq \delta\text{ on }\alpha.
		\end{aligned}\right.\]
		From Lemma \ref{compactness} and Theorem \ref{quadratic expansion}, for $\delta$ small enough there exists $\tilde{Q}$ and $\tilde{P}$ such that
		\[|\tilde{w}-\tilde{Q}-r\tilde{P}|\leq \frac{1}{4}\rho^{k+2+\alpha}+C\rho^{k+3}\leq\frac{1}{2}\rho^{k+2+s}\quad\text{in }B_\rho.\]
		Furthermore, $\tilde{Q}$ and $\tilde{P}$ satisfy \eqref{Neumann condition} for $F(u)=\{(x',x_n,y):x_n=y=0\}$. We modify $\tilde{Q},\tilde{P}$ to construct $Q'$, $P'$ in the claim.
		
		By the argument above, we use $q_\mu$ of $\tilde{Q}$ and $\tilde{a}_{\mu 0}$ of $\tilde{P}$ as coefficients. The generated $Q'$,$P'$ satisfying \eqref{Neumann condition} is unique. Furthermore, since $\tilde{Q}$ and $\tilde{P}$ satisfy the equations \eqref{amu1 determine} and \eqref{amul induction}  with $c_{\sigma l}^{\mu  m}=\hat{c}^\mu_{\sigma'}=0$, we have $\|\tilde{Q}+r\tilde{P}-\bar{Q}-r\bar{P}\|\leq C\delta$. Finally, by the fact that $E(\bar{Q})$ is close to $\bar{Q}$ for $\rho$ small, we have
		\[|\tilde{w}-W_{\bar{Q},\bar{P}}|<\rho^{k+2}\quad\text{in }B_\rho.\]
		That gives the desired claim.
	\end{proof}
	
	Finally, we prove Theorem \ref{C infty of free boundary} to end this section.
	\begin{proof}
		Since $u(x',g(x'),0)=0$, simple calculus gives that
		\[u_i+u_ng_i=0,\;1\leq i\leq n-1.\] Therefore it suffices to show that $w=\frac{u_i}{u_n}$ has the same regularity as $g$. We prove in 2 steps. In step 1, we apply Theorem \ref{expansion theorem} to show that $\frac{u_i}{u_n}$ satisfies the equation \eqref{equation for gi}. In step 2, we apply Theorem \ref{regularity improvement} to show that $w$ has the same regularity as $g$.
		
		\vspace{5pt}
		
		\textit{Step 1: Equation for $w$.}
		
		\vspace{5pt}
		
		For the interior equation, we have
		\[L_\beta(u_n\frac{u_i}{u_n})=L_\beta(u_i)=0,\]
		it suffices to prove the Neumann boundary condition.
		
		From Theorem \ref{expansion theorem} and $\nabla g(0)=0$, there exists a  polynomial $P=a_0+a_nx_n	+a_{n+1}r$, $a_n\neq 0$ such that $\frac{u_i}{sU_0/r}=\partial_iP+O(r^{1+\alpha})$, $1\leq i\leq n$. Therefore we have
		\[\frac{u_i}{u_n}=\frac{O(r^{1+\alpha})}{a_n+O(r^{1+\alpha})}.\]
		That indicates the Neumann condition.

		\vspace{5pt}
		
		\textit{Step 2: Improvement of regularity.}
		
		\vspace{5pt}
		
		From Theorem \ref{regularity improvement}, $w$ has the same regularity as $g$. Suppose $g$ is $C^{k+2,\alpha}$, then $g_i$ is also $C^{k+2,\alpha}$. That implies $g$ is $C^{k+3,\alpha}$. The induction shows that $g$ is smooth.
	\end{proof}
	
	\section{Expansion Theorem for Linearized Problem}\label{expansion theorem section}
	In this section, we prove the expansion Theorem \ref{quadratic expansion}. It gives an infinite order expansion to the linearized problem \eqref{linearized problem}. For $\beta=0$, the equation $\Delta (U_nw)=0$ in $\Rr^n\setminus\pP$ is easier to understand. We can consider the slice of the space $\{x'=0\}$ and map the 2D slit domain to the upper half plane with a holomorphic map. For general $\beta$, we will use boundary Harnack inequality in slit domain to handle it. The idea is inspired by \cite{DeSilva2012b,Jhaveri2017}.
	
	For $k=1$, the theorem was proven in \cite{Silva2014} by D. De Silva, O. Savin, and Y. Sire. Based on that, we will prove by induction on $k$. Our proof is inspired by the paper \cite{DeSilva2012b} by D. De Silva and O. Savin. Furthermore, the regularity results in \cite{Jhaveri2017} by Y. Jhaveri and R. Neumayer and \cite{Sire2021a} by Y. Sire, S. Terracini, and S. Vita play important roles in our proof. To begin with, we recall the boundary Harnack theorem in slit domain. It is a special case of Theorem \ref{slit domain boundary harnack}.
	\begin{theorem}\label{boundary harnack for half space}(\cite[Proposition 3.3]{Jhaveri2017})
		Assume that $u\in C(B_1)$ is a solution to \eqref{extension of fracional laplacian}, even with respect to $\{y=0\}$, and $F(u)=\Ll$. For any $k\in\Nn$, there exists a polynomial $P(x,r)$ of degree $k$ with $\|P\|\leq C$ such that\[\left\{\begin{aligned}
			&L_\beta (UP)=0,\quad&\text{ in }&B_1\setminus \pP,\\
			&|u-UP|\leq CU|X|^{k+1},&\text{ in }&B_1\setminus\pP,
		\end{aligned}\right.\]
		for some constant $C=C(\beta,n,k)$.
	\end{theorem}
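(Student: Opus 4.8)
The plan is to reduce the statement to a regularity result for the quotient $w:=u/U$ and then run a De Silva--Savin type improvement-of-approximation iteration on $k$. First I would record, using $L_\beta U=0$ in $B_1\setminus\pP$, the divergence-form identity
\[
U\,L_\beta(Uw)=\operatorname{div}\!\big(|y|^\beta U^2\,\nabla w\big)\qquad\text{in }B_1\setminus\pP,
\]
so that $u$ solving \eqref{extension of fracional laplacian} with $F(u)=\Ll$ amounts to $w=u/U$ being a bounded, $y$-even solution of $\operatorname{div}(|y|^\beta U^2\nabla w)=0$ in $B_1\setminus\pP$; the weight $|y|^\beta U^2$ is comparable to $|y|^\beta$ on $\pP\setminus\Ll$ and to $|y|^\beta r^{2s}$ near $\Ll$, where $r=\operatorname{dist}(\cdot,\Ll)=\sqrt{x_n^2+y^2}$. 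I would also record the flat specialization ($\kappa\equiv0$) of the computation already used in the proofs of Theorems \ref{expansion theorem} and \ref{regularity improvement},
\[
L_\beta\!\big(Ux^\mu r^m\big)=|y|^\beta\frac{U}{r}\Big(m(m+1+2\mu_n)\,x^\mu r^{m-1}+2s\mu_n\,x^{\mu-\bar n}r^m+\mu_i(\mu_i-1)\,x^{\mu-2\bar i}r^{m+1}\Big),
\]
which exhibits a triangular recursion on the coefficients of a polynomial $P=\sum a_{\mu m}x^\mu r^m$: the map $P\mapsto P(\cdot,0)$ is a linear isomorphism from the space $\mathcal{W}_k$ of degree-$\le k$ polynomials $P(x,r)$ with $L_\beta(UP)=0$ onto the degree-$\le k$ polynomials in $x$, all remaining coefficients of $P$ being determined by the recursion (exactly as in the proof of Theorem \ref{regularity improvement}). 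Finally, the slit-domain boundary Harnack inequality (the special case of Theorem \ref{slit domain boundary harnack} recorded above; the $k=1$ case is in \cite{Silva2014}) gives, for every bounded $y$-even solution $v$ of $\operatorname{div}(|y|^\beta U^2\nabla v)=0$ in $B_1\setminus\pP$, that $v$ extends to a $C^\alpha$ function up to $\pP$ with the scale-invariant estimate $\|v\|_{C^\alpha(B_{3/4})}\le C\|v\|_{L^\infty(B_1)}$; together with the interior Schauder estimates of Theorem \ref{schauder estimate} away from $\pP$, this supplies the compactness used below.

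With these ingredients I would prove, by induction on $k$, the existence of $P_k\in\mathcal{W}_k$ with $\|P_k\|\le C$ and $|u-UP_k|\le C\,U\,|X|^{k+1}$ in $B_1\setminus\pP$; the induction starts from the $C^\alpha$ regularity of $w=u/U$ just established and the $k=1$ case of \cite{Silva2014}. The inductive step rests on an improvement lemma: writing $h:=u-UP_k$ (so $L_\beta h=0$, $h$ is $y$-even, $h=0$ on $\pP$ and $|h/U|\le C|X|^{k+1}$), there are a universal $\rho\in(0,1)$ and a homogeneous degree-$(k+1)$ polynomial $Q\in\mathcal{W}_{k+1}$, $\|Q\|\le C$, with $|u-U(P_k+Q)|\le\tfrac12 C\rho^{k+2}U$ in $B_\rho\setminus\pP$. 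Granting it, rescaling by $\rho$ (the problem is invariant under $u\mapsto\rho^{-s}u(\rho\cdot)$, which fixes $U$ and $\Ll$), iterating, and summing a geometric series yields $P_{k+1}\in\mathcal{W}_{k+1}$ with $\|P_{k+1}\|\le C$ and error $O(|X|^{k+2})$, closing the induction; dropping the degree-$(k+1)$ homogeneous term from the statement at level $k+1$ then gives the asserted degree-$k$ expansion with error $|X|^{k+1}$.

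The improvement lemma I would prove by compactness and contradiction. If it failed, there would be solutions $u_j$ with $F(u_j)=\Ll$ and $P^{(j)}\in\mathcal{W}_k$, $\|P^{(j)}\|\le C$, $|u_j-UP^{(j)}|\le CU|X|^{k+1}$, for which the conclusion fails for every admissible $Q$. Put $v_j:=(u_j-UP^{(j)})/U$: then $\|v_j\|_{L^\infty(B_1)}\le C$ and $\operatorname{div}(|y|^\beta U^2\nabla v_j)=0$ in $B_1\setminus\pP$, so by the boundary Harnack estimate the $v_j$ are uniformly $C^\alpha$ up to $\pP$ and, along a subsequence, converge in $C^0(\overline{B_{3/4}})$ to a $y$-even limit $v_0$ solving the same degenerate equation in $B_{3/4}\setminus\pP$ with $|v_0(X)|\le C|X|^{k+1}$, hence vanishing to order $k+1$ at $0$. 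By a Liouville-type classification of such solutions --- which, after separating the tangential variables $x'$, reduces to a weighted ordinary differential equation in the polar angle of the $(x_n,y)$-plane, relies on the slit boundary Harnack to pin down the admissible homogeneities, and uses the weighted Schauder theory of \cite{Sire2021a,Jhaveri2017} for the regularity at $\Ll$ --- $v_0$ agrees with its $(x,r)$-expansion to order $k+2$ at $0$; since it vanishes to order $k+1$, that expansion begins with a homogeneous degree-$(k+1)$ polynomial $Q_0$ with $L_\beta(UQ_0)=0$ (as one checks by matching the degree-$k$ part of $L_\beta(Uv_0)=0$ against the displayed formula), i.e. $Q_0\in\mathcal{W}_{k+1}$, and $|v_0-Q_0|\le C\rho^{k+2}$ on $B_\rho$. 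By uniform convergence the same bound holds for $v_j$ with $j$ large, and since $\mathcal{W}_k\subset\mathcal{W}_{k+1}$ we have $P^{(j)}+Q_0\in\mathcal{W}_{k+1}$, so $Q:=Q_0$ contradicts the assumed failure.

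I expect the decisive obstacle to be exactly this classification/regularity statement at the edge $\Ll$: that bounded $y$-even solutions of $\operatorname{div}(|y|^\beta U^2\nabla v)=0$ admit genuine polynomial expansions in the intrinsic variables $(x,r)$ at points of $\Ll$, with expansion polynomials lying in the spaces $\mathcal{W}_m$. Away from $\Ll$, on the interior of the slit $\pP$, this is just tangential $C^\infty$ regularity of $L_\beta$-harmonic functions even across $\{y=0\}$; the genuinely new phenomenon is the interaction at $\Ll$ of the two degeneracies $|y|^\beta$ and $U^2\sim r^{2s}$, compounded by the fact that $r=\sqrt{x_n^2+y^2}$ is only Lipschitz at $\Ll$. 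This is where the slit-domain boundary Harnack inequality and the weighted Schauder theory of Sire--Terracini--Vita and Jhaveri--Neumayer are indispensable, and it is why the argument is organized as an induction seeded by the $k=1$ result of De Silva--Savin--Sire rather than carried out in a single stroke.
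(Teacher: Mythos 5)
The paper does not prove Theorem \ref{boundary harnack for half space}; it is imported verbatim as a citation of \cite[Proposition 3.3]{Jhaveri2017}, and the only in-paper argument is the perturbative Remark \ref{remark boundary harnack for half space}, which takes the cited theorem as a black box. So there is no internal proof to compare against, and the relevant question is whether your sketch would, on its own, establish the cited result.

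Your architecture is the right one: pass to $w=u/U$ solving $\operatorname{div}(|y|^\beta U^2\nabla w)=0$ in $B_1\setminus\pP$, set up the finite-dimensional spaces $\mathcal{W}_k$ of polynomials $P(x,r)$ with $L_\beta(UP)=0$ via the triangular recursion coming from the explicit formula for $L_\beta(Ux^\mu r^m)$ (the identification $\mathcal{W}_k\cong\{$degree-$\le k$ polynomials in $x\}$ through $P\mapsto P(\cdot,0)$ does check out), seed the induction with the $k=1$ case and the scale-invariant $C^\alpha$ boundary Harnack bound, and close the inductive step by a compactness/contradiction improvement lemma. This is broadly the route the literature takes. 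But the argument as written has a genuine gap precisely where you flag it. In the contradiction step, everything rests on the assertion that the compactness limit $v_0$ — a bounded $y$-even solution of the degenerate equation vanishing to order $k+1$ at $0$ — admits a polynomial expansion in the $(x,r)$-variables to order $k+2$ with leading homogeneous part in $\mathcal{W}_{k+1}$. That assertion is not a consequence of the ingredients you list; it is essentially the theorem itself (for one exact solution, without the uniform bounds), and the iteration cannot be used to supply it without circularity. What breaks the circle in practice is a separate, direct analysis of exact solutions: separate the $x'$-variables, pass to polar coordinates in the $(x_n,y)$-plane, classify the admissible homogeneities of the resulting weighted eigenvalue problem on the slit circle, and show that the homogeneous profiles one obtains this way are exactly the polynomials $Q\in\mathcal{W}_m$ times $U$; then match the low-order part of $v_0$ against these profiles using the weighted Schauder theory near $\Ll$. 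You gesture at this ODE reduction in one clause, but that reduction together with the identification of the eigenfunctions with polynomial data is the technical heart of \cite{Jhaveri2017}, and without it the improvement lemma — and hence the whole induction — does not close. In short: a correct and well-organized outline of the strategy, but a sketch with the hardest step left open rather than a proof.
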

	\begin{remark}\label{remark boundary harnack for half space}
		Notice that for $k\in\Nn$, it suffices to have $|L_\beta u|\leq C|y|^\beta |X|^{k-1}U$ to derive the same result. To prove this, we calculate as below.
		\[\begin{aligned}
			L_\beta (|X|^{k+1}U)=|y|^\beta(\Delta+\frac{\beta}{y})(|X|^{k+1}U)=(k+1)(n+k+1)|y|^\beta |X|^{k-1}U.
		\end{aligned}\]
		By the calculation above, for $u$ satisfying $|L_\beta u|\leq C|y|^\beta |X|^{k-1}U$, it suffices to consider $u\pm C|X|^{k+1}U$ as barrier functions. In detail, we have $L_\beta(u+C|X|^{k+1}U)\geq 0$. Then we consider $u_1$ satifying
		\[\left\{\begin{aligned}
			&L_\beta u_1=0,\quad&\text{in }&B_1\setminus \pP\\
			&u_1=u+Cr^{k+1}U,\quad&\text{on }&\partial(B_1\setminus \pP).
		\end{aligned}\right.\]
		By Theorem \ref{interior harnack} Harnack inequality, we have $|u_1-UP_1|\leq C|X|^{k+1}U$. By maximum principle, we have
		\[u\leq u_1\leq UP_1+C|X|^{k+1}U.\]
		Similarly, we can derive that
		\[u\geq UP_2-C|X|^{k+1}U.\]
		We can simply derive that
		\[|P_1-P_2|\leq C|X|^{k+1}.\]
		As $P_1$ and $P_2$ are polynomials of degree $k$, we can say they are exactly the same.
	\end{remark}
	The proof of Theorem \ref{quadratic expansion} is based on the fact that for the solution $h$ to \eqref{linearized problem}, $D^k_{x'}h$ has the same regularity as $h$. To begin with, we present some known results first.
	
	\begin{lemma}\label{derivative regularity}(\cite[Theorem 6.1 and Lemma 6.3]{DeSilva2014})
		Given a boundary data $\bar{h}\in C(\partial B_1)$ even with respect to $\{y=0\}$, $|\bar{h}|\leq 1$. There exists a unique classical solution $h$ to \eqref{linearized problem} such that $h\in C(\bar{B}_1)$ even with respect to $\{y=0\}$, $h=\bar{h}$ on $\partial B_1$.
		
		Furthermore, $D^k_{x'}h$ is also a solution to \eqref{linearized problem} for any $k\in \Nn^n$. There exists $\alpha\in (0,1)$ universal such that
		\[\begin{aligned}
			|h(X)-h(0)-\xi_1\cdot x'|&\leq C(n,\beta)(|x'|^2+r^{1+\alpha}),\\
			[D^k_{x'}h]_{C^\alpha(B_{1/2})}&\leq C(n,\beta,k)\|h\|_{L^\infty(B_1)}.
		\end{aligned}\]
		Here $|\xi_1|\leq C(n,\beta)$.
	\end{lemma}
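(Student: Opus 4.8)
The statement is \cite[Theorem 6.1 and Lemma 6.3]{DeSilva2014}, and I would follow that argument. The key reduction is to rewrite \eqref{linearized problem} in pure divergence form: since the weight $|y|^\beta$ is independent of $x_n$, the operator $L_\beta$ commutes with $\partial_{x_n}$, so $L_\beta U_n=0$ in $B_1\setminus\pP$, and the product rule $L_\beta(fg)=fL_\beta g+gL_\beta f+2|y|^\beta\nabla f\cdot\nabla g$ shows that $L_\beta(U_n w)=0$ is equivalent to $\operatorname{div}\!\big(|y|^\beta U_n^2\,\nabla w\big)=0$ on $B_1\setminus\pP$. The weight $a:=|y|^\beta U_n^2$ is a locally $A_2$ Muckenhoupt weight away from the slit $\pP$ and the edge $\Ll$, and degenerates near those sets at the rates dictated by $U\sim r^s$ (so $U_n\sim r^{s-1}$ near $\Ll$).

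\textbf{Step 1 (existence and uniqueness).} Minimize the weighted Dirichlet energy $\int_{B_1}a\,|\nabla w|^2$ over functions even with respect to $\{y=0\}$ with trace $\bar h$ on $\partial B_1$ (alternatively, run Perron's method with barriers); the direct method produces a solution $h$, and weighted De Giorgi--Nash--Moser theory upgrades it to a classical solution of $\operatorname{div}(a\nabla h)=0$ in $B_1\setminus\pP$, continuous up to $\partial B_1$. The viscosity Neumann condition $|\nabla_r h|=0$ on $\Ll$ in the sense of Definition \ref{definition of solution to linearized problem} is forced by minimality (a touching test function with strictly signed $b(X_0)$ would lower the energy), and uniqueness is the comparison principle for this equation.

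\textbf{Step 2 ($x'$-derivatives).} The coefficient $a$ does not depend on $x'$ and the sets $\pP,\Ll$ are invariant under $x'$-translations, so the incremental quotients $\tau^{-1}\big(h(\cdot+\tau e_i)-h\big)$ solve the same linearized problem. Interior estimates for degenerate divergence-form equations, together with the boundary estimate of Step 3, bound these quotients uniformly in $\tau$; letting $\tau\to0$ shows $\partial_{x_i}h$ exists, is bounded, and again solves \eqref{linearized problem}. Iterating gives that $D^k_{x'}h$ solves \eqref{linearized problem} for every $k\in\Nn^n$, with $\|D^k_{x'}h\|_{L^\infty(B_{3/4})}\le C(n,\beta,k)\|h\|_{L^\infty(B_1)}$.

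\textbf{Step 3 (the two estimates), which I expect to be the main obstacle.} The first-order expansion $|h(X)-h(0)-\xi_1\cdot x'|\le C(|x'|^2+r^{1+\alpha})$ at the edge point $0\in\Ll$ is a $C^{1,\alpha}$-type boundary estimate; I would prove it either by a compactness and improvement-of-flatness scheme in the spirit of \cite{DeSilva2012b} combined with a Liouville classification of homogeneous global solutions, or directly from the boundary Harnack inequality in the slit domain $B_1\setminus\pP$ (Theorem \ref{boundary harnack for half space}), which controls ratios of solutions vanishing on $\pP$ and in particular rules out a first-order term in $r$. Given Step 2, each $D^k_{x'}h$ is a bounded solution, so applying the same edge estimate and the slit-domain boundary Harnack to it — and standard elliptic estimates where $a$ is non-degenerate — yields $[D^k_{x'}h]_{C^\alpha(B_{1/2})}\le C\|h\|_{L^\infty(B_1)}$. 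The difficulty concentrated in this step is that the equation degenerates simultaneously at $\{y=0\}$ (through $|y|^\beta$) and at the codimension-two edge $\Ll$ (through $U_n\sim r^{s-1}$), and the domain carries the slit $\pP$: for $\beta=0$ one slices by $\{x'=\text{const}\}$ and uses a conformal map, but for general $\beta$ the substitute is precisely the slit-domain boundary Harnack inequality (Theorem \ref{boundary harnack for half space}, a special case of Theorem \ref{slit domain boundary harnack}).
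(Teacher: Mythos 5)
This lemma is stated in the paper with a citation to \cite[Theorem 6.1 and Lemma 6.3]{DeSilva2014}; the paper itself gives no proof, so there is no internal argument to compare against and your proposal has to stand on its own. Your structural observations are correct: since $L_\beta U_n=0$ in $B_1\setminus\pP$, one does have $U_n\,L_\beta(U_nw)=\operatorname{div}\!\big(|y|^\beta U_n^2\nabla w\big)$, and since the coefficient $|y|^\beta U_n^2$ and the sets $\pP,\Ll$ are invariant under translations in $x'$, incremental quotients of $h$ in those directions solve the same problem. These two facts are indeed the skeleton of the argument in \cite{DeSilva2014}.

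However, the proposal has substantive gaps exactly where you flag the difficulty. In Step 1, direct minimization of $\int |y|^\beta U_n^2|\nabla w|^2$ in a weighted Sobolev space whose weight vanishes identically on the slit $\pP$ and blows up like $r^{2s-2}$ at the edge $\Ll$ does not on its own produce a function continuous up to $\overline{B}_1$, and the sentence that "minimality forces the viscosity Neumann condition" is an assertion rather than an argument: the viscosity condition of Definition~\ref{definition of solution to linearized problem} is a non-touching statement and is verified in the literature by Perron's method together with explicit barriers of the form $U_n(a\cdot x'+br+\cdots)$ and comparison, not by an energy variation. In Step 3, the estimate $|h(X)-h(0)-\xi_1\cdot x'|\le C(|x'|^2+r^{1+\alpha})$ is the improvement-of-flatness content of \cite[Theorem 6.1]{DeSilva2014} and does not follow in one stroke from the slit-domain boundary Harnack inequality; and the seminorm bound $[D^k_{x'}h]_{C^\alpha(B_{1/2})}\le C\|h\|_{L^\infty(B_1)}$ over the full ball requires gluing three separate regimes: the edge estimate near $\Ll$ (where Theorem~\ref{boundary harnack for half space} controls $U_nD^k_{x'}h$ against $U$), a boundary Harnack argument near $\pP\setminus\Ll$ (where $U_n$ vanishes but one is away from the edge, so Theorem~\ref{boundary harnack} is the relevant tool), and interior Schauder estimates where the equation is non-degenerate. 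You name the right tools and correctly identify that the slit-domain boundary Harnack replaces the $\beta=0$ conformal-map trick, but the barrier constructions and the gluing across these regimes are where the actual work lies, and they are not carried out.
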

	\begin{remark}\label{derivative regularity remark}
		Lemma \ref{derivative regularity} indicates the H\"{o}lder regularity of $D^k_{x'}h$. In fact, $D^k_{x'}h$ shares the same regularity with $h$. By Lemma \ref{derivative regularity} we have that $D^k_{x'}h$ is also a solution to \eqref{linearized problem} with uniform bound. We apply the regularity estimate of $h$ to $D^k_{x'}h$ and have the result.
	\end{remark}
	
	Now we give the proof of Theorem \ref{quadratic expansion}. Our proof is inspired by the paper \cite{DeSilva2014}.
	
	\begin{proof}
		We will prove the desired results in 2 steps. In step 1, we show that for $\alpha$ in Lemma \ref{derivative regularity} and $\forall k\in \Nn$, there exists a polynomial $P_k(x,r)$ of order $k$ such that
		\begin{equation}\label{holder approximation}
			|h-P_k|\leq C_o|X|^{k+\alpha}\text{ in }B_{1/2}.
		\end{equation}
		In step 2, we consider the properties of $P_k$. We show that $P_k$ is bounded and $L_\beta(U_nP_k)=0$ for all $k$. Then by the bound of $P_{k+1}$, we have that the $\alpha$ in step 1 can be raised to 1.
		
		\vspace{5pt}
		
		\textit{Step 1: Proof of \eqref{holder approximation}:}
		
		\vspace{5pt}
		
		We prove by induction on k in this step. The case $k=1$ was proven in \cite[Theorem 6.1]{DeSilva2014}. We assume \eqref{holder approximation} for $k$ and prove the $k+1$ case. We will prove in 2 substeps. In substep 1, we show that there exists a polynomial approximating $h$ in the space $\{x'=0\}$. In substep 2, we extend the result to the whole space.
		
		\vspace{5pt}
		
		\textit{Substep 1: Polynomial appromating $h$ in $\{x'=0\}$.}
		
		\vspace{5pt}
		
		In this step, we consider $h$ in $\{x'=0\}$. In this 2D space, we will use coordinates $t$ to denote original $x_n$. $y,r$ remain the same as in $\Rr^{n+1}$. By definition, we have
		\[L_\beta(U_nh)=0\quad\text{in }B_1\setminus \pP.\]
		Since $U_n$ and $y$ are independent of $x'$, we extract the terms of derivatives in $x'$ direction to have
		\[\text{div}_{x_n,z}(|y|^\beta\nabla(U_nh))=-|y|^\beta U_n\Delta_{x'}h.\]
		
		As indicated in Remark \ref{derivative regularity remark}, we can apply the regularity result of $h$ to $f=\Delta_{x'}h$ to derive
		\[|f-Q(x,r)|\leq C_o|X|^{k+\alpha},\]
		where $C_o$ is a universal constant and  $\|Q\|$ is bounded by universal constants and $k$. Here $Q$ is an order $k$ polynomial of $t,r$. In $\{x'=0\}$, $\Delta_{x'}h$ is a function of $t$ and $r$. We use  $f$ to denote $\Delta_{x'}h$. We investigate the 2-dimensional problem in $(t,y)$-variables:
		\begin{equation}\label{2D regularity problem}
			\text{div}_{t,y}(|y|^\beta\nabla(U_th))=|y|^\beta U_tf\quad \text{in }B_{1/2}\setminus\pP.
		\end{equation}
		The regularity analysis of \eqref{2D regularity problem} derives regularity results on sets $\{x'=0\}$. We consider the solution $H\in C(B_{1/2})$ to the following problem
		\[\left\{\begin{aligned}
			&\text{div}_{t,y}(|y|^\beta\nabla H)=|y|^\beta U_tf\quad&\text{in }&B_{1/2}\setminus\pP,\\
			&H=U_th\quad&\text{on }&\partial B_{1/2},\\
			&H=0\quad&\text{on }&B_{1/2}\cap\pP.
		\end{aligned}\right.\]We claim that $H=U_th$ in $B_{1/2}$. By the equality, it suffices to analyze the regularity of $H$. 
		By Remark \ref{derivative regularity remark}, we have that for some universally bounded polynomial $P(t,r)$ of order $k+1$, \[|\frac{H}{U}-P|=|s\frac{h}{r}-P|\leq C(\beta,n,k)r^{k+1+\alpha}.\]
		This gives the desired result in the plane $\{x'=0\}$. 
		
		We prove the claim to end this substep. As we have
		\[\left\{\begin{aligned}
			&\text{div}_{t,y}(|y|^\beta\nabla (H-U_th))=0\quad&\text{in }&B_{1/2}\setminus\{t\leq 0,y=0\},\\
			&H-U_th=0\quad&\text{on }&\partial B_{1/2}\cup(B_{1/2}\cap\{t< 0,y=0\}),
		\end{aligned}\right.\]
		it suffices to analyze $H-U_nh$ near $\{t=0,y=0\}$ to  apply maximum principle.
		
		By Theorem \ref{slit domain boundary harnack} Harnack inequality in slit domain,  we have that $\frac{H}{U}\in C^\alpha(B_{1/2})$. In other words, $\exists a\in \RR$ such that $|H-aU|\leq Cr^\alpha U$ in $B_{1/2}\setminus\{t\leq 0,y=0\}$. Therefore, $$\lim_{r\to 0} \frac{H-U_th}{U_t}=\lim_{r\to 0} \frac{H}{U_t}=\lim_{r\to 0} \frac{H/U}{s/r}=0.$$ By maximum principle, we have $-\epsilon U_t\leq H-U_th\leq \epsilon U_t$ for any $\epsilon>0$ small. Letting $\epsilon\to 0$, we have the desired result.

		\vspace{5pt}
		
		\textit{Substep 2: Polynomial appromating $h$ in the whole space.}
		
		\vspace{5pt}
		
		We apply the result of substep 1 to each slice $\{x'=c\}$. It gives
		\[|h-\sum_{m,l\in\Nn}^{l+m\leq k+1}b_{\mu l}^h(x')x^m r^l|\leq C(\beta,n,k)r^{k+1+\alpha}.\]
		It suffices to show that all $b_{m l}^h(x')$ are $x'$ smooth functions and apply Taylor expansion to them at $x'=0$. To show this, we consider $h^\tau(X)=\frac{h(X+\tau)-h(X)}{|\tau|}$ for $\tau$ in the $x'$ direction. $h^\tau$ is a solution to \eqref{linearized problem}, and $b_{m l}^{h^\tau}(x')=\frac{b_{m l}^h(x'+\tau)-b_{m l}^h(x')}{|\tau|}$ by definition. Pass $|\tau|\to 0$. According to Remark \ref{derivative regularity remark}, $D^\tau h$ is uniformly bounded. Therefore we have $\partial_\tau b_{m l}^h=b_{m l}^{\partial_\tau h}$ being bounded by universal constants and $k$. This gives the desired result.

		\vspace{5pt}
		
		\textit{Step 2: Properties of $P_k$.}

		\vspace{5pt}
		
		To begin with, we show that $P_k$ is bounded by constant by $C(n,\beta,k)$. We notice that it suffices to prove the bound for $b_{ml}$ above. It is a direct result of Remark \ref{derivative regularity remark}.
		
		Next we prove the $L_\beta P=0$ by induction. Use $P_k$ to denote the approximating polynomial of order $k$. For $k=1$, the equality holds naturally. Assume the equality holds for $k$, we prove it for $k+1$.
		
		We set $v=u-U_0P_k$. It satisfies that $L_\beta v=0$. By the expansion proven above, $v=U_0(P_{k+1}-P_k+o|X|^{k+1})$. We consider\[v_\lambda(X)=\frac{v(\lambda X)}{\lambda^{s+k+1}},\]and pass  $\lambda$ to $0$. It converges to $U_0(P_{k+1}-P_k)$ uniformly. Therefore we have
		\[L_\beta(U_0(P_{k+1}))=L_\beta(U_0(P_{k+1}-P_k)+U_0P_k)=0.\]
		
		Finally, we notice that the $\alpha$ in Step 1 can be replaced by $1$. All we need to do is to consider the $k+1$ case, the $k+1+\alpha$ regularity is stronger than $k+1$ regularity.
	\end{proof}
	\section{Appendix}\label{appendix}
	In this Appendix, we gather some known results about flat boundaries, Harnack inequalities,  and regularity theories. To begin with, we recall the Harnack type inequalities for the operator $L_\beta$. To begin with, we recall Harnack inequality. It will be used to estimate the solution $u$.  
	
	\begin{theorem}\label{interior harnack}(Harnack inequality, \cite[Theorem 2.4]{tangentball} or \cite[Theorem 2.13]{DeSilva2014})
		
		Let $u \geq 0$ be a solution of  
		\[  
		L_\beta u = 0 \quad \text{in} \quad B_1 \subset \mathbb{R}^n.  
		\]  
		Then,  
		\[  
		\sup_{B_{1/2}} u \leq C \inf_{B_{1/2}} u,  
		\]  
		for some constant $C$ depending only on $n$ and $\beta$.  
	\end{theorem}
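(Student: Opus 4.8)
The plan is to recognize $L_\beta$ as a degenerate elliptic operator in divergence form whose coefficient matrix $A(X)=|y|^\beta I$ is comparable to the Muckenhoupt weight $w(X)=|y|^\beta$, and then to invoke the De Giorgi--Nash--Moser theory for such operators developed by Fabes, Kenig and Serapioni. The whole point is that $\beta\in(-1,1)$ is exactly the range in which $w$ is an $A_2$ weight, which is precisely what the degenerate regularity theory requires.

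First I would verify that $w(X)=|y|^\beta$ belongs to the Muckenhoupt class $A_2$ whenever $-1<\beta<1$. Since $w$ depends only on the $y$-coordinate and cubes are comparable to balls, this reduces to the one-dimensional estimate
\[
\sup_{I\subset\mathbb{R}}\left(\frac{1}{|I|}\int_I |y|^\beta\,dy\right)\left(\frac{1}{|I|}\int_I |y|^{-\beta}\,dy\right)<\infty,
\]
which is an elementary computation after splitting into the cases $0\in I$ and $0\notin I$; finiteness is exactly the requirement $|\beta|<1$, which also guarantees that $|y|^{\pm\beta}$ are locally integrable so that the weak formulation of $L_\beta u=0$ makes sense. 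With $w\in A_2$ in hand, the operator $L_\beta$ satisfies the weighted uniform ellipticity bound $w(X)|\xi|^2\le A(X)\xi\cdot\xi\le w(X)|\xi|^2$ (here with ellipticity ratio $1$).

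Then I would quote the Harnack inequality of Fabes--Kenig--Serapioni for nonnegative weak solutions of $\operatorname{div}(A(X)\nabla u)=0$ with $A$ comparable to an $A_2$ weight: there is a constant $C$ depending only on the dimension, the ellipticity ratio, and the $A_2$ constant of $w$ --- hence only on $n$ and $\beta$ --- such that $\sup_{B_{1/2}}u\le C\inf_{B_{1/2}}u$. If one prefers a self-contained argument, the same conclusion follows by Moser iteration: the weighted Sobolev--Poincar\'e inequality available for $A_2$ weights, combined with the weighted Caccioppoli inequality for sub- and supersolutions, yields an $L^\infty$ bound for subsolutions and a lower bound for $\inf$ of supersolutions, and a logarithmic (John--Nirenberg-type) estimate bridges the two.

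The only genuinely non-routine ingredient is the weighted Sobolev inequality underlying the Moser scheme; this is where the $A_2$ structure (or, in the very concrete case $w=|y|^\beta$, an explicit proof by integrating in the $y$-direction) is essential. Everything else --- the $A_2$ verification and the iteration --- is standard once that tool is available. Since the statement is cited verbatim from \cite{tangentball,DeSilva2014}, in the paper it in fact suffices to record the reference.
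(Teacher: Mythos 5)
Your proposal is correct and matches the intended mechanism: the paper records this theorem by citation only, and its remark following Theorem~\ref{boundary harnack} explicitly points to the Fabes--Kenig--Serapioni degenerate elliptic theory (the cited Fabes~1982 papers) with exactly the $A_2$-type condition $\sup\bigl(\frac{1}{|B|}\int_B\varphi\bigr)\bigl(\frac{1}{|B|}\int_B\varphi^{-1}\bigr)\le c$ that you verify for $\varphi=|y|^\beta$, $\beta\in(-1,1)$. Your sketch of the $A_2$ verification plus Moser iteration is the standard proof underlying the references, so nothing further is needed.
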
 
	Next we state the boundary Harnack inequality. Similar to Harnack inequality, boundary Harnack inequality will be used to analyze the behavior of $u$ near $\{u=0\}$.
	\begin{theorem}\label{boundary harnack}(Boundary Harnack inequlity, \cite[Theorem 2.4 (ii)]{tangentball})
		
		Assume that $u\geq 0$, $v$ satisfy
		\[\left\{\begin{aligned}
			&L_\beta u=L_\beta v=0 &\text{ in }& B^n_1(x_0)\times(y_0-\frac{1}{2},y_0+\frac{1}{2}),\\
			&u=v=0&\text{ on }&\Sigma=Q_{1/2}^n(x_0)\times\{y_0+1/2\},\\
			&u(x_0,y_0)=v(x_0,y_0),
		\end{aligned}\right. \]
		then $\|\frac{v}{u}\|_{C^\eta}\leq C$ for some universal $\eta\in(0,1)$ and $C$.
	\end{theorem}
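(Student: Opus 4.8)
The plan is to prove this by the classical three-step scheme for a boundary Harnack principle — a Carleson estimate, a two-sided comparability, and an oscillation-decay iteration — adapted to the weighted operator $L_\beta=\operatorname{div}(|y|^\beta\nabla\,\cdot\,)$. After a translation assume $x_0=0$, and write $\mathcal{C}=B_1^n\times(y_0-\tfrac12,y_0+\tfrac12)$, so that $\Sigma=Q_{1/2}^n\times\{y_0+\tfrac12\}$ is the flat piece of $\partial\mathcal{C}$ on which $u=v=0$, and $A:=(0,y_0)$ is the reference point, at distance $\tfrac12$ from $\Sigma$. Since $|y|^\beta$ is an $A_2$ Muckenhoupt weight for every $\beta\in(-1,1)$, the functions $u,v$ are weak solutions of a uniformly elliptic degenerate/singular equation, so the Fabes--Kenig--Serapioni theory supplies the interior Harnack inequality and interior H\"older estimates with constants depending only on $n$ and $\beta$, even across $\{y=0\}$; and where $\Sigma$ keeps a definite distance from $\{y=0\}$ the weight is smooth and two-sided bounded, so there $L_\beta$ is an honest smooth-coefficient uniformly elliptic operator and the classical boundary Schauder theory applies. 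These are the only analytic inputs.

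First I would prove the Carleson estimate: for a constant $C=C(n,\beta)$, for every $X_0\in\Sigma$ and $\rho\le\tfrac12$,
\[
\sup_{\mathcal{C}\cap B_\rho(X_0)}u\ \le\ C\,u\big(A_\rho(X_0)\big),\qquad A_\rho(X_0):=X_0-\rho\,e_{n+1},
\]
obtained by comparing $u$ on $\mathcal{C}\cap B_{2\rho}(X_0)$ with an $L_\beta$-superharmonic barrier that is positive on the curved part of the boundary and vanishes on $\Sigma$, followed by a Harnack chain down to the corkscrew point $A_\rho(X_0)$ (this chain may cross $\{y=0\}$, which is harmless by the weighted Harnack inequality). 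Next, a matching lower bound — the Hopf lemma against a one-dimensional barrier in the smooth-coefficient region, or its weighted counterpart when $\Sigma$ lies on $\{y=0\}$ — gives $u(X)\ge c\,\phi(X)\,u(A_\rho(X_0))$ on $\mathcal{C}\cap B_{\rho/2}(X_0)$, where $\phi$ is the appropriate positive vanishing profile (a smooth positive multiple of $\operatorname{dist}(X,\Sigma)$, respectively of $|y|^{1-\beta}$), while the Carleson estimate supplies the matching upper bound $u(X)\le C\,\phi(X)\,u(A_\rho(X_0))$. Since the same is true of $v$, the profile $\phi$ cancels in the quotient and, using $u(A)=v(A)$, we obtain the two-sided comparability $c\le v/u\le C$ on $\mathcal{C}\cap B_{1/4}$ with $c,C$ universal.

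The main work — and the main obstacle — is upgrading comparability to H\"older continuity of $v/u$ up to $\Sigma$. I would run the oscillation-decay iteration: fix $X_0\in\Sigma$ and dyadic radii $\rho_k=2^{-k}\rho_0$, and show $\operatorname{osc}_{\mathcal{C}\cap B_{\rho_k}(X_0)}(v/u)$ decays geometrically. The inductive step is the key lemma: if $m_k\,u\le v\le M_k\,u$ on $\mathcal{C}\cap B_{\rho_k}(X_0)$, then at the corkscrew point $A_k:=A_{\rho_k}(X_0)$ one of the two nonnegative $L_\beta$-solutions $M_k u-v$ and $v-m_k u$ is at least $\tfrac12(M_k-m_k)\,u(A_k)$; applying the comparability estimate of the previous paragraph to that solution together with $u$ (both vanishing on $\Sigma$) forces it to dominate $c\,(M_k-m_k)\,u$ on $\mathcal{C}\cap B_{\rho_{k+1}}(X_0)$, which shrinks $M_{k+1}-m_{k+1}$ by a universal factor $1-\theta$. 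Iterating gives $\operatorname{osc}_{\mathcal{C}\cap B_\rho(X_0)}(v/u)\le C\rho^{\eta}$ for a universal $\eta\in(0,1)$, and combining this with the interior H\"older estimate for $v/u$ away from $\Sigma$ (where $u$ is bounded below by interior Harnack) plus a covering argument yields $\|v/u\|_{C^\eta}\le C$ on a slightly smaller cylinder. The delicate points are all technical bookkeeping: every barrier, corkscrew construction and Harnack chain must be carried out in the presence of the weight $|y|^\beta$, which one handles by invoking the $A_2$-weighted potential theory uniformly in $\beta\in(-1,1)$ near $\{y=0\}$, and by freezing the weight and using smooth-coefficient theory near $\Sigma$; I would organize the proof so these two regimes are treated separately and glued along a fixed intermediate surface.
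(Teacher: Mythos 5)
The paper itself does not prove this theorem: it is quoted directly from \cite[Theorem~2.4(ii)]{tangentball} and used as a black box, so there is no in-house argument to compare against. Your sketch is, up to two small imprecisions, a correct rendition of the standard Caffarelli--Fabes--Mortola--Salsa / Fabes--Kenig--Serapioni scheme for boundary Harnack in the $A_2$-weighted setting, and it is the kind of argument one expects to find behind the citation. The two imprecisions, neither fatal: (i) the Carleson estimate by itself gives $\sup_{\mathcal{C}\cap B_\rho(X_0)}u\le C\,u(A_\rho(X_0))$, not the pointwise decay $u(X)\le C\,\phi(X)\,u(A_\rho(X_0))$ you attribute to it --- the latter needs Carleson at all dyadic scales paired with an $L_\beta$-supersolution barrier vanishing on $\Sigma$ (linearly when $\Sigma$ is away from $\{y=0\}$, like $|y|^{1-\beta}$ when $\Sigma\subset\{y=0\}$); (ii) the statement assumes $u\ge 0$ but places no sign restriction on $v$, so the preliminary two-sided bound $c\le v/u\le C$ cannot be obtained by symmetry --- you must first show $|v|\le C\,u$ by comparing $v$ against $\pm$ the same supersolution barrier and then invoking the Hopf-type lower bound for $u$. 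Your oscillation-decay step already handles signed $v$ correctly since $M_k u-v$ and $v-m_k u$ are nonnegative solutions, but the base case needs this extra observation. With those two details made explicit, the scheme closes and gives the stated universal $C^\eta$ bound on the quotient.
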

	Before we  move on, we point out that the inequalites above can apply to a more general operator. For operator in the form div$(\varphi\nabla\cdot)$, the inequalites apply if for any ball $B$ we have $\sup(\frac{1}{|B|}\int_B \varphi)(\frac{1}{|B|}\int_B \varphi^{-1})\leq c$. We refer to \cite{Fabes1982,Fabes1982no2} for details. Finally, we introduce Boundary Harnack in slit domains. It is used to analyze the behavior of $u$ near the free booundary. The pioneering paper \cite{Silva2014} first provided the inequality for $\beta=0$. The paper \cite{Jhaveri2017} proved the inequlity for $-1<\beta<1$. A more recent paper \cite{Kassmann2024} contains some more general Harnack inequalities.
	\begin{theorem}\label{slit domain boundary harnack} (Boundary Harnack in slit domain, \cite[Theorem 1.3]{Jhaveri2017})
		
		Suppose that $\Omega\subset\Rr^n$ satisfy $\|\partial\Omega\|_{C^{k,\alpha}}\leq 1$, $0\in \partial\Omega$, and the normal unit vector of $\partial\Omega$ at $0$ is $e_n$, the unit vector in $x_n$ direction. Furthermore, \( u \) and \( U > 0 \) satisfy\[\left\{\begin{aligned}
			&L_\beta u=L_\beta v=0\quad\text{ in }B_1 \setminus(\Omega\times\{0\}),\\
			&u,v\text{ vanishe continuously on the slit }B_1\cap (\Omega\times\{0\}),\\
			&u,v \text{ are even in $y$},\\
			&\|u\|_{L^\infty(B_1)} \leq 1 \quad \text{and} \quad v\Big(\frac{\nu(0)}{2}\Big) = 1.
		\end{aligned}\right.\]
		Then  
		\[
		\left\| \frac{u}{v} \right\|_{C^{k,\alpha}_{x,r}(\alpha \cap B_{1/2})} \leq C,  
		\]  
		for some \( C = C(\beta, n, k, \alpha) > 0 \).
	\end{theorem}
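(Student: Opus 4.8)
\emph{Proposal.} This statement is quoted from \cite{Jhaveri2017}; the plan below is the proof I would give, following the iteration-and-compactness philosophy of De Silva together with the weighted Schauder theory for $L_\beta$.

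\emph{Step 1 (flattening).} First I would reduce to the model slit $\pP$ and edge $\Ll$ of \eqref{half plane P}. Pick a $C^{k,\alpha}$ diffeomorphism of $\mathbb{R}^n$ that straightens $\partial\Omega$ near $0$ and extend it to $\mathbb{R}^{n+1}$ by the identity in the $y$-variable; since the weight $|y|^\beta$ only sees $y$, it is unchanged, and $L_\beta$ is transformed into $\tilde L w = |y|^{\beta}\,\mathrm{div}\bigl(A(x)\nabla w\bigr)$ with $A$ symmetric, uniformly elliptic, of class $C^{k-1,\alpha}$, and $A(0)=I$, while $u,v$ become even-in-$y$ solutions of $\tilde L w = 0$ in $B_1\setminus\pP$ vanishing continuously on $\pP$. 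The model solution in the new coordinates is still $U$ from \eqref{half space solution} up to a $C^{k-1,\alpha}$ correction, and it suffices to prove $\|u/v\|_{C^{k,\alpha}_{x,r}(\Ll\cap B_{1/2})}\le C$.

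\emph{Step 2 (comparability with $U$ and the base case).} Using the interior Harnack inequality (Theorem~\ref{interior harnack}), the boundary Harnack inequality across $\{y=0\}\setminus\pP$ (Theorem~\ref{boundary harnack}), and barriers obtained from $U$ by adding a small multiple of a linear function (whose action under $L_\beta$ is computed via \eqref{derivatives of U}), I would first establish the Carleson-type bound $c\,U(d,y)\le w\le C\,U(d,y)$ near $\Ll$ for every nonnegative solution $w$ vanishing on $\pP$, where $d=\mathrm{sgn}(x_n)\,\mathrm{dist}(\cdot,\partial\Omega)$. Given this, the $C^{0,\alpha}$ estimate for $f:=u/v$ follows from an oscillation-decay iteration: apply interior and boundary Harnack to the nonnegative solutions $v(\sup_{B_{2\rho}}f-f)$ and $v(f-\inf_{B_{2\rho}}f)$ and use $v\asymp U$ to get $\mathrm{osc}_{B_\rho}f\le(1-\theta)\,\mathrm{osc}_{B_{2\rho}}f$, then sum the geometric series; the error coming from the $C^{k-1,\alpha}$ perturbation of $\tilde L$ is absorbed since it can be made small by rescaling.

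\emph{Step 3 (induction on $k$) and main obstacle.} Assuming the statement for $k-1$, I would differentiate $u$ and $v$ in the directions $x'$ tangent to $\Ll$: in the flat case these are again solutions, and with variable coefficients they pick up commutator terms $\mathrm{div}\bigl((\partial_{x'}A)\nabla w\bigr)$ which, by the inductive hypothesis together with the weighted Schauder estimates for $L_\beta$ (\cite{Sire2021a,Sire2021,Fall2022} and the $|y|^\beta\partial_y$-transformation trick of \cite{Caffarelli2007}), are of strictly lower order and hence controllable. Combined with the expansion of $U$ in the $(d,r)$ variables, Taylor-expanding the tangential derivatives along $\Ll$ produces a tangent polynomial $P(x,r)$ with $\|P\|\le C$, $\tilde L(UP)=0$, and $|u-vP|\le C\,U\,|X|^{k+\alpha}$ in $B_{1/2}$, which is precisely the $C^{k,\alpha}_{x,r}$ statement for $u/v$. (An alternative, parallel to how Theorem~\ref{quadratic expansion} is proved in Section~\ref{expansion theorem section}, is to take the case $k=1$ of \cite{Silva2014} as the base and iterate the polynomial approximation directly, avoiding differentiation through the variable coefficients.) The difficulty is the interaction of the codimension-two edge $\Ll$, where the intrinsic exponent is the non-integer $s$ and one must work in the $C^{k,\alpha}_{x,r}$ scale rather than the classical one, with the degenerate $A_2$-weight $|y|^\beta$, for which Schauder-type estimates hold only in suitably adapted weighted spaces; making the tangential bootstrap rigorous — controlling the commutators generated by the flattening while matching the $d$- and $r$-direction expansions with the $y$-direction degeneracy simultaneously — is the technical heart, whereas once Steps 1 and 2 are in place the induction is routine.
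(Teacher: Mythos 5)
The first thing to note is that the paper does not prove Theorem~\ref{slit domain boundary harnack} at all: it is imported verbatim from \cite[Theorem 1.3]{Jhaveri2017} and used as a black box (both here and in Section~\ref{expansion theorem section}). So there is no in-paper argument to compare against; the only question is whether your sketch would stand on its own as a proof of the cited result, and as written it would not. Your Steps 1--2 (flattening plus the $C^{0,\alpha}$ ratio estimate via Harnack, barriers and oscillation decay) are standard and essentially sound, but the main route you propose for Step 3 has a genuine gap: after flattening, the commutator terms $\mathrm{div}\bigl((\partial_{x'}A)\nabla w\bigr)$ are \emph{not} harmlessly lower order near the edge $\Ll$, because $\nabla w$ for a solution vanishing on the slit behaves like $U/r\sim r^{s-1}$ and blows up there; controlling these terms requires exactly the kind of weighted, slit-adapted estimates whose construction is the content of the theorem, so invoking ``inductive hypothesis plus Schauder'' is circular at the decisive point. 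Moreover, tangential differentiation along $\Ll$ by itself cannot produce the $r$-dependence of the tangent polynomial $P(x,r)$; the expansion in the $(d,r)$ (or $(x_n,y)$) variables has to come from the equation, which your sketch only gestures at. You acknowledge this (``the technical heart''), which means the proposal is a plan rather than a proof.

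The parenthetical alternative you mention is in fact the published route: \cite{Jhaveri2017} proves the result by a De Silva--Savin style iteration, approximating $u$ at edge points by $v$ (or $U$) times polynomials in $(x,r)$, computing $L_\beta(U x^\mu r^m)$ explicitly, and improving the approximation on dyadic balls --- the same scheme this paper replays for the linearized problem in Section~\ref{expansion theorem section} (compare Theorem~\ref{boundary harnack for half space} and Remark~\ref{remark boundary harnack for half space}) and in the computations inside Theorem~\ref{expansion theorem} and Lemma~\ref{equation for u_nEQ}. That approach avoids differentiating through flattened coefficients altogether, which is precisely why it succeeds where the commutator strategy stalls. If you intend to supply a proof rather than a citation, develop that polynomial-approximation iteration; otherwise the honest course is to cite \cite{Jhaveri2017} as the paper does.
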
  
	Finally, we introduce a sequence of regularity theorems to finish this Appendix. To begin with, we state a regularity theorem for the operator $L_\beta$. It is used to analyze the behavior of the modified function near $\{y=0\}$.
	\begin{proposition}\label{interior regularity for nabla_xu and uy}(\cite[Proposition 2.3]{Jhaveri2017})
		Let $u$ be a solution to \[L_\beta u=|y|^\beta f\text{ in }B_\lambda,\;f\in L^\infty(B_\lambda).\]
		Then we have
		\[\|\nabla_x u\|_{L^\infty(B_{\lambda/4})}\leq \frac{C}{\lambda}\|u\|_{L^\infty(B_\lambda)}+C\|f\|_{L^\infty(B_\lambda)}.\]
		Furthermore, if $\bar{f}:=|y|^\beta\partial_yf\in L^{\infty}(B_\lambda)$, then
		\[\||y|^\beta\partial_y u\|_{L^\infty(B_{\lambda/4})}\leq C\lambda^{\beta-1}\|u\|_{L^\infty(B_\lambda)}+C\lambda^\beta\|f\|_{L^\infty(B_\lambda)}+C\lambda^2\|\bar{f}\|_{L^\infty(B_\lambda)}.\]
	\end{proposition}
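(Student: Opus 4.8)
The plan is to reduce everything to the unit scale and then invoke the De Giorgi--Nash--Moser machinery for degenerate divergence-form operators with the Muckenhoupt $A_2$ weight $|y|^{\beta}$ ($-1<\beta<1$), the same circle of ideas behind Theorem \ref{interior harnack} and Theorem \ref{boundary harnack}; see \cite{Fabes1982,Fabes1982no2}. First I would set $v(X):=u(\lambda X)$, which solves $L_\beta v=|y|^\beta g$ in $B_1$ with $g=\lambda^{2}f(\lambda\,\cdot\,)$ and $\bar g:=|y|^\beta g_y$ a corresponding dilate of $\bar f$; since $\nabla_x v(X)=\lambda(\nabla_x u)(\lambda X)$ and $(|y|^\beta v_y)(X)=\lambda^{1-\beta}(|y|^\beta u_y)(\lambda X)$, it suffices to prove both inequalities for $\lambda=1$ and then collect the powers of $\lambda$ (using $\lambda\le 1$).

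For the first inequality I would run the weighted Caccioppoli estimate: testing the weak form against $\eta^2 v$ for a cutoff $\eta$ gives $\int_{B_{3/4}}|y|^\beta|\nabla v|^2\le C(\|v\|_{L^\infty(B_1)}^2+\|g\|_{L^\infty(B_1)}^2)$, so $v\in H^1_{loc}(|y|^\beta dX)$. Because $|y|^\beta$ and the equation are invariant under translations in the $x_1,\dots,x_n$ variables, the tangential difference quotients $D^h_{x_i}v$ solve the same type of equation and are bounded in $H^1_{loc}(|y|^\beta)$ uniformly in $h$, so $\partial_{x_i}v\in H^1_{loc}(|y|^\beta)$ with the same bound. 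Differentiating the equation in $x_i$ (legitimate since the weight does not depend on $x_i$) shows $\partial_{x_i}v$ is a weak solution of
\[
\text{div}\big(|y|^\beta(\nabla\partial_{x_i}v-g\,e_i)\big)=0\quad\text{in }B_{3/4},
\]
to which the Fabes--Kenig--Serapioni local boundedness estimate for equations with divergence-form $L^\infty$ forcing applies:
\[
\|\partial_{x_i}v\|_{L^\infty(B_{1/4})}\le C\big(\|\partial_{x_i}v\|_{L^2(|y|^\beta dX,\, B_{3/4})}+\|g\|_{L^\infty(B_{3/4})}\big).
\]
Combining with the Caccioppoli bound yields $\|\nabla_x v\|_{L^\infty(B_{1/4})}\le C(\|v\|_{L^\infty(B_1)}+\|g\|_{L^\infty(B_1)})$.

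For the second inequality I would use the conjugate function $w:=|y|^\beta v_y$. From $\partial_y(|y|^\beta v_y)+|y|^\beta\Delta_x v=|y|^\beta g$, i.e.\ $|y|^{-\beta}\partial_y w=g-\Delta_x v$, a direct computation (the substitution noted in \cite{Caffarelli2007}) gives the conjugate identity
\[
L_{-\beta}w=\text{div}(|y|^{-\beta}\nabla w)=|y|^{-\beta}\bar g,\qquad\bar g=|y|^\beta g_y,
\]
where now $-1<-\beta<1$, so $|y|^{-\beta}$ is again $A_2$. Since $v$ is even in $y$, $w$ is odd, so the identity holds weakly across $\{y=0\}$; and $\|w\|_{L^2(|y|^{-\beta}dX,\, B_{3/4})}^2=\int_{B_{3/4}}|y|^\beta v_y^2\le C(\|v\|_{L^\infty(B_1)}^2+\|g\|_{L^\infty(B_1)}^2)$ by the Caccioppoli bound. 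Applying the Moser $L^\infty$-bound (again \cite{Fabes1982,Fabes1982no2}) to $w$ as a solution of $L_{-\beta}w=|y|^{-\beta}\bar g$ gives
\[
\|w\|_{L^\infty(B_{1/4})}\le C\big(\|w\|_{L^2(|y|^{-\beta}dX,\, B_{1/2})}+\|\bar g\|_{L^\infty(B_{1/2})}\big)\le C\big(\|v\|_{L^\infty(B_1)}+\|g\|_{L^\infty}+\|\bar g\|_{L^\infty}\big),
\]
and undoing the scaling produces the claimed bound for $\||y|^\beta u_y\|_{L^\infty(B_{\lambda/4})}$.

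The delicate step I expect to be the real obstacle is making the conjugate equation for $w=|y|^\beta v_y$ rigorous up to and across the degeneracy set $\{y=0\}$, i.e.\ checking a priori that $w\in H^1_{loc}(|y|^{-\beta}dX)$ and that the distributional identity $L_{-\beta}w=|y|^{-\beta}\bar g$ carries no extra boundary contribution on $\{y=0\}$. I would handle this either by iterating the difference-quotient argument once more (first upgrading $v$ so that $\partial^2_{x_ix_j}v\in L^2_{loc}(|y|^\beta)$, hence $\Delta_x v\in L^2_{loc}(|y|^\beta)$ and $\partial_y w\in L^2_{loc}(|y|^{-\beta})$), or by replacing $|y|^\beta$ with the smooth non-degenerate weight $(y^2+\varepsilon)^{\beta/2}$, proving the two estimates with constants independent of $\varepsilon$, and passing to the limit $\varepsilon\to0$.
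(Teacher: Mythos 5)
The paper does not prove this proposition; it is quoted verbatim from \cite[Proposition 2.3]{Jhaveri2017}, so there is no internal proof to compare against and I am evaluating your argument on its own terms. Your strategy is the correct and standard one, and as far as I can tell it is in the same spirit as the cited source: rescale to the unit ball, treat $|y|^{\beta}$ as a Muckenhoupt $A_{2}$ weight and invoke the Fabes--Kenig--Serapioni theory of \cite{Fabes1982,Fabes1982no2}, get the tangential bound from Caccioppoli plus difference quotients and weighted Moser local boundedness applied to $\text{div}\bigl(|y|^{\beta}(\nabla\partial_{x_i}v-g\,e_i)\bigr)=0$, and get the normal bound from the Caffarelli--Silvestre conjugate identity $L_{-\beta}\bigl(|y|^{\beta}v_{y}\bigr)=|y|^{-\beta}\bar g$. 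Your conjugate computation checks out: $|y|^{-\beta}\partial_{y}(|y|^{\beta}v_{y})=g-\Delta_{x}v$ gives $\partial_{y}\bigl(|y|^{-\beta}w_{y}\bigr)=g_{y}-\partial_{y}\Delta_{x}v$, while $|y|^{-\beta}\Delta_{x}w=\partial_{y}\Delta_{x}v$, so the two $\partial_{y}\Delta_{x}v$ terms cancel and $L_{-\beta}w=g_{y}=|y|^{-\beta}\bar g$. Your scaling bookkeeping is also correct and in fact yields the slightly sharper powers $\lambda\|f\|$ and $\lambda^{1+\beta}\|f\|$ in place of the stated $\|f\|$ and $\lambda^{\beta}\|f\|$; since $\lambda\le 1$ these imply the claimed inequalities.

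Two points to tighten. First, the statement as reproduced in the paper silently drops the hypothesis that $u$ is even in $y$ (equivalently, that $u\in H^{1}(|y|^{\beta}\,dX)$ solves the equation weakly across $\{y=0\}$), a hypothesis that is in force throughout both this paper and \cite{Jhaveri2017}. Your argument genuinely uses it --- the oddness of $w=|y|^{\beta}v_{y}$ is what lets you glue the conjugate equation across $\{y=0\}$ without a jump in the flux $|y|^{-\beta}w_{y}$ --- so you should state this dependence rather than treat it as obvious. Second, you have correctly isolated the only real obstruction, namely verifying a priori that $w\in H^{1}_{\mathrm{loc}}(|y|^{-\beta}dX)$ and that $L_{-\beta}w=|y|^{-\beta}\bar g$ holds distributionally with no defect measure supported on $\{y=0\}$. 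Of your two proposed repairs, the weight regularization $(y^{2}+\varepsilon^{2})^{\beta/2}$ is the cleaner and more robust route: for fixed $\varepsilon$ the operator is uniformly elliptic and the conjugate identity is a classical smooth computation, while the $A_{2}$ characteristic of the regularized weight (and hence the constant in the weighted Moser iteration) is uniform in $\varepsilon$, so the estimate survives the limit $\varepsilon\to 0$ by weak compactness. The difference-quotient alternative would require $\Delta_{x}v\in L^{2}_{\mathrm{loc}}(|y|^{\beta}dX)$ up to $\{y=0\}$, which is more delicate when $\beta$ is close to $1$ and would need a separate argument.
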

	For the interior part, we will need the following Theorem. It is used to analyze the difference between the approximating function and paraboloid with the solution.
	\begin{theorem}\label{schauder estimate}(\cite[Theorem 1.2]{Sire2021a})
		Suppose $u\in H^1{(|y|^\beta B_1)}$ and $f\in L^\infty(B_1)$ satisfy
		\begin{equation}\label{disturbed Lbeta equation}
			\left\{\begin{aligned}
				&L_\beta u=|y|^\beta f\quad &\text{in }&B_1(X)\subset\RR^{n+1},\\
				&|y|^\beta u_y=0\quad&\text{on }&B_1(X)\cap\{y=0\}
			\end{aligned}\right.
		\end{equation}
		in the weak sense. For any $t\in (0,1)$, we have:
		\begin{equation}\label{schauder estimate inequality}
			\|u\|_{C^{1,t}(B_{1/2}(X))}\leq C(\|u\|_{L^\infty(B_1(X))}+\|f\|_{L^\infty(B_1(X))}),
		\end{equation}
		where $C$ depends only on $n,t,\beta$.
	\end{theorem}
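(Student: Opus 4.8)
The plan is to prove \eqref{schauder estimate inequality} by the De Giorgi--Campanato compactness scheme adapted to the $A_2$-weighted operator $L_\beta$, the one substantial input being a Liouville-type classification of entire solutions.

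\emph{Step 1 (weighted regularity, reduction to an interior problem).} Since $\beta\in(-1,1)$, the weight $|y|^\beta$ lies in the Muckenhoupt class $A_2(\RR^{n+1})$, so the degenerate-elliptic theory of Fabes--Kenig--Serapioni supplies weighted Poincar\'e, Sobolev and Caccioppoli inequalities, local boundedness, the Harnack inequality, and interior $C^{0,\gamma_0}$ estimates for weak solutions of $L_\beta v=0$, and, via Moser iteration with a right-hand side, also for $L_\beta v=|y|^\beta g$ and $L_\beta v=\mathrm{div}(|y|^\beta G)$ with $g,G\in L^\infty$. The conormal condition $|y|^\beta u_y=0$ on $\{y=0\}$ says exactly that the even-in-$y$ reflection of $u$ is a weak solution of $L_\beta u=|y|^\beta f$ across $\{y=0\}$ in the full ball $B_1\subset\RR^{n+1}$, so we may assume $u$ is even and face an interior problem with no boundary. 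Applying the above to $u$ yields $u\in C^{0,\gamma_0}_{loc}$, and applying it to the tangential derivatives $\partial_{x_i}u$ ($1\le i\le n$) --- which solve $L_\beta(\partial_{x_i}u)=\mathrm{div}(|y|^\beta f\,e_i)$ because the weight is independent of $x$ --- yields $\partial_{x_i}u\in C^{0,\gamma_0}_{loc}$, all bounded by the right side of \eqref{schauder estimate inequality} (Caccioppoli converting the sup bound into control of the weighted energy). Away from $\{y=0\}$ the equation is uniformly elliptic, so $u$ is $C^\infty$ there; only control of $\partial_y u$ near the degeneracy set is missing.

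\emph{Step 2 (Liouville classification).} The claim is: every entire weak solution $w\in H^1_{loc}(|y|^\beta;\RR^{n+1})$ of $L_\beta w=0$ that is even in $y$ and obeys $\sup_{B_R}|w|=o(R^2)$ is affine in $x$, i.e. $w(X)=c_0+c'\cdot x$. Formally, a homogeneous solution $\rho^\gamma g(\theta)$ of degree $\gamma$ in the $(x_n,y)$-plane forces $g$ to solve $(|\sin\theta|^\beta g')'+\gamma(\gamma+\beta)|\sin\theta|^\beta g=0$ on $(0,\pi)$ with conormal conditions at $\theta=0,\pi$, whose eigenvalues are $\mu_k=k(k+\beta)$; so the admissible homogeneities are the non-negative integers --- realized by the polynomials $1$, $x_n$, $(1+\beta)x_n^2-y^2,\dots$ --- and, crucially, none lies strictly between $1$ and $2$. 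Rigorously this follows from an Almgren-type frequency/monotonicity formula for $L_\beta$ (the admissible frequencies form a discrete set with no point in $(1,2)$). Combined with the interior $C^{0,\gamma_0}$ estimate applied to $\partial_{x_i}w$ (entire of sublinear growth, hence constant after iterating over dyadic balls) and a one-variable computation ruling out $y$-dependence, this gives the claim; alternatively the Liouville argument can be run purely by compactness.

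\emph{Step 3 (Campanato decay and conclusion).} One shows that at every $X_0\in B_{1/2}$ there is an affine $\ell_{X_0}(Z)=a_0+a'\cdot(z-z_0)$ with $|a_0|+|a'|\le C(\|u\|_{L^\infty(B_1)}+\|f\|_{L^\infty(B_1)})$ and $\sup_{B_r(X_0)}|u-\ell_{X_0}|\le C\,r^{1+t}(\|u\|_{L^\infty(B_1)}+\|f\|_{L^\infty(B_1)})$ for all $0<r\le\tfrac14$. This is the usual contradiction argument: if it failed, there would be scales $r_k\to0$ along which the deviation $\mathrm{dev}_k$ from the best affine fit exceeds $r_k^{1+t}$ times the data; the normalized rescalings $w_k(Z):=(u(x_0+r_kZ)-\ell_k(x_0+r_kZ))/\mathrm{dev}_k$ would be equibounded and equi-$C^{0,\gamma_0}$ (Caccioppoli plus Step 1), would solve $L_\beta w_k=|y|^\beta (r_k^2/\mathrm{dev}_k)f_k$ with right-hand side tending to $0$ (here $2>1+t$ forces $r_k^2\|f\|_\infty=o(\mathrm{dev}_k)$), and would converge locally uniformly to an entire, even-in-$y$ solution $w_\infty$ of $L_\beta w_\infty=0$ with $w_\infty(0)=0$, $\nabla w_\infty(0)=0$ and $o(|Z|^2)$ growth --- hence affine by Step 2, hence $\equiv0$, contradicting $\sup_{B_1}|w_\infty|=1$. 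A Morrey--Campanato argument then finishes: the decay estimate at every base point, together with the fact that the approximants $\ell_{X_0}$ and $\ell_{X_1}$ both fit $u$ well on $B_{|X_0-X_1|}$ (so $Z\mapsto(a_0(Z),a'(Z))$ is $C^{0,t}$ and $a'=\nabla_x u$), is precisely $u\in C^{1,t}(B_{1/2})$ with the asserted bound. The main obstacle is Step 2 --- the weighted Liouville theorem, i.e. that $L_\beta$ admits no homogeneity between $1$ and $2$; everything else is the routine De Giorgi--Campanato machinery, the only care needed being the weighted Caccioppoli/compactness in $H^1(|y|^\beta)$ and the $C^\infty$-regularity of $u$ off $\{y=0\}$ that legitimizes comparing nearby affine approximants.
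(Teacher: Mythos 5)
This theorem is imported: the paper states it as a quotation of \cite[Theorem 1.2]{Sire2021a} and gives no proof of its own, so there is nothing internal to compare your argument against. What you sketch---even reflection to an interior problem, the Fabes--Kenig--Serapioni $A_2$ toolbox for $L_\beta$, a weighted Liouville theorem ruling out homogeneities in $(1,2)$, and a Campanato/blow-up compactness argument---is a legitimate and essentially standard route to such weighted Schauder estimates, and it is in the same family of ideas as the reference being cited. Your Liouville step is correct: tangential derivatives of an entire $L_\beta$-harmonic function of $o(R^2)$ growth are sublinear and hence constant by the interior oscillation decay (cf.\ Theorem \ref{regularity of harmonic}(3) in the appendix), and the remaining $y$-only part is killed by the one-dimensional ODE together with evenness, which excludes $|y|^{1-\beta}$; the spherical-eigenvalue computation $\mu_k=k(k+\beta)$ gives the same conclusion by a different route.

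Two points in Step~3 are stated too loosely and should be tightened to make the argument airtight. First, as written the rescalings $w_k$ are only normalized to have size $1$ on $B_1$; to conclude that the limit $w_\infty$ is \emph{entire} with $o(|Z|^2)$ growth you need the standard ``worst scale'' normalization (Simon's trick), i.e.\ choose $r_k$ so that $r_k^{-(1+t)}\inf_\ell\sup_{B_{r_k}}|u-\ell|$ is within a factor $2$ of its supremum over $r\ge r_k$; only then does one get $\sup_{B_M}|w_k|\le C(1+M)^{1+t}$ uniformly in $k$. Second, when the base point $X_0$ lies off $\{y=0\}$, the rescaled operator does not converge to $L_\beta$ but (after dividing by $|y_0|^\beta$) to a uniformly elliptic constant-coefficient operator when $r_k\ll y_0$, or to $L_\beta$ translated in $y$ when $r_k\gg y_0$; one needs to argue these cases separately (or simply handle $X_0\in\{y=0\}$ by the degenerate Liouville theorem and $X_0$ off $\{y=0\}$ by classical interior Schauder, matching the two estimates at scale $\sim y_0$). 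Neither issue is a genuine gap in the strategy, but as currently phrased the blow-up in Step~3 does not quite deliver the claimed entire limit with the claimed growth.
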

	
	For $f\equiv 0$, $t$ can be chosen to be exactly $1$. The estimate was proven in \cite{Caffarelli2007a}, \cite{Caffarelli2007} and \cite{Fabes1982no2}. We state it to end this paper.
	
	\begin{theorem}\label{regularity of harmonic}(\cite[Proposition 3.5]{Caffarelli2016})
		Let \( u \in H^1(B_1, y^\beta \, dX) \) be a weak solution to
		\begin{equation}\label{harmonic solution}
			\left\{\begin{aligned}
				&L_\beta u = 0 \quad &\text{in }& B_{3/4},\\
				&|y|^\beta u_y\Big|_{y=0}=0\quad&\text{on }&B_{3/4}^n.
			\end{aligned}\right.
		\end{equation}
		Then we have the following results.
		\begin{enumerate}
			\item For each \( B_r(x_0) \subset B_1 \),
			\[
			\max_{B_{r/2}(x_0)} |u| \leq M \left( \frac{1}{r^{n+1+\beta}} \int_{B_r(x_0)} y^a |u|^2 \, dX \right)^{1/2},
			\]
			where \( M \) depends only on \( n \) and \( s \).
			
			\item We have
			\[
			\sup_{x \in B_{1/2}} |u_y(x, y)| \leq C y,
			\]
			where \( C \) depends only on \( n \) and \( s \).
			
			\item For each integer \( k \geq 0 \) and each \( B_r(x_0) \subset B_1 \),
			\[
			\sup_{B_{r/2}(x_0)} |D_x^k u| \leq \frac{C}{r^k} \text{osc}_{B_r(x_0)} u,
			\]
			where \( C \) depends only on \( n \), \( k \), and \( s \).
			
		\end{enumerate}
	\end{theorem}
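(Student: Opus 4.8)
The plan is to treat the three assertions separately, as each is a standard consequence of the $A_2$-weighted theory for $L_\beta$ once two structural facts are recorded. First, the weight $|y|^\beta$ with $\beta=1-2s\in(-1,1)$ is a Muckenhoupt $A_2$ weight on $\mathbb{R}^{n+1}$, so the De Giorgi--Nash--Moser machinery for $\operatorname{div}(|y|^\beta\nabla\cdot)=0$ (the Fabes--Kenig--Serapioni framework already invoked for Theorems \ref{interior harnack}--\ref{boundary harnack}) applies to weak solutions. Second, the conormal condition $|y|^\beta u_y|_{y=0}=0$ together with $L_\beta u=0$ forces $u$ to be even in $y$ (the odd part has vanishing Dirichlet and vanishing conormal trace on $\{y=0\}$, hence vanishes), so the even extension of $u$ across $\{y=0\}$ is a weak solution of $L_\beta\cdot=0$ in the full ball; moreover, since the coefficients of $L_\beta$ depend only on $y$, every tangential derivative $D_x^\mu u$ is again a weak solution of the same problem and satisfies the same conormal condition. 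We also use $\int_{B_r(X_0)}|y|^\beta\,dX\simeq r^{n+1+\beta}$ for $X_0\in\{y=0\}$. \textbf{Part (1)} is then exactly the Moser local-boundedness estimate for the $A_2$-weighted operator $L_\beta$: for $B_r(x_0)$ disjoint from $\{y=0\}$ it is the interior estimate, and for $x_0\in\{y=0\}$ (or $B_r(x_0)$ straddling the hyperplane) one first replaces $u$ by its even extension, a genuine weak solution on $B_r(x_0)$, and applies the same interior estimate; the stated form of the inequality is the normalization $w(B_r)\simeq r^{n+1+\beta}$.

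\textbf{Part (2).} I would deduce this from the sharp Schauder estimate. By Theorem \ref{schauder estimate} together with the remark following it (the endpoint $t=1$ when $f\equiv0$), rescaled to radius $\tfrac14$, one gets for every $X_0\in\overline{B_{1/2}}$ that $\nabla u$ is Lipschitz up to and across $\{y=0\}$ near $X_0$, with $[\nabla u]_{C^{0,1}(B_{1/8}(X_0))}\le C(n,\beta)\,\|u\|_{L^\infty(B_{3/4})}$. Since $u$ is even in $y$, $u_y$ is odd in $y$, hence $u_y(x,0)=0$ for $x\in B_{1/2}^n$, and therefore $|u_y(x,y)|=|u_y(x,y)-u_y(x,0)|\le [\nabla u]_{C^{0,1}(B_{1/8}(x,0))}\,|y|\le C\|u\|_{L^\infty(B_{3/4})}\,|y|$ for $|y|\le\tfrac18$, while for $|y|>\tfrac18$ the bound is trivial from $|u_y|\le C\|u\|_{L^\infty}$. (Under the normalization $\|u\|_{L^\infty(B_1)}\le1$ implicit in the statement this is exactly $|u_y(x,y)|\le Cy$.) An alternative avoiding the endpoint Schauder estimate: $v:=|y|^\beta u_y$ is odd and solves the conjugate equation $L_{-\beta}v=\operatorname{div}(|y|^{-\beta}\nabla v)=0$ with $v=0$ on $\{y=0\}$; comparing $|v|$ on each side with a standard boundary barrier built from the exact solution $|y|^{1+\beta}$ of $L_{-\beta}$ gives $|v|\le C|y|^{1+\beta}$, i.e.\ $|u_y|\le C|y|$.

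\textbf{Part (3).} Here I would iterate the interior tangential-gradient estimate. After replacing $u$ by its even extension (to allow balls centered on $\{y=0\}$) and by $u-c$ with $c:=\tfrac12(\sup_{B_r(x_0)}u+\inf_{B_r(x_0)}u)$ — which does not change $D_x^\mu u$ and makes $\|u-c\|_{L^\infty(B_r(x_0))}=\tfrac12\operatorname{osc}_{B_r(x_0)}u$ — apply Proposition \ref{interior regularity for nabla_xu and uy} with $f\equiv0$ successively to $u,\ \partial_{x_{i_1}}u,\ \partial_{x_{i_1}}\partial_{x_{i_2}}u,\dots$, each being a weak solution of $L_\beta\cdot=0$, on the nested balls $B_\lambda\supset B_{\lambda/4}\supset\cdots\supset B_{\lambda/4^k}$, obtaining $\|D_x^k u\|_{L^\infty(B_{\lambda/4^k}(z))}\le C^k\lambda^{-k}\,\|u-c\|_{L^\infty(B_\lambda(z))}$. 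Taking $\lambda=r/2$, letting $z$ range over a fixed finite cover of $B_{r/2}(x_0)$ by balls of radius $(r/2)4^{-k}$ with each $B_{r/2}(z)\subset B_r(x_0)\subset B_1$, and taking the supremum over the cover, yields $\sup_{B_{r/2}(x_0)}|D_x^k u|\le C(n,k,\beta)\,r^{-k}\operatorname{osc}_{B_r(x_0)}u$.

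\textbf{Main obstacle.} The analytically substantive inputs are precisely the results gathered in the Appendix: the $A_2$-weighted De Giorgi--Nash--Moser theory for Part (1), the endpoint $C^{1,1}$ Schauder estimate (Theorem \ref{schauder estimate} and the remark after it) — or, alternatively, the conjugate-equation barrier — for Part (2), and the interior gradient estimate (Proposition \ref{interior regularity for nabla_xu and uy}) for Part (3). Granting these, the remaining work is bookkeeping; the one place that needs genuine care is the geometric loss of radius in the $k$-fold iteration of Part (3), which is handled by the covering argument above so that the final constant depends only on $n$, $k$, $\beta$ and not on $r$, and the only conceptual point to flag is that all boundary cases (centers on $\{y=0\}$) are reduced to interior statements via even reflection, legitimized by the conormal condition.
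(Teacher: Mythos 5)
This statement is not proved in the paper at all: it is quoted verbatim from \cite[Proposition 3.5]{Caffarelli2016} and used as a black box, so there is no internal proof to compare against. Your reconstruction follows the same standard route as the cited reference (even reflection across $\{y=0\}$ legitimized by the conormal condition, the Fabes--Kenig--Serapioni $A_2$-weighted De Giorgi--Nash--Moser estimate for part (1), the conjugate-equation/oddness of $u_y$ for part (2), and iteration of the tangential-derivative estimate with a covering argument for part (3)), and as a whole it is sound; in particular the alternative barrier argument for (2), comparing $v=|y|^\beta u_y$, which solves $L_{-\beta}v=0$ and vanishes on $\{y=0\}$, with the model solution $|y|^{1+\beta}$, is exactly the classical Caffarelli--Silvestre conjugation trick and avoids the endpoint Schauder estimate.

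One step is too quick: the claim that the conormal condition forces $u$ to be even because ``the odd part has vanishing Dirichlet and vanishing conormal trace on $\{y=0\}$, hence vanishes.'' Vanishing Cauchy data on the degenerate hyperplane does not kill a weak solution by soft arguments alone; note that the odd function $c\,\mathrm{sgn}(y)|y|^{1-\beta}$ is a genuine weak solution of $L_\beta$ across $\{y=0\}$, so the parity decomposition by itself proves nothing, and even after using the conormal hypothesis to set its flux to zero, concluding that the odd part vanishes requires a unique-continuation (Almgren-type) argument or a reflection-comparison argument that you do not supply. The cleaner fix, consistent with how the result is used in this paper, is to take evenness in $y$ as part of the setting (the solutions here arise from the Caffarelli--Silvestre extension and the paper's definition of viscosity solution already requires evenness), after which the even reflection step is immediate. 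A second, cosmetic point: the normalization $r^{n+1+\beta}$ in part (1) is comparable to the weighted volume of $B_r(x_0)$ only when $x_0$ lies on (or within distance $\sim r$ of) $\{y=0\}$; for balls far from the hyperplane the Moser estimate naturally comes with the weighted average $\frac{1}{w(B_r)}\int_{B_r}|y|^\beta|u|^2\,dX$, so your proof of the inequality as literally stated only covers the regime in which it is actually applied.
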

\bibliographystyle{alpha} 
\bibliography{bibtex}
\end{document}